\newcommand{\F}{\mathbb{F}} 
\def\RCS$#1: #2 ${\expandafter\def\csname RCS#1\endcsname{#2}}
\DeclareMathOperator{\Frob}{Frob}
\DeclareMathOperator{\sgn}{sgn}
 \DeclareMathOperator{\ab}{ab}
 \newcommand{\eps}{\epsilon}
\newcommand{\notdiv}{\nmid}
\newcommand{\wtilde}{\tilde{\omega}}
\newcommand{\To}{\longrightarrow}\newcommand{\into}{\hookrightarrow}
\newcommand{\m}{\mathfrak{m}}
\newcommand{\onto}{\twoheadrightarrow}
\newcommand{\isoto}{\stackrel{\sim}{\To}} 
 \newcommand{\rec}{\operatorname{rec}}
\newcommand{\St}{\operatorname{St}}
\newcommand{\cL}{\mathcal{L}}
\newcommand{\bigO}{\mathcal{O}}
 \newcommand{\R}{\mathbb{R}}
\newcommand{\Z}{\mathbb{Z}} \newcommand{\A}{\mathbb{A}}
\newcommand{\Q}{\mathbb{Q}}
\newcommand{\fq}{\mathfrak{q}} \newcommand{\fQ}{\mathfrak{Q}}
\newcommand{\C}{\mathbb{C}} 
\newcommand{\Gn}{\mathcal{G}_n}
\newcommand{\Favoid}{F^{(\mathrm{avoid})}} 
\newcommand{\tensor}{\otimes} 
\newcommand{\thbarpr}{\bar{\theta'}}
\newcommand{\bb}{\mathbb} 
\newcommand{\mc}{\mathcal}
\newcommand{\wt}{\widetilde} 
\newcommand{\mf}{\mathfrak}
\newcommand{\TT}{\bb{T}^{T}_{\lambda}(U,\mc{O})}
\DeclareMathOperator{\Iw}{Iw}
\DeclareMathOperator{\ord}{ord}  
\DeclareMathOperator{\Art}{Art}
\DeclareMathOperator{\univ}{univ}
\DeclareMathOperator{\red}{red}
\DeclareMathOperator{\ssg}{ss}
\DeclareMathOperator{\loc}{loc}
\DeclareMathOperator{\WD}{WD}
\DeclareMathOperator{\JL}{JL}
\newcommand{\rhobar}{\overline{\rho}} \newcommand{\rbar}{\bar{r}}
\newcommand{\rbarwtv}{\rbar|_{G_{F_{\wt{v}}}}}
\newcommand{\rbarwtvL}{\rbar|_{G_{L_{\wt{v}}}}}
\newcommand{\Gal}{\operatorname{Gal}}
\newcommand{\GL}{\operatorname{GL}}
\newcommand{\PGL}{\operatorname{PGL}}
\newcommand{\HT}{\operatorname{HT}}
 \newcommand{\Qbar}{\overline{\Q}}
 \newcommand{\Qp}{\Q_p}
\newcommand{\Ql}{\Q_l} 
\newcommand{\Qpbar}{\overline{\Q}_p}
\newcommand{\Qlbar}{\overline{\Q}_{l}}
\newcommand{\Flbar}{\overline{\F}_l} \newcommand{\Fbar}{\overline{\F}}
\newcommand{\gr}{\operatorname{gr}}
\newcommand{\Spec}{\operatorname{Spec}}
\newcommand{\Ind}{\operatorname{Ind}}
\newcommand{\SL}{\operatorname{SL}}
\newcommand{\PSL}{\operatorname{PSL}}
\newcommand{\ad}{\operatorname{ad}}
\newcommand{\tr}{\operatorname{tr}}
\newcommand{\End}{\operatorname{End}}
\newcommand{\Hom}{\operatorname{Hom}}
\newcommand{\Aut}{\operatorname{Aut}}
\newcommand{\Fil}{\operatorname{Fil}}
\newcommand{\Sym}{\operatorname{Sym}}
\newcommand{\dR}{\mathrm{dR}} \newcommand{\textB}{\mathrm{B}}
 \newcommand{\BdR}{\textB_{\dR}}
\DeclareMathOperator{\Def}{Def}
\newtheorem{thm}{Theorem}[subsection]
\newtheorem{cor}[thm]{Corollary}
\newtheorem*{lemunnum}{Lemma}
\newtheorem{lem}[thm]{Lemma} \newtheorem{prop}[thm]{Proposition}
 \theoremstyle{definition}
 \theoremstyle{definition}
\newtheorem{defn}[thm]{Definition} \theoremstyle{remark}
\numberwithin{equation}{subsection}
\theoremstyle{definition}
\newtheorem{situation}[thm]{Situation}
\newtheorem*{sublem}{Sublemma}
\begin{document}
\title[Sato-Tate]{The Sato-Tate conjecture
  for Hilbert modular forms}

\author{Thomas Barnet-Lamb}\email{tbl@brandeis.edu}\address{Department of Mathematics, Brandeis University, 415 South Street MS 050, Waltham MA}
\author{Toby Gee} \email{tgee@math.harvard.edu} \address{Department of
  Mathematics, Harvard University} \author{David Geraghty}
\email{geraghty@math.harvard.edu}\address{Department of Mathematics,
  Harvard University} \thanks{The second author was partially supported
  by NSF grant DMS-0841491.}  \subjclass[2000]{11F33.}
\begin{abstract}We prove the Sato-Tate conjecture for Hilbert modular
  forms. More precisely, we prove the natural generalisation of the
  Sato-Tate conjecture for regular algebraic cuspidal automorphic representations
  of $\GL_2(\A_F)$, $F$ a totally real field, which are not of CM
  type. The argument is based on the potential automorphy techniques
  developed by Taylor \emph{et.~al.}, but makes use of automorphy
  lifting theorems over ramified fields, together with a
  ``topological'' argument with local deformation rings. In
  particular, we give a new proof of the conjecture for modular forms,
  which does not make use of potential automorphy theorems for
  non-ordinary $n$-dimensional Galois representations.
\end{abstract}
\maketitle
\tableofcontents
\section{Introduction}
\subsection{}In this paper we prove the Sato-Tate conjecture for
Hilbert modular forms. More precisely, we prove the natural
generalisation of the Sato-Tate conjecture for regular algebraic
cuspidal automorphic representations of $\GL_2(\A_F)$, $F$ a totally real field,
which are not of CM type. 

Several special cases of this result were proved in the last few
years. The papers \cite{hsbt} and \cite{tay06} prove the result for elliptic curves
over totally real fields which have potentially multiplicative
reduction at some place, and it is straightforward to extend this
result to the case of cuspidal automorphic representations of weight 0
(i.e. those corresponding to Hilbert modular forms of parallel weight
2) which are a twist of the Steinberg representation at some finite
place. The case of modular forms (over $\Q$) of weight 3 whose
corresponding automorphic representations are a twist of the Steinberg
representation at some finite place was treated in \cite{GeeSTwt3},
via an argument that depends on the existence of infinitely many
ordinary places. The case of modular forms (again over $\Q$) was
proved in \cite{BLGHT} (with no assumption on the existence of a
Steinberg place). The main new features of the arguments of
\cite{BLGHT} were the use of an idea of Harris (\cite{harristrick}) to
ensure that potential automorphy need only be proved in weight 0,
together with a new potential automorphy theorem for $n$-dimensional
Galois representations which are symmetric powers of those attached to
non-ordinary modular forms. Recent developments in the theory of the
trace formula remove the need for an assumption of the existence of a
Steinberg place in both this theorem and in the case of elliptic
curves over totally real fields.

In summary, the Sato-Tate conjecture has been proved for modular forms
and for elliptic curves over totally real fields, but is not known in
any non-trivial case for Hilbert modular forms not of parallel weight
2 over any field other than $\Q$. It seems to be hard to extend the
arguments of either \cite{GeeSTwt3} or \cite{BLGHT} to the general
case; in the former case one has no way of establishing the existence
of infinitely many ordinary places (although it is conjectured that
the set of such places should be of density one), and in the latter
case one has no control over the mixture of ordinary and supersingular
places over any rational prime. In this paper, we adopt a new
approach: we combine the approach of \cite{GeeSTwt3}, which is based
on taking congruences to representations of $\GL_2(\A_F)$ of weight
$0$, with the twisting argument of \cite{harristrick} (or rather the
version of this argument used in \cite{BLGHT}). These techniques do
not in themselves suffice to prove the result, as one has to prove a
automorphy lifting theorem for non-ordinary representations over a
ramified base field. No such theorems are known for representations of
dimension greater than two. The chief innovation of this paper is a
new technique for proving such results.

Our new automorphy lifting theorem uses the usual Taylor-Wiles-Kisin
patching techniques, but rather than identifying an entire deformation
ring with a Hecke algebra, we prove that certain global Galois
representations, whose restrictions to decomposition groups lie on
certain components of the local lifting rings, are automorphic. That
this is the ``natural'' output of the Taylor-Wiles-Kisin method is at
least implicit in the work of Kisin, cf. section 2.3 of
\cite{MR2459302}. One has to be somewhat careful in making this
precise, because it is necessary to use fixed lattices in the global
Galois representations one considers, and to work with lifting rings
rather than deformation rings. In particular, it is not clear that the
set of irreducible components of a local lifting ring containing a
particular $\bigO_K$-valued point, $K$ a finite extension of $\Ql$, is
determined by the equivalence class of the corresponding
$K$-representation. This necessitates a good deal of care to work with
$\bigO_K$-liftings throughout the paper.

Effectively (modulo the remarks about lattices in the previous
paragraph) the automorphy lifting theorem that we prove tells us that
if we are given two congruent $n$-dimensional $l$-adic regular
crystalline essentially self dual representations of $G_F$ (the
absolute Galois group of a totally real field $F$) with the same
$l$-adic Hodge types, with ``the same ramification properties'',
and satisfying a standard assumption on the size of the mod $l$ image,
then if one of them is automorphic, so is the other. By ``the same
ramification properties'', we mean that they are ramified at the same
set of places, and that the points determined by the two
representations on the corresponding local lifting rings lie on the
same components. For example, we require that the two representations
have unipotent ramification at exactly the same set of places; we do
not know how to adapt Taylor's techniques for avoiding Ihara's lemma
(\cite{tay06}) to this more general setting.

The local deformation rings for places not dividing $l$ are reasonably
well-understood, so that it is possible to verify that this condition
holds at such places in concrete examples. On the other hand, the
components of the crystalline deformation rings of fixed weight are
not at all understood if $l$ ramifies in $F$, unless $n=2$ and the
representations are Barsotti-Tate, when there are at most two
components, corresponding to ordinary and non-ordinary
representations. This might appear to prevent us from being able to
apply our theorem to any representations at all. We get around this
problem by making use of the few cases where the components are
known. Specifically, we use the cases where $n=2$ and either the
representations are Barsotti-Tate; or $F$ is unramified in $l$, and
the representations are crystalline of low weight. To explain how we
are able to bootstrap from these two cases, we now explain the main
argument.

We begin with a regular algebraic cuspidal representation $\pi$ of
$\GL_2(\A_F)$, assumed not to be of CM type. By a standard analytic
argument, it suffices to prove that for each $n\ge 1$ the $(n-1)$-st
symmetric power of $\pi$ is potentially automorphic, in the sense that
there is a finite Galois extension $F''/F$ of totally real fields and an
automorphic representation $\pi_n$ of $\GL_n(\A_{F''})$ whose $L$-function
is equal to that of the base change to $F''$ of the $(n-1)$-st
symmetric power $L$-function of $\pi$. Equivalently, if we fix a prime
$l$, then it suffices to prove that the $(n-1)$-st symmetric power of
an $l$-adic Galois representation corresponding to $\pi$ is
potentially automorphic, i.e. that its restriction to $G_{F''}$ is
automorphic. This is what we prove. 

We choose $l$ to be large and to split completely in $F$, and such
that $\pi_v$ is unramified at all places $v$ of $F$ lying over $l$. We
begin by making a preliminary solvable base change to a totally real
field $F'/F$, such that the base change $\pi_{F'}$ of $\pi$ to $F'$ is
either unramified or an unramified twist of the Steinberg
representation at each finite place of $F'$. We then choose an
automorphic representation $\pi'$ of $\GL_2(\A_{F'})$ of weight 0
which is congruent to $\pi$, which for any place $v\nmid l$ is
unramified (respectively an unramified twist of the Steinberg
representation) if and only if $\pi$ is, and which is a principal
series representation (possibly ramified) or a supercuspidal
representation for all $v|l$. Furthermore we choose $\pi'$ so that
for places $v|l$, $\pi_v$ is ordinary if and only if $\pi'_v$ is
ordinary.

We now prove that the $(n-1)$-st symmetric power of $\pi'$ is
potentially automorphic over some finite Galois extension $F''$ of $F$. This is
straightforward, although it is not quite in the literature. This
is the only place that we need to make use of a potential automorphy
theorem for an $n$-dimensional Galois representation, and the theorems
of \cite{hsbt} (or rather the versions of them which are now available
thanks to improvements in our knowledge of the trace formula, which
remove the need for discrete series hypotheses) would suffice, but for the
convenience of the reader we use a theorem from \cite{BLGHT}
(which, for instance, already include the improvements made possible
by our enhanced understanding of the trace formula)
instead. This also allows us to avoid having to make an argument with
Rankin-Selberg convolutions as in \cite{hsbt}. We note that the
theorem we use from \cite{BLGHT} is for ordinary representations,
rather than the far more technical result for supersingular
representations that is also proved in \cite{BLGHT}.

We now wish to deduce the potential automorphy of the $(n-1)$-st
symmetric power of $\pi$, or rather the automorphy of the
corresponding $l$-adic Galois representation
$r:G_{F''}\to\GL_n(\Qlbar)$, from the automorphy of the $l$-adic
Galois representation $r':G_{F''}\to\GL_n(\Qlbar)$ corresponding to
the $(n-1)$-st symmetric power of $\pi'$. We cannot directly apply our
automorphy lifting theorem, because the Hodge-Tate weights of $r'$ and
$r$ are different. Instead, we employ an argument of Harris
(\cite{harristrick}), and tensor both $r$ and $r'$ with
representations obtained by the automorphic induction of algebraic
characters of a certain CM field. The choice of the field and the
characters is somewhat delicate, in order to preserve various
technical assumptions for the automorphy lifting theorem, in
particular the assumption of big residual image. The two characters
are chosen so that the resulting Galois representations $r''$ and
$r'''$ are potentially crystalline with the same Hodge-Tate
weights. The representation $r'''$ is automorphic, by standard results
on automorphic induction. We then apply our automorphy lifting theorem
to deduce the automorphy of $r''$. The automorphy of $r$ then follows
by an argument as in \cite{harristrick} (although we employ a version
of this which is very similar to that used in \cite{BLGHT}).

In order to apply our automorphy lifting theorem, we need to check the
local hypotheses. At places not dividing $l$, these essentially follow
from the construction of $\pi'$, together with a path-connectedness
argument, and a check (using the Ramanujan conjecture) that a certain
point on a local lifting ring is smooth. At the places dividing $l$
the argument is rather more involved. At the places where $\pi$ is
ordinary the hypothesis can be verified (after a suitable base change)
using the results of \cite{ger}. At the non-ordinary places we proceed
more indirectly. For each non-ordinary $v|l$ we choose two local
2-dimensional $l$-adic representations $\rho$ and $\rho'$ of $G_{F_v}$
which are induced from characters of quadratic extensions. The
representations $\rho$ and $\rho'$ are chosen to be congruent to the
local Galois representations attached to $\pi$, $\pi'$ respectively, with
$\rho$ crystalline of the same Hodge-Tate weights as the local
representation attached to $\pi$, and $\rho'$ non-ordinary and
potentially Barsotti-Tate. Then $\rho$ is on the same component of the
local crystalline lifting ring as the local representation attached to
$\pi$, and a similar statement is true for $\rho'$ and $\pi'$ after a base change
to make it crystalline (using the knowledge of the components of
Barsotti-Tate lifting rings mentioned above). Since the image of an
irreducible component under a continuous map is irreducible, a
straightforward argument shows that we need only check that the Galois
representations corresponding to the $(n-1)$-st symmetric powers of
$\rho$ and $\rho'$, when tensored with the Galois representations
obtained from the characters induced from the CM field, lie on a
common component of a crystalline deformation ring (possibly after a
base change). We ensure this by choosing our characters so that the
two Galois representations are both direct sums of unramified twists
of the same crystalline characters, and making a path-connectedness
argument.

We should note that we have suppressed some technical details in the
above outline of our argument; we need to take considerable care to
ensure that the hypotheses relating to residual Galois representations
having big image are satisfied. In addition, as mentioned above,
rather than working with Galois representations valued in fields it is
essential to work with fixed lattices throughout. We remark that in
the forthcoming paper \cite{blggt} we remove the need to consider lattices, and
generalise the arguments of this paper to prove potential automorphy
theorems for a broad class of Galois representations.

We now outline the structure of the paper. In section \ref{sec:
  notation} we recall some basic notation and definitions from
previous papers on automorphy lifting theorems. The automorphy lifting
theorem is proved in section \ref{sec: lifting theorem}, together with
some results on the behaviour of local lifting rings under conjugation
and functorialities. The most technical section of the paper is
section \ref{sec; character building}, where we construct the
characters of CM fields that we need in the main argument. In section
\ref{sec: twisting and untwisting} we recall various standard results
on base change and automorphic induction, and give an exposition of
Harris's trick in the level of generality we require. In section
\ref{sec: pot auto weight 0} we prove a potential automorphy theorem
in weight 0; the precise result we require is not in the literature,
and while it is presumably clear to the experts how to prove it, we
provide the details. Finally, in section \ref{sec: main thm} we carry
out the strategy described above, and deduce the Sato-Tate conjecture.

We would like to thank Richard Taylor for some helpful discussions
related to the content of this paper. We would also like to thank Florian
Herzig and Sug Woo Shin for their helpful comments on an earlier draft.

\section{Notation}\label{sec: notation}If $M$ is a field, we let $G_M$
denote its absolute Galois group.  We write all matrix transposes on
the left; so ${}^tA$ is the transpose of $A$. Let $\epsilon$ denote
the $l$-adic or mod $l$ cyclotomic character. If $M$ is a finite
extension of $\bb{Q}_p$ for some $p$, we write $I_M$ for the inertia
subgroup of $G_M$. If $R$ is a local ring we write $\mf{m}_{R}$ for
the maximal ideal of $R$.

We fix an algebraic closure $\Qbar$ of $\Q$, and regard all algebraic
extensions of $\Q$ as subfields of $\Qbar$. For each prime $p$ we fix
an algebraic closure $\Qpbar$ of $\Qp$, and we fix an embedding
$\Qbar\into\Qpbar$. In this way, if $v$ is a finite place of a number
field $F$, we have a homomorphism $G_{F_v}\into G_F$.

We will use some of the notation and definitions of \cite{cht} without
comment. In particular, we will use the notions of RACSDC (regular,
algebraic, conjugate self-dual, cuspidal) and RAESDC (regular,
algebraic, essentially self-dual, cuspidal)
automorphic representations, for which see sections 4.2 and 4.3 of
\cite{cht}. We will also use the notion of a RAECSDC (regular,
algebraic, essentially conjugate self-dual, cuspidal) automorphic
representation, for which see section 1 of \cite{BLGHT}. If $\pi$ is a RAESDC automorphic representation of $\GL_n(\A_F)$, $F$
a totally real field, and $\iota:\Qlbar\isoto\C$, then we let
$r_{l,\iota}(\pi):G_F\to\GL_n(\Qlbar)$ denote the corresponding Galois
representation. Similarly, if $\pi$ is a RAECSDC or RACSDC automorphic representation of $\GL_n(\A_F)$, $F$
a CM field (in this paper, all CM fields are totally imaginary), and $\iota:\Qlbar\isoto\C$, then we let
$r_{l,\iota}(\pi):G_F\to\GL_n(\Qlbar)$ denote the corresponding Galois
representation. For the properties of $r_{l,\iota}(\pi)$, see Theorems
1.1 and 1.2 of \cite{BLGHT}.  If $K$ is a finite extension of $\Qp$ for some
$p$, we will let $\rec_K$ be the local Langlands correspondence of
\cite{MR1876802}, so that if $\pi$ is an irreducible complex
admissible representation of $\GL_n(K)$, then $\rec_K(\pi)$ is a
Weil-Deligne representation of the Weil group $W_K$. If $K$ is an
archimedean local field, we write $\rec_K$ for the local Langlands
correspondence of \cite{MR1011897}. We will write $\rec$ for $\rec_K$
when the choice of $K$ is clear.

\section{An automorphy lifting theorem}\label{sec: lifting theorem}

\subsection{The group $\mathcal{G}_n$}
Let $n$ be a positive integer, and let $\Gn$ be the group scheme over
$\Z$ which is the semidirect product of $\GL_n\times\GL_1$ by the
group $\{1,j\}$, which acts on $\GL_n\times\GL_1$
by \[j(g,\mu)j^{-1}=(\mu{}^tg^{-1},\mu).\] There is a homomorphism
$\nu:\Gn\to\GL_1$ sending $(g,\mu)$ to $\mu$ and $j$ to $-1$. Write
$\mathfrak{g}_n^0$ for the trace zero subspace of the Lie algebra of
$\GL_n$, regarded as a Lie subalgebra of the Lie algebra of $\Gn$.

Suppose that $F$ is an imaginary CM field with totally real subfield $F^+$.
If $R$ is a ring and $r : G_{F^+} \rightarrow \mc{G}_n(R)$
is a homomorphism with $r^{-1}(\GL_n(R)\times \GL_1(R))=G_{F}$, we
will make a slight abuse of notation and write $r|_{G_{F}}$
(respectively $r|_{G_{F_w}}$ for $w$ a place of $F$) to mean
$r|_{G_{F}}$ (respectively $r|_{G_{F_w}}$) composed with the
projection $\GL_n(R)\times \GL_1(R) \rightarrow \GL_n(R)$.

\subsection{$l$-adic automorphic forms on unitary groups}
\label{subsec: l-adic aut forms on unitary groups}
Let $F^{+}$ denote a totally real number field and $n$ a positive
integer. Let $F/F^{+}$ be a totally imaginary quadratic extension of
$F^{+}$ and let $c$ denote the non-trivial element of
$\Gal(F/F^{+})$. 
Suppose that the extension $F/F^{+}$ is unramified at
all finite places. Assume that $n[F^{+}:\bb{Q}]$ is divisible by
4. Under this assumption, we can find a reductive algebraic group $G$
over $F^{+}$ with the following properties:
\begin{itemize}
\item $G$ is an outer form of $\GL_n$ with $G_{/F} \cong \GL_{n/F}$;
\item for every finite place $v$ of $F^{+}$, $G$ is quasi-split at
  $v$;
\item for every infinite place $v$ of $F^{+}$, $G(F^{+}_v) \cong
  U_{n}(\bb{R})$.
\end{itemize}
We can and do fix a model for $G$ over the ring of integers
$\mc{O}_{F^{+}}$ of $F^{+}$ as in section 2.1 of \cite{ger}.
For each place $v$ of $F^{+}$ which splits as $ww^{c}$ in $F$ there is
a natural isomorphism
\[ \iota_{w} : G(F^{+}_v) \isoto GL_n(F_w) \]
which restricts to an isomorphism between $G(\mc{O}_{F^{+}_v})$ and $GL_n(\mc{O}_{F_w})$.
If $v$ is a place of $F^{+}$ split over $F$ and $w$ is a place of
$F$ dividing $v$, then we let 
\begin{itemize}
\item $\Iw(w)$ denote the subgroup of $GL_n(\mc{O}_{F_w})$
  consisting of matrices which reduce to an upper triangular matrix
  modulo $w$;
\item $U_0(w)$ denote the subgroup of $GL_n(\mc{O}_{F_w})$ consisting
  of matrices whose last row is congruent to $(0,\ldots,0,*)$ modulo
  $w$;
\item $U_1(w)$ denote the subgroup of $U_0(w)$ consisting of matrices
  whose last row is congruent to $(0,\ldots,0,1)$ modulo $w$.
\end{itemize}

Let $l>n$ be a prime number with the property that every place of
$F^{+}$ dividing $l$ splits in $F$. Let $S_l$ denote the set of places
of $F^{+}$ dividing $l$. Let $K$ be an algebraic extension of $\bb{Q}_l$ in
$\Qlbar$ such that every embedding $F \hookrightarrow \Qlbar$ has
image contained in $K$. 
Let $\mc{O}$ denote the ring of integers in $K$ and $k$ the residue field. Let
$S_l$ denote the set of places of $F^{+}$ dividing $l$ and for each $v
\in S_l$, let $\wt{v}$ be a place of $F$ over $v$.

Let $W$ be an $\mc{O}$-module with an action of $G(\mc{O}_{F^{+},l})$. 
Let $V \subset G(\bb{A}_{F^+}^{\infty})$ be a compact open subgroup
with $v_l \in G(\mc{O}_{F^+,l})$ for all $v \in V$, where $v_l$
denotes the projection of $v$ to $G(F^{+}_l)$. We let $S(V,W)$ denote
the space of $l$-adic automorphic forms on $G$ of weight $W$ and level $V$, that is, the space of functions
\[  f : G(F^{+})\backslash G(\bb{A}_{F^+}^{\infty}) \rightarrow W \]
with $f(gv) = v_l^{-1} f(g)$ for all $v \in V$. 

Let $\wt{I}_{l}$ denote the set of embeddings $F \hookrightarrow
K$
giving rise to one of the places $\wt{v}$.
Let
$(\bb{Z}^{n}_{+})^{\wt{I}_l}$ denote the set of $\lambda \in
(\bb{Z}^{n})^{\wt{I}_l}$ with $\lambda_{\tau,1}\geq \lambda_{\tau,2}
\geq \ldots \geq \lambda_{\tau,n}$ for all embeddings $\tau \in \wt{I}_l$. To each $\lambda \in
(\bb{Z}^{n}_{+})^{\wt{I}_l}$ we associate a finite free
$\mc{O}$-module $M_{\lambda}$ with a continuous action of
$G(\mc{O}_{F^{+},l})$ as in Definition 2.2.3 of \cite{ger}. The
representation $M_{\lambda}$ is the tensor product over $\tau \in
\wt{I}_l$ of the irreducible algebraic representations of $\GL_n$ of
highest weights given by the $\lambda_\tau$. We write $S_{\lambda}(V,\mc{O})$
instead of $S(V,M_{\lambda})$ and similarly for any $\mc{O}$-module
$A$, we write $S_{\lambda}(V,A)$ for
$S(V,M_{\lambda}\otimes_{\mc{O}}A)$.

Assume from now on that $K$ is a finite extension of $\Ql$. Let
$\mf{l}$ denote the product of all places in $S_l$. Let $R$ and $S_a$
denote finite sets of finite places of $F^{+}$ disjoint from each
other and from $S_l$ and consisting only of places which split in
$F$. Assume that each $v \in S_a$ is unramified over a rational prime $p$ with
$[F(\zeta_p):F]>n$. Let $T=S_l \coprod R \coprod S_a$. For each $v \in
T$ fix a place $\wt{v}$ of $F$ dividing $v$, extending the choice of
$\wt{v}$ for $v \in S_l$. 
Let $U=\prod_v U_v$ be a compact open subgroup of
$G(\bb{A}_{F^{+}}^{\infty})$ such that
\begin{itemize}
\item $U_v = G(\mc{O}_{F^{+}_v})$ if $v \not \in R \cup S_a$ splits in
  $F$;
\item $U_v = \iota_{\wt{v}}^{-1}\ker( GL_n(\mc{O}_{F_{\wt{v}}})
  \rightarrow GL_n(k(\wt{v})))$ if $v \in S_a$;
\item $U_v$ is a hyperspecial maximal compact subgroup of $G(F^{+}_v)$
  if $v$ is inert in $F$.
\end{itemize}
(At this stage we impose no restrictions on $U_v$ for places $v\in
R$.) We note that if $S_a$ is non-empty then $U$ is sufficiently small (which means that its projection to $G(F^{+}_v)$ for some place $v \in F^{+}$ contains no finite order elements other than the identity).

For any $\mc{O}$-algebra $A$, the space $S_{\lambda}(U,A)$ is acted upon by the Hecke operators
\[ T_{w}^{(j)}:=  \iota_{w}^{-1}\left( \left[ GL_n(\mc{O}_{F_w}) \left( \begin{matrix}
      \varpi_{w}1_j & 0 \cr 0 & 1_{n-j} \end{matrix} \right)
GL_n(\mc{O}_{F_w}) \right] \right)
\]
for $w$ a place of $F$, split over $F^+$ and not lying over $T$,
$j=1,\ldots,n$ and $\varpi_{w}$ a uniformizer in $\mc{O}_{F_w}$. We
let $\bb{T}^{T}_{\lambda}(U,A)$ be the $A$-subalgebra of
$\End_A(S_{\lambda}(U,A))$ generated by these operators and the
operators $(T_w^{(n)})^{-1}$.

To any maximal ideal $\mf{m}$ of $\TT$ one can associate a continuous representation
\[ \rbar_{\mf{m}} : G_{F} \rightarrow
\GL_n(\TT/\mf{m}) \]
characterised by the following properties:
\begin{enumerate}
\item $\rbar_{\mf{m}}^{c} \cong \rbar_{\mf{m}}^{\vee} \overline{\epsilon}^{1-n}$;
\item $\rbar_{\mf{m}}$ is unramified outside $T$.  If $v \not \in T$ is a place of $F^{+}$ which splits as $ww^{c}$
  in $F$ and $\Frob_w$ is the geometric Frobenius element of $G_{F_w}/I_{F_w}$, then
$\rbar_{\mf{m}}(\Frob_{w})$ has characteristic polynomial
\[ X^{n}+\ldots+ (-1)^{j} (\mathbf{N}w)^{j(j-1)/2} T_{w}^{(j)}X^{n-j} + \ldots + (-1)^{n}(\mathbf{N}w)^{n(n-1)/2} T_{w}^{(n)}. \]
\end{enumerate}
The maximal ideal $\mf{m}$ is said to be \emph{non-Eisenstein} if
$\rbar_{\mf{m}}$ is absolutely irreducible. In this case, $\rbar_{\mf{m}}$ can be extended to a homomorphism $\rbar_{\mf{m}} : G_{F^+} \rightarrow \mc{G}_n(\TT/\mf{m})$ (in the sense that $\rbar_{\mf{m}}|_{G_{F}} = (\rbar_{\mf{m}},\overline{\epsilon}^{1-n}))$ with $\rbar_{\mf{m}}^{-1}((\GL_n \times \GL_1)(\TT/\mf{m})) = G_{F}$. 
Also, any such extension has a continuous lifting
\[ r_{\mf{m}} : G_{F^{+}} \rightarrow \mc{G}_{n}(\TT_{\mf{m}}) \]
with the following properties:
\begin{enumerate}
\item[(0)] $r_{\mf{m}}^{-1}((\GL_n\times \GL_1)(\TT_{\mf{m}})) = G_{F}$.
\item $\nu \circ r_{\mf{m}} = \epsilon^{1-n} \delta_{F/F^{+}}^{\mu_\mf{m}}$ where $\delta_{F/F^+}$ is the
non-trivial character of $\Gal(F/F^{+})$ and $\mu_{\mf{m}} \in
\bb{Z}/2\bb{Z}$.
\item $r_{\mf{m}}$ is unramified outside $T$. If $v \not \in T$ is a place of $F^{+}$ which splits as $ww^{c}$
  in $F$ and $\Frob_w$ is the geometric Frobenius element of $G_{F_w}/I_{F_w}$, then
$r_{\mf{m}}(\Frob_{w})$ has characteristic polynomial
\[ X^{n}+\ldots+ (-1)^{j} (\mathbf{N}w)^{j(j-1)/2} T_{w}^{(j)}X^{n-j} + \ldots + (-1)^{n}
(\mathbf{N}w)^{n(n-1)/2} T_{w}^{(n)} .\]
\item If $v \in S_l$, and $\zeta : \TT \rightarrow \Qlbar$ is a
  homomorphism of $\mc{O}$-algebras, then $\zeta \circ
  r_{\mf{m}}|_{G_{F_{\wt{v}}}}$ is crystalline of $l$-adic Hodge type
  $\mathbf{v}_{\lambda_{\wt{v}}}$ (in the sense of Definition \ref{defn: l-adic
    hodge type} below).
\end{enumerate}

\subsection{Local deformation rings}
\label{subsec: local deformation rings}

Let $l$ be a prime number and $K$ be a finite extension of
$\Ql$ with residue field $k$ and ring of integers $\bigO$, and write
$\mathfrak{m}_\mc{O}$ for the maximal ideal of $\bigO$. Let
$\mathcal{C}_\bigO$ be the category of complete local Noetherian $\bigO$-algebras
with residue field isomorphic to $k$ via the structural homomorphism. As in section 3 of \cite{BLGHT}, we consider an object $R$ of $\mc{C}_{\mc{O}}$ to be geometrically integral if for all finite extensions $K'/K$, the algebra $R\otimes_{\mc{O}}\mc{O}_{K'}$ is an integral domain.

Let $M$ be a finite extension of $\bb{Q}_p$ for some prime $p$
possibly equal to $l$ and let $\rhobar : G_M \rightarrow \GL_n(k)$ be
a continuous homomorphism.
Then the functor from $\mathcal{C}_\bigO$ to
$Sets$ which takes $A\in\mathcal{C}_\bigO$ to the set of continuous
liftings $\rho:G_M\to\GL_n(A)$ of $\rhobar$  is represented by
a complete local Noetherian $\bigO$-algebra $R^{\square}_{\rhobar}$. We call this
ring the universal $\mc{O}$-lifting ring of $\rhobar$.
 We write
$\rho^{\square}:G_M\to\GL_n(R^{\square}_{\rhobar})$ for the universal
lifting. 

The following definitions will prove to be useful later.
\begin{defn} \label{defn:inherited basis}
Suppose $\rho: G_M\to\GL_2(\bigO)$ is a representation.
We can think of this as putting a $G_M$-action on the vector space
$K^2$ (=$V$, say), in a way that stabilizes the lattice generated by the standard 
basis $\{e_0,e_1\}$, where $e_0=\langle1,0\rangle$, $e_1=\langle0,1\rangle$.
Considering
$\Sym^{n-1}\rho$ as a quotient of
$V^{\otimes(n-1)}$, we have an ordered basis
$\{g_0,\dots,g_{n-1}\}$ of $\Sym^{n-1}V$, where
$g_i$ is the image of $e_0^{\otimes(n-1-i)}\otimes e_1^{\otimes
  i}$. We call this the $\mc{O}$-basis of $\Sym^{n-1} V$ \emph{inherited}
  from our original basis in $\rho$.
\end{defn}

\begin{defn} \label{defn:inherited basis for otimes}
Suppose $\rho: G_M\to\GL_n(\bigO), \rho': G_M\to\GL_{n'}(\bigO)$
are representations, which
we think of as putting $G_M$-actions on the vector spaces $V_\rho=K^n$,
$V'_\rho=K^{n'}$ in a way that stabilizes the lattices generated by the standard bases of each.
In this situation we have an ordered
$\mc{O}$-basis on $ V_\rho \otimes_{\mc{O}} 
 V_\rho'$ given by the vectors $e_j\otimes f_k$, ordered lexicographically, where the $e_j$ are the standard 
 $\mc{O}$-basis in $V_\rho$ and the $f_k$ are the standard basis in
 $V_\rho'$. We call this the $\mc{O}$-basis of $ \rho \otimes_{\mc{O}} 
 \rho'$ \emph{inherited} from our original bases.
\end{defn}

\subsubsection{Local deformations ($p=l$ case)}

Suppose that $p=l$. In this section we will define an equivalence relation
on crystalline lifts of $\rhobar$.  For this, we need to consider
certain quotients of $R^\square_{\rhobar}$. Assume that $K$ contains the image of every embedding $M \into \overline{K}$.

\begin{defn} 
  Let $(\bb{Z}^{n}_{+})^{\Hom(M,K )}$ denote the subset of
  $(\bb{Z}^{n})^{\Hom(M,K )}$ consisting of elements $\lambda$ which
  satisfy
  \[ \lambda_{\tau,1} \geq \lambda_{\tau,2} \geq \ldots \geq
  \lambda_{\tau,n} \] for every embedding $\tau$.
\end{defn}

Let $\lambda$ be an element of $(\bb{Z}^{n}_{+})^{\Hom(M,K)}$.  We
associate to $\lambda$ an $l$-adic Hodge type $\mathbf{v}_{\lambda}$
in the sense of section 2.6 of \cite{kisinpst} as follows. Let $D_{K}$
denote the vector space $K^{n}$. Let $D_{K,M}=D_{K} \otimes_{\bb{Q}_l}
M$. For each embedding $\tau : M \hookrightarrow K$, we let
$D_{K,\tau}=D_{K,M} \otimes_{K\otimes M,1\otimes \tau}K $ so that
$D_{K,M} = \oplus_{\tau} D_{K,\tau}$. For each $\tau$ choose a
decreasing filtration $\Fil^{i}D_{K,\tau}$ of $D_{K,\tau}$ so that
$\dim_{K} \gr^{i}D_{K,\tau} = 0 $ unless $ i =
(j-1)+\lambda_{\tau,n-j+1}$ for some $j=1,\ldots,n$ in which case
$\dim_{K} \gr^{i}D_{K,\tau}=1$. We define a decreasing filtration of
$D_{K,M}$ by $K \otimes_{\bb{Q}_l} M$-submodules by setting
\[ \Fil^{i} D_{K,M} = \oplus_{\tau} \Fil^{i}D_{K,\tau}.\] Let
$\mathbf{v}_{\lambda}= \{ D_{K}, \Fil^{i}D_{K,M} \}$. 

We now recall some results of Kisin. Let $\lambda$ be an element of $(\bb{Z}^{n}_{+})^{\Hom(M,K)}$ and let
$\mathbf{v}_{\lambda}$ be the associated $l$-adic Hodge type.
\begin{defn}\label{defn: l-adic hodge type}
  If $B$ is a finite $K$-algebra and $V_{B}$ is a free $B$-module of
  rank $n$ with a continuous action of $G_{M}$ that makes $V_B$ into a
  de Rham representation, then we say that \emph{$V_B$ is of $l$-adic Hodge
  type $\mathbf{v}_{\lambda}$} if for each $i$ there is an isomorphism
  of $B \otimes_{\bb{Q}_l} M$-modules
\[ \gr^{i}(V_{B} \otimes_{\bb{Q}_l} B_{dR})^{G_{M}}
\tilde{\rightarrow} (\gr^{i}D_{K,M}) \otimes_{K} B. \] 
\end{defn}

Corollary 2.7.7 of \cite{kisinpst} implies that there is a unique
$l$-torsion free quotient $R^{\mathbf{v}_{\lambda},cr}_{\rhobar}$ of
$R^{\square}_{\rhobar}$ with the property that for any finite
$K$-algebra $B$, a homomorphism of $\mc{O}$-algebras $\zeta :
R^{\square}_{\rhobar}\rightarrow B$ factors through
$R^{\mathbf{v}_{\lambda},cr}_{\rhobar}$ if and only if $\zeta \circ
\rho^{\square}$ is crystalline of $l$-adic Hodge type
$\mathbf{v}_{\lambda}$. Moreover, Theorem 3.3.8 of \cite{kisinpst}
implies that $\Spec R^{\mathbf{v}_{\lambda},cr}_{\rhobar}[1/l]$ is
formally smooth over $K$ and equidimensional of dimension $n^2 +
\frac{1}{2}n(n-1)[M:\bb{Q}_l]$.

By Lemma 3.3.3 of \cite{ger}
there is a quotient $R^{\triangle_{\lambda},cr}_{\rhobar}$ of
  $R^{\mathbf{v}_{\lambda},cr}_{\rhobar}$ corresponding to a union of
  irreducible components such that for any finite extension $E$ of
  $K$, a homomorphism of $\mc{O}$-algebras $\zeta :
  R^{\mathbf{v}_{\lambda},cr}_{\rhobar} \rightarrow E$ factors through
  $R^{\triangle_{\lambda},cr}_{\rhobar}$ if and only if $\zeta \circ
  \rho^{\square}$ is crystalline and ordinary of weight $\lambda$.

We now introduce an equivalence relation on continuous representations
$G_M\to\GL_n(\bigO)$ lifting $\rhobar$.

\begin{defn}\label{defn:tilde}Suppose that $\rho_1,\rho_2 : G_M\to\GL_n(\bigO)$ are two continuous lifts
of $\rhobar$. Then we say that
$\rho_1\sim \rho_2$ if the following hold.
\begin{enumerate}
\item There is a $\lambda\in(\bb{Z}^{n}_{+})^{\Hom(M,K)}$
such that $\rho_1$ and $\rho_2$ both correspond to points of
$R^{\mathbf{v}_{\lambda},cr}_{\rhobar}$ (that is,
$\rho_1\otimes_\bigO K$ and $\rho_2\otimes_\bigO K$ are both crystalline of $l$-adic Hodge
  type $\mathbf{v}_{\lambda}$).
\item For every minimal prime ideal $\wp$ of
  $R^{\mathbf{v}_{\lambda},cr}_{\rhobar}$, the quotient
  $R^{\mathbf{v}_{\lambda},cr}_{\rhobar}/\wp$ is geometrically
  integral.
\item $\rho_1$ and $\rho_2$ give rise to closed points on a common irreducible
  component of  $\Spec R^{\mathbf{v}_{\lambda},cr}_{\rhobar}[1/l]$.
\end{enumerate}
\end{defn}

In (3) above, note that the irreducible component is uniquely
determined by either of $\rho_1$, $\rho_2$ because $\Spec
R^{\mathbf{v}_{\lambda},cr}_{\rhobar}[1/l]$ is formally smooth. Note
also that we can always ensure that (2) holds by replacing $\mc{O}$ with the
ring of integers in a finite extension of $K$.

Suppose that $\rho_1 \sim \rho_2$ as above and let $M'/M$ be a finite
extension. Assume that $K$ contains the image of every embedding $M'
\into \overline{K}$. Then we claim that $\rho_1|_{G_{M'}} \sim
\rho_2|_{G_{M'}}$. Indeed, let $\lambda$ be such that $\rho_1$ and
$\rho_2$ have $l$-adic Hodge type $\mathbf{v}_{\lambda}$. Define
$\lambda' \in (\bb{Z}^{n}_{+})^{\Hom(M',K)}$ by $\lambda'_{\tau} =
\lambda_{\tau|_{M}}$ for all $\tau : M' \into K$. Then restriction to
$G_{M'}$ gives rise to an $\mc{O}$-algebra homomorphism
$R^{\square}_{\rhobar|_{G_{M'}}} \to
R^{\mathbf{v}_{\lambda},cr}_{\rhobar}$ which factors through
$R^{\mathbf{v}_{\lambda'},cr}_{\rhobar|_{G_{M'}}}$ (using the fact
that $R^{\mathbf{v}_{\lambda},cr}_{\rhobar}$ is reduced and
$l$-torsion free). The result now follows from the formal smoothness
of $\Spec R^{\mathbf{v}_{\lambda'},cr}_{\rhobar|_{G_{M'}}}[1/l]$, which
implies that the image of any irreducible component of $\Spec
R^{\mathbf{v}_{\lambda},cr}_{\rhobar}[1/l]$ is contained in a unique
irreducible component of $\Spec
R^{\mathbf{v}_{\lambda'},cr}_{\rhobar|_{G_{M'}}}[1/l]$.

In a similar vein, it follows that if $n=2$ and $\rho_1 \sim \rho_2$,
then $\Sym^{k-1} \rho_1 \sim \Sym^{k-1} \rho_2$ for all $k \geq 1$,
where we take the $\mc{O}$-basis on the $\Sym^{k-1} \rho_i$ inherited
from the bases we have on the $\rho_i$, in the sense of Definition
\ref{defn:inherited basis}. [The same is true if $n>2$, with an
appropriate modification of Definition \ref{defn:inherited basis}.]

We will make one final variation on this theme. Suppose 
$\rho' : G_{M} \rightarrow \GL_m(\mc{O})$ is crystalline of $l$-adic 
Hodge type $\mathbf{v}_{\lambda'}$ for some $m$ and some $\lambda' \in 
(\bb{Z}^{m}_{+})^{\Hom(M,K)}$, and $\rho_1 \sim \rho_2$ are as above.
(Note $n$ need no longer be 2.) 
Then
$ \rho_1 \otimes_{\mc{O}} 
 \rho' \sim \rho_2 \otimes_{\mc{O}} 
 \rho'$, where we take as $\mc{O}$-basis on the $ \rho_i \otimes_{\mc{O}} 
 \rho'$ the inherited bases in the sense of Definition \ref{defn:inherited basis for otimes}.

 \begin{lem}
   \label{lem: sim preserved under extension of bigO}
   Let $\rhobar:G_M \rightarrow \GL_n(k)$ be a continuous
   homomorphism. Suppose $\rho_1,\rho_2:G_M \rightarrow \GL_n(\mc{O})$
   are two lifts of $\rhobar$ with $\rho_1 \sim \rho_2$. If $\mc{O}'$
   denotes the ring of integers in a finite extension of $K$ with
   residue field $k'$, then
   $\rho_1 \sim \rho_2$, regarded as lifts of $\rhobar \otimes_k k'$
   to $\GL_n(\mc{O}')$. 
 \end{lem}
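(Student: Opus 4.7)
The plan is to verify that each of the three clauses in Definition \ref{defn:tilde} is preserved under base change of coefficients from $\mathcal{O}$ to $\mathcal{O}'$. The structural input I will need is the identification of base-changed lifting rings: unwinding the universal properties yields canonical isomorphisms $R^{\square}_{\bar\rho \otimes_k k'} \cong R^{\square}_{\bar\rho} \otimes_\mathcal{O} \mathcal{O}'$ and $R^{\mathbf{v}_\lambda,cr}_{\bar\rho \otimes_k k'} \cong R^{\mathbf{v}_\lambda,cr}_{\bar\rho} \otimes_\mathcal{O} \mathcal{O}'$. For the second isomorphism, the right-hand side is $l$-torsion free (flat base change along $\mathcal{O} \to \mathcal{O}'$ of an $l$-torsion free ring), and its $\mathcal{O}'$-algebra maps to a finite $K'$-algebra $B$ coincide with $\mathcal{O}$-algebra maps $R^{\mathbf{v}_\lambda,cr}_{\bar\rho} \to B$, which by Kisin's characterization correspond exactly to the crystalline lifts of $\bar\rho$ of $l$-adic Hodge type $\mathbf{v}_\lambda$ valued in $B$.

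Write $A = R^{\mathbf{v}_\lambda,cr}_{\bar\rho}$ and $A' = A \otimes_\mathcal{O} \mathcal{O}'$. Clause (1) transfers immediately, since being crystalline of Hodge type $\mathbf{v}_\lambda$ is preserved upon enlarging the coefficient field, so both $\rho_1$ and $\rho_2$ still correspond to $\mathcal{O}'$-points of $A'$. For clause (2), let $\wp_1,\ldots,\wp_r$ be the minimal primes of $A$. Geometric integrality of each $A/\wp_i$ makes $(A/\wp_i) \otimes_\mathcal{O} \mathcal{O}'$ an integral domain, so each $\wp_i A'$ is a prime ideal of $A'$; faithful flatness of $A'$ over $A$ gives $\wp_i A' \cap A = \wp_i$, so no inclusions $\wp_i A' \subseteq \wp_j A'$ occur for $i \ne j$. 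The same flatness yields $\bigcap_i \wp_i A' = (\bigcap_i \wp_i) A' = 0$, where the final equality uses that $A$ is reduced (since it embeds into $A[1/l]$, which is formally smooth over $K$ hence regular). Hence the minimal primes of $A'$ are exactly the $\wp_i A'$, and each quotient $A'/\wp_i A' \cong (A/\wp_i) \otimes_\mathcal{O} \mathcal{O}'$ remains geometrically integral, because any finite extension of $\mathrm{Frac}(\mathcal{O}')$ is in particular a finite extension of $K$.

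For clause (3), by hypothesis $\rho_1$ and $\rho_2$ both factor through $A/\wp$ for some common minimal prime $\wp$ of $A$. Base-changing the corresponding $\mathcal{O}$-algebra maps $A/\wp \to \mathcal{O}$ along $\mathcal{O} \to \mathcal{O}'$ produces $\mathcal{O}'$-algebra maps $A'/\wp A' \to \mathcal{O}'$, so $\rho_1$ and $\rho_2$ lie on the common irreducible component of $\Spec A'[1/l]$ cut out by the minimal prime $\wp A'$. The only subtle step is the identification of minimal primes of $A'$ with extensions of minimal primes of $A$; this rests precisely on the geometric integrality hypothesis of Definition \ref{defn:tilde}(2), which is exactly the feature that makes the equivalence relation $\sim$ robust under enlarging the coefficient ring.
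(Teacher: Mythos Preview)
Your proof is correct and takes a genuinely different route from the paper's. You argue algebraically: using the geometric integrality hypothesis from clause~(2) of Definition~\ref{defn:tilde}, you show directly that the minimal primes of $A' = A \otimes_{\mathcal{O}} \mathcal{O}'$ are exactly the extensions $\wp_i A'$ of the minimal primes $\wp_i$ of $A$, and then read off all three clauses at once. The paper instead argues geometrically: it shows that each irreducible component of $\Spec R'[1/l]$ maps onto an irreducible component of $\Spec R[1/l]$ (using finiteness of $R \to R'$ and a dimension comparison via completed local rings), and then that the preimage of the component $\mathcal{C}$ containing $x_1, x_2$ is irreducible, by observing that the fibre over $x_1$ is a single point and invoking formal smoothness of $\Spec R'[1/l]$ to rule out two components meeting there.

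Your approach has the advantage of being more self-contained and of explicitly verifying clause~(2), which the paper's proof leaves implicit. The paper's approach leans on the formal smoothness of $\Spec R'[1/l]$ (a deeper input from Kisin) rather than on the geometric integrality hypothesis, and gives a cleaner topological picture of why components cannot split under the base change. Both arguments ultimately establish the same bijection between components, so the difference is one of emphasis rather than strength.
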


 \begin{proof}
   Let $\lambda \in (\bb{Z}^n_+)^{\Hom(M,K)}$ be such that $\rho_1$
   and $\rho_2$ have $l$-adic Hodge type $\mathbf{v}_{\lambda}$. Let
   $R=R^{\mathbf{v}_{\lambda},cr}_{\rhobar}$ and $R'=
   R\otimes_{\mc{O}}\mc{O}'$. We need to show that $\rho_1$ and
   $\rho_2$ give rise to closed points of $\Spec R'[1/l]$ lying on a
   common component. Note that if $\mc{C}'$ is an irreducible
   component of $\Spec R'[1/l]$, then the image of $\mc{C}'$ in $\Spec R[1/l]$
   is an irreducible component. Indeed, the image of $\mc{C}'$ in
   $\Spec R[1/l]$ is irreducible and closed (as $R\rightarrow R'$ is
   finite). If $x'$ is a closed point of $\Spec R'[1/l]$ lying in
   $\mc{C}'$ with image $x$ in $\Spec R[1/l]$, then the completed
   local rings of $\Spec R'[1/l]$ and $\Spec R[1/l]$ at $x'$ and $x$ respectively
   are isomorphic. We deduce that the image of $\mc{C}'$ has the same
   dimension as $\mc{C}'$ and hence is an irreducible component.

   Now, let $x_1$ and $x_2$ denote the closed points of $\Spec R[1/l]$
   corresponding to $\rho_1$ and $\rho_2$ and let $\mc{C}$ denote the
   irreducible component of $\Spec R[1/l]$ containing $x_1$ and
   $x_2$. Then we claim that the preimage of $\mc{C}$ in $\Spec
   R'[1/l]$ is irreducible. Indeed, suppose there are two distinct
   irreducible components $\mc{C}'$ and $\mc{C}''$ of $\Spec R'[1/l]$
   mapping to $\mc{C}$. Then there are points $x_1'$ and $x_1''$ of
   $\mc{C}'$ and $\mc{C}''$ respectively mapping to $x_1$. However,
   the preimage of $x_1$ in $\Spec R'[1/l]$ consists of a single
   point (let $\mf{m}$ denote the maximal ideal of $R[1/l]$
   corresponding to $x_1$. Then the fibre over $x_1$ is given by the
   spectrum of
   $(R[1/l]/\mf{m})\otimes_{\mc{O}}\mc{O}' \cong K
   \otimes_{\mc{O}}\mc{O}' \cong K'$.) Thus $x_1'=x_1''$ lies in the
   intersection of $\mc{C}'$ and $\mc{C}''$, contradicting the formal
   smoothness of $\Spec R'[1/l]$. 
 \end{proof}

\subsubsection{Local deformations ($p\neq l$ case)}
Suppose now that $p \neq l$.
 By Theorem 2.1.6 of \cite{gee061}, $\Spec R^{\square}_{\rhobar}[1/l]$ is equidimensional of
dimension $n^2$.

\begin{defn}
\label{defn: leadsto}
  Let $\rho_1,\rho_2 : G_M \rightarrow
  \GL_n(\mc{O})$ be two lifts of $\rhobar$. We say that $\rho_1
  \leadsto_{\mc{O}} \rho_2$ if the following hold.
 \begin{enumerate}
 \item For each minimal prime ideal $\wp$ of
   $R^{\square}$, the quotient
   $R^{\square}/\wp$ is geometrically irreducible.
  \item $\rho_1$ corresponds to a closed point of $\Spec
    R^{\square}_{\rhobar}[1/l]$ which is contained in a unique
    irreducible component and this irreducible component also contains
    the closed point corresponding to $\rho_2$.
  \end{enumerate}
\end{defn}
We remark that, we can always replace
$\mc{O}$ by the ring of integers in a finite extension of $K$ so that
condition (1) above holds. Also, condition (1) ensures that if $\rho_1
\leadsto_{\mc{O}} \rho_2$ and if
$\mc{O}'$ is the ring of integers in a finite extension of $K$ then
$\rho_1 \leadsto_{\mc{O}'} \rho_2$.

\subsection{Properties of $\leadsto_{\bigO}$ and $\sim$}

\begin{lem} $\sim$ is an equivalence relation.
\end{lem}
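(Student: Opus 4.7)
The plan is to check the three axioms in turn, leaning on the formal smoothness of $\Spec R^{\mathbf{v}_\lambda,cr}_{\rhobar}[1/l]$ (Theorem 3.3.8 of \cite{kisinpst}) that was already invoked in the remark following Definition \ref{defn:tilde}. Reflexivity and symmetry are essentially formal: for $\rho \sim \rho$, one notes that $\rho$ being crystalline of some Hodge type $\mathbf{v}_\lambda$ gives a closed point of $\Spec R^{\mathbf{v}_\lambda,cr}_{\rhobar}[1/l]$, which (after enlarging $\mc{O}$ if necessary to ensure (2), as noted in the text) lies on some irreducible component and therefore on a common component with itself; symmetry is immediate since all three conditions in Definition \ref{defn:tilde} are manifestly symmetric in $\rho_1$ and $\rho_2$.

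The content is in transitivity. Suppose $\rho_1 \sim \rho_2$ via $\lambda$ and $\rho_2 \sim \rho_3$ via $\lambda'$. The first observation is that $\lambda$ is forced to equal $\lambda'$: the element $\lambda \in (\bb{Z}^n_+)^{\Hom(M,K)}$ for which $\rho_2 \otimes_\mc{O} K$ is of $l$-adic Hodge type $\mathbf{v}_\lambda$ is determined by the Hodge--Tate weights of $\rho_2$ at each embedding $M \hookrightarrow K$. So all three $\rho_i$ lie in $\Spec R^{\mathbf{v}_\lambda,cr}_{\rhobar}[1/l]$, and condition (2) holds for this single $\lambda$. Let $\mc{C}$ be the irreducible component containing $\rho_1$ and $\rho_2$ and let $\mc{C}'$ be the irreducible component containing $\rho_2$ and $\rho_3$. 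Since $\Spec R^{\mathbf{v}_\lambda,cr}_{\rhobar}[1/l]$ is formally smooth over $K$, the closed point corresponding to $\rho_2$ is contained in a unique irreducible component; this forces $\mc{C} = \mc{C}'$, so $\rho_1$ and $\rho_3$ lie on a common irreducible component, as required.

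The main (and only) obstacle is confirming that the Hodge type $\lambda$ is intrinsic to each $\rho_i$, so that the three relations in play all refer to the same deformation ring $R^{\mathbf{v}_\lambda,cr}_{\rhobar}$; once this is observed, transitivity reduces to the unique-component-through-a-closed-point property of a formally smooth scheme, which is already implicit in Definition \ref{defn:tilde}(3).
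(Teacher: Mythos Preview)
Your proof is correct and spells out exactly what the paper's one-line ``This follows immediately from the definitions'' means. The key transitivity observation---that $\lambda$ is intrinsic to $\rho_2$ and that formal smoothness of $\Spec R^{\mathbf{v}_\lambda,cr}_{\rhobar}[1/l]$ forces a unique component through each closed point---is precisely the content of the remark following Definition~\ref{defn:tilde}, so your approach and the paper's coincide.
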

\begin{proof}
This follows immediately from the definitions.
\end{proof}

\begin{lem}
\label{lem: unramified twists lie on same components}
  Let $M$ be a finite extension of $\bb{Q}_p$ for some prime $p$. Let
  $\rhobar : G_M \rightarrow \GL_n(k)$ be a continuous
  homomorphism. If $p\neq l$, let $R=R^{\square}_{\rhobar}$. If $p=l$,
  assume that $K$ contains the image of each embedding $M \into
  \Qlbar$ and
  let $R=R^{\mathbf{v}_{\lambda},cr}_{\rhobar}$ for some $\lambda \in
  (\bb{Z}^n_+)^{\Hom(M,K)}$. Let $\mc{O}'$ denote the ring of integers
  in a finite extension of $K$.
  Let $\rho$ and $\rho^{\prime}$ be two lifts of $\rhobar$
  to $\mc{O}'$ giving rise to closed points of $\Spec R[1/l]$.
 Suppose that after conjugation by an element of
  $\ker(GL_n(\mc{O'})\rightarrow \GL_n(\mc{O}'/\mf{m}_{\mc{O}'}))$ they differ by an
  unramified twist. Then an irreducible component of $\Spec
  R[1/l]$ contains $\rho$ if and only if it contains $\rho'$.
\end{lem}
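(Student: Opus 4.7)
The strategy is to show that, for any irreducible component $C'$ of $\Spec R'[1/l]$ (where $R' := R \otimes_{\mc{O}} \mc{O}'$), both unramified twists by characters $\equiv 1 \pmod{\mf{m}_{\mc{O}'}}$ and conjugations by elements of $\ker(\GL_n(\mc{O}') \to \GL_n(k'))$ preserve $C'$. This immediately yields the biconditional, since (after swapping $\rho$ and $\rho'$ if needed) $\rho'$ is obtained from $\rho$ by applying a conjugation and then an unramified twist.

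For the twist step, let $A := \mc{O}'[[T]]$ and let $\chi_{\univ}: G_M \to A^\times$ be the unramified character with $\chi_{\univ}(\Frob) = 1 + T$. Twisting the universal framed lift of $\rhobar \otimes_k k'$ by $\chi_{\univ}$ produces, by universality, a morphism
\[ \mu : \Spec A \times_{\mc{O}'} \Spec R' \to \Spec R' \]
which restricts to the identity along $\{T = 0\} \times \Spec R'$. In the $p=l$ case, one checks that this descends through the crystalline quotient using that an unramified twist of a crystalline representation of Hodge type $\mathbf{v}_\lambda$ is again of that type (unramified characters have trivial Hodge--Tate weights), together with Corollary 2.7.7 of Kisin cited in the text; in the $p \neq l$ case $R = R^\square_{\rhobar}$ and the descent is automatic.

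Fix an irreducible component $C'$ of $\Spec R'[1/l]$ containing $\rho$. After enlarging $\mc{O}'$ to the ring of integers in a finite extension of $K$ (harmless by Lemma \ref{lem: sim preserved under extension of bigO} and the evident $p \neq l$ analogue), we may assume $C'$ is geometrically irreducible over $K' := \mc{O}'[1/l]$. The scheme $\Spec A[1/l]$ is also geometrically irreducible over $K'$, as its fraction field $K'((T))$ has $K'$ as its algebraic closure of $K'$; so $\Spec A[1/l] \times_{K'} C'$ is irreducible. Its image under $\mu$ is therefore irreducible, and contains $C'$ (from $T = 0$). Since $C'$ is a maximal irreducible closed subset, any irreducible closed subset of $\Spec R'[1/l]$ containing $C'$ must be contained in $C'$, forcing the closure of the image to equal $C'$. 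Taking $T = \chi(\Frob) - 1$ and $x = \rho$, we conclude $\rho \otimes \chi \in C'$; the reverse implication follows by replacing $\chi$ with $\chi^{-1}$.

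The conjugation action by $g \in \ker(\GL_n(\mc{O}') \to \GL_n(k'))$ is handled identically, using the family $g_s := 1 + s(g-1) \in M_n(\mc{O}'[s])$. The polynomial $\det g_s \in \mc{O}'[s]$ satisfies $\det g_0 = 1$ and $\det g_1 = \det g \in \mc{O}'^\times$, so $g_s$ is invertible on the geometrically irreducible open subscheme $U \subset \mathbb{A}^1_{\mc{O}'}$ where $\det g_s \neq 0$, and $U$ contains both $s=0$ and $s=1$. The conjugation morphism $U \times_{\mc{O}'} \Spec R' \to \Spec R'$ restricts to a morphism with image containing $C'$ at $s=0$, and the same irreducibility-and-maximality argument forces its image to lie in $C'$; specializing at $s=1$ gives $g \rho g^{-1} \in C'$ whenever $\rho \in C'$. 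Combining the twist and conjugation steps yields the lemma. The main technical obstacle is securing geometric irreducibility of $C'$ so the product irreducibility in the image argument goes through; this is achieved by the coefficient extension noted above.
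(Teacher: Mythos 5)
Your overall strategy — spread the twist and conjugation out into a family parametrized by an irreducible scheme and argue by irreducibility-of-image together with maximality of components — is close in spirit to the paper's proof, which builds a single combined family over $R[[Y,\underline{X}]]$ and then tracks the minimal prime $\wp$ using the ``evaluate at $Y=X_{ij}=0$'' section. Your treatment of the twist is essentially sound: $A=\mc{O}'[[T]]$ is in $\mc{C}_{\mc{O}}$, so the universal property of $R^\square_{\rhobar}$ does supply the morphism $\mu$, and one descends through the crystalline quotient as you say. (Small correction: $\operatorname{Frac}(A[1/l])$ is a \emph{proper} subfield of $K'((T))$, but it embeds in $K'((T))$ and the algebraic closure of $K'$ there is still $K'$, so geometric irreducibility of $\Spec A[1/l]$ holds. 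Also, once you know $\Spec A[1/l]$ is geometrically irreducible, the product with any irreducible $C'$ is already irreducible; the coefficient extension to make $C'$ geometrically irreducible is unnecessary.)

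The genuine gap is in the conjugation step. You invoke a ``conjugation morphism'' $U\times_{\mc{O}'}\Spec R'\to\Spec R'$ with $U\subset\mathbb{A}^1_{\mc{O}'}$ the non-vanishing locus of $\det g_s$. But $\Gamma(U)\hat\otimes_{\mc{O}'}R'$ is not an object of $\mc{C}_{\mc{O}}$ — it is not local — so the universal property of $R^\square_{\rhobar}$ does not produce this morphism. Indeed, at a $K'$-point $s_0\in K'\setminus\mc{O}'$ of $U$ the matrix $g_{s_0}=1+s_0(g-1)$ need not even lie in $\GL_n(\mc{O}')$, let alone reduce to the identity mod $\mf{m}_{\mc{O}'}$, so $g_{s_0}\rho^\square g_{s_0}^{-1}$ is not a lifting of $\rhobar$ at all; the purported morphism does not exist on the locus you need. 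Nor can you repair this by replacing $\mc{O}'[s]$ with $\mc{O}'[[s]]$, since you then cannot specialize at $s=1\notin\mf{m}_{\mc{O}'}$. The correct parametrization is the one the paper uses: conjugate by $1_n+(X_{ij})$ over $\mc{O}'[[X_{ij}]]$ (which \emph{is} in $\mc{C}_{\mc{O}}$, and over which $1+X$ is residually trivial), and then specialize $X_{ij}\mapsto(g-1)_{ij}\in\mf{m}_{\mc{O}'}$, using that $g$ is assumed residually trivial. The ring $\mc{O}'[[X_{ij}]][1/l]$ is geometrically irreducible over $K'$ by the same argument as for $A[1/l]$, after which your irreducibility-and-maximality argument goes through and recovers the lemma.
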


\begin{proof}
  The universal unramified $\mc{O}$-lifting ring of the trivial
  character $G_M \rightarrow k^{\times}$ is given by $\mc{O}[[Y]]$
  where the universal lift $\chi^{\square}$ sends $\Frob_M$ to $1+Y$.
  Let $R[[Y, \underline{X}]] =
  R[[Y]][[X_{ij}:1\leq i,j \leq n]]$. Let
  $\rho^{\square}$ denote the universal lift of $\rhobar$ to $R$. Consider
  the lift $(1_n + (X_{ij}))\rho^{\square}(1_n+(X_{ij}))^{-1} \otimes
  \chi^{\square}$ of $\rhobar$ to $R[[Y,
  \underline{X}]]$. This lift gives rise to a homomorphism
  $R^{\square}_{\rhobar} \rightarrow R[[Y,
  \underline{X}]]$ which factors through $R$. Let $\alpha$ denote the resulting $\mc{O}$-algebra homomorphism $R \rightarrow R[[Y,\underline{X}]]$. Let $\iota : R \rightarrow R[[Y,\underline{X}]]$ be the standard $R$-algebra structure on $R[[Y,\underline{X}]]$.

  The minimal prime ideals of $R[[Y,\underline{X}]]$ and $R$ are in
  natural bijection (if $\wp$ is a minimal prime of $R$ then
  $\iota(\wp)$ generates a minimal prime of
  $R[[Y,\underline{X}]]$). Let $\wp$ be a minimal prime of $R$. We
  claim that the kernel of the map $\beta : R \rightarrow
  R[[Y,\underline{X}]]/\iota(\wp)=(R/\wp)[[Y,\underline{X}]]$ induced
  by $\alpha$ is
  $\wp$. Indeed, the $R$-algebra homomorphism (with
  $R[[Y,\underline{X}]]$ considered as an $R$-algebra via $\iota$)
  $\gamma : R[[Y,\underline{X}]] \rightarrow R$ which sends $Y$ and
  each $X_{ij}$ to $0$ is a section to the map $\beta$. The
  composition $ R \stackrel{\beta}{\rightarrow}
  (R/\wp)[[Y,\underline{X}]] \stackrel{\gamma}{\rightarrow} R/\wp$ is
  thus the natural reduction map. In particular its kernel is
  $\wp$. Since $\ker(\beta) \subset \ker(\gamma \circ \beta)=\wp$ and
  $\wp$ is minimal, we deduce $\ker(\beta)=\wp$. The lemma follows.
\end{proof}

\begin{lem}
  \label{lem: residually trivial crystalline, all you need is
    GLn-conjugacy}Let $M$ be a finite extension of $\Ql$.
    Let $\rhobar:
  G_M\to\GL_n(k)$ be the trivial representation, and let $\rho$ and
  $\rho': G_{M}\rightarrow \GL_n(\bigO)$ be two crystalline lifts of $\rhobar$ of
  $l$-adic Hodge type $\mathbf{v}_\lambda$ which are
  $\GL_n(\bigO)$-conjugate. Then $\rho\sim\rho'$.
\end{lem}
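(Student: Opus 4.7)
I verify the three conditions of Definition \ref{defn:tilde}. Condition (1) holds by hypothesis, and condition (2) can be arranged by enlarging $\bigO$, invoking Lemma \ref{lem: sim preserved under extension of bigO}. The substance is condition (3): that $\rho$ and $\rho' = g\rho g^{-1}$ (with $g \in \GL_n(\bigO)$) lie on a common irreducible component of $\Spec R[1/l]$, where $R = R^{\mathbf{v}_\lambda, cr}_{\rhobar}$.

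The key observation is that, since $\rhobar$ is trivial, $h\rhobar h^{-1} = \rhobar$ for \emph{any} $h \in \GL_n$, regardless of its residue class. Hence conjugation carries lifts of $\rhobar$ to lifts of $\rhobar$, and the algebraic group $\GL_{n,\bigO}$ acts naturally on the lifting functor. By Yoneda this gives a scheme-theoretic action on $\Spec R^{\square}_{\rhobar}$. Since conjugation preserves both the crystalline condition and the Hodge type $\mathbf{v}_\lambda$, the action descends to an action on $\Spec R$, and so on the generic fiber $\Spec R[1/l]$. To realize the co-action concretely, one forms the universal conjugate $X\rho^{\square}X^{-1}\colon G_M \to \GL_n(R[\underline{X},\det X^{-1}])$, where $X = (X_{ij})$ is the generic invertible matrix and $\rho^{\square}$ is the image in $\GL_n(R)$ of the universal lift; at every closed point of $\Spec R[\underline{X},\det X^{-1}][1/l]$ this specializes to a $\GL_n$-conjugate of a crystalline lift of type $\mathbf{v}_\lambda$, hence is itself such a lift, so (using that the target is Jacobson and $l$-torsion free) the resulting map $R^{\square}_{\rhobar} \to R[\underline{X},\det X^{-1}]$ factors through $R$.

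Granted this action, the rest is a single irreducibility argument. By Kisin's theorem recalled above, $\Spec R[1/l]$ is formally smooth over $K$ and hence regular, so its finitely many irreducible components are pairwise disjoint open subsets. Possibly enlarging $\bigO$ so that $\rho$ and $\rho'$ define $K$-points, let $C$ be the irreducible component containing $\rho$. Evaluating the action at $\rho$ produces an orbit morphism
\[\alpha\colon \GL_{n,K} \To \Spec R[1/l], \qquad h \mapsto h\rho h^{-1},\]
whose source is geometrically irreducible; its image is therefore irreducible, and since the components are pairwise disjoint it is contained in a single one, which must be $C$ because $\alpha(1_n) = \rho \in C$. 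Specializing to $h = g$ gives $\rho' \in C$, establishing (3).

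The main technical obstacle is making the scheme-theoretic $\GL_n$-action on $\Spec R$ rigorous --- specifically, the factorization of the co-action map through $R$, which relies on the Jacobson property of $R[\underline{X},\det X^{-1}][1/l]$ and on the characterization of $R$ as an $l$-torsion free quotient of $R^{\square}_{\rhobar}$ cut out by the crystalline condition on closed points of the generic fiber. Once this is in hand, the connectedness of $\GL_n$ supplies the proof essentially for free.
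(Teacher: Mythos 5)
Your proposal is correct in spirit and follows essentially the same path as the paper's proof: conjugate by a generic invertible matrix, observe that the resulting family stays crystalline of type $\mathbf{v}_\lambda$, and use irreducibility of the parameter scheme to put $\rho$ and $\rho'$ on one component. The two points of departure are worth flagging. First, you conjugate the pushed-forward universal lift $\rho^\square$ to get a $\GL_n$-action on all of $\Spec R$; the paper only conjugates the fixed lift $\rho$, working with $\rho_A = X\rho X^{-1}$ over a ring $A$ not involving $R$. The orbit morphism $\alpha: \GL_{n,K}\to\Spec R[1/l]$, $h\mapsto h\rho h^{-1}$, that you extract at the end is exactly what the paper builds directly, so the extra generality of the group action is unused. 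Second — and this is a genuine small gap — your ring $R[\underline{X},\det X^{-1}]$ is not an object of $\mathcal{C}_\bigO$, nor even $\m_\bigO$-adically complete, so the universal property of $R^\square_{\rhobar}$ does not by itself supply the classifying map "$R^\square_{\rhobar}\to R[\underline{X},\det X^{-1}]$'' whose existence you assume when you write "the resulting map.'' The paper works instead with $A = \bigO\langle X_{ij},Y\rangle/(Y\det(X_{ij})-1)$, the $\m_\bigO$-adic completion, and cites Lemma 3.3.1 of \cite{ger} to extend the universal property of the lifting ring to such complete (non-local) targets; $\Spec A$ is still irreducible, so the path-connectedness argument goes through. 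Your "Jacobson and $l$-torsion free'' remark handles the factorization through $R$ once the map exists, but not the existence of the map itself; passing to the completion as the paper does is what closes this.
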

\begin{proof}
  Take $g\in\GL_n(\bigO)$ with $\rho'=g\rho g^{-1}$. Let
  $A=\bigO\langle X_{ij},Y\rangle/(Y\det(X_{ij})-1)$ where
  $\bigO\langle X_{ij},Y\rangle$ is the $\m_\bigO$-adic completion of
  $\bigO[X_{ij},Y]$. Let $\rho_A:G_M\to\GL_n(A)$ be given by $X\rho
  X^{-1}$, where $X$ is the matrix $(X_{ij})$. By Lemma 3.3.1 of
  \cite{ger}, there is a continuous homomorphism
  $R_{\rhobar}^\square\to A$ such that $\rho_A$ is the push-forward of
  the universal lifting
  $\rho^\square:G_M\to\GL_n(R_{\rhobar}^\square)$. Now, for any
  $\Qlbar$-point of $A$, the corresponding specialisation of $\rho_A$
  is a $\Qlbar$-conjugate of $\rho$, and is thus crystalline of
  $l$-adic Hodge type $\mathbf{v}_\lambda$, so corresponds to a
  $\Qlbar$-point of $R_{\rhobar}^{\mathbf{v}_\lambda,cr}$. Since the
  $\Qlbar$-points of $A$ are dense in $\Spec A$, we conclude that the
  homomorphism $R_{\rhobar}^\square\to A$ factors through
  $R_{\rhobar}^{\mathbf{v}_\lambda,cr}$.

Now, $\Spec A$ is irreducible, and the points $x$ and $x'$ of
$\Spec R_{\rhobar}^{\mathbf{v}_\lambda,cr}$ corresponding to $\rho$ and
$\rho'$ respectively are in the image of the map $\Spec A\to \Spec
R_{\rhobar}^{\mathbf{v}_\lambda,cr}$, because they correspond to
specialising the matrix $X$ to the matrices $1_n$ and $g$
respectively. The result follows.
\end{proof}
\begin{cor}\label{cor: sums of unramified twists of chars tilde}
  Let $M$ be a finite extension of $\Ql$. 
    Let $\rhobar: G_M\to\GL_n(k)$ be the trivial
  representation, and let $\rho, \rho' :G_{M} \rightarrow \GL_n(\mc{O})$ be two crystalline lifts
  of $\rhobar$ which are both $\GL_n(\mc{O})$-conjugate to direct sums of unramified twists of a
  common set of crystalline characters. Then $\rho\sim\rho'$.

\end{cor}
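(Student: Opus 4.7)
The plan is to reduce via Lemma \ref{lem: residually trivial crystalline, all you need is GLn-conjugacy} to comparing two specific direct sums, then interpolate between them with a family over a formal power series ring.

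First, since permutation matrices lie in $\GL_n(\mc{O})$, by further conjugating we may assume $\rho$ is $\GL_n(\mc{O})$-conjugate to $\rho_0 := \bigoplus_{i=1}^n \psi_i \chi_i$ and $\rho'$ is $\GL_n(\mc{O})$-conjugate to $\rho'_0 := \bigoplus_{i=1}^n \psi'_i \chi_i$ for some unramified characters $\psi_i, \psi'_i : G_M \to \mc{O}^\times$. Both $\rho_0$ and $\rho'_0$ are crystalline with the same $l$-adic Hodge type $\mathbf{v}_\lambda$, since unramified twists do not affect Hodge--Tate weights. Applying Lemma \ref{lem: residually trivial crystalline, all you need is GLn-conjugacy} to the pairs $(\rho,\rho_0)$ and $(\rho',\rho'_0)$ gives $\rho\sim\rho_0$ and $\rho'\sim\rho'_0$, so by transitivity it suffices to prove $\rho_0 \sim \rho'_0$.

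Since $\overline{\rho_0} = \overline{\rho'_0} = \rhobar$ is trivial, we have $\overline{\psi_i} = \overline{\chi_i}^{-1} = \overline{\psi'_i}$, hence $u_i := \psi'_i(\Frob_M)/\psi_i(\Frob_M) - 1 \in \m_\mc{O}$. Let $B := \mc{O}[[Y_1,\ldots,Y_n]] \in \mathcal{C}_\mc{O}$ and let $\tilde{\psi}_i : G_M \to B^\times$ be the unramified character sending $\Frob_M$ to $\psi_i(\Frob_M)(1+Y_i)$. The diagonal representation
\[ \tilde{\rho} := \diag(\chi_1 \tilde{\psi}_1, \ldots, \chi_n \tilde{\psi}_n) : G_M \to \GL_n(B) \]
lifts $\rhobar$, and specializes to $\rho_0$ via $Y_i \mapsto 0$ and to $\rho'_0$ via $Y_i \mapsto u_i$. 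By the universal property of $R^\square_{\rhobar}$, the representation $\tilde{\rho}$ defines an $\mc{O}$-algebra map $R^\square_{\rhobar} \to B$.

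Any $\Qlbar$-point of $\Spec B$ specializes $\tilde{\rho}$ to a direct sum of crystalline characters with the same Hodge--Tate weights as the $\chi_i$, hence is crystalline of $l$-adic Hodge type $\mathbf{v}_\lambda$. The ideals $(Y_1-y_1,\ldots,Y_n-y_n)$ with $y_i$ in the maximal ideal of some finite extension of $\mc{O}$ cut out a Zariski-dense collection of closed points of $\Spec B$, so by Kisin's characterization of $R^{\mathbf{v}_\lambda,cr}_{\rhobar}$ (exactly as in the proof of Lemma \ref{lem: residually trivial crystalline, all you need is GLn-conjugacy}) the map $R^\square_{\rhobar} \to B$ factors through $R^{\mathbf{v}_\lambda,cr}_{\rhobar}$. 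Since $B$ is a domain, $\Spec B$ is irreducible, so its image in $\Spec R^{\mathbf{v}_\lambda,cr}_{\rhobar}$ is an irreducible subset containing the points corresponding to $\rho_0$ and $\rho'_0$. These two points therefore lie on a common irreducible component of $\Spec R^{\mathbf{v}_\lambda,cr}_{\rhobar}[1/l]$; replacing $\mc{O}$ by a finite extension if necessary to ensure condition (2) of Definition \ref{defn:tilde} (possible by Lemma \ref{lem: sim preserved under extension of bigO}), we conclude $\rho_0 \sim \rho'_0$.

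The main technical hurdle is the factorization step---showing that $R^\square_{\rhobar} \to B$ kills the kernel of the crystalline quotient---which is handled exactly as in the preceding lemma via density of $\Qlbar$-points. The remaining ingredients (reduction to ordered direct sums, interpolation, and the continuous-image-of-irreducible argument) are essentially formal.
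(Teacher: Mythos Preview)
Your argument is essentially the same as the paper's: reduce via Lemma \ref{lem: residually trivial crystalline, all you need is GLn-conjugacy} to ordered direct sums, then connect them through a family over $\mc{O}[[Y_1,\ldots,Y_n]]$ of unramified twists, using irreducibility of $\Spec\mc{O}[[Y_1,\ldots,Y_n]]$. The paper is terser about the factorization through $R^{\mathbf{v}_\lambda,cr}_{\rhobar}$, but your density-of-$\Qlbar$-points justification is a reasonable elaboration.

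One small slip: your appeal to Lemma \ref{lem: sim preserved under extension of bigO} at the end is in the wrong direction---that lemma propagates $\sim$ from $\mc{O}$ up to $\mc{O}'$, not down. The paper simply does not address condition (2) of Definition \ref{defn:tilde} in either this corollary or the preceding lemma, so it is evidently treated as a standing hypothesis (one always passes to a large enough $\mc{O}$ before invoking $\sim$); you should just drop that sentence rather than try to justify it.
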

\begin{proof} After applying Lemma \ref{lem: residually trivial crystalline, all you need is
    GLn-conjugacy}, we may assume
  that \[\rho=\oplus_{i=1}^n\rho_i\]and \[\rho'=\oplus_{i=1}^n\rho'_i\]
where $\rho'_i$ and $\rho_i$ are crystalline characters $
G_{M}\rightarrow \mc{O}^{\times}$ which differ by an
unramified twist for each $i$ and reduce to the trivial character
modulo $\mf{m}_{\mc{O}}$.
  It suffices to check that the corresponding points $x$ and $x'$ of $R_{\rhobar}^{\mathbf{v}_\lambda,cr}$  are path-connected. 

As in the proof of Lemma \ref{lem: unramified twists lie on same
  components}, the universal unramified $\bigO$-lifting ring of the
trivial character $G_M\to k^\times$ is given by $\bigO[[Y]]$ with the
universal lifting $\chi^\square$ sending $\Frob_M$ to $1+Y$. Taking
$n$ copies of this character, we obtain a
lifting \[\oplus_{i=1}^n\rho_i\otimes\chi^\square_i\]of $\rhobar$ to
$\mc{O}[[Y_1,\dots,Y_n]]$, and thus a continuous map
$\Spec \bigO[[Y_1,\dots,Y_n]]\to\Spec R_{\rhobar}^{\mathbf{v}_\lambda,cr}$. Both $x$ and $x'$
are in the image of this map, so the result follows.
\end{proof}

The following is Lemma 3.4.3 of \cite{ger} (recall that the ring
$R^{\triangle_{\lambda},cr}_{\rhobar}$ is defined below Definition
\ref{defn: l-adic hodge type}, and is universal for crystalline
ordinary lifts of weight $\lambda$).

\begin{lem}
  \label{lem: crystalline ordinary lifting ring is irreducible when
    residually trivial}
    Suppose $M$ is a finite extension of $\Ql$ and $\rhobar: G_M
\rightarrow \GL_n(k)$ is the trivial representation. If the ring
$R^{\triangle_{\lambda},cr}_{\rhobar}$ is non-zero, then it is
irreducible. 
\end{lem}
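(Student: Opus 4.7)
The plan is to exhibit a surjection $\Spec S \twoheadrightarrow \Spec R^{\triangle_{\lambda},cr}_{\rhobar}$ from the spectrum of a formal power series ring over $\mc{O}$, which immediately yields irreducibility of the target. The key enabling observation, using that $\rhobar$ is trivial, is that every ordinary crystalline lift $\rho:G_M\to\GL_n(A)$ (with $A\in\mc{C}_{\mc{O}}$) of weight $\lambda$ admits a $G_M$-stable full flag whose successive quotients are crystalline characters of the Hodge-Tate weights prescribed by $\lambda$; the reduction of this flag is a full flag on the trivial representation $k^n$, so by Nakayama we may conjugate $\rho$ by an element of $\ker(\GL_n(A)\to\GL_n(k))$ to arrange that this ordinary flag coincides with the standard one, i.e.\ that $\rho$ is upper triangular.

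To construct $S$, note that the non-vanishing hypothesis produces at least one ordinary crystalline lift of $\rhobar$ of weight $\lambda$, whose diagonal entries provide crystalline characters $\chi_i^0 : G_M \to \mc{O}^{\times}$ of the required Hodge-Tate weights, each reducing to the trivial character. The unramified twists of each $\chi_i^0$ with residually trivial reduction are parametrized by $\mc{O}[[Y_i]]$, as in the proof of Lemma \ref{lem: unramified twists lie on same components}. Building an upper triangular lift inductively by successive extensions, the strictly upper triangular entries are parametrized by continuous cocycle spaces, each of which is pro-representable by a formal power series ring over $\mc{O}$, yielding variables $Z_1,\ldots,Z_N$. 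Finally, adjoining conjugation variables $X_{ij}$ ranging over $\ker(\GL_n(\mc{O})\to\GL_n(k))$, one obtains a versal lift $\rho^{\mathrm{vers}} = (1_n+X)\,\rho_{\mathrm{up}}(Y,Z)\,(1_n+X)^{-1}$ to $S=\mc{O}[[X_\bullet,Y_\bullet,Z_\bullet]]$ which is ordinary crystalline of weight $\lambda$, and hence induces an $\mc{O}$-algebra map $R^{\triangle_{\lambda},cr}_{\rhobar}\to S$.

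By the opening observation, every $\Qlbar$-point of $\Spec R^{\triangle_{\lambda},cr}_{\rhobar}$ is a specialization of $\rho^{\mathrm{vers}}$, so the map of formal schemes $\Spec S \to \Spec R^{\triangle_{\lambda},cr}_{\rhobar}$ has dense image; since $R^{\triangle_{\lambda},cr}_{\rhobar}$ is reduced and $l$-torsion free (being a union of irreducible components of the reduced ring $R^{\mathbf{v}_{\lambda},cr}_{\rhobar}$), this map is in fact surjective, and as $\Spec S$ is irreducible so too is $\Spec R^{\triangle_{\lambda},cr}_{\rhobar}$. The main obstacle is verifying rigorously that the upper triangular cocycle space is a formal power series ring over $\mc{O}$: one must inductively show, for each step of the flag, that any extension of an ordinary crystalline representation by a crystalline character of compatible Hodge-Tate weights is itself crystalline and ordinary, and that the corresponding integral cocycle space is pro-representable by $\mc{O}[[Z_1,\ldots,Z_m]]$. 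This cohomological computation, while cleaner for ordinary lifts than for general crystalline lifts, requires care to preserve integrality throughout.
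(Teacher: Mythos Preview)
The paper does not prove this lemma itself; it simply records that it is Lemma 3.4.3 of \cite{ger}. Your proposed argument is along the right lines but contains a genuine gap.

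The error lies in your opening observation. You assert that, by Nakayama, any ordinary crystalline lift $\rho:G_M\to\GL_n(A)$ of the trivial $\rhobar$ may be conjugated by an element of $\ker(\GL_n(A)\to\GL_n(k))$ into upper triangular form. But conjugation by an element of this kernel cannot move the \emph{reduction} of the ordinary flag in $k^n$, and since $\rhobar$ is trivial that reduction can be any full flag on $k^n$, not only the standard one. Thus a lift whose (unique) ordinary flag reduces to a nonstandard flag is not in the image of your map $\Spec S\to\Spec R^{\triangle_\lambda,cr}_{\rhobar}$, and the claim that every $\Qlbar$-point is a specialisation of $\rho^{\mathrm{vers}}$ fails. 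For a concrete instance with $n=2$, take a nonsplit extension of $\chi_2$ by $\chi_1$ written in \emph{lower} triangular form: it lifts $\rhobar=1$ and is ordinary, but no $\ker$-conjugate of it is upper triangular.

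The repair is available and is already implicit in the paper. Because $\rhobar$ is trivial, conjugation by all of $\GL_n$ still produces lifts of $\rhobar$. One option is to enlarge the conjugation parameters from $1_n+(X_{ij})$ to the whole group, working over $A=\mc{O}\langle X_{ij},Y\rangle/(Y\det(X_{ij})-1)$ exactly as in the proof of Lemma~\ref{lem: residually trivial crystalline, all you need is GLn-conjugacy}, where it is noted that $\Spec A$ is irreducible. A cleaner option is to keep your $S$, observe that every $\Qlbar$-point of $R^{\triangle_\lambda,cr}_{\rhobar}$ is $\GL_n(\mc{O}')$-conjugate to a point in the image of $\Spec S$ (first conjugate by a lift of an element of $\GL_n(k)$ carrying the flag reduction to the standard one, and then by an element of the kernel via Nakayama as you intended), and finally invoke Lemma~\ref{lem: residually trivial crystalline, all you need is GLn-conjugacy} to conclude that $\GL_n(\mc{O}')$-conjugate points lie on the same irreducible component. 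Either way the conclusion follows.

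As a secondary remark, you are right to flag the verification that all upper triangular lifts with the prescribed diagonal are automatically crystalline as the substantive step: this amounts to checking $H^1_f(G_M,\chi_i\chi_j^{-1})=H^1(G_M,\chi_i\chi_j^{-1})$ for the relevant pairs, which holds for the ordinary ordering of the Hodge--Tate weights away from degenerate cases, and is precisely what is carried out in \cite{ger}.
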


\begin{lem}
\label{lem: nice twists preserve sim and leadsto}
Let $M$ be a finite extension of $\bb{Q}_p$ for some prime $p$ and let
$\rhobar : G_M \rightarrow \GL_n(k)$ be a continuous homomorphism. Let
$\rho_1,\rho_2 : G_M \rightarrow \GL_n(\mc{O})$ be two lifts of
$\rhobar$. If $p \neq l$, suppose that $\rho_1 \leadsto_{\mc{O}}
\rho_2$ and $\rho_2 \leadsto_{\mc{O}} \rho_1$. (Equivalently, assume that
for each minimal prime $\mf{p}$ of $R^\square$ the quotient
$R^\square/\mf{p}$ is geometrically irreducible, and $\rho_1$,
$\rho_2$ each correspond to closed points contained in a common
irreducible component of $\Spec R_{\rhobar}^\square[1/l]$, and neither
point is contained in any other irreducible component.) 
If $p=l$, assume that $\rho_1 \sim \rho_2$.
Let $\chi_1,\chi_2 : G_M
\rightarrow \mc{O}^{\times}$ be continuous characters with
$\overline{\chi}_1 = \overline{\chi}_2$ and $\chi_1|_{I_M} =
\chi_2|_{I_M}$. Suppose in addition that if $p=l$ then $\chi_1$ and
$\chi_2$ are crystalline. Then $\chi_1 \rho_1 \leadsto_{\mc{O}} \chi_2
\rho_2$ if $p \neq l$ and $\chi_1 \rho_1 \sim \chi_2 \rho_2$
if $p=l$.
\end{lem}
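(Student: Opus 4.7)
Set $\eta := \chi_2\chi_1^{-1}$. The hypotheses $\chi_1|_{I_M}=\chi_2|_{I_M}$ and $\overline{\chi}_1=\overline{\chi}_2$ say precisely that $\eta$ is an unramified character of $G_M$ lifting the trivial character; moreover, if $p=l$ then $\eta$ is crystalline (being the quotient of two crystalline characters). Since $\chi_2\rho_2 = \eta\cdot(\chi_1\rho_2)$, Lemma~\ref{lem: unramified twists lie on same components}, applied to the residual representation $\overline{\chi}_1\rhobar$ with trivial conjugation, shows that $\chi_1\rho_2$ and $\chi_2\rho_2$ give rise to closed points of the relevant lifting ring that lie on exactly the same set of irreducible components. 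It therefore suffices to prove $\chi_1\rho_1\sim\chi_1\rho_2$ (respectively $\chi_1\rho_1\leadsto_{\mc{O}}\chi_1\rho_2$).

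For this, I would transport the hypothesis on $(\rho_1,\rho_2)$ across an isomorphism of lifting rings induced by twisting. Let $\rho^{\square}:G_M\to\GL_n(R^{\square}_{\rhobar})$ be the universal lift; then $\chi_1\cdot\rho^{\square}$ is a lift of $\overline{\chi}_1\rhobar$ to $R^{\square}_{\rhobar}$, and so induces an $\mc{O}$-algebra homomorphism $R^{\square}_{\overline{\chi}_1\rhobar}\to R^{\square}_{\rhobar}$. The analogous construction using $\chi_1^{-1}$ provides a two-sided inverse, so this is an isomorphism. If $p\ne l$ this is already enough. If $p=l$, twisting by the crystalline character $\chi_1$ preserves crystallinity and shifts the $l$-adic Hodge type in a predictable way from $\mathbf{v}_\lambda$ to some $\mathbf{v}_{\lambda'}$ determined by the Hodge--Tate weights of $\chi_1$; the universal property of $R^{\mathbf{v}_\lambda,cr}_{\rhobar}$ (Corollary~2.7.7 of \cite{kisinpst}, as invoked above Definition~\ref{defn: l-adic hodge type}) then forces the isomorphism above to descend to an isomorphism $R^{\mathbf{v}_\lambda,cr}_{\rhobar}\isoto R^{\mathbf{v}_{\lambda'},cr}_{\overline{\chi}_1\rhobar}$. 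Under this isomorphism the closed point corresponding to $\rho_i$ is carried to the one corresponding to $\chi_1\rho_i$.

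Both conditions figuring in Definitions~\ref{defn:tilde} and~\ref{defn: leadsto} transfer across this isomorphism: the geometric integrality (respectively geometric irreducibility) of the quotients of the lifting ring by its minimal primes is an intrinsic property of the ring, and ``being contained in a common (respectively in a unique) irreducible component of the generic fibre'' is clearly preserved by a ring isomorphism. So $\rho_1\sim\rho_2$ (respectively $\rho_1\leadsto_{\mc{O}}\rho_2$ and $\rho_2\leadsto_{\mc{O}}\rho_1$) carries over to $\chi_1\rho_1\sim\chi_1\rho_2$ (respectively $\chi_1\rho_1\leadsto_{\mc{O}}\chi_1\rho_2$). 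Combined with the first paragraph, this gives the result.

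The only non-routine point is the construction of the twist isomorphism at the level of the crystalline lifting rings: one needs to know that if $\rho$ is crystalline of $l$-adic Hodge type $\mathbf{v}_\lambda$ and $\chi_1$ is crystalline with a specified set of Hodge--Tate numbers, then $\chi_1\rho$ is crystalline of the correspondingly shifted type $\mathbf{v}_{\lambda'}$, and that the resulting shift $\lambda\mapsto\lambda'$ is a bijection on the indexing set $(\bb{Z}^n_+)^{\Hom(M,K)}$. This is immediate from the definition of the Hodge type and the behaviour of filtered modules under twisting by a rank-one object.
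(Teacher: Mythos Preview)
Your argument is correct and follows essentially the same route as the paper's proof: first twist by $\chi_1$ using the induced isomorphism of lifting rings to transport the hypothesis, then use the fact that $\chi_1\rho_2$ and $\chi_2\rho_2$ differ by a residually trivial unramified twist (the paper cites the argument of Corollary~\ref{cor: sums of unramified twists of chars tilde}, while you invoke Lemma~\ref{lem: unramified twists lie on same components} directly, which is equally valid). Your treatment of the $p=l$ case is more explicit than the paper's, which simply says ``the other case being similar''; your description of how the twisting isomorphism descends to the crystalline lifting rings is correct and fills in exactly the detail the paper omits.
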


\begin{proof}
  We treat the case $p \neq l$, the other case being similar. Let
  $\overline{\chi} = \overline{\chi}_1 = \overline{\chi}_2$. Then the
  operation of twisting by $\chi_1$ defines an isomorphism of the
  lifting problems of $\rhobar$ and $\overline{\chi}\rhobar$. It
  therefore defines an isomorphism
  $R^{\square}_{\overline{\chi}\rhobar}\isoto R^{\square}_{\rhobar}$.
  It follows that $\chi_1 \rho_1 \leadsto_{\mc{O}} \chi_1 \rho_2$ and
  that $\chi_1 \rho_2$ gives rise to a closed point of $\Spec
  R^{\square}_{\overline{\chi}\rhobar}[1/l]$ lying on a unique
  irreducible component. Since $\chi_1$ and $\chi_2$ differ by a
  residually trivial unramified twist, an easy argument shows that
  this component also contains $\chi_2 \rho_2$ (c.f.\ the proof of
  Corollary \ref{cor: sums of unramified twists of chars tilde}). It
  follows that $\chi_1 \rho_1 \leadsto_{\mc{O}}\chi_2 \rho_2$.
\end{proof}

\subsection{Global deformation rings}
\label{subsec: global deformation rings}

Let $F/F^{+}$ be a totally imaginary quadratic extension of a totally
real field $F^+$. Let $c$ denote the non-trivial element of $\Gal(F/F^+)$. Let $k$ denote a finite field of characteristic $l$ and $K$ a
finite extension of $\bb{Q}_l$, inside our fixed algebraic closure $\Qlbar$, with ring of integers $\mc{O}$ and
residue field $k$. Assume that $K$ contains the image of every
embedding $F \into \Qlbar$ and that the prime $l$ is odd. Assume that
every place in $F^+$ dividing $l$ splits in $F$. Let $S$ denote a finite set of finite places of $F^+$ which split in
$F$, and assume that $S$ contains every place dividing $l$. Let $S_l$ denote the set of places of $F^+$ lying over $l$. Let $F(S)$
denote the maximal extension of $F$ unramified away from $S$. Let
$G_{F^{+},S}=\Gal(F(S)/F^{+})$ and $G_{F,S}=\Gal(F(S)/F)$. For each $v
\in S$ choose a place $\wt{v}$ of $F$ lying over $v$ and let $\wt{S}$
denote the set of $\wt{v}$ for $v \in S$. For each place $v|\infty$ of $F^+$ we let
$c_v$ denote a choice of a complex conjugation at $v$ in
$G_{F^+,S}$.  For each place $w$ of $F$
we have a $G_{F,S}$-conjugacy class of homomorphisms $G_{F_w}
\rightarrow G_{F,S}$. For $v \in S$ we fix a choice of homomorphism
$G_{F_{\wt{v}}} \rightarrow G_{F,S}$.

Fix a continuous homomorphism
\[ \rbar : G_{F^+,S} \rightarrow \mc{G}_n(k) \] such that $G_{F,S} =
\rbar^{-1}(\GL_n(k)\times \GL_1(k))$ and fix a continuous character
$\chi : G_{F^+,S}\rightarrow \mc{O}^{\times} $ such that $\nu \circ
\rbar = \overline{\chi}$. Assume that $\rbar|_{G_{F,S}}$ is absolutely
irreducible. 
As in Definition 1.2.1 of \cite{cht}, we define
\begin{itemize}
\item a \emph{lifting} of $\rbar$ to an object $A$ of
  $\mc{C}_{\mc{O}}$ to be a continuous homomorphism $r : G_{F^+,S}
  \rightarrow \mc{G}_n(A)$ lifting $\rbar$ and with $\nu \circ r =
  \chi$;
\item two liftings $r$, $r^{\prime}$ of $\rbar$ to $A$ to be
  \emph{equivalent} if they are conjugate by an element of
  $\ker(\GL_n(A)\rightarrow \GL_n(k))$;
\item a \emph{deformation} of $\rbar$ to an object $A$ of
  $\mc{C}_{\mc{O}}$ to be an equivalence class of liftings.
\end{itemize}
Similarly, if $T\subset S$, we define
\begin{itemize}
\item a \emph{$T$-framed lifting} of $\rbar$ to $A$ to be a tuple
 $(r,\{ \alpha_v\}_{v \in T})$ where $r$ is a lifting of $\rbar$ and
 $\alpha_v \in \ker(\GL_n(A)\rightarrow \GL_n(k))$ for $v \in T$;
\item two $T$-framed liftings  $(r,\{ \alpha_v\}_{v \in T})$,
 $(r^{\prime},\{ \alpha^{\prime}_v\}_{v \in T})$ to be
 \emph{equivalent} if there is an element $\beta \in
 \ker(\GL_n(A)\rightarrow \GL_n(k))$ with $r^{\prime}=\beta r
 \beta^{-1}$ and $\alpha_v^{\prime}=\beta \alpha_v$ for $v \in T$;
\item a \emph{$T$-framed deformation} of $\rbar$ to be an equivalence
 class of $T$-framed liftings.
\end{itemize}

For each place $v \in S$, let $R^{\square}_{\rbarwtv}$ denote
the universal $\mc{O}$-lifting ring of $\rbar|_{G_{F_{\wt{v}}}}$ and
let $R_{\wt{v}}$ denote a quotient of $R^{\square}_{\rbarwtv}$ which satisfies the
following property: 
\begin{itemize}
\item[(*)] let $A$ be an object of $\mc{C}_{\mc{O}}$ and let
$\zeta,\zeta^{\prime}: R^{\square}_{\rbarwtv}\rightarrow A$ be homomorphisms corresponding to
two lifts $r$ and $r^{\prime}$ of $\rbarwtv$ which are
conjugate by an element of $\ker(\GL_n(A)\rightarrow
\GL_n(k))$. Then $\zeta$ factors through $R_{\wt{v}}$ if and only if
$\zeta^{\prime}$ does.
\end{itemize}
We
consider the \emph{deformation problem}
\[ \mc{S} = (F/F^{+},S,\wt{S},\mc{O},\rbar,\chi,\{ R_{\wt{v}}\}_{v \in
  S}) \] 
(see sections 2.2 and 2.3 of \cite{cht} for this terminology).
We say that a lifting $r : G_{F^+,S}\rightarrow \mc{G}_n(A)$ is
\emph{of type $\mc{S}$} if for each place $v \in S$, the homomorphism
$R^{\square}_{\rbarwtv} \rightarrow A$ corresponding to $r|_{G_{F_{\wt{v}}}}$ factors
through $R_{\wt{v}}$. We also define deformations of type $\mc{S}$ in the same way.

Let $\Def_{\mc{S}}$ be the functor $ \mc{C}_{\mc{O}}\rightarrow Sets$
which sends an algebra $A$ to the set of deformations of $\rbar$ to
$A$ of type $\mc{S}$. Similarly, if $T\subset S$, let
$\Def_{\mc{S}}^{\square_T}$ be the functor $\mc{C}_{\mc{O}}\rightarrow
Sets$ which sends an algebra $A$ to the set of $T$-framed liftings of
$\rbar$ to $A$ which are of type $\mc{S}$. 
By Proposition 2.2.9 of
\cite{cht} these functors are represented by objects
$R_{\mc{S}}^{\univ}$ and $R^{\square_T}_{\mc{S}}$ respectively
of $\mc{C}_{\mc{O}}$.

\begin{lem}
  \label{lem: property (*)}
  Let $M$ be a finite extension of $\bb{Q}_p$ for some prime $p$. Let
  $\rhobar : G_M \rightarrow \GL_n(k)$ be a continuous
  homomorphism. If $p\neq l$, let $R$ be the maximal $l$-torsion free
  quotient of $R^{\square}_{\rhobar}$. If $p=l$,
  assume that $K$ contains the image of each embedding $M \into
  \Qlbar$ and
  let $R=R^{\mathbf{v}_{\lambda},cr}_{\rhobar}$ for some $\lambda \in
  (\bb{Z}^n_+)^{\Hom(M,K)}$. Then $R$ satisfies property $(*)$ above.
\end{lem}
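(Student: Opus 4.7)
The plan is to construct a ``universal conjugation'' map $\alpha: R^{\square}_{\rhobar} \to R^{\square}_{\rhobar}[[X_{ij}]]$ corresponding, via the universal property of $R^{\square}_{\rhobar}$, to the lift $(1_n + (X_{ij}))\,\rho^{\square}\,(1_n + (X_{ij}))^{-1}$ of $\rhobar$, and to reduce property $(*)$ to the statement that $\alpha$ descends to a map $\bar{\alpha}: R \to R[[X_{ij}]]$. Explicitly, given $\zeta,\zeta': R^{\square}_{\rhobar}\to A$ as in $(*)$ with $r'=grg^{-1}$ for $g=1_n+\gamma$ and $\gamma\in \mf{m}_A\cdot M_n(A)$, define $\beta: R^{\square}_{\rhobar}[[X_{ij}]]\to A$ by $\beta|_{R^{\square}_{\rhobar}}=\zeta$ and $\beta(X_{ij})=\gamma_{ij}$. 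By universality $\zeta=\beta\circ\iota$ (where $\iota$ is the inclusion) and $\zeta'=\beta\circ\alpha$. Setting $I=\ker(R^{\square}_{\rhobar}\to R)$, one has $I\cdot R^{\square}_{\rhobar}[[X_{ij}]]=I[[X_{ij}]]$ (since $R^{\square}_{\rhobar}$ is Noetherian and $I$ finitely generated), and the single inclusion $\alpha(I)\subset I[[X_{ij}]]$ gives $\zeta'(I)=\beta(\alpha(I))\subset\zeta(I)\cdot A=0$ whenever $\zeta(I)=0$; the symmetric statement follows by reversing the roles of $r,r'$.

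In the case $p\neq l$, $R = R^{\square}_{\rhobar}/J$ with $J$ the ideal of $l$-power torsion. By Noetherianness $J$ is annihilated by some $l^N$, so for $y\in J$ we have $l^N\alpha(y)=\alpha(l^N y)=0$ in $R^{\square}_{\rhobar}[[X_{ij}]]$; examining coefficients, the $l^N$-torsion of $R^{\square}_{\rhobar}[[X_{ij}]]$ is exactly $J[[X_{ij}]]$, giving the needed inclusion. In the case $p=l$ with $R=R^{\mathbf{v}_\lambda,cr}_{\rhobar}$, consider the composition $\phi: R^{\square}_{\rhobar}\xrightarrow{\alpha}R^{\square}_{\rhobar}[[X_{ij}]]\twoheadrightarrow R[[X_{ij}]]$. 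The induced lift to $R[[X_{ij}]]$ is $(1_n+X)\rho^{\mathrm{univ},cr}(1_n+X)^{-1}$, where $\rho^{\mathrm{univ},cr}$ is the universal crystalline lift over $R$. For any $\mc{O}$-algebra homomorphism $\eta: R[[X_{ij}]]\to B$ with $B$ a finite $K$-algebra, $\eta\circ\phi$ corresponds to the $\eta(1_n+X)$-conjugate of $\eta|_R\circ\rho^{\mathrm{univ},cr}$; since the latter is crystalline of $l$-adic Hodge type $\mathbf{v}_\lambda$ by the universal property of $R^{\mathbf{v}_\lambda,cr}_{\rhobar}$ for finite $K$-algebras, and this property is invariant under $\GL_n(B)$-conjugation (it is a statement about the $B$-representation up to isomorphism), $\eta\circ\phi$ is itself crystalline of type $\mathbf{v}_\lambda$. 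Hence $\eta\circ\phi$ factors through $R$, giving $\eta(\phi(I))=0$.

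The remaining and hardest step is to promote the vanishing $\eta(\phi(I))=0$ for every such $\eta$ to the vanishing $\phi(I)=0$ in $R[[X_{ij}]]$. Since $\Spec R^{\mathbf{v}_\lambda,cr}_{\rhobar}[1/l]$ is formally smooth over $K$ by Theorem 3.3.8 of \cite{kisinpst}, $R$ is $l$-torsion free and reduced, and hence so are $R[[X_{ij}]]$ and its localisation $R[[X_{ij}]][1/l]$. The closed points of $\Spec R[[X_{ij}]][1/l]$ (which have residue fields finite over $K$) are Zariski-dense---a standard input coming from the identification of this locus with an affinoid rigid-analytic space attached to the formal scheme $\mathrm{Spf}\,R[[X_{ij}]]$---so $\bigcap_\eta\ker(\eta)=0$ in $R[[X_{ij}]][1/l]$, and the inclusion $R[[X_{ij}]]\hookrightarrow R[[X_{ij}]][1/l]$ forces $\phi(I)=0$. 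The main obstacle is this density/Jacobson input; an alternative route, in the spirit of the proof of Lemma \ref{lem: residually trivial crystalline, all you need is GLn-conjugacy}, would be to express the universal conjugation via a map into a topologically finitely generated $\mc{O}$-algebra analogous to $\mc{O}\langle X_{ij},Y\rangle/(Y\det X-1)$, where Nullstellensatz over $K$ after inverting $l$ supplies the density directly.
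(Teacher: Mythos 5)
Your proof follows the same strategy the paper intends: construct the universal conjugation map $\alpha\colon R^{\square}_{\rhobar}\to R[[X_{ij}]]$ corresponding to $(1_n+X)\rho^{\square}(1_n+X)^{-1}$, and observe that the factoring of $\alpha$ through $R$ yields property $(*)$ by pushing forward along an auxiliary $\beta$. The paper simply asserts this factoring (in the proof of Lemma \ref{lem: unramified twists lie on same components}, to which the proof of the present lemma defers), whereas you supply the verification: for $p\neq l$ by the elementary $l$-power-torsion computation, and for $p=l$ by combining invariance of the crystalline-of-type-$\mathbf{v}_\lambda$ condition under conjugation with reducedness and $l$-torsion-freeness of $R[[X_{ij}]]$ and the standard density/Jacobson fact that maps from a complete Noetherian local $\mc{O}$-algebra to finite $K$-algebras separate points after inverting $l$ (one small imprecision: the Berthelot generic fibre of $\mathrm{Spf}\,R[[X_{ij}]]$ is quasi-Stein rather than affinoid, though this does not affect the density conclusion).
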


\begin{proof}
  This can be proved in exactly the same way as Lemma \ref{lem: unramified twists lie on same components}.
\end{proof}

\subsection{Automorphy lifting}
\label{subsec: automorphy lifting}

\subsubsection{CM Fields}
\label{subsubsec: CM fields}

\begin{thm}
\label{thm: CM modularity lifting theorem, using tilde}
  Let $F$ be an imaginary CM field with totally real subfield $F^+$
  and let $c$ be the non-trivial element of $\Gal(F/F^+)$. Let $n\in
  \bb{Z}_{\geq 1}$ and let $l>n$ be a prime. Let $K\subset \Qlbar$ denote a finite extension of $\bb{Q}_l$ with ring of integers
$\mc{O}$ and residue field $k$. Assume that $K$ contains the image of
every embedding $F \into \Qlbar$.
  Let \[\rho:G_F \to\GL_n(\mc{O})\]be a continuous 
  representation and let $\rhobar = \rho \mod
  \mf{m}_{\mc{O}}$. Suppose that $\rho$ enjoys the following
  properties:
  \begin{enumerate}
  \item $\rho^c\cong \rho^\vee \epsilon^{1-n}$.
  \item The reduction $\rhobar$ is absolutely irreducible and 
 $\rhobar(G_{F(\zeta_l)}) \subset \GL_n(k)$ is big (see Definition
 \ref{defn: m-big}). 
  \item $(\overline{F})^{\ker\ad\rhobar}$ does not contain $\zeta_l$.
  \item There is a continuous representation
    $\rho':G_{F}\to\GL_n(\mc{O})$, a RACSDC automorphic representation
    $\pi$ of $\GL_n(\bb{A}_F)$ which is unramified above $l$ and $\iota: \Qlbar \isoto \bb{C}$
 such that
    \begin{enumerate}
    \item $\rho' \otimes_{\mc{O}}\Qlbar \cong r_{l,\iota}(\pi) : G_F
      \rightarrow \GL_n(\Qlbar)$.
    \item  $\rhobar=\rhobar'$.
    \item For all places $v\nmid l$ of $F$, either
      \begin{itemize}
      \item $\rho|_{G_{F_v}}$ and $\pi_v$ are both
        unramified, or
      \item  $\rho'|_{G_{F_v}}\leadsto_{\mc{O}} \rho|_{G_{F_v}}$.
      \end{itemize}
    \item For all places $v|l$, $\rho|_{G_{F_v}} \sim
      \rho'|_{G_{F_v}}$.
    \end{enumerate}
  \end{enumerate}
Then $\rho$ is automorphic.
\end{thm}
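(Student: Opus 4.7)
The plan is to run Taylor--Wiles--Kisin patching with a global deformation problem whose local conditions pin down precisely the common irreducible components through $\rho$ and $\rho'$ at each bad place, and then to read off the automorphy of $\rho$ from that of $\rho'$. Using the absolute irreducibility of $\rhobar$ together with hypothesis (1), extend $\rho$ and $\rho'$ to $r, r' : G_{F^+} \to \mc{G}_n(\mc{O})$ with $\nu\circ r = \nu\circ r' = \chi$ for a common character $\chi$ lifting $\nu\circ\rbar$. After a preliminary solvable totally real base change (arranging that every $v\mid l$ splits in $F$ and that $\pi$ can be transferred to a compact unitary group $G/F^+$ as in Section~\ref{subsec: l-adic aut forms on unitary groups}, while preserving (1)--(4)), and after possibly enlarging $\mc{O}$ (invoking Lemma~\ref{lem: sim preserved under extension of bigO} and its analogue for $\leadsto_{\mc{O}}$), I may assume all local components I will use are geometrically integral.

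Let $S\supseteq S_l$ collect all finite places at which $\rho$, $\rho'$, or $\pi$ is ramified. For each $v\in S$, choose $R_{\wt v}$ as follows: at $v\mid l$, take the quotient of $R^{\mathbf{v}_{\lambda_{\wt v}},\cris}_{\rbarwtv}$ corresponding to the common irreducible component of $\Spec R^{\mathbf{v}_{\lambda_{\wt v}},\cris}_{\rbarwtv}[1/l]$ through $\rho|_{G_{F_{\wt v}}}$ and $\rho'|_{G_{F_{\wt v}}}$, which exists by (4d); at $v\nmid l$ where $\rho|_{G_{F_{\wt v}}}$ and $\pi_{\wt v}$ are both unramified, take the unramified quotient of $R^{\square}_{\rbarwtv}$; otherwise, invoking (4c), take the quotient of the maximal $l$-torsion-free quotient of $R^{\square}_{\rbarwtv}$ cutting out the unique irreducible component through $\rho'|_{G_{F_{\wt v}}}$, which by (4c) also contains $\rho|_{G_{F_{\wt v}}}$. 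By Lemma~\ref{lem: property (*)} each $R_{\wt v}$ satisfies $(*)$, so $\mc{S}:=(F/F^{+},S,\wt{S},\mc{O},\rbar,\chi,\{R_{\wt v}\})$ is a well-posed deformation problem; both $r$ and $r'$ are of type $\mc{S}$, so $r'$ cuts out a non-Eisenstein maximal ideal $\mf{m}$ of a suitable Hecke algebra $\TT^T_{\lambda}(U,\mc{O})$, and $R^{\univ}_{\mc{S}}$ acts on $S_{\lambda}(U,\mc{O})_{\mf m}$.

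Next carry out patching as in Section~4 of \cite{ger} (cf.\ Section~3 of \cite{BLGHT}): using bigness (2) and condition (3) to produce Taylor--Wiles primes of arbitrary level, one obtains a maximal Cohen--Macaulay module $M_\infty$ over a power series ring $S_\infty$ carrying a compatible action of $R_\infty = R^{\mathrm{loc}}[[x_1,\ldots,x_g]]$, where $R^{\mathrm{loc}} = \widehat{\bigotimes}_{v\in S,\mc{O}}R_{\wt v}$. Equidimensionality of each $R_{\wt v}$ (Theorem~3.3.8 of \cite{kisinpst} at $v\mid l$ and Theorem~2.1.6 of \cite{gee061} at $v\nmid l$) yields $\dim R_\infty = \dim S_\infty$; geometric integrality of each $R_{\wt v}$ makes $R_\infty$ a domain; and the standard depth computation then forces $M_\infty$ to be a faithful $R_\infty$-module. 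Pulling back to the global level, $R^{\univ}_{\mc{S}}$ acts faithfully on $S_{\lambda}(U,\mc{O})_{\mf m}$, so the surjection $R^{\univ}_{\mc{S}}\twoheadrightarrow\TT^T_{\lambda}(U,\mc{O})_{\mf m}$ becomes an isomorphism after inverting $l$. The $\mc{O}$-point of $R^{\univ}_{\mc{S}}$ defined by $r$ therefore arises from an eigenform, proving the automorphy of $\rho$. The chief obstacle is not the patching machinery itself but the lattice- and component-level bookkeeping required to set up the refined $R_{\wt v}$: arranging geometric integrality of the chosen components, checking that each refined quotient satisfies property $(*)$, and preserving the dimension count with such restricted local conditions --- this is precisely the reason for the paper's insistence on working with $\mc{O}$-valued lifts and lifting rings throughout, rather than with $K$-valued representations and deformation rings.
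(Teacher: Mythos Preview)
Your overall architecture is right, but there is a genuine gap in the way you set up the deformation problem. You take each local ring $R_{\wt v}$ to be a \emph{single} irreducible component (the one through $\rho'$, hence through $\rho$), so that $R^{\loc}$ is a domain and the patched module $M_\infty$ is forced to be faithful. The problem is step~2: you assert that $R^{\univ}_{\mc S}$ acts on $S_\lambda(U,\mc O)_{\mf m}$. It does not. The Hecke algebra $\TT^T_\lambda(U,\mc O)_{\mf m}$ is a quotient of the \emph{unrestricted} deformation ring (the one with the full $R^{\mathbf v_{\lambda_{\wt v}},\cris}_{\rbarwtv}$ at $v\mid l$ and the full $l$-torsion-free lifting ring at $v\nmid l$), because $r_{\mf m}$ is only known to be crystalline of the right Hodge type, not to land on any prescribed component. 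There is no level structure cutting out a single component of $\Spec R^{\mathbf v_\lambda,\cris}_{\rbarwtv}[1/l]$, so you have no natural $\mc O$-lattice of automorphic forms on which your refined $R^{\univ}_{\mc S}$ acts; tensoring $S_\lambda(U,\mc O)_{\mf m}$ down along $R^{\univ}_{\text{full}}\twoheadrightarrow R^{\univ}_{\mc S}$ destroys the $\mc O[\Delta_{Q_N}]$-freeness that patching requires. This is exactly why your ``$R_\infty$ a domain, hence $M_\infty$ faithful'' shortcut is unavailable here.

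The paper handles this by patching with the \emph{full} local rings (so $R^{\loc}$ is only equidimensional, not a domain), obtaining $M_\infty$ whose support in $\Spec R_\infty$ is a union of irreducible components. The component argument is then done \emph{after} patching: one checks via Lemma~\ref{lem: unramified twists lie on same components} that $r'$ determines a unique component $\mc C_{r'}$ of $\Spec R_\infty[1/l]$ (this is where $\leadsto_{\mc O}$ enters), that $\mc C_{r'}$ is in the support of $M_\infty$ because $r'$ is automorphic and lies on no other component, and finally that $r$ lies on $\mc C_{r'}$ by hypotheses (4c) and (4d). Thus the paper never proves an $R=\TT$ theorem; it only shows that the closed point of $R^{\univ}_{\mc S}$ given by $r$ lies in the support of the Hecke module, which is all that is needed.
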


\begin{proof}
  Choose a place $v_1$ of $F$
  not dividing $l$ such that
  \begin{itemize}
  \item $v_1$ is unramified over a rational prime $p$ with
    $[F(\zeta_p): F]>n$;
  \item $v_1$ does not split completely in $F(\zeta_l)$;
  \item $\rho$ and $\pi$ are unramified at $v_1$;
  \item $\ad \rhobar (\Frob_{v_1})= 1$.
  \end{itemize}

  Extending $\mc{O}$ if necessary, choose an imaginary CM field $L/F$  such that:
  \begin{itemize}
  \item $L/F$ is solvable;
  \item $L$ is linearly disjoint from $\overline{F}^{\ker 
      \rbar}(\zeta_l)$ over $F$;
  \item $4 | [L^+:F^+]$ where $L^+$ denotes the maximal totally real
    subfield of $L$;
  \item $L/L^+$ is unramified at all finite places;
  \item Every prime of $L$ dividing $l$ is split over $L^+$ and every
    prime where $\rho|_{G_L}$ or $\pi_L$ ramifies is split over $L^+$ (here
    $\pi_L$ denotes the base change of $\pi$ to $L$);
  \item Every place of $L$ over $v_1$ or  $cv_1$ is split over
    $L^+$. Moreover, $v_1$ and $cv_1$ split completely in $L$;
  \item Every place $v|l$ of $F$ splits completely in $L$;
  \item If $v \nmid l$ is a place of $F$ and at least one of
    $\rho|_{G_{F_v}}$ or $\pi_v$ is ramified, then $v$
    splits completely in $L$.
  \end{itemize}

  Let $G_{/\mc{O}_{L^+}}$
  be an algebraic group as in section \ref{subsec: l-adic aut forms on
    unitary groups} (with $F/F^+$ replaced by $L/L^+$).
  By
  Th\'eor\`eme 5.4 and Corollaire 5.3 of \cite{labesse} there exists
  an automorphic representation $\Pi$ of $G(\bb{A}_{L^+})$ such that
  $\pi_L$ is a strong base change of $\Pi$. Let $S_l$ denote the set of
  places of $L^+$ dividing $l$ and let $R$ denote the set of places of
  $L^+$ not dividing $l$ and lying under a place of $L$ where
  $\rho$ or $\pi_L$ is ramified. Let $S_a$ denote
  the set of places of $L^+$ lying over the restriction of $v_1$ to $F^+$.
  Let $T= S_l \coprod R
  \coprod S_a$. For each place $v \in T$, choose a place $\wt{v}$ of
  $L$ lying over it and let $\wt{T}$ denote the set of $\wt{v}$ for $v
  \in T$. Let $U=\prod_v U_v \subset G(\bb{A}_{L^+}^{\infty})$ be a
  compact open subgroup such that
 \begin{itemize}
\item $U_v = G(\mc{O}_{L^+_v})$ for $v \in S_l$ and for $v \not \in
   T$ split in $L$;
 \item $U_v$ is a hyperspecial maximal compact subgroup of $G(L^+_v)$
   for each $v$ inert in $L$;
\item $U_v$ is such that $\Pi_v^{U_v}\neq \{0\}$ for $v \in R$;
 \item $U_v = \ker(G(\mc{O}_{L^+_v})\rightarrow G(k_{v}))$ for $v \in S_a$.
 \end{itemize}

Extend $K$ if necessary so that it contains the image of
every embedding $L\into \Qlbar$. For each $v \in S_l$, let
$\lambda_{\wt{v}}$ be the element of $(\bb{Z}^{n}_+)^{\Hom(L_{\wt{v}},K)}$ with the property that 
$\rho|_{G_{L_{\wt{v}}}}$ and $\rho'|_{G_{L_{\wt{v}}}}$ have $l$-adic Hodge type
$\mathbf{v}_{\lambda_{\wt{v}}}$. Let $\wt{I}_l$ denote the set of
embeddings $L\into K$ giving rise to one of the places $\wt{v}$.
Let $\lambda = (\lambda_{\wt{v}})_{v
  \in S_l}$ regarded as an element of $(\bb{Z}^n_+)^{\wt{I}_l}$ in the
evident way and let $S_{\lambda}(U,\mc{O})$ be the space of $l$-adic
automorphic forms on $G$ of weight $\lambda$ introduced above. Let $\bb{T}^{T}_{\lambda}(U,\mc{O})$ be the $\mc{O}$-subalgebra
of $\End_{\mc{O}}(S_{\lambda}(U,\mc{O}))$ generated by the Hecke operators
$T_{w}^{(j)}$, $(T_{w}^{(n)})^{-1}$ for $w$ a place of $L$ split over
$L^+$, not lying over $T$ and $j=1,\ldots,n$. The eigenvalues of
the operators $T_{w}^{(j)}$ on the space $(\iota^{-1}\Pi^{\infty})^{U}$ give
rise to a homomorphism of $\mc{O}$-algebras
$\bb{T}^{T}_{\lambda}(U,\mc{O}) \rightarrow \Qlbar$. Extending $K$ if necessary, we can
and do assume that this homomorphism takes values in $\mc{O}$. Let
$\mf{m}$ denote the unique maximal ideal of
$\bb{T}^{T}_{\lambda}(U,\mc{O})$ containing the kernel of this homomorphism.

Let $\delta_{L/L^+}$ be the quadratic character of $G_{L^+}$
corresponding to $L$. By Lemma 2.1.4 of \cite{cht} we can and do
extend $\rho$ and $\rho'$ to homomorphisms $r,r' : G_{L^+} \rightarrow
\mc{G}_n(\mc{O})$ with $r \otimes k =r' \otimes k : G_{L^+}
\rightarrow \mc{G}_n(k)$, $r|_{G_L}=(\rho|_{G_L},\epsilon^{1-n})$,
$r'|_{G_L}=(\rho'|_{G_L},\epsilon^{1-n})$ and $\nu \circ r = \nu \circ
r' = \epsilon^{1-n}\delta_{L/L^+}^{\mu}$ for some $\mu \in
(\bb{Z}/2\bb{Z})$. Let $\rbar = r \otimes k : G_{L^+} \rightarrow
\mc{G}_n(k)$.

For $v\in R\cup S_a$, let $\overline{R}^{\square}_{\rbarwtvL}$ denote
the maximal $l$-torsion free quotient of
$R^{\square}_{\rbarwtvL}$. Note that $R^{\square}_{\rbarwtvL}$ is
formally smooth over $\mc{O}$ for $v \in S_a$ by Lemma 2.4.9 of
\cite{cht} so $\overline{R}^{\square}_{\rbarwtvL} =
R^{\square}_{\rbarwtvL}$. Consider the deformation problem  
\[ \mc{S} :=
\left(L/L^+,T,\wt{T},\mc{O},\rbar,\epsilon^{1-n}\delta^{\mu}_{L/L^+},\{
R^{\mathbf{v}_{\lambda_{\wt{v}}},cr}_{\rbarwtvL}\}_{v \in S_l} \cup \{
\overline{R}^{\square}_{\rbarwtvL}\}_{v \in R\cup S_a} \right).\]
By Lemma \ref{lem: property (*)}, the rings
$R^{\mathbf{v}_{\lambda_{\wt{v}}},cr}_{\rbarwtvL}$ for $v \in S_l$ and $
\overline{R}^{\square}_{\rbarwtvL}$ for $v \in R$ satisfy the
property (*) of section \ref{subsec: global deformation rings}.
Let $R^{\univ}_{\mc{S}}$ be the object representing the corresponding
deformation functor. Note that $\rbar|_{G_L}$ is $\GL_n(k)$-conjugate to $\rbar_{\mf{m}}$
where $\rbar_{\mf{m}}$ is the representation associated to the maximal
ideal $\mf{m}$ of $\bb{T}^T_{\lambda}(U,\mc{O})$ in section \ref{subsec: l-adic aut forms
  on unitary groups}. After conjugating we
can and do assume that $\rbar|_{G_L} = \rbar_{\mf{m}}$.
Since $\mf{m}$ is non-Eisenstein, we have as above a
continuous lift 
\[ r_{\mf{m}} : G_{L^+} \rightarrow
\mc{G}_n(\bb{T}^T_{\lambda}(U,\mc{O})_{\mf{m}}) \]
of $\rbar$. Properties (0)-(3) of $r_{\mf{m}}$ and the fact that $\TT_{\mf{m}}$ is $l$-torsion free and reduced imply that $r_{\mf{m}}$ 
 is of type $\mc{S}$.
Hence $r_{\mf{m}}$ gives rise
to an $\mc{O}$-algebra homomorphism
\[  R^{\univ}_{\mc{S}} \onto \bb{T}^T_{\lambda}(U,\mc{O})_{\mf{m}} \]
(which is surjective by property (2) of $r_{\mf{m}}$). To prove the
theorem it suffices to show that the homomorphism
$R^{\univ}_{\mc{S}}\rightarrow \mc{O}$ corresponding to $r$ factors
through $\bb{T}^{T}_{\lambda}(U,\mc{O})_{\mf{m}}$.

We define
\[ R^{\loc} := \left(\widehat{\otimes}_{v \in
  S_l}R^{\mathbf{v}_{\lambda_{\wt{v}}},cr}_{\rbarwtvL}\right)\widehat{\otimes} \left(\widehat{\otimes}_{v \in
  R \cup S_a}\overline{R}^{\square}_{\rbarwtvL}\right) \]
where all completed tensor products are taken over $\mc{O}$. Note that $R^{\loc}$ is equidimensional of dimension $1 + n^2 \# T + [L^+:\bb{Q}]n(n-1)/2$ by Lemma 3.3 of \cite{BLGHT}.

\begin{sublem}
  There are integers $q,g \in \bb{Z}_{\geq 0}$ with
\[ 1+q+n^2 \# T = \dim R^{\loc} + g \]
and a module $M_{\infty}$ for both $R_{\infty}:=R^{\loc}[[x_1,\ldots,x_g]]$ and $S_{\infty}:=\mc{O}[[z_1,\ldots,z_{n^2 \#T},y_1,\ldots,y_q]]$ such that:
\begin{enumerate}
\item $M_{\infty}$ is finite and free over $S_{\infty}$.
\item $M_{\infty}/(z_i,y_j) \cong S_{\lambda}(U,\mc{O})_{\mf{m}}$.
\item The action of $S_{\infty}$ on $M_{\infty}$ can be factored through an $\mc{O}$-algebra homomorphism $S_{\infty} \rightarrow R_{\infty}$.
\item There is a surjection $R_{\infty} \onto R^{\univ}_{\mc{S}}$
  whose kernel contains all the $z_i$ and $y_j$ which is compatible
  with the actions of $R_{\infty}/(z_i,y_j)$ and $R^{\univ}_{\mc{S}}$
  on $M_{\infty}/(z_i,y_j)\cong
  S_{\lambda}(U,\mc{O})_{\mf{m}}$. Moreover, there is a lift $r^{\univ}_{\mc{S}} : G_{L^+} \rightarrow \mc{G}_n(R^{\univ}_{\mc{S}})$ of $\rbar$ representing the universal deformation
 so that for each $v \in T$, the composite
  $R^{\square}_{\rbarwtvL} \rightarrow R^{\loc} \rightarrow R_{\infty}
  \onto R^{\univ}_{\mc{S}}$ arises from the lift $r^{\univ}_{\mc{S}}|_{G_{L_{\wt{v}}}}$.
\end{enumerate}
\end{sublem}

Assuming the sublemma for now, let us finish the proof of the
theorem. 
Since $R_{\infty}$ is equidimensional of dimension $\dim
R^{\loc} + g$, it follows from (1) and (3) that the support of
$M_{\infty}$ in $R_{\infty}$ is a union of irreducible
components.  (Indeed by Lemma 2.3 of \cite{tay06} it is enough to check that the
$\m_{R_\infty}$-depth of $M_\infty$ is equal to $\dim R_\infty=\dim
S_\infty$. By (3) it is enough to check the same statement for the
$\m_{S_\infty}$-depth, and this is immediate from (1)).
The conjugacy class of $r'$ determines a homomorphism $\zeta' :
R^{\univ}_{\mc{S}} \rightarrow \mc{O}$ so that $r'$ is
$\ker(\GL_n(\mc{O})\rightarrow \GL_n(k))$-conjugate to $\zeta' \circ
r^{\univ}_{\mc{S}}$.
By the choice of $L$, for each $v \in R$, $r'|_{G_{L_{\wt{v}}}}$ lies
on a unique irreducible component $\mc{C}_{\wt{v}}$ of $\Spec
\overline{R}^{\square}_{\rbarwtvL}[1/l]$.
By Lemma \ref{lem: unramified twists lie on same components},
$\mc{C}_{\wt{v}}$ is also the unique irreducible component of $\Spec
\overline{R}^{\square}_{\rbarwtvL}$ containing $\zeta' \circ
r^{\univ}_{\mc{S}}|_{G_{L_{\wt{v}}}}$. For $v|l$, a similar argument
shows that $\zeta' \circ r^{\univ}_{\mc{S}} |_{G_{L_{\wt{v}}}}$ and
$r'|_{G_{L_{\wt{v}}}}$ lie on the same irreducible component
$\mc{C}_{\wt{v}}$ of $\Spec R^{\mathbf{v}_{\lambda_{\wt{v}}},cr}_{\rbarwtvL}[1/l]$.

The conjugacy class of $r$ determines a homomorphism $\zeta :
R^{\univ}_{\mc{S}}\rightarrow \mc{O}$ so that $r$ is
$\ker(\GL_n(\mc{O})\rightarrow \GL_n(k))$-conjugate to $\zeta \circ
r^{\univ}_{\mc{S}}$. By Lemma \ref{lem: unramified twists lie on same
  components}, the set of irreducible components of $\Spec
\overline{R}^{\square}_{\rbarwtvL}[1/l]$ containing $\zeta \circ
r^{\univ}_{\mc{S}}|_{G_{L_{\wt{v}}}}$ is equal to the set of components containing
$r|_{G_{L_{\wt{v}}}}$. By the choice of $L$ it follows that
$\mc{C}_{\wt{v}}$ contains $\zeta \circ r^{\univ}_{\mc{S}}|_{G_{L_{\wt{v}}}}$. A
similar argument, using part (d) of assumption (4) of the theorem, shows that $\mc{C}_{\wt{v}}$ contains $\zeta \circ
r^{\univ}_{\mc{S}}|_{G_{L_{\wt{v}}}}$ for $v|l$.

By part 5 of Lemma 3.3 of \cite{BLGHT} the irreducible components $\mc{C}_{\wt{v}}$ for $v \in S_l \cup R$
determine an irreducible component $\mc{C}_{r'}$ of $\Spec R_{\infty}$
(as mentioned above, $R^{\square}_{\rbarwtvL}$ is formally smooth over $\mc{O}$
for $v \in S_a$). Moreover, $\zeta'$ composed with the surjection $R_{\infty} \onto R^{\univ}_{\mc{S}}$ of part (4) of the sublemma gives rise to
a closed point of $\Spec R_{\infty}[1/l]$ which is in the support of
$M_{\infty}$ and which lies in $\mc{C}_{r'}$ but does not lie in any other irreducible component of $\Spec
R_{\infty}$. We deduce that $\mc{C}_{r'}$ is in
the support of $M_{\infty}$. Since the closed point of $\Spec R_{\infty}[1/l]$
corresponding to $\zeta$ lies in $\mc{C}_{r'}$ it is also in the
support of $M_{\infty}$ and we are done by assertion (4) of the sublemma.

\begin{proof}[Proof of sublemma]
We apply the Taylor-Wiles-Kisin patching method. Let
\[ q_0 = [L^+:\bb{Q}]n(n-1)/2 +
[L^+:\bb{Q}]n(1-(-1)^{\mu-n})/2. \]
If $(Q,\wt{Q},\{ \psi_{\wt{v}} \}_{v\in Q})$ is a triple where
\begin{itemize}
\item  $Q$ is
a finite set of places of $L^+$ disjoint from $T$ and consisting of places
which split in $L$;
\item $\wt{Q}$ consists of one place $\wt{v}$ of $L$
over each place $v \in Q$;
\item for each $v \in Q$, $\rbarwtvL \cong
  \overline{\psi}_{\wt{v}}\oplus \overline{s}_{\wt{v}}$ where $\dim
  \overline{\psi}_{\wt{v}}=1$ and $\overline{\psi}_{\wt{v}}$ is not
  isomorphic to any subquotient of $\overline{s}_{\wt{v}}$;
\end{itemize}
then for each $v \in Q$, let $R_{\rbarwtvL}^{\overline{\psi}_{\wt{v}}}$
denote the quotient of $R^{\square}_{\rbarwtvL}$ corresponding to lifts
$r: G_{L_{\wt{v}}} \rightarrow \GL_n(A)$ which are
$\ker(\GL_n(A)\rightarrow\GL_n(k))$-conjugate to a lift of the form
$\psi \oplus s$ where $\psi$  lifts
$\overline{\psi}_{\wt{v}}$ and  $s$ is an unramified lift of
$\overline{s}_{\wt{v}}$. We then introduce the deformation problem
\[ \mc{S}_{Q} = 
\left(L/L^+,T\cup Q,\wt{T}\cup \wt{Q},\mc{O},\rbar,\epsilon^{1-n}\delta^{\mu}_{L/L^+},\{
R^{\mathbf{v}_{\lambda_{\wt{v}}},cr}_{\rbarwtvL}\}_{v \in S_l} \cup \{
\overline{R}^{\square}_{\rbarwtvL}\}_{v \in R\cup S_a} \cup \{ R^{\overline{\psi}_{\wt{v}}}_{\rbarwtvL}\}_{v\in Q} \right).\]
We define deformations (resp.\ $T$-framed deformations) of $\rbar$ of type $\mc{S}_{Q}$ in
the evident manner and let $R^{\univ}_{\mc{S}_Q}$ (resp.\  $R^{\square_T}_{\mc{S}_{Q}}$) denote the
universal deformation ring  (resp.\ $T$-framed deformation ring) of type $\mc{S}_Q$.

By Proposition 2.5.9 of \cite{cht} we can and do choose an integer
$q\geq q_0$ and for each $N \in \bb{Z}_{\geq 1}$ a tuple
$(Q_N,\wt{Q}_N,\{\psi_{\wt{v}}\}_{v \in Q_N})$ as above with the
following additional properties:
\begin{itemize}
\item $\# Q_N = q$ for all $N$;
\item $\mathbf{N}v \equiv 1 \mod l^N$ for $v \in Q_N$;
\item the ring $R^{\square_T}_{\mc{S}_{Q_N}}$ can be topologically
  generated over $R^{\loc}$ by
\[ q-q_0 = q -  [L^+:\bb{Q}]n(n-1)/2 -
[L^+:\bb{Q}]n(1-(-1)^{\mu-n})/2 \]
elements.
\end{itemize}

For each $N \geq 1$, let $U_1(Q_N)=\prod_v U_1(Q_N)_v$ and
$U_0(Q_N)=\prod_v U_0(Q_N)_v$ be the compact open subgroups of
$G(\bb{A}_{L^+}^{\infty})$ with $U_i(Q_N)_v = U_v$ for $v \not \in
Q_N$, $i=0,1$ and $U_i(Q_N)_v = \iota_{\wt{v}}^{-1}U_i(\wt{v})$ for $v
\in Q_N$, $i=0,1$. Note that we have natural maps
\[ \bb{T}_{\lambda}^{T\cup Q_N}(U_{1}(Q_N),\mc{O}) \onto \bb{T}_{\lambda}^{T\cup Q_N}(U_{0}(Q_N),\mc{O}) \onto \bb{T}_{\lambda}^{T\cup Q_N}(U,\mc{O}) \into \bb{T}_{\lambda}^T(U,\mc{O}).\]
Thus $\mf{m}$ determines maximal ideals of the first three algebras in this sequence which we denote by $\mf{m}_{Q_N}$ for the first two and $\mf{m}$ for the third. Note also that $\bb{T}_{\lambda}^{T \cup Q_N}(U,\mc{O})_{\mf{m}} = \bb{T}_{\lambda}^T(U,\mc{O})_{\mf{m}}$ by the proof of Corollary 3.4.5 of \cite{cht}.

For each $v \in Q_N$ choose an element
$\phi_{\wt{v}} \in G_{L_{\wt{v}}}$ lifting geometric Frobenius and
let $\varpi_{\wt{v}} \in \mc{O}_{L_{\wt{v}}}$ be the uniformiser with
$\Art_{L_{\wt{v}}}\varpi_{\wt{v}}=\phi_{\wt{v}}|_{L_{\wt{v}}^{\ab}}$. Let
$P_{\wt{v}}(X) \in \bb{T}^{T\cup
  Q_N}_{\lambda}(U_1(Q_N),\mc{O})_{\mf{m}_{Q_N}} [X]$ denote the characteristic
polynomial of $r_{\mf{m}_{Q_N}}(\phi_{\wt{v}})$.
By Hensel's lemma, we can factor
$P_{\wt{v}}(X)=(X-A_{\wt{v}})Q_{\wt{v}}(X)$ where $A_{\wt{v}}$ lifts
$\overline{\psi}_{\wt{v}}(\phi_{\wt{v}})$ and $Q_{\wt{v}}(A_{\wt{v}})$
is a unit in $\bb{T}^{T\cup Q_N}_{\lambda}(U_1(Q_N),\mc{O})_{\mf{m}_{Q_N}}$. For
$i=0,1$ and $\alpha \in L_{\wt{v}}$ of non-negative valuation, consider the Hecke operator
\[V_{\alpha}:= \iota_{\wt{v}}^{-1}\left( \left[ U_{i}(\wt{v})  \left( \begin{matrix}
      1_{n-1} & 0 \cr 0 & \alpha \end{matrix} \right)
  U_{i}(\wt{v}) \right] \right) \]
on $S_{\lambda}(U_i(Q_N),\mc{O})_{\mf{m}_{Q_N}}$. Let $G_{Q_N} =
U_0(Q_N)/U_1(Q_N)$ and let $\Delta_{Q_N}$ denote the maximal $l$-power
order quotient of $G_{Q_N}$. Let $\mf{a}_{Q_N}$ denote the kernel of
the augmentation map $\mc{O}[\Delta_{Q_N}] \rightarrow \mc{O}$.
For $i=0,1$, let
\[ H_{i,Q_N} = \prod_{v \in Q_N}
Q_{\wt{v}}(V_{\varpi_{\wt{v}}})S_{\lambda}(U_{i}(Q_N),\mc{O})_{\mf{m}_{Q_N}} .\]
and let $\bb{T}_{i,Q_N}$ denote the image of $\bb{T}^{T\cup
  Q_N}_{\lambda}(U_i(Q_N),\mc{O})$ in
$\End_{\mc{O}}(H_{i,Q_N})$. Let $H = S_{\lambda}(U,\mc{O})_{\mf{m}}$.
 We
claim that the following hold:
\begin{enumerate}
\item For each $N$, the map
\[ \prod_{v\in Q_N} Q_{\wt{v}}(V_{\varpi_{\wt{v}}}): H \rightarrow
H_{0,Q_N} \]
is an isomorphism.
\item For each $N$, $H_{1,Q_N}$ is free over $\mc{O}[\Delta_{Q_N}]$ with
\[ H_{1,Q_N}/{\mf{a}_{Q_N}} \isoto H_{0,Q_N}. \]
\item For each $N$ and each $v \in Q_N$, there is a character with
  open kernel $V_{\wt{v}} : L_{\wt{v}}^{\times} \rightarrow
  \bb{T}_{1,Q_N}^{\times}$ so that
  \begin{enumerate}
  \item for each $\alpha \in L_{\wt{v}}$ of non-negative valuation,
    $V_{\alpha}= V_{\wt{v}}(\alpha)$ on $H_{1,Q_N}$;
  \item $(r_{\mf{m}_{Q_N}}\otimes \bb{T}_{1,Q_N})|_{W_{L_{\wt{v}}}}
    \cong s \oplus (V_{\wt{v}}\circ \Art_{L_{\wt{v}}}^{-1})$ with $s$ unramified, lifting $\overline{s}_{\wt{v}}$ and $(V_{\wt{v}}\circ \Art_{L_{\wt{v}}}^{-1})$ lifting $\overline{\psi}_{\wt{v}}$.
  \end{enumerate}
\end{enumerate}
To see this, note that Lemmas 3.1.3 and 3.1.5 of \cite{cht} imply that
$P_{\wt{v}}(V_{\varpi_{\wt{v}}})=0$ on
$S_{\lambda}(U_{1}(Q_N),\mc{O})_{\mf{m}_{Q_N}}$.  Property (1) now
follows from Lemma 3.2.2 of \cite{cht} together with Lemma 3.1.5 of
\cite{cht} and the fact that $\bb{T}^{T\cup
  Q_N}_{\lambda}(U,\mc{O})_{\mf{m}} =
\bb{T}^{T}_{\lambda}(U,\mc{O})_{\mf{m}}$. Property (3) follows exactly
as in the proof of part 8 of Proposition 3.4.4 of \cite{cht}. Note
that $H_{1,Q_N}$ is a
$\bb{T}_{\lambda}^{T}(U_1(Q_N),\mc{O})_{\mf{m}_{Q_N}}[G_{Q_N}]$-direct
summand of $S_{\lambda}(U_1(Q_N),\mc{O})_{\mf{m}_{Q_N}}$. Moreover, it follows from the fact that $U$ is sufficiently
small (see Lemma 3.3.1 of \cite{cht}) that
$S_{\lambda}(U_1(Q_N),\mc{O})_{\mf{m}_{Q_N}}$ is finite free over
$\mc{O}[G_{Q_N}]$ with $G_{Q_N}$-coinvariants isomorphic to
$S_{\lambda}(U_0(Q_N),\mc{O})_{\mf{m}_{Q_N}}$ via the trace map
$\tr_{G_{Q_N}}$. It follows that $H_{1,Q_N}$
has $G_{Q_N}$-coinvariants isomorphic to $H_{0,Q_N}$ via
$\tr_{G_{Q_N}}$. Finally, note that by (3) the action of $\alpha =
(\alpha_{\wt{v}})_{v \in Q_N} \in G_{Q_N}$ on $H_{1,Q_N}$ is given by
$\prod_{v \in Q_N} V_{\wt{v}}(\alpha_{\wt{v}})$. Since each
$\overline{\psi}_{\wt{v}}$ is unramified, the action of $G_{Q_N}$ on
$H_{1,Q_N}$ must factor through $\Delta_{Q_N}$ and (2) follows.

For each $N$, the lift $r_{\mf{m}_{Q_N}} \otimes \bb{T}_{1,Q_N}$ of $\rbar$ is of
type $\mc{S}_{Q_N}$ and gives rise to a surjection
$R^{\univ}_{\mc{S}_{Q_N}} \onto \bb{T}_{1,Q_N}$. 
Thinking of $\Delta_{Q_N}$ as the maximal $l$-power quotient of
$\prod_{v \in Q_N} I_{L_{\wt{v}}}$, the determinant of any choice of
universal deformation $r^{\univ}_{\mc{S}_{Q_N}}$ gives rise to a
homomorphism $\Delta_{Q_N} \rightarrow (R^{\univ}_{\mc{S}_{Q_N}})^{\times}$. We
thus have homomorphisms $\mc{O}[\Delta_{Q_N}] \rightarrow
R^{\univ}_{\mc{S}_{Q_N}} \rightarrow R^{\square_T}_{\mc{S}_{Q_N}}$ and
natural isomorphisms $R^{\univ}_{\mc{S}_{Q_N}}/\mf{a}_{Q_N} \cong
R^{\univ}_{\mc{S}}$ and $R^{\square_T}_{\mc{S}_{Q_N}}/\mf{a}_{Q_N}
\cong R^{\square_T}_{\mc{S}}$.

Let
\[ \mc{T} = \mc{O}[[X_{v,i,j}:v \in T, i,j = 1,\ldots,n]].\]
Choose a lift $r^{\univ}_{\mc{S}} : G_{L^+} \rightarrow
\mc{G}_n(R^{\univ}_{\mc{S}})$ representing the universal
deformation. The tuple $(r^{\univ}_{\mc{S}},(1_n+X_{v,i,j})_{v \in T})$ gives rise to an isomorphism
\[ R^{\square_T}_{\mc{S}} \isoto
R^{\univ}_{\mc{S}}\widehat{\otimes}_{\mc{O}} \mc{T}.\]
For each $N$, choose a lift $r^{\univ}_{\mc{S}_{Q_N}} : G_{L^+}
\rightarrow \mc{G}_n(R^{\univ}_{\mc{S}_{Q_N}})$ representing the
universal deformation with $r^{\univ}_{S_{Q_N}} \mod
\mf{a}_{Q_N}=r^{\univ}_{\mc{S}}$. This gives rise to an isomorphism
$R^{\square_T}_{\mc{S}_{Q_N}} \isoto
R^{\univ}_{\mc{S}_{Q_N}}\widehat{\otimes}_{\mc{O}}\mc{T}$ which
reduces modulo $\mf{a}_{Q_N}$ to the isomorphism
$R^{\square_T}_{\mc{S}}\isoto
R^{\univ}_{\mc{S}}\widehat{\otimes}_{\mc{O}} \mc{T}$. We let
\begin{eqnarray*}
  H^{\square_T} & = & H \otimes_{R^{\univ}_{\mc{S}}}R^{\square_T}_{\mc{S}}
  \\
  H^{\square_T}_{1,Q_N} & = & H_{1,Q_N}
  \otimes_{R^{\univ}_{\mc{S}_{Q_N}}} R^{\square_T}_{\mc{S}_{Q_N}} \\
 \bb{T}^{\square_T}_{1,Q_N} & = & \bb{T}_{1,Q_N} \otimes_{R^{\univ}_{\mc{S}_{Q_N}}} R^{\square_T}_{\mc{S}_{Q_N}}.
\end{eqnarray*}
Then $H^{\square_T}_{1,Q_N}$ is a finite free
$\mc{T}[\Delta_{Q_N}]$-module with $H^{\square_T}_{1,Q_N}/\mf{a}_{Q_N}
\cong H^{\square_T}$, compatible with the isomorphism
$R^{\square_T}_{\mc{S}_{Q_N}}/\mf{a}_{Q_N} \cong R^{\square_T}_{\mc{S}}$.

Let $g=q-q_0$ and let
\begin{eqnarray*}
  \Delta_{\infty} & = & \bb{Z}_l^q \\
  R_{\infty} & = & R^{\loc}[[x_1,\ldots,x_g]] \\
  S_{\infty} & = & \mc{T}[[\Delta_{\infty}]] 
\end{eqnarray*}
and let $\mf{a}$ denote the kernel of the $\mc{O}$-algebra homomorphism $S_{\infty}
\rightarrow \mc{O}$ which sends each $X_{v,i,j}$ to 0 and each element
of $\Delta_{\infty}$ to 1. Note that $S_\infty$ is a formally smooth
over $\bigO$ of relative dimension $q+n^2\# T$. For each $N$, choose a surjection
$\Delta_{\infty} \onto \Delta_{Q_N}$ and let $\mf{c}_N$ denote the
kernel of the corresponding homomorphism $S_{\infty} \onto \mc{T}[\Delta_{Q_N}]$.
For each $N \geq 1$, choose a surjection of $R^{\loc}$-algebras
\[ R_{\infty} \onto R^{\square_T}_{\mc{S}_{Q_N}}.\]
We regard each $R^{\square_T}_{\mc{S}_{Q_N}}$ as an
$S_{\infty}$-algebra via $S_{\infty}\onto \mc{T}[\Delta_{Q_N}]
\rightarrow R^{\square_T}_{\mc{S}_{Q_N}}$. In particular,
$R^{\square_T}_{\mc{S}_{Q_N}}/\mf{a} \cong R^{\univ}_{\mc{S}}$.

Choose a sequence of open ideals $(\mf{b}_N)_{N\geq 1}$ of $S_{\infty}$ with
\begin{itemize}
  \item $\mf{b}_N \supset \mf{c}_N$
  \item $\mf{b}_{N} \supset \mf{b}_{N+1}$
  \item $\cap_N \mf{b}_N = (0)$.
\end{itemize}
Let $\bb{T}=\bb{T}_{\lambda}(U,\mc{O})_{\mf{m}}$. Choose a sequence of open ideals $(\mf{d}_N)_{N \geq 1}$ of
$R^{\univ}_{\mc{S}}$ with the following properties:
\begin{itemize}
  \item $ \mf{b}_NR^{\univ}_{\mc{S}} + \ker(R^{\univ}_{\mc{S}} \onto
    \bb{T}) \supset \mf{d}_N \supset
    \mf{b}_N R^{\univ}_{\mc{S}}$
  \item $\mf{d}_{N} \supset \mf{d}_{N+1}$
  \item $\cap_N \mf{d}_N = (0)$.
\end{itemize}
In the first bullet point 
$S_{\infty}$ acts on $R^{\univ}_{\mc{S}}$ via the quotient
$S_{\infty}/\mf{a}=\mc{O}$. In what follows, we also consider $S_{\infty}$ acting on
$\bb{T}$ and $H$ via the quotient $S_{\infty}/\mf{a}$.

For each $N \geq 1$, define a `patching datum of level $N$' to consist
of a tuple $(\phi,\mc{M},\psi)$ where
\begin{itemize}
  \item $\phi$ is a surjective homomorphism of $\mc{O}$-algebras
\[ \phi : R_{\infty} \onto R^{\univ}_{\mc{S}}/\mf{d}_N.\]
  \item $\mc{M}$ is a module over $R_{\infty} \widehat{\otimes}_{\mc{O}}
    S_{\infty}$ which is finite free over $S_{\infty}/\mf{b}_N$.
  \item $\psi$ is an isomorphism of $\mc{O}$-modules
\[ \psi : \mc{M}/\mf{a} \isoto H/\mf{b}_N \]
compatible with the action of $R_{\infty}$ on $\mc{M}/\mf{a}$, the action
of $R^{\univ}_{\mc{S}}/\mf{d}_N$ on $H/\mf{b}_N$ (via
$R^{\univ}_{\mc{S}}/\mf{d}_N \onto \bb{T}/\mf{b}_N$)  and the homomorphism $\phi$.
\end{itemize}
We consider two such patching data of level $N$ $(\phi,\mc{M},\psi)$ and
$(\phi',\mc{M}',\psi')$ to be equivalent if $\phi = \phi'$ and there is an
isomorphism $\mc{M} \cong \mc{M}'$ of $R_{\infty}\widehat{\otimes}_{\mc{O}}
S_{\infty}$-modules which is compatible with $\psi$ and $\psi'$ when reduced
modulo $\mf{a}$. Note that there are only finitely many patching data
of level $N$ up to equivalence. Note also that given $N' \geq N \geq
1$ and a patching datum
of level $N'$, we can obtain a patching datum of level $N$ in an
obvious fashion.

For each $M\geq N \geq 1$, let $D(M,N)$ be the patching datum of level
$N$ consisting of
\begin{itemize}
  \item the surjective homomorphism
\[ R_{\infty} \onto R^{\square_T}_{\mc{S}_{Q_M}} \onto
R^{\square_T}_{\mc{S}_{Q_M}}/\mf{a}=R^{\univ}_{\mc{S}} \onto
R^{\univ}_{\mc{S}}/\mf{d}_N \]
  \item the module $H^{\square_T}_{1,Q_M}/\mf{b}_N$ which is finite
    free over $S_{\infty}/\mf{b}_N$ and acted upon by $R_{\infty}$ via
    $R_{\infty} \onto R^{\square_T}_{\mc{S}_{Q_M}} \onto \bb{T}_{1,Q_M}^{\square}$. 
\item the isomorphism
\[ H^{\square_T}_{1,Q_M} /(\mf{a} + \mf{b}_N) \cong H/\mf{b}_N \]
which is compatible with the homomorphism $R_{\infty} \onto
R^{\square_T}_{\mc{S}_{Q_M}} \onto R^{\univ}_{\mc{S}}/\mf{d}_N$.
\end{itemize}
Since there are only finitely many patching data of each level $N$, we
can and do choose a sequence of pairs of integers $(M_i,N_i)_{i \geq
  1}$ with $M_{i+1} > M_i$, $N_{i+1}> N_{i}$ and $M_i \geq N_i$ for
all $i$ such that $D(M_{i+1},N_{i+1})$ reduced to level $N_i$ is
equivalent to $D(M_i,N_i)$. In other words
\begin{itemize}
\item for each $i$, the homomorphism
\[ \phi_{i+1}: R_{\infty} \onto R^{\square_T}_{\mc{S}_{Q_{M_{i+1}}}} \onto
R^{\square_T}_{\mc{S}_{Q_{M_{i+1}}}}/\mf{a}=R^{\univ}_{\mc{S}} \onto
R^{\univ}_{\mc{S}}/\mf{d}_{N_{i+1}}, \]
when reduced modulo $\mf{d}_{N_{i}}$ is equal to the homomorphism 
\[ \phi_i: R_{\infty} \onto R^{\square_T}_{\mc{S}_{Q_{M_i}}} \onto
R^{\square_T}_{\mc{S}_{Q_{M_i}}}/\mf{a}=R^{\univ}_{\mc{S}} \onto
R^{\univ}_{\mc{S}}/\mf{d}_{N_i}. \]
\item for each $i$, we can and do choose an isomorphism of $R_{\infty}
  \widehat{\otimes}_{\mc{O}} S_{\infty}$-modules
\[ \gamma_i : H^{\square_T}_{1,Q_{M_{i+1}}}/\mf{b}_{N_{i}} \cong
H^{\square_T}_{1,Q_{M_i}}/\mf{b}_{N_i}. \]
\end{itemize}
Taking the inverse limit of the $\phi_i$ gives rise to a surjection
\[  R_{\infty} \onto R^{\univ}_{\mc{S}}. \]
Define
\[ H^{\square_T}_{\infty} := \varprojlim_{i}
H^{\square_T}_{1,Q_{M_{i}}}/\mf{b}_{N_i}\] where the limit is taken
with respect to the $\gamma_i$. Then $H^{\square_T}_{\infty}$ is a
module for $R_{\infty}\widehat{\otimes}_{\mc{O}}S_{\infty}$ which is
finite free over $S_{\infty}$. Note that the image of $S_{\infty}$ in
$\End_{S_{\infty}}(H^{\square_T}_{\infty})$ is contained in the image
of $R_{\infty}$ (indeed, the image of $R_{\infty}$ is closed and the
corresponding statement is true for each
$H^{\square_T}_{1,Q_{M_i}}/b_{N_i}$).  Since $S_{\infty}$ is formally
smooth over $\mc{O}$, we can and do factor the action of $S_{\infty}$
on $H^{\square_T}_{\infty}$ through $R_{\infty}$ (note that it
suffices to define the factorisation on a set of topological
generators of $S_\infty$ over $\bigO$).
Note that we have
\[ H^{\square_T}_{\infty}/\mf{a} \cong H \]
compatible with the surjection $R_{\infty} \onto
R^{\univ}_{\mc{S}}$. Since $R_{\infty}$ is equidimensional of
dimension 
\[ 1 + n^2 \#T + [L^+:\bb{Q}]n(n-1)/2 + q-q_0 = 1 + q + n^2 \#T  -
[L^+:\bb{Q}]n(1-(-1)^{\mu-n})/2 \]
and $H^{\square_T}_{\infty}$ has $\m_{R_{\infty}}$-depth at least
\[  1 + q + n^2 \#T \]
(the dimension of $S_{\infty}$) we deduce from Lemma 2.3 of \cite{tay06} that $\mu \equiv n \mod
2$. Hence
\[ 1 + q + n^2 \#T = \dim R^{\loc} + g\]
and taking $M_{\infty} = H^{\square_T}_{\infty}$, the sublemma is proved.
\end{proof}
Since proving the sublemma was our only remaining task, the automorphy lifting 
theorem is proven.
\end{proof}

\subsubsection{Totally real fields}
\label{subsubsec: totally real fields}

\begin{thm}\label{thm: the main modularity lifting theorem, using tilde}
  Let $F^+$ be a totally real field. Let $l>n$ be a prime and let $K\subset \Qlbar$ denote a finite extension of $\bb{Q}_l$ with ring of integers
$\mc{O}$ and residue field $k$. Assume that $K$ contains the image of
every embedding $F^+ \into \Qlbar$.
  Let \[\rho:G_{F^+}\to\GL_n(\mc{O})\]be a continuous 
  representation and let $\rhobar = \rho \mod
  \mf{m}_{\mc{O}}$. Suppose that $\rho$ enjoys the following properties:
  \begin{enumerate}
  \item $\rho^\vee\cong \rho\epsilon^{n-1}\chi$  for some character
    $\chi:G_{F^+}\to\mc{O}^\times$ with $\chi(c_v)$ independent of
    $v|\infty$ (where $c_v$ denotes a complex conjugation at $v$).
   \item The reduction $\rhobar$ is absolutely irreducible and 
 $\rhobar(G_{F^+(\zeta_l)}) \subset \GL_n(k)$ is big (see Definition
 \ref{defn: m-big}). 
  \item $(\overline{F^+})^{\ker\ad\rhobar}$ does not contain $\zeta_l$.
  \item There is a continuous representation
    $\rho':G_{F^+}\to\GL_n(\mc{O})$, a RAESDC automorphic representation
    $\pi$ of $\GL_n(\bb{A}_{F^+})$ which is unramified above $l$ and $\iota: \Qlbar \isoto \bb{C}$
 such that
    \begin{enumerate}
    \item $\rho' \otimes_{\mc{O}}\Qlbar \cong r_{l,\iota}(\pi) : G_{F^+}
      \rightarrow \GL_n(\Qlbar)$ and $(\rho')^\vee\cong
  \rho'\epsilon^{n-1}\chi'$ for some character
  $\chi':G_{F^+}\to\mc{O}^\times$ with $\overline{\chi}'= \overline{\chi}$.
    \item  $\rhobar=\rhobar'$.
    \item For all places $v\nmid l$ of $F^+$, either
     $\rho|_{G_{F^+_v}}$ and $\pi_v$ are both
        unramified, or the following both hold:
      \begin{itemize}
      \item $\rho'|_{G_{F^+_v}}\leadsto_{\mc{O}} \rho|_{G_{F^+_v}}$
        and $\rho|_{G_{F^+_v}}\leadsto_{\mc{O}} \rho'|_{G_{F^+_v}}$.
      \item $\chi|_{I_{F^+_v}}=\chi'|_{I_{F^+_v}}$.
      \end{itemize}
    \item For all places $v|l$, $\rho|_{G_{F_v}} \sim
      \rho'|_{G_{F_v}}$.
    \end{enumerate}
  \end{enumerate}
Then $\rho$ is automorphic.
\end{thm}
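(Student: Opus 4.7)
The plan is to deduce this totally real version from the CM version (Theorem \ref{thm: CM modularity lifting theorem, using tilde}) by solvable base change to an imaginary quadratic CM extension of $F^+$, followed by descent. The strategy parallels the passage from RAESDC to RACSDC that is standard in this subject (see, e.g., the arguments in \cite{cht} and \cite{BLGHT}).

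First I would choose an imaginary CM quadratic extension $F/F^+$ such that: $F$ is linearly disjoint from $\overline{F^+}^{\ker\rhobar}(\zeta_l)$ over $F^+$; $F/F^+$ is unramified at all finite places; every place of $F^+$ dividing $l$ splits in $F$; and every finite place of $F^+$ at which $\rho$, $\rho'$, or $\pi$ ramifies splits in $F$. Using the hypothesis that $\chi(c_v)$ is independent of $v\mid\infty$ together with Lemma 2.1.4 of \cite{cht}, the representations $\rho$ and $\rho'$ extend to homomorphisms $r,r':G_{F^+}\to \mc{G}_n(\mc{O})$ with $\nu\circ r=\epsilon^{1-n}\chi$ and $\nu\circ r'=\epsilon^{1-n}\chi'$ (after reconciling $\chi$ and $\chi'$ via their common reduction and possibly enlarging $\mc{O}$). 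By choosing $F$ large enough (and enlarging $\mc{O}$), we may arrange that $\chi|_{G_F} = \psi\psi^c$ and $\chi'|_{G_F}=\psi'(\psi')^c$ for continuous crystalline-away-from-ramification characters $\psi,\psi':G_F\to\mc{O}^\times$ with $\overline{\psi}=\overline{\psi'}$, so that $\rho\otimes\psi^{-1}$ and $\rho'\otimes(\psi')^{-1}$, when restricted to $G_F$, become conjugate self-dual in the sense required by the CM theorem.

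Next I would base change $\pi$ to $F$ via Arthur--Clozel (or rather \cite{labesse}) to obtain a RACSDC automorphic representation $\pi_F$ of $\GL_n(\A_F)$; cuspidality is preserved because $\rho'|_{G_F}$ is residually irreducible (again by linear disjointness). I would then verify the hypotheses of Theorem \ref{thm: CM modularity lifting theorem, using tilde} applied to $\rho|_{G_F}\otimes\psi^{-1}$, $\rho'|_{G_F}\otimes(\psi')^{-1}$, and $\pi_F\otimes(\psi\circ\Art_F^{-1})^{-1}$. Absolute irreducibility and bigness of the residual image are preserved by the linear disjointness of $F$ from $\overline{F^+}^{\ker\ad\rhobar}(\zeta_l)$; the residual congruence $\rhobar=\rhobar'$ descends to the twists since $\overline{\psi}=\overline{\psi'}$; for $v\nmid l$, the splitting of ramified places in $F$ together with Lemma \ref{lem: nice twists preserve sim and leadsto} (applied using $\chi|_{I_{F^+_v}}=\chi'|_{I_{F^+_v}}$, which gives $\psi|_{I_{F_w}}=\psi'|_{I_{F_w}}$ on the chosen place $w\mid v$) preserves $\leadsto_\mc{O}$; for $v\mid l$, $\sim$ is preserved by restriction (by the discussion following Definition \ref{defn:tilde}) and by twisting by crystalline characters (Lemma \ref{lem: nice twists preserve sim and leadsto}).

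Applying the CM theorem then gives automorphy of $\rho|_{G_F}\otimes\psi^{-1}$, whence of $\rho|_{G_F}$ after untwisting. It remains to descend from $F$ to $F^+$: the Galois representation $\rho|_{G_F}$ corresponds to a RACSDC $\Pi$ of $\GL_n(\A_F)$, and the $\Gal(F/F^+)$-invariance of $\rho|_{G_F}$ (up to the twist encoded by $r$) together with strong multiplicity one and Arthur--Clozel cyclic descent yields a RAESDC $\pi_1$ of $\GL_n(\A_{F^+})$ whose base change is $\Pi$ and whose associated Galois representation is $\rho$, establishing that $\rho$ is automorphic. The main obstacle, and the bulk of the technical work, will be the bookkeeping around the auxiliary characters $\psi,\psi'$: one must arrange them so that they are crystalline at places above $l$, have the required local equalities of inertial restrictions to allow Lemma \ref{lem: nice twists preserve sim and leadsto} to apply, satisfy $\overline{\psi}=\overline{\psi'}$, and produce a $c$-conjugate-self-dual twist on $G_F$ without disturbing the residual hypotheses; constructing such characters amounts to a Hecke character existence argument of the kind carried out in detail in section \ref{sec; character building}.
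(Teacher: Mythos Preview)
Your approach is essentially the same as the paper's: choose a CM quadratic extension $F/F^+$ with appropriate splitting and linear disjointness, twist $\rho|_{G_F}$ and $\rho'|_{G_F}$ by characters $\psi,\psi'$ of $G_F$ so they become conjugate self-dual, verify that $\sim$ and $\leadsto_{\mc{O}}$ survive the twist via Lemma \ref{lem: nice twists preserve sim and leadsto}, apply Theorem \ref{thm: CM modularity lifting theorem, using tilde}, and descend.

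A few points where the paper is leaner than your outline. First, the intermediate step of extending $\rho,\rho'$ to $\mc{G}_n$-valued homomorphisms over $G_{F^+}$ is not used here; the paper works directly with the $\GL_n$-valued twists $r=\psi\rho|_{G_F}$ and $r'=\psi'\rho'|_{G_F}$. Second, the character construction is much simpler than you indicate: rather than invoking the machinery of section \ref{sec; character building}, the paper cites Lemma 4.1.5 of \cite{cht} to produce $\psi$ with $\psi\psi^c=\chi|_{G_F}$ and crystalline above $l$, and then Lemma 4.1.6 of \cite{cht} to produce $\psi'$ with $\psi'(\psi')^c=\chi'|_{G_F}$, $\overline{\psi}=\overline{\psi}'$, and $\psi|_{I_{F_w}}=\psi'|_{I_{F_w}}$ for all relevant $w$ (the latter being an input to the construction of $\psi'$, not a consequence of $\chi|_{I_{F^+_v}}=\chi'|_{I_{F^+_v}}$ as your parenthetical suggests; the paper also observes separately that $\chi$ and $\chi'$ agree on inertia at $l$ because they are crystalline with the same Hodge--Tate weights). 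Third, there is no need to require $F/F^+$ unramified at all finite places at this stage. Finally, the descent is packaged as a single citation to Lemma 1.5 of \cite{BLGHT} rather than spelled out via Arthur--Clozel.
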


\begin{proof}
  Extending $\mc{O}$ if necessary, choose a quadratic CM extension $F$
  of $F^+$ and algebraic characters $\psi,\psi' : G_{F} \rightarrow
  \mc{O}^{\times}$ such that the following hold.
  \begin{enumerate}[(i)]
  \item $F$ is linearly disjoint from $\overline{F^+}^{\ker 
      \rbar}(\zeta_l)$ over $F^{+}$.
  \item Each place of $F^+$ lying over $l$ and each place at which $\rho$ or $\pi$ is
    ramified splits completely in $F$.
  \item $\psi \psi^{c} = \chi|_{G_{F}}$.
  \item $\psi$ and $\psi'$ are crystalline above $l$.
  \item $\psi' (\psi')^{c}=\chi'|_{G_F}$.
  \item Let $S$ denote the set of places of $F$ which divide $l$ or
    which lie over a place of $F^+$ where $\rho$ or $\pi$
    ramifies. Then for all $w \in S$, we have $\psi'|_{I_{L_w}} =
    \psi|_{I_{L_w}}$.
  \item $\overline{\psi}=\overline{\psi}'$.
  \end{enumerate}
  (Take $F$ to be a quadratic CM extension of $F^+$ satisfying (i) and
  (ii).  Use Lemma 4.1.5 of \cite{cht} to construct a $\psi$ which
  satisfies (iii) and (iv). Note that $\chi$ and $\chi'$ are
  crystalline characters of $G_{F^+}$ with the same Hodge-Tate
  weights. In particular, for each place $v|l$ of $F^+$ we have
  $\chi|_{I_{F^+_v}} = \chi' |_{I_{F^+_v}}$. We can therefore apply
  Lemma 4.1.6 of \cite{cht} to find a $\psi'$ satisfying (v), (vi) and
  (vii).)
  
  Let $r = \psi \rho|_{G_F}$ and $r' = \psi' \rho'|_{G_F}$. Then
  $\rbar = \rbar'$, $r^c \cong r^{\vee}\epsilon^{1-n}$ and $(r')^c
  \cong (r')^{\vee}\epsilon^{1-n}$. For $w \in S$, Lemma \ref{lem:
    nice twists preserve sim and leadsto} implies that $r|_{G_{F_w}}
  \sim r'|_{G_{F_w}}$ if $v|l$ and both $r|_{G_{F_w}}
  \leadsto_{\mc{O}} r'|_{G_{F_w}}$ and $r'|_{G_{F_w}}
  \leadsto_{\mc{O}} r|_{G_{F_w}}$ otherwise.
    
    The theorem now follows from Theorem \ref{thm: CM modularity lifting
    theorem, using tilde} applied to $r|_{G_F}$ and $r'|_{G_F}$,
  together with Lemma 1.5 of \cite{BLGHT}.
\end{proof}

\subsubsection{Finiteness of a deformation ring}
\label{subsubsec: finiteness of def ring}

We now deduce a result on the finiteness of a universal deformation
ring. This result is not needed for the main theorem, so this section
may be skipped by readers interested only in the Sato-Tate
conjecture. However, we believe that it is of independent interest,
and it will prove useful to us in future work.

Let $F$ be an imaginary CM field with totally real subfield $F^+$ and
let $c$ be the non-trivial element of $\Gal(F/F^+)$. Let $n\in
\bb{Z}_{\geq 1}$ and let $l>n$ be a prime. Let $K\subset \Qlbar$
denote a finite extension of $\bb{Q}_l$ with ring of integers $\mc{O}$
and residue field $k$. Assume that $K$ contains the image of every
embedding $F \into \Qlbar$.  Suppose in addition that each place of
$F^+$ dividing $l$ splits in $F$.  Let
\[ \rhobar : G_F \rightarrow \GL_n(k) \]
be a continuous homomorphism and suppose
\[ \rho' : G_F \rightarrow \GL_n(\mc{O}) \]
is a continuous lift of $\rhobar$ which is automorphic of
level prime to $l$. In particular, $(\rho')^c \cong (\rho')^{\vee} \epsilon^{1-n}$.
By definition, there is an RACSDC automorphic
representation $\pi$ of $\GL_n(\bb{A}_{F})$ (which is unramified above $l$)
and an isomorphism $\iota : \Qlbar \isoto
\bb{C}$ such that $\rho^{\prime}\otimes\Qlbar \cong
r_{l,\iota}(\pi) : G_{F} \rightarrow \GL_n(\Qlbar)$. Suppose that
every finite place of $F$ at which $\pi$ is ramified is split over
$F^+$. Let $S_l$ denote the set of places of $F^+$ lying above
$l$. Let $R$ denote a finite set of finite places of $F^+$ disjoint
from $S_l$ and containing the restriction to $F^+$ of every finite
place of $F$ where $\pi$ is ramified.

Let $\delta_{F/F^+}$ be the quadratic character of $G_{F^+}$
corresponding to $F$. By Lemma 2.1.4 of \cite{cht} we can and do
extend $\rhobar$ and $\rho'$ to homomorphisms $\rbar : G_{F^+}
\rightarrow \mc{G}_n(k)$ and $r' : G_{F^+} \rightarrow
\mc{G}_n(\mc{O})$ with $r' \otimes k =\rbar $,
$\rbar|_{G_F}=(\rhobar,\epsilon^{1-n})$,
$r'|_{G_F}=(\rho'|_{G_F},\epsilon^{1-n})$ and $ \nu \circ r' =
\epsilon^{1-n}\delta_{F/F^+}^{\mu}$ for some $\mu \in
(\bb{Z}/2\bb{Z})$ (which is independent of the choice of $r'$).

For each $v \in R\cup S_l$ choose once and for all a place $\wt{v}$ of
$F$ lying above $v$. Let $\wt{R}$ and $\wt{S}_l$ denote the set of
$\wt{v}$ for $v$ in $R$ and $S$ respectively.  For each $v \in R$, let
$R^{\square}_{\rbarwtv}$ denote the universal $\mc{O}$-lifting ring of
$\rbarwtv$.
Suppose that for each $v \in R$ and each minimal prime ideal $\wp$ of $R^{\square}_{\rbarwtv}$, the quotient $R^{\square}_{\rbarwtv}/\wp$ is geometrically integral.
Note that this can always be achieved by replacing $\mc{O}$ with the ring of integers in a finite extension of $K$.
Suppose that the lift $r'|_{G_{F_{\wt{v}}}}$
corresponds to a closed point of $\Spec R^{\square}_{\rbarwtv}[1/l]$
which lies on only one irreducible component. 
Let $R_{\wt{v}}$ denote
the quotient of $R^{\square}_{\rbarwtv}$ by the minimal prime ideal
corresponding to this irreducible component.

For $v \in S_l$, let
$\lambda_{\wt{v}}$ be the element of
$(\bb{Z}^{n}_+)^{\Hom(F_{\wt{v}},K)}$ with the property that
$r'|_{G_{F_{\wt{v}}}}$ has $l$-adic Hodge type
$\mathbf{v}_{\lambda_{\wt{v}}}$. Suppose that for each minimal prime ideal $\wp$ of $R^{\mathbf{v}_{\lambda_{\wt{v}}},cr}_{\rbarwtv}$, the quotient $R^{\mathbf{v}_{\lambda_{\wt{v}}},cr}_{\rbarwtv}/\wp$ is geometrically integral.
 Let $R_{\wt{v}}$ be the quotient of
$R^{\mathbf{v}_{\lambda_{\wt{v}}},cr}_{\rbarwtv}$ by the minimal prime
ideal corresponding to the (necessarily unique) irreducible component
of $\Spec R^{\mathbf{v}_{\lambda_{\wt{v}}},cr}_{\rbarwtv}[1/l]$
containing $r'|_{G_{F_{\wt{v}}}}$.

Consider the deformation problem
\begin{align*} 
  \mc{S}_{r'} & := \left(F/F^+,R\cup S_l,\wt{R}\cup
    \wt{S}_l,\mc{O},\rbar,\epsilon^{1-n}\delta^{\mu}_{F/F^+},\{
    R_{\wt{v}}\}_{v \in R\cup S_l} \right) .
\end{align*}
By Lemma \ref{lem: property (*)}, the rings $R_{\wt{v}}$ for $v \in
S_l \cup R$ satisfy the property (*) of section \ref{subsec:
  global deformation rings}. Let $R^{\univ}_{\mc{S}_{r'}}$ be the
object of $\mc{C}_{\mc{O}}$ representing the corresponding deformation
functor.

\begin{prop}
\label{prop: finiteness of global deformation ring} Maintain the
assumptions made above. Suppose in addition that
\begin{itemize}
\item[(i)] $\rhobar(G_{F(\zeta_l)})\subset \GL_n(k)$ is big, and
\item[(ii)] $\overline{F}^{\ker \ad \rhobar}$ does not contain
  $\zeta_l$.
\end{itemize}
Then
\begin{enumerate}
\item $R^{\univ}_{\mc{S}_{r'}}$ is a finite $\mc{O}$-algebra.
\item  Any
  $\Qlbar$-point of $R^{\univ}_{\mc{S}_{r'}}$ gives rise to a
  representation $G_F \rightarrow \GL_n(\Qlbar)$ which is automorphic
  of level prime to $l$.
\item $\mu \equiv n \mod 2$.
\end{enumerate}
\end{prop}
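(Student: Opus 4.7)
The plan is to re-run the Taylor--Wiles--Kisin patching argument in the proof of Theorem~\ref{thm: CM modularity lifting theorem, using tilde} (making the same preliminary solvable base change to a CM field $L \supset F$ so that $L/L^+$ is unramified at all finite places, $4 \mid n[L^+:\bb{Q}]$, and the other technical conditions needed for the unitary group $G$ hold), but with the local deformation conditions at $v \in R \cup S_l$ replaced by the specific quotients $R_{\wt{v}}$ of the proposition (pulled back to the corresponding places of $L^+$). The Sublemma of that proof then produces a module $M_\infty$, finite free over $S_\infty = \mc{O}[[z_1,\ldots,z_{n^2\#T},y_1,\ldots,y_q]]$ with augmentation ideal $\mathfrak{a}$, and an action of $R_\infty = R^{\loc}[[x_1,\ldots,x_g]]$ for
\[
R^{\loc} = \Bigl(\widehat{\otimes}_{v\in S_l}R_{\wt{v}}\Bigr)\widehat{\otimes}_{\mc{O}}\Bigl(\widehat{\otimes}_{v\in R}R_{\wt{v}}\Bigr)\widehat{\otimes}_{\mc{O}}\Bigl(\widehat{\otimes}_{v\in S_a}\overline{R}^{\square}_{\rbar|_{G_{L_{\wt{v}}}}}\Bigr),
\]
with $M_\infty/\mathfrak{a} \cong S_\lambda(U,\mc{O})_\mathfrak{m}$ and a surjection $R_\infty \twoheadrightarrow R^{\univ}_{\mc{S}_{r'|_{G_{L^+}}}}$ whose kernel contains $\mathfrak{a} R_\infty$.

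The crucial new input is that, by the assumed geometric integrality of each $R_{\wt{v}}$ (and formal smoothness at $v \in S_a$), $R^{\loc}$ is a completed tensor product of geometrically integral $\mc{O}$-algebras and is therefore itself an integral domain; hence $R_\infty$ is an integral domain. The standard depth calculation shows that $M_\infty$ is maximal Cohen--Macaulay over $R_\infty$, so its support in $\Spec R_\infty$ is a union of irreducible components; irreducibility of $R_\infty$ together with $M_\infty \neq 0$ (since $M_\infty/\mathfrak{a}$ contains the Hecke eigenclass attached to $\pi$) forces the support to be all of $\Spec R_\infty$, so $R_\infty$ acts faithfully on $M_\infty$. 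Matching $\dim S_\infty = 1 + q + n^2\#T$ against $\dim R_\infty = 1 + q + n^2\#T - [L^+:\bb{Q}]n(1-(-1)^{\mu-n})/2$ then forces $\mu \equiv n \pmod 2$, proving (3); with the two dimensions equal, $R_\infty$ embeds in $\End_{S_\infty}(M_\infty)$ and so is finite over $S_\infty$, and reducing modulo $\mathfrak{a}$ shows that $R^{\univ}_{\mc{S}_{r'|_{G_{L^+}}}}$ is finite over $\mc{O}$. A standard solvable-descent argument (using that $R^{\univ}_{\mc{S}_{r'}}$ is a finite module over $R^{\univ}_{\mc{S}_{r'|_{G_{L^+}}}}$, the fibres of the restriction functor being controlled by finite-dimensional cohomology of $\Gal(L^+/F^+)$) then yields (1).

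For (2), I would deduce automorphy of each $\Qlbar$-point $\zeta: R^{\univ}_{\mc{S}_{r'}} \to \Qlbar$ directly from Theorem~\ref{thm: CM modularity lifting theorem, using tilde}: $\zeta$ is defined over the ring of integers $\mc{O}'$ of some finite extension of $K$ and yields a lift $r_\zeta: G_{F^+} \to \mc{G}_n(\mc{O}')$ of type $\mc{S}_{r'}$. Setting $\rho = r_\zeta|_{G_F}$, hypotheses (1)--(3) of that theorem are immediate; for (4), with $\rho'$ and $\pi$ as in the set-up of the proposition, both $\rho'|_{G_{F_v}}$ and $\rho|_{G_{F_v}}$ give $\mc{O}'$-points of (the $\mc{O}'$-base-change of) $R_{\wt{v}}$, so the geometric integrality of $R_{\wt{v}}$ together with the uniqueness of the irreducible component of $\Spec R^{\square}_{\rbar|_{G_{F_{\wt{v}}}}}[1/l]$ containing $r'|_{G_{F_{\wt{v}}}}$ (respectively the analogous statement for the crystalline ring at $v \mid l$) delivers $\rho'|_{G_{F_v}} \leadsto_{\mc{O}'} \rho|_{G_{F_v}}$ at $v \in R$ and $\rho|_{G_{F_v}} \sim \rho'|_{G_{F_v}}$ at $v \mid l$ (invoking Lemma~\ref{lem: sim preserved under extension of bigO} to descend the equivalences from $\mc{O}$ to $\mc{O}'$). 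The principal obstacle is assembling the patching diagram with the refined integral local rings and checking that $R_\infty$ is an integral domain; once that is in hand, faithfulness of the action, the parity of $\mu$, the finiteness of $R^{\univ}_{\mc{S}_{r'}}$, and automorphy of all $\Qlbar$-points are all formal consequences.
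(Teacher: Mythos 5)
Your plan for part (1) contains a genuine gap at the very first step, where you propose to ``re-run the Taylor--Wiles--Kisin patching argument \ldots but with the local deformation conditions at $v\in R\cup S_l$ replaced by the specific quotients $R_{\wt{v}}$.''  Recall how the Sublemma in the proof of Theorem~\ref{thm: CM modularity lifting theorem, using tilde} is actually established: one introduces the auxiliary deformation problems $\mc{S}_{Q_N}$ (with the Taylor--Wiles primes added), and the input from the Hecke side is the surjection $R^{\univ}_{\mc{S}_{Q_N}}\onto\bb{T}_{1,Q_N}$, which exists because $r_{\mf{m}_{Q_N}}\otimes\bb{T}_{1,Q_N}$ is a lift \emph{of type $\mc{S}_{Q_N}$}.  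If you replace the local lifting rings $R^{\mathbf{v}_{\lambda_{\wt{v}}},cr}_{\rbarwtvL}$, $\overline{R}^{\square}_{\rbarwtvL}$ by the single-component quotients $R_{\wt{v}}$, you would need $r_{\mf{m}_{Q_N}}|_{G_{L_{\wt{v}}}}$ to factor through $R_{\wt{v}}$, i.e.\ you would need every congruent automorphic form contributing to the localized Hecke algebra to have local Galois representations confined to the component selected by $r'$.  There is no reason for this to hold, and in general it does not: the Hecke algebra sees the whole packet of congruent forms, not just those on the chosen component.  So the surjection onto $\bb{T}_{1,Q_N}$ does not exist in your setup, and the patching does not go through.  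In particular the claimed faithfulness of $R_\infty$ on $M_\infty$ (which would follow from irreducibility of your $R_\infty$) is far stronger than what is actually available and has no proof along these lines.

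The paper sidesteps this exactly because of the above issue.  It patches with the \emph{full} local lifting rings (the deformation problem $\mc{S}_L$), obtaining $M_\infty$ over $R_\infty = R^{\loc}[[x_1,\dots,x_g]]$ with $R^{\loc}$ the completed tensor product of the full local rings; $R^{\loc}$ is not a domain.  One then forms $R^{\loc}_{r'}=\widehat{\otimes}_{v\in T}R_{\wt{v}}$, a quotient of $R^{\loc}$ cutting out a single irreducible component (part~5 of Lemma~3.3 of \cite{BLGHT}), sets $R_{\infty,r'}=R_\infty\otimes_{R^{\loc}}R^{\loc}_{r'}$, and observes that the closed point of $\Spec R_\infty[1/l]$ coming from $r'|_{G_{L^+}}$ lies in the support of $M_\infty$, lies in $\Spec R_{\infty,r'}[1/l]$, and lies in no other component.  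Since the support is a union of components (by the depth argument you correctly recall), $M_\infty\otimes_{R_\infty}R_{\infty,r'}$ is nearly faithful over $R_{\infty,r'}$; reducing modulo $\mf{a}$ and applying Lemma~2.2 of \cite{tay06} then gives near-faithfulness of a finite $\mc{O}$-module over $R^{\univ}_{\mc{S}_{L,r'}}$, whence finiteness over $\mc{O}$ by the nilpotence of the annihilator and topological Nakayama.  This ``patch first, select a component afterwards'' step is the essential point your proposal omits, and without it the rest of your (1) does not get off the ground.  (Your route to (2) via a direct application of Theorem~\ref{thm: CM modularity lifting theorem, using tilde} is a genuinely different and plausible alternative to the paper's derivation from near-faithfulness, and your deduction of (3) from the dimension count is the same as the paper's; but both still depend on having the patched module, which your version of the patching does not produce.)
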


\begin{proof}
Choose a place $v_1$ of $F$ not lying over $l$ and such that 
 \begin{itemize}
  \item $v_1$ is unramified over a rational prime $p$ with
    $[F(\zeta_p): F]>n$;
  \item $v_1$ does not split completely in $F(\zeta_l)$;
  \item $\rho$ and $\pi$ are unramified at $v_1$;
  \item $\ad \rhobar (\Frob_{v_1})= 1$.
  \end{itemize}
Choose an imaginary CM field $L/F$  such that:
  \begin{itemize}
  \item $L/F$ is solvable;
  \item $L$ is linearly disjoint from $\overline{F}^{\ker 
      \rbar}(\zeta_l)$ over $F$;
  \item $4 | [L^+:F^+]$ where $L^+$ denotes the maximal totally real
    subfield of $L$;
  \item $L/L^+$ is unramified at all finite places;
  \item Every place of $L$ over $v_1$ or  $cv_1$ is split over
    $L^+$. Moreover, $v_1$ and $cv_1$ split completely in $L$;  
  \item The places $v \in R \cup S_l$ split completely in $L^+$.
  \end{itemize}
Let $S_{L,l}$ denote the set of
  places of $L^+$ dividing $l$ and let $R_L$ denote the set of places of
  $L^+$ lying over $R$. Similarly, let $\wt{S}_{L,l}$ and $\wt{R}_L$
  denote the sets of places of $L$ lying over $\wt{S}_l$ and $\wt{R}$
  respectively.  Let $S_{L,a}$ denote
  the set of places of $L^+$ lying over the restriction of $v_1$ to
  $F^+$. Let $\wt{S}_{L,a}$ denote the set of places of $L$ lying over
  $v_1$. Let $T = S_{L,l} \coprod R_{L} \coprod S_{L,a}$ and let
  $\wt{T} = \wt{S}_{L,l} \coprod \wt{R}_{L} \coprod \wt{S}_{L,a}$ so
  that $\wt{T}$ consists of one place $\wt{v}$ for each place $v \in T$.

  For $v \in R_{L}\cup S_{L,l}$, let $R_{\wt{v}} = R_{\wt{v}|_F}$ regarded as a
  quotient of $R^{\square}_{\rbarwtvL}$ (note that $F_{\wt{v}|_F}
  \isoto L_{\wt{v}}$ by the choice of $L$).
For $v \in R_L \cup S_{L,a}$, let
  $\overline{R}^{\square}_{\rbarwtvL}$ be the maximal $l$-torsion free
  quotient of $R^{\square}_{\rbarwtvL}$. For $v \in S_{L,a}$,
  $R^{\square}_{\rbarwtvL}$ is formally smooth over $\mc{O}$ and we
  let $R_{\wt{v}} = R^{\square}_{\rbarwtvL} = \overline{R}^{\square}_{\rbarwtvL}$.
 For each $v \in S_{L,l}$, let
$\lambda_{\wt{v}} = \lambda_{\wt{v}|_F} \in
(\bb{Z}^{n}_+)^{\Hom(L_{\wt{v}},K)}$.
Consider the deformation problems
\begin{align*} \mc{S}_{L} & :=
\left(L/L^+,T,\wt{T},\mc{O},\rbar|_{G_{L^+}},\epsilon^{1-n}\delta^{\mu}_{L/L^+},\{
R^{\mathbf{v}_{\lambda_{\wt{v}}},cr}_{\rbarwtvL}\}_{v \in S_{L,l}} \cup \{
\overline{R}^{\square}_{\rbarwtvL}\}_{v \in R_L\cup S_{L,a}} \right) \\
\mc{S}_{L,r'} & := \left(L/L^+,T,\wt{T},\mc{O},\rbar|_{G_{L^+}},\epsilon^{1-n}\delta^{\mu}_{L/L^+},\{
R_{\wt{v}}\}_{v \in T}  \right) .
\end{align*}
By Lemma \ref{lem: property (*)}, the rings
$R^{\mathbf{v}_{\lambda_{\wt{v}}},cr}_{\rbarwtvL}$, $R_{\wt{v}}$ for
$v \in S_{L,l}$, $ \overline{R}^{\square}_{\rbarwtvL}$, $R_{\wt{v}}$
for $v \in R_L$ and $ \overline{R}^{\square}_{\rbarwtvL}$ for $v\in
S_{L,a}$ satisfy the property (*) of section \ref{subsec: global
  deformation rings}.  Let $R^{\univ}_{\mc{S}_L}$ and
$R^{\univ}_{\mc{S}_{L,r'}}$ be the objects of $\mc{C}_{\mc{O}}$
representing the corresponding deformation functors. Note that there
is a natural surjection $R^{\univ}_{\mc{S}_L} \onto
R^{\univ}_{\mc{S}_{L,r'}}$. Note also that there is a natural
homomorphism of $\mc{O}$-algebras $R^{\univ}_{\mc{S}_{L,r'}}
\rightarrow R^{\univ}_{\mc{S}_{r'}}$ which is obtained by restricting
the universal deformation of type $\mc{S}_{r'}$ to $G_{L^+}$. This map
is finite by an argument of
Khare and Wintenberger (cf. Lemma 3.2.5 of \cite{GG}) and hence it suffices to show that
$R^{\univ}_{\mc{S}_{L,r'}}$ is finite over $\mc{O}$.

  Let $G_{/\mc{O}_{L^+}}$
  be an algebraic group as in section \ref{subsec: l-adic aut forms on
    unitary groups} (with $F/F^+$ replaced by $L/L^+$).
  By
  Th\'eor\`eme 5.4 and Corollaire 5.3 of \cite{labesse} there exists
  an automorphic representation $\Pi$ of $G(\bb{A}_{L^+})$ such that
  $\pi_L$ is a strong base change of $\Pi$.
Let $U=\prod_v U_v \subset G(\bb{A}_{L^+}^{\infty})$ be a
  compact open subgroup such that
 \begin{itemize}
\item $U_v = G(\mc{O}_{L^+_v})$ for $v \in S_{L,l}$ and for $v \not \in
   T$ split in $L$;
 \item $U_v$ is a hyperspecial maximal compact subgroup of $G(L^+_v)$
   for each $v$ inert in $L$;
\item $U_v$ is such that $\Pi_v^{U_v}\neq \{0\}$ for $v \in R_L$;
 \item $U_v = \ker(G(\mc{O}_{L^+_v})\rightarrow G(k_{v}))$ for $v \in S_{L,a}$.
 \end{itemize}

Let $\wt{I}_l$ denote the set of
embeddings $L\into K$ giving rise to one of the places $\wt{v} \in \wt{S}_{L,l}$.
Let $\lambda = (\lambda_{\wt{v}})_{v
  \in S_{L,l}}$ regarded as an element of $(\bb{Z}^n_+)^{\wt{I}_l}$ in the
evident way and let $S_{\lambda}(U,\mc{O})$ be the space of $l$-adic
automorphic forms on $G$ of weight $\lambda$ introduced above. Let $\bb{T}^{T}_{\lambda}(U,\mc{O})$ be the $\mc{O}$-subalgebra
of $\End_{\mc{O}}(S_{\lambda}(U,\mc{O}))$ generated by the Hecke operators
$T_{w}^{(j)}$, $(T_{w}^{(n)})^{-1}$ for $w$ a place of $L$ split over
$L^+$, not lying over $T$ and $j=1,\ldots,n$. The eigenvalues of
the operators $T_{w}^{(j)}$ on the space $(\iota^{-1}\Pi^{\infty})^{U}$ give
rise to a homomorphism of $\mc{O}$-algebras
$\bb{T}^{T}_{\lambda}(U,\mc{O}) \rightarrow \Qlbar$. Extending $K$ if necessary, we can
and do assume that this homomorphism takes values in $\mc{O}$. Let
$\mf{m}$ denote the unique maximal ideal of
$\bb{T}^{T}_{\lambda}(U,\mc{O})$ containing the kernel of this homomorphism.

Note that $\rbar|_{G_L}$ is $\GL_n(k)$-conjugate to $\rbar_{\mf{m}}$
where $\rbar_{\mf{m}}$ is the representation associated to the maximal
ideal $\mf{m}$ of $\bb{T}^T_{\lambda}(U,\mc{O})$ in section \ref{subsec: l-adic aut forms
  on unitary groups}. After conjugating we
can and do assume that $\rbar|_{G_L} = \rbar_{\mf{m}}$.
Since $\mf{m}$ is non-Eisenstein we have a
continuous lift 
\[ r_{\mf{m}} : G_{L^+} \rightarrow
\mc{G}_n(\bb{T}^T_{\lambda}(U,\mc{O})_{\mf{m}}) \]
of $\rbar$. Properties (0)-(3) of $r_{\mf{m}}$ and the fact that $\TT_{\mf{m}}$ is $l$-torsion free and reduced imply that $r_{\mf{m}}$ 
 is of type $\mc{S}_{L}$.
Hence $r_{\mf{m}}$ gives rise
to an $\mc{O}$-algebra homomorphism
\[  R^{\univ}_{\mc{S}_{L}} \onto \bb{T}^T_{\lambda}(U,\mc{O})_{\mf{m}} \]
(which is surjective by property (2) of $r_{\mf{m}}$). 

We define
\[ R^{\loc} := \left(\widehat{\otimes}_{v \in
    S_{L,l}}R^{\mathbf{v}_{\lambda_{\wt{v}}},cr}_{\rbarwtvL}\right)\widehat{\otimes}
\left(\widehat{\otimes}_{v \in R_L \cup
    S_{L,a}}\overline{R}^{\square}_{\rbarwtvL}\right) \] where all
completed tensor products are taken over $\mc{O}$. Note that
$R^{\loc}$ is equidimensional of dimension $1 + n^2 \# T +
[L^+:\bb{Q}]n(n-1)/2$ by Lemma 3.3 of \cite{BLGHT}.

\begin{sublem}
  There are integers $q,g \in \bb{Z}_{\geq 0}$ with
\[ 1+q+n^2 \# T = \dim R^{\loc} + g \]
and a module $M_{\infty}$ for both $R_{\infty}:=R^{\loc}[[x_1,\ldots,x_g]]$ and $S_{\infty}:=\mc{O}[[z_1,\ldots,z_{n^2 \#T},y_1,\ldots,y_q]]$ such that:
\begin{enumerate}
\item $M_{\infty}$ is finite and free over $S_{\infty}$.
\item $M_{\infty}/(z_i,y_j) \cong S_{\lambda}(U,\mc{O})_{\mf{m}}$.
\item The action of $S_{\infty}$ on $M_{\infty}$ can be factored through an $\mc{O}$-algebra homomorphism $S_{\infty} \rightarrow R_{\infty}$.
\item There is a surjection $R_{\infty} \onto R^{\univ}_{\mc{S}_L}$
  whose kernel contains all the $z_i$ and $y_j$ which is compatible
  with the actions of $R_{\infty}/(z_i,y_j)$ and
  $R^{\univ}_{\mc{S}_L}$ on $M_{\infty}/(z_i,y_j)\cong
  S_{\lambda}(U,\mc{O})_{\mf{m}}$. Moreover, there is a lift
  $r^{\univ}_{\mc{S}_L} : G_{L^+} \rightarrow
  \mc{G}_n(R^{\univ}_{\mc{S}_L})$ of $\rbar$ representing the universal
  deformation so that for each $v \in T$, the composite
  $R^{\square}_{\rbarwtvL} \rightarrow R^{\loc} \rightarrow R_{\infty}
  \onto R^{\univ}_{\mc{S}_L}$ arises from the lift
  $r^{\univ}_{\mc{S}_L}|_{G_{L_{\wt{v}}}}$.
\end{enumerate}
Moreover, $\mu \equiv n \mod 2$.
\end{sublem}

This is proved in exactly the same way as the sublemma in the proof of
Theorem \ref{thm: CM modularity lifting theorem, using tilde} (for the
fact that $\mu\equiv n\mod 2$, see the penultimate sentence of
\emph{loc. cit.}). Points
(1) and (3) tell us that the support of $M_{\infty}$ in $\Spec
R_{\infty}$ is a union of irreducible components.
Let 
\[ R^{\loc}_{r'} = \widehat{\otimes}_{v \in T}R_{\wt{v}}. \] Then
$R^{\loc}_{r'}$ is a quotient of $R^{\loc}$ corresponding to an
irreducible component (see part 5 of Lemma 3.3 of \cite{BLGHT}). Let $R_{\infty,r'} = R_{\infty}
\otimes_{R^{\loc}} R^{\loc}_{r'}$. Again $R_{\infty,r'}$ is a quotient
of $R_{\infty}$ corresponding to an irreducible component. The lift
$r'|_{G_{L^+}}$ of $\rbar|_{G_{L^+}}$ gives rise to a closed point of
$\Spec R_{\infty}[1/l]$ which lies in the support of $M_{\infty}$ and
which lies in $\Spec R_{\infty,r'}[1/l]$ but in no other irreducible
component of $\Spec R_{\infty}[1/l]$. We deduce that $\Spec
R_{\infty,r'}$ is contained in the support of $M_{\infty}$. In other
words, in the terminology of section 2 of \cite{tay06},
$M_{\infty}\otimes_{R_{\infty}} R_{\infty,r'}$ is a nearly faithful
$R_{\infty,r'}$-module. It follows (by Lemma 2.2 of \cite{tay06}) that
$M_{\infty} \otimes_{R_{\infty}} R^{\univ}_{\mc{S}_{L,r'}}$ is a
nearly faithful $R^{\univ}_{\mc{S}_{L,r'}}$-module. Note that
$M_{\infty}\otimes_{R_{\infty}} R^{\univ}_{\mc{S}_{L,r'}}$ is a finite
$\mc{O}$-module, being a quotient of
$S_{\lambda}(U,\mc{O})_{\mf{m}}$. Let $I$ denote the annihilator of
$M_{\infty}\otimes_{R_{\infty}} R^{\univ}_{\mc{S}_{L,r'}}$ in
$R^{\univ}_{\mc{S}_{L,r'}}$. Then $R^{\univ}_{\mc{S}_{L,r'}}/I$ is
finite over $\mc{O}$. The same is true of $(R^{\univ}_{\mc{S}_{L,r'}})^{\red}$ since
$I$ is nilpotent. It follows that
$R^{\univ}_{\mc{S}_{L,r'}}/\m_{\bigO}$ is Artinian (being Noetherian
of dimension 0) and hence
$R^{\univ}_{\mc{S}_{L,r'}}$ is finite over $\mc{O}$ by the topological
version of Nakayama's lemma.

We have established parts (1) and (3) of the proposition. For part
(2), it is clear from the above that any $\Qlbar$-point of
$R^{\univ}_{\mc{S}}$ gives rise to a representation $G_{F} \rightarrow
\GL_n(\Qlbar)$ which becomes automorphic upon restriction to $G_L$ and
hence is automorphic by Lemma 1.4 of \cite{BLGHT} (since $L/F$ is
solvable).
\end{proof}

\section{A character building exercise}\label{sec; character building}

\subsection{} The main purpose of this section is to prove a lemma 
allowing us to construct Galois characters with certain properties.
Before we do so, we must discuss the notion of bigness and prove
a result which will allow us to deduce that representations are `big' in certain
circumstances.

We begin by recalling the definition of $m$-big from \cite{BLGHT}.
\begin{defn}
\label{defn: m-big}
  Let $k/\F_l$ be algebraic and $m$ a positive integer. We say that a
  subgroup $H\subset\GL_n(k)$ of $\GL_n(k)$ is $m$-\emph{big} if the
  following conditions are satisfied.
  \begin{itemize}
  \item $H$ has no $l$-power order quotient.
  \item $H^0(H,\mathfrak{sl}_n(k))=(0)$.
  \item $H^1(H,\mathfrak{sl}_n(k))=(0)$.
  \item For all irreducible $k[H]$-submodules $W$ of
    $\mathfrak{gl}_n(k)$ we can find $h\in H$ and $\alpha\in k$ such
    that:
    \begin{itemize}
    \item $\alpha$ is a simple root of the characteristic polynomial
      of $h$, and if $\beta$ is any other root then
      $\alpha^m\ne\beta^m$.
    \item Let $\pi_{h,\alpha}$ (respectively $i_{h,\alpha}$) denote
      the $h$-equivariant projection from $k^n$ to the
      $\alpha$-eigenspace of $h$ (respectively the $h$-equivariant
      injection from the $\alpha$-eigenspace of $h$ to $k^n$). Then
      $\pi_{h,\alpha}\circ W\circ i_{h,\alpha}\ne 0$.
    \end{itemize}

  \end{itemize}
We simply write ``big'' for 1-big. If $\rbar$ is a representation of
some group valued in $\GL_n(k)$, then we say that $\rbar$ has $m$-big
image if the image of $\rbar$ is $m$-big. If $K$ is an algebraic
extension of $\Ql$ with residue field $k$ and $r$ is a representation of
some group valued in $\GL_n(K)$, then we say that $r$ has $m$-big
image if $\rbar$ has $m$-big image, where $\rbar$ is the
semisimplification of the reduction mod $l$ of $r$.
\end{defn}

The following lemma is essentially implicit in the proof of Theorem 7.6 of \cite{BLGHT},
but it is hard to extract by reference the material we need from the proof there,
so we will give a self-contained statement and proof.
\begin{lem} \label{lem:big image etc} Suppose that 
$F$ is a totally real field,
$l$ is a rational prime,
$m^*$ is a positive even integer not divisible by $l$,
$n$ is a positive integer with $l>2n-2$, 
$r:G_F\to\GL_n(\overline{\Z}_l)$ is a continuous $l$-adic Galois representation, and
$M$ is a cyclic CM extension of $F$ of degree $m^*$ such that:
\begin{itemize}
\item $M$ is linearly disjoint from $\bar{F}^{\ker\bar{r}}(\zeta_l)$ over $F$, and
\item every prime $v$ of $F$ above $l$ is unramified in $M$.
\end{itemize}
Suppose also that $\theta':G_M\to \overline{\Z}_l^\times$ is a continuous character.

\begin{enumerate}
\item Suppose $[\overline{F}^{\ker\ad \bar{r}}(\zeta_l):\overline{F}^{\ker\ad \bar{r}}]>m^*$.
  Then the 
    fixed field of the kernel of the representation
    $\ad(\bar{r}\tensor\Ind_{G_M}^{G_F}\bar{\theta}')$ does not contain
    $\zeta_l$.
  \item Suppose that $r|_{G_{F(\zeta_l)}}$ has $m^*$-big image and
    that $(\overline{\theta}')(\overline{\theta}')^c$ can be extended
    to $G_F$.  Suppose further that there is a prime $\fQ$ of $M$
    lying above a prime $\fq$ of $F$ lying in turn above a rational
    prime $q$, such that:
\begin{itemize}
\item $r$ is unramified at all primes above $q$,
\item $q\neq l$,
\item $q$ splits completely in $M$,
\item $q$ is unramified in $\bar{F}^{\ker\bar{r}}(\zeta_l)$,
\item $q-1>2n$,
\item $(\theta')(\theta')^c$ is unramified
  at primes above $q$, and
\item $q|\#\theta'(I_\fQ)$ (and so $q|\#\theta'(I_{\fQ^c})$), and $\theta'$ is unramified at all primes above $\fq$ except $\fQ$ and $\fQ^c$.
\end{itemize}
Then $(r\otimes\Ind_{G_M}^{G_F}\theta')|_{G_{F(\zeta_l)}}$ has big image.
\end{enumerate}
\end{lem}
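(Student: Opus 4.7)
For part (1), set $F_1 = \overline{F}^{\ker\ad\bar r}$ and $F_2 = \overline{F}^{\ker\ad\Ind_{G_M}^{G_F}\bar\theta'}$. Since a tensor $A \otimes B$ of invertible matrices is a scalar matrix if and only if both $A$ and $B$ are scalar, the fixed field of $\ker\ad(\bar r \otimes \Ind_{G_M}^{G_F}\bar\theta')$ equals $F_1 F_2$, and $F_2 \supseteq M$ (because $\Ind\bar\theta'(g)$ is a non-scalar monomial matrix when $g \notin G_M$). Argue by contradiction: if $\zeta_l \in F_1 F_2$, set $N = F_1 M$; then $\epsilon$ is trivial on $G_{F_1 F_2} = G_N \cap G_{F_2} = \bigcap_i \ker(\phi_i|_{G_N})$, where $\phi_i = \bar\theta'^{g_i}/\bar\theta'^{g_1}$ for coset representatives $g_1,\ldots,g_{m^*}$ of $G_M$ in $G_F$. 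Hence $\epsilon|_{G_N}$ lies in the subgroup $V \subset \Hom(G_N, \bar\F_l^\times)$ generated by the $\phi_i|_{G_N}$. Since $\epsilon$ extends to $G_F$ it is fixed by $\Gal(M/F) = \Gal(N/F_1)$, while the regular action of this group permutes the $\phi_i$, so $V$ is $\Gal(M/F)$-stable and $\epsilon|_{G_N} \in V^{\Gal(M/F)}$. The norm map sends each generator to $\prod_\sigma \bar\theta'^{\sigma g_i}/\prod_\sigma \bar\theta'^{\sigma g_1} = 1$ (both products ranging over all cosets), so $N_{\Gal(M/F)}(V) = 1$; but for $v \in V^{\Gal(M/F)}$ one has $N_{\Gal(M/F)}(v) = v^{m^*}$, so $V^{\Gal(M/F)}$ is annihilated by $m^*$. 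This contradicts $|\epsilon(G_N)| = [F_1(\zeta_l):F_1] > m^*$ (the latter equality via the linear disjointness of $M$ and $F_1(\zeta_l)$ over $F$).

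For part (2), I must verify the four conditions of Definition~\ref{defn: m-big} for $H := (\bar r \otimes \Ind_{G_M}^{G_F}\bar\theta')(G_{F(\zeta_l)})$, following the pattern of the proof of Theorem~7.6 of \cite{BLGHT}. Restricting to the index-$m^*$ subgroup $G_{MF(\zeta_l)}$, the tensor representation decomposes as $\bigoplus_\sigma \bar r|_{G_{MF(\zeta_l)}} \otimes \bar\theta'^\sigma$; the linear disjointness of $M$ from $\overline{F}^{\ker\bar r}(\zeta_l)$ over $F$ gives $\bar r(G_{MF(\zeta_l)}) = \bar r(G_{F(\zeta_l)})$, which is $m^*$-big, while the hypothesis that $(\bar\theta')(\bar\theta')^c$ extends to $G_F$ ensures the required structural compatibility among the conjugates $\bar\theta'^\sigma$. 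Conditions (i)--(iii) (no $l$-power quotient; vanishing of $H^0$ and $H^1$ with coefficients in $\mathfrak{sl}_{nm^*}(k)$) then follow by combining inflation-restriction from the extension by $\Gal(M/F)$ (whose order is coprime to $l$) with direct-sum arguments that reduce to the corresponding vanishings for $\bar r$ coming from its $m^*$-bigness.

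The main obstacle is condition (iv). The prime $\fQ$ is tailored for this purpose: since $q \mid \#\theta'(I_\fQ)$ there is $\sigma \in I_\fQ \subset G_M$ with $\bar\theta'(\sigma)$ a primitive $q$-th root of unity, and since $r$ is unramified above $q$ we have $\bar r(\sigma) = 1$; so $(\bar r \otimes \Ind_{G_M}^{G_F}\bar\theta')(\sigma)$ is block-diagonal with one $n$-block for each prime of $M$ above $q$ ($m^*$ such primes, as $q$ splits completely in $M$), scaled by $\bar\theta'(\sigma)$ in exactly the $\fQ$-block and by $1$ elsewhere (using that $\theta'$ is unramified at primes above $\fq$ other than $\fQ, \fQ^c$, and $\theta'\theta'^c$ is unramified at primes above $q$). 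Combining $\sigma$ with a regular element $h_0$ supplied by the $m^*$-bigness of $r|_{G_{F(\zeta_l)}}$ --- whose simple eigenvalue $\alpha_0$ satisfies $\alpha_0^{m^*} \neq \beta_0^{m^*}$ for every other eigenvalue $\beta_0$, ruling out collisions with the $q$-th roots of unity introduced by $\sigma$ --- and using $q - 1 > 2n$ to ensure genuine separation of the combined spectrum, yields an element $h \in H$ with a simple eigenvalue $\alpha$ distinct from all others. The key remaining task is verifying $\pi_{h,\alpha} \circ W \circ i_{h,\alpha} \neq 0$ for each irreducible $k[H]$-submodule $W$ of $\mathfrak{gl}_{nm^*}(k)$; this is handled by using the tensor decomposition $\mathfrak{gl}_{nm^*} \cong \mathfrak{gl}_n \otimes \mathfrak{gl}_{m^*}$ and reducing, block by block, to the corresponding non-vanishing for $\bar r$-submodules of $\mathfrak{gl}_n(k)$, again supplied by $m^*$-bigness.
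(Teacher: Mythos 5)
Your argument for part (1) is correct and takes a genuinely different route than the paper. The paper constructs a single explicit witness $\sigma' = \prod_{i=0}^{m^*-1}\tilde\tau^{-i}\sigma\tilde\tau^{i}$ lying in $\ker\ad(\bar r\otimes\Ind\bar\theta')$ with $\bar\epsilon_l(\sigma')=\bar\epsilon_l(\sigma)^{m^*}\neq 1$. You instead recast the question in the character group $V\subset\Hom(G_N,\bar\F_l^\times)$ generated by the ratios $\phi_i=\bar\theta'^{g_i}/\bar\theta'^{g_1}$, observe $V$ is $\Gal(M/F)$-stable with trivial norm, and deduce $V^{\Gal(M/F)}$ is $m^*$-torsion, contradicting $[F_1(\zeta_l):F_1]>m^*$. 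This is cleaner and more conceptual; the one step you should justify explicitly is why $\epsilon|_{G_N}$ trivial on $\bigcap_i\ker(\phi_i|_{G_N})$ forces $\epsilon|_{G_N}\in V$ --- this uses that the finite quotient $G_N/\bigcap_i\ker(\phi_i|_{G_N})$ has order prime to $l$ (all the $\phi_i$ take values in $\bar\F_l^\times$), so $\bar\F_l^\times$ is an injective coefficient module there. That said, the logic is sound.

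Part (2) has a genuine gap. Verifying the fourth condition of Definition~\ref{defn: m-big} requires handling every irreducible $k[H]$-submodule $W$ of $\mathfrak{gl}_{nm^*}(k)$, and the irreducible submodules come in two essentially different families once one decomposes $\ad(\bar r\otimes\Ind\bar\theta')|_{G_{F(\zeta_l)}}$: those of the form $V_j(\chi)$ (twists of constituents of $\ad\bar r$ by characters of $\Gal(M(\zeta_l)/F(\zeta_l))$) and those of the form $V_j\otimes W_i\cong\Ind_{G_{M(\zeta_l)}}^{G_{F(\zeta_l)}}(V_j\otimes\bar\theta'/\bar\theta'^{\tilde\tau^i})$ with $1\le i\le m^*-1$. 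Your construction --- an element $\sigma\in I_\fQ\subset G_M$ with $\bar r(\sigma)=1$, modified so as to separate eigenvalues --- only produces elements lying over $G_M$, which is exactly what the paper uses for the $V_j(\chi)$ case. For the $V_j\otimes W_i$ case the paper is forced to choose $\sigma$ mapping to the generator $\tau$ of $\Gal(M(\zeta_l)/F(\zeta_l))$, and the eigenvalue/eigenvector analysis is entirely different (one must track the cyclic action across the blocks, compute $m^*$-th roots, and use the $m^*$-bigness of $r$ in the form $\alpha^{m^*}\ne\beta^{m^*}$ in a way that has no analogue for your element $\sigma$). Your sketch of ``reducing block by block to non-vanishing for $\bar r$-submodules'' silently assumes all $W$ are of the first kind, which is false. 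Moreover, the classification of which constituents of $\mathfrak{gl}_{nm^*}(k)$ are actually irreducible and non-isomorphic rests on a nontrivial sublemma (when can $\bar\theta'/\bar\theta'^{\tilde\tau^k}=\bar\theta'^{\tilde\tau^l}/\bar\theta'^{\tilde\tau^m}\cdot\psi$ hold for $\psi$ unramified above $q$?); without this, the subsequent vanishing of $H^0$ and $H^1$ and the reduction to $\bar r$-submodules is not justified. You also misstate the action of $\sigma\in I_\fQ$: the hypothesis that $\theta'\theta'^c$ is unramified above $q$ forces the $\fQ^c$-block to be scaled by $\bar\theta'(\sigma)^{-1}$, not by $1$.
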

\begin{proof}
  We will begin by proving the first part; so suppose that $r$ has the
  property assumed there.  By the given assumption, there is an
  element of $\Gal(\overline{F}^{\ker\ad
    \bar{r}}(\zeta_l)/\overline{F}^{\ker\ad \bar{r}})$ of order larger
  than $m^*$; using the assumption that $M$ and $\overline{F}^{\ker
    \bar{r}}(\zeta_l) \supset \overline{F}^{\ker\ad \bar{r}}(\zeta_l)$
  are linearly disjoint over $F$, we can consider this as an element
  of $\Gal(M\overline{F}^{\ker\ad
    \bar{r}}(\zeta_l)/\overline{F}^{\ker\ad \bar{r}}M)$ with the same
  property, and lift this to an element $\sigma$ of
  $\Gal(\overline{F}/M\overline{F}^{\ker\ad \bar{r}})$. (This element
  will have the property that $\overline{\eps}_l(\sigma)$ has order
  $>m^*$.)
Let $\tau$ be a generator of $\Gal(M/F)$. Then lift $\tau$ to an element $\tilde{\tau}$ of $G_F$;
let\[\sigma'=\prod_{i=0}^{m^*-1} \tilde{\tau}^{-i} \sigma
\tilde{\tau}^{i}\]and notice that $\ad(\bar{r}\otimes\Ind^{G_F}_{G_M}
\bar{\theta}')$ is trivial on $\sigma'$, while
$\overline{\eps}_l(\sigma')=\overline{\eps}_l(\sigma)^{m^*}\neq
1$. This proves (1).

\smallskip 
We now turn to proving the second part; so let us assume
that we have $k, q,\fq,\fQ$ with the properties stipulated there. We
will follow the proof of Theorem 7.6 of \cite{BLGHT} very closely.
Since $l>2n-2$, the main result of \cite{MR1253203} shows that
$\ad\rbar|_{G_{F(\zeta_l)}}$ is semisimple, and we may write
\begin{equation}\label{eq:decomposition of rbar}
  \ad \bar{r}|_{G_{F(\zeta_l)}}= V_0^{\oplus m_0} \oplus V_1^{\oplus m_1} \oplus \dots\oplus V_s^{\oplus m_s}
\end{equation}
where the $V_i$ are pairwise non-isomorphic, irreducible
$\Fbar_l[G_{F(\zeta_l)}]$-modules and the $m_i$ are positive
integers. Let $V_0=1$. By the assumption that $r|_{G_{F(\zeta_l)}}$
has $m^*$-big image, $m_0=1$. Adopting $r'$ as an abbreviated name for
$\Ind_{G_M}^{G_F}\theta'$, let us choose $e_0,\dots, e_{m^*-1}$ a
basis for $r'$ as follows. First, choose $\tilde{\tau}\in G_F$ a
lifting of $\tau\in \Gal(M/F)$. 
Then, choose a non-zero primitive vector $e_0$ in $r'$ such
that $r'(\sigma) e_0 = \theta'(\sigma) e_0$ for all $\sigma\in G_M$,
and set $e_i=r'(\tilde{\tau}^i)e_0$ for $i=1,\dots,m^*-1$.  Note that
this means that $G_M$ acts on $e_i$ via the character
${\theta'}^{\tilde{\tau}^{-i}}$. Moreover, 
\begin{align*}
r'(\tilde{\tau})(e_{m^*-1}) &= r'(\tilde{\tau}^{m^*})e_0  =
\theta'(\tilde{\tau}^{m^*})e_0
\end{align*}

 Now let $f_0,\dots, f_{m^*-1}$ be the basis of
 $\Hom(r',\overline{\Z}_l)$ dual to $e_0,\dots, e_{m^*-1}$. Let us
 quickly establish a sublemma:
 \begin{sublem} Suppose there is some character $\psi:G_M\to\Fbar_l^\times$
 which is unramified at primes above $q$ such that
   $\thbarpr/\thbarpr^{\tilde{\tau}^k} =
   \thbarpr^{\tilde{\tau}^l}/\thbarpr^{\tilde{\tau}^{m}} \psi$ (or
   equivalently $\thbarpr\thbarpr^{\tilde{\tau}^{m}} =
   {\thbarpr^{\tilde{\tau}^l}\thbarpr^{\tilde{\tau}^k}}\psi$); then either
   $k=m$ and $l=0$, $k=0$ and $l=m$, or $l=(m^*/2)+k$ and $m=m^*/2$. 
   Conversely, if $k=m$ and $l=0$, then $\thbarpr/\thbarpr^{\tilde{\tau}^k} =
   \thbarpr^{\tilde{\tau}^l}/\thbarpr^{\tilde{\tau}^{m}}$
   and the same is true if $k=0$ and $l=m$, or if $l=(m^*/2)+k$ and $m=m^*/2$.
 \end{sublem}
 \begin{proof} For the first part, we consider the action of
   inertia above $\fq$ on each side of
   $\thbarpr\thbarpr^{\tilde{\tau}^{m}} =
   {\thbarpr^{\tilde{\tau}^l}\thbarpr^{\tilde{\tau}^k}}$.  
   
   We first set up some notation. Let $\iota_{I_\fQ}$ denote the inclusion of $I_\fQ$
   into $G_M$, and let $\delta:=\thbarpr\circ\iota_{I_\fQ}$. (Note that $\delta$ is 
   not trivial, by our assumption that $q|\#\theta'(I_\fQ)$.) For $i$ an integer,
   let $c_{{\tilde{\tau}^i}}:G_M\to G_M$
   denote the `conjugation by ${\tilde{\tau}^i}$' map, and let
   $\delta_i:=\thbarpr\circ c_{\tilde{\tau}^i}\circ\iota_{I_\fQ}$. Observe that 
   $\delta_0=\delta$, that $\delta_i$ is trivial for $i\neq 0, m^*/2$
   (by the assumption that $\theta'$ is unramified at all primes above $\fq$
   except $\fQ$ and $\fQ^c$), and that
   $\delta_{m^*/2}=\delta^{-1}$ (since $\delta_{m^*/2} = 
   (\thbarpr\circ c_{\tilde{\tau}^{m^*/2}}\circ\iota_{I_\fQ}) = 
   (\thbarpr^{\tilde{\tau}^{m^*/2}}\circ\iota_{I_\fQ}) = 
   (\thbarpr^c)\circ\iota_{I_\fQ}) = 
   (1/\thbarpr)\circ\iota_{I_\fQ}) = 1/\delta$, where the penultimate equality
   is by our assumption that $(\theta')(\theta')^c$ is unramified at primes above $q$).
   
   Now let us consider the maps
   $\beta_i:=(\thbarpr\thbarpr^{\tilde{\tau}^{m}})\circ c_{\tilde{\tau}^i}\circ\iota_{I_\fQ} $
   for each $i$. These maps are clearly trivial unless $i=0$, $m^*/2$, $m$ or $m+(m^*/2)$
   (considering $i$ mod $m^*$). Let us
   split into cases:
   \begin{itemize}
   \item {\sl We have $m\neq (m^*/2),0$.} In this case, it is easy to
     see that the values $i=0$, $m^*/2$, $m$ or $m+(m^*/2)$ mod $m^*$ are
     distinct. It is also easy to see that $\beta_i$ for these $i$ values are
     (respectively)
     $\delta,\delta^{-1},\delta,\delta^{-1}$, where $\delta$ is some
     character. 
     
     On the other hand, consider 
     $\beta'_i:=(\thbarpr^{\tilde{\tau}^{l}}\thbarpr^{\tilde{\tau}^{k}}\psi)\circ
      c_{\tilde{\tau}^i}\circ\iota_{I_\fQ} $.
     For these maps to be nontrivial for 4 distinct values of $i$ mod $m^*$,
     as they must, we need
     $l\neq k,k+(m^*/2)$; then it will be nontrivial for
     $i=l$, $l+(m^*/2)$, $k$, $k+(m^*/2)$, for which values of $i$ we
     get $\delta,\delta^{-1},\delta,\delta^{-1}$ respectively; and
     comparing, we conclude that $l=0$ and $k=m$ or $l=m$ and $k=0$
     (since $\delta \neq \delta^{-1}$).
   \item {\sl We have $m=0$.} In this case, $\beta_i$ is nontrivial for
   $i=0$ or $i=m^*/2$ (mod $m^*$), which are distinct, and the
   corresponding $\beta_i$ are (respectively)
     $\delta^2,\delta^{-2}$. Now consider 
     $\beta'_i:=(\thbarpr^{\tilde{\tau}^{l}}\thbarpr^{\tilde{\tau}^{k}}\psi)\circ
      c_{\tilde{\tau}^i}\circ\iota_{I_\fQ} $. For these maps to be nontrivial for
      at most two values of $i$ we need $l=k$ or $l=k+(m^*/2)$, and in the
     latter case we see that \emph{all} $\beta'_i$ are in fact trivial. Thus 
     we must have
     $l=k$. Comparing the $i$ for which $\beta_i$ and $\beta'_i$ are
     nontrivial, we see that $l=0$.
   \item {\sl We have $m=m^*/2$.} In this case, it is easy to see that all $\beta_i$
   are trivial. Now, considering $\beta'_i:=(\thbarpr^{\tilde{\tau}^{l}}\thbarpr^{\tilde{\tau}^{k}}\psi)\circ
      c_{\tilde{\tau}^i}\circ\iota_{I_\fQ} $, we see that for these maps to be nontrivial for
      at most two values of $i$ (as they must be) we need $l=k$ or $l=k+(m^*/2)$, and we see that
      only in the latter case is it in
     fact trivial for all values of $i$.
   \end{itemize}
   For the converse, if $k=m$ and $l=0$, the fact that $\thbarpr/\thbarpr^{\tilde{\tau}^k}
   = \thbarpr/\thbarpr^{\tilde{\tau}^{k}}$ is trivial. Similarly, if
   $k=0$ and $l=m$, we need to prove that  $\thbarpr/\thbarpr
   = \thbarpr^{\tilde{\tau}^m}/\thbarpr^{\tilde{\tau}^{m}}$, which is
 also trivial. Finally, if  $l=(m^*/2)+k$ and $m=m^*/2$, it remains
   to show that
   $(\thbarpr)(\thbarpr)^c=((\thbarpr)(\thbarpr)^c)^{\tilde{\tau}^k}$
   (where we write $c$ for complex conjugation), which is true since
   one of our hypotheses is that
   $(\thbarpr)(\thbarpr)^c$ can be extended to $G_F$.
  \end{proof}
We can now decompose:
 \begin{equation} \label{eq:decomposition of rbarprime}
 \ad\bar{r}'|_{G_{F(\zeta_l)}} =\left(
   \bigoplus_{\chi\in\Hom(\Gal({M(\zeta_l)}/{F(\zeta_l)}),\Fbar_l^\times)} W_\chi
 \right)\oplus\left(
   \bigoplus_{i=1}^{m^*-1} W_i
 \right)
 \end{equation}
where
\begin{itemize}
\item $W_\chi$ is the span of $\sum_{i=0}^{m^*-1} \chi^{-1}(\tau^i)
  e_i \otimes f_i$, so $W_\chi\cong \Fbar_{l}(\chi)$.
\item $W_i$ is the span of $\{e_j\otimes f_{i+j}\}_{j=0,\dots,m^*-1}$,
  so $W_i\cong
  \Ind^{G_{F(\zeta_l)}}_{G_{M(\zeta_l)}}(\thbarpr/\thbarpr^{\tilde{\tau}^{-i}})$.
\end{itemize}

From this we turn to study $\rbar\otimes\rbar'$, which we will
abbreviate $\rbar''$.  We can decompose:
\begin{equation} \label{eq:decomposition of rbardoubleprime}
\ad\rbar''|_{G_{F(\zeta_l)}}=\left(
   \bigoplus_{j=0}^s\bigoplus_{\chi\in\Hom(\Gal({M(\zeta_l)}/{F(\zeta_l)}),\Fbar_l^\times)} V_j(\chi)^{m_j}
 \right)\oplus\left(\bigoplus_{j=0}^s
   \bigoplus_{i=1}^{m^*-1} (V_j\otimes W_i)^{m_j}\right)
\end{equation}

We then have the following straightforward lemma:
\begin{lemunnum} The decomposition \ref{eq:decomposition of rbardoubleprime} enjoys the following properties:
\begin{enumerate}
\item Each $V_j(\chi)$ is irreducible.
\item We have $V_j(\chi)\not\equiv V_{j'}(\chi')$ unless
  $\chi=\chi'$ and $j=j'$.
\item Each $V_j\otimes
  W_i\cong\Ind^{G_{F(\zeta_l)}}_{G_{M(\zeta_l)}}(V_j\otimes\thbarpr/\thbarpr^{\tilde{\tau}^i})$
  is irreducible; moreover, we have
  $\Ind^{G_{F(\zeta_l)}}_{G_{M(\zeta_l)}}(V_j\otimes\thbarpr/\thbarpr^{\tilde{\tau}^i})
  \not\cong
  \Ind^{G_{F(\zeta_l)}}_{G_{M(\zeta_l)}}(V_{j'}\otimes\thbarpr/\thbarpr^{\tilde{\tau}^{i'}})
  $ unless $j=j'$ and $i\in\{i',m^*-i'\}$.
\item $V_j(\chi) \not\cong
  \Ind^{G_{F(\zeta_l)}}_{G_{M(\zeta_l)}}(V_{j'}\otimes\thbarpr/\thbarpr^{\tilde{\tau}^{i'}})
  $ for all $\chi,i',j,j'$.
\end{enumerate}
\end{lemunnum}
\begin{proof}
For part 1, see equation \ref{eq:decomposition of rbar} and the remarks immediately following it.
Part 2 is clear, because $M$ is linearly disjoint from $\bar{F}^{\ker\rbar}(\zeta_l)$ over $F$.
Both of the assertions in part 3 follow from \emph{(a)} the
fact that the only cases where $G_{M(\zeta_l)}$ acts via the same
character on $e_0\otimes f_k$ and $e_l\otimes f_m$ are when $k=m$
and $l=0$, $k=0$ and $l=m$, or $l=(m^*/2)+k$ and $m=m^*/2$ (this fact is
our sublemma) and \emph{(b)} from examining the action of inertia above $q$.
Finally, part 4 again follows from a consideration of the action of inertia above $q$.
\end{proof}

It is immediate (since there are no terms in the
decomposition of $\ad\rbar''|_{G_{F(\zeta_l)}}$ isomorphic to $1$
other than $V_0(1)^{m_0}$) that
$$H^0((\ad \rbar'')(G_{F(\zeta_l)}),\ad^0\rbar'') = (0).$$

Let $H$ (resp.\ $H'$, resp.\ $H''$) denote the image $(\ad
\rbar)(G_{F(\zeta_l)})$ (resp.\ $(\ad \rbar')(G_{F(\zeta_l)})$, resp.\
$(\ad \rbar'')(G_{F(\zeta_l)})$). We note that the only element of $H$
which is a scalar when thought of as an element of $\Aut\ad\rbar$ is
the identity\footnote{In case it will help avoid confusion, let us
  spell this out a little more. The representation $\rbar$ can
  equivalently be thought of as a vector space $V_{\rbar}$ on which
  the Galois group acts; similarly, we can think of the representation
  $\ad\rbar$ as a vector space $V_{\ad\rbar}$ with an action of the
  Galois group; the underlying vector space of $V_{\ad\rbar}$ is just
  the vector space $\End_{\mathrm{VectSpc}} V_{\rbar}$ of
endomorphisms of the underlying vector space of $V_{\rbar}$.  Each
element $h$ of $H$ determines an element of $\Aut_{\mathrm{VectSpc}}
V_{\ad\rbar}$; this is what we mean by $h$ `thought of as an element
of $\Aut\ad\rbar$'.}; and the same is true for $H'$, so that
$$H \times H' \hookrightarrow\Aut(\ad \rbar'')\,\text{ and
}\!\ker\ad\rbar''|_{G_{F(\zeta_l)}}=\ker\ad\rbar|_{G_{F(\zeta_l)}}\cap\ker\ad\rbar'|_{G_{F(\zeta_l)}}\,\,\text{(=$K_\cap$,
  say)}$$ Thus, if we define $K_\cup=\langle(\ker \ad \rbar|_{G_{F(\zeta_l)}}),(\ker \ad
\rbar'|_{G_{F(\zeta_l)}})\rangle\triangleleft G_{F(\zeta_l)}$, and
$\overline{H}=G_{F(\zeta_l)}/K_\cup$, we have maps
$$
\pi:H=\frac{G_{F(\zeta_l)}}{\ker \ad \rbar|_{G_{F(\zeta_l)}}} \twoheadrightarrow\frac{G_{F(\zeta_l)}}{K_\cup}=\overline{H}
\quad\text{and}\quad
\pi':H'=\frac{G_{F(\zeta_l)}}{\ker \ad \rbar'|_{G_{F(\zeta_l)}}} \twoheadrightarrow\frac{G_{F(\zeta_l)}}{K_\cup}=\overline{H}
$$ 
such that $G_{F(\zeta_l)} \twoheadrightarrow \{(h,h')\in H\times
H'|\pi(h)=\pi'(h')\}$. (This is surjective; it suffices to show a)
that for each $h \in H$, the image contains $(h,x)$ for some $x$, and
b) that the image contains $(e,h')$ for each $h'$ in the kernel of
$\pi'$, both of which are clear.)

Finally,
\begin{align*}
\ker (G_{F(\zeta_l)}&\onto \{(h,h')\in H\times H'|\pi(h)=\pi'(h')\}) =\ker (G_{F(\zeta_l)}\to H\times H')\\
&=\ker (G_{F(\zeta_l)}\to H) \cap\ker (G_{F(\zeta_l)}\to H')=K_\cap=\ker \ad\rbar''|_{G_{F(\zeta_l)}}
\end{align*}
and hence there is an isomorphism $H''\cong G_{F(\zeta_l)}/\ker\ad\rbar''|_{G_{F(\zeta_l)}}\cong\{(h,h')\in H\times H'|\pi(h)=\pi'(h')\}$.

Let $K'$ denote the kernel of $\pi'$. It is the case that:
\begin{itemize}
\item The image of the inertia group at any prime above $q$ in $H'$ is
  contained in $K'$ (as $\rbar$ is unramified above $q$).
\item $K'\hookrightarrow H'\onto \Gal(M(\zeta_l)/F(\zeta_l))$ is
  surjective (since $M$ is linearly disjoint from
  $\overline{F}^{\ker \rbar}(\zeta_l)$ over
  $F$).
\end{itemize}
From this we easily see that $(\ad \rbar'')^{K'}=\ad \rbar$ (use the
decomposition (\ref{eq:decomposition of rbardoubleprime})), and hence
(by inflation-restriction, using the fact that $l\notdiv\#K'$, so that
the group $H^1(K',\ad^0\rbar'')$ vanishes, and the assumption that
$\rbar|_{G_{F(\zeta_l)}}$ has big image) that:
$$(0)=H^1(H,\ad^0\rbar)=H^1(H''/K',(\ad^0\rbar'')^{K'}) \overset{\sim}{\rightarrow} H^1(H'',\ad^0\rbar'').$$

All that still remains is to show the non-group-cohomology-related
part of the definition of `big image'.  To this end, let us fix a copy
$V_j\subset \ad \bar{r}$. (Note that every copy of
$V_j(\chi)\subset\ad\bar{r}''$ will be of the form $V_j\otimes W_\chi$
for some copy of $V_j\subset\ad\bar{r}$; this uses our analysis,
above, of the conditions under which terms in the direct sum
(\ref{eq:decomposition of rbardoubleprime}) are isomorphic.) Since
$r|_{G_{F(\zeta_l)}}$ has $m^*$-big image, hence big image, and $M$ is linearly
disjoint from $\overline{F}^{\ker\rbar}(\zeta_l)$ over $F$, we see that
$r|_{G_{M(\zeta_l)}}$ has big image. Thus we can find a $\sigma\in
G_{M(\zeta_l)}$ and a simple root $\alpha$ of the characteristic
polynomial $\det(X-\bar{r}(\sigma))$ such that
$\pi_{\bar{r}(\sigma),\alpha} V_j i_{\bar{r}(\sigma),\alpha}\neq
(0)$. Altering $\sigma$ by elements of inertia subgroups at primes
above $q$ (which does not affect $\rbar(\sigma)$), we can
assume\footnote{In particular, we note that we can alter the
  quantities $\thbarpr^{\tilde{\tau}^i}(\sigma)$ for $1\leq i\leq
  (m^*/2)-1$ independently of each other, and of $\thbarpr(\sigma)$,
  since altering $\sigma$ by inertia at a prime $\fQ^{\tau^i}$ will
  only affect $\thbarpr^{\tilde{\tau}^i}(\sigma)$ and
  $\thbarpr^{\tilde{\tau}^{i+(m^*/2)}}(\sigma)$. Moreover,
  since $(\thbarpr/\thbarpr^{\tilde{\tau}^i})(\sigma)$ and
  $(\thbarpr/\thbarpr^{\tilde{\tau}^{i+(m^*/2)}})(\sigma)$ are related
  via the fact that 
  $(\thbarpr)(\thbarpr)^c$ is unramified at all places of $M$ above
  $q$, we may ensure that  $(\thbarpr/\thbarpr^{\tilde{\tau}^{i+(m^*/2)}})(\sigma)$ avoids
  taking the value $\alpha'/\alpha$ by ensuring that
  $\thbarpr^{\tilde{\tau}^i}(\sigma)$ avoids the value   $(\alpha'(\thbarpr\thbarpr^c)(\sigma))/(\thbarpr(\sigma)\alpha)$.
}
that for $i=1,\dots, m^*-1$ the ratio
$(\thbarpr/\thbarpr^{\tilde{\tau}^i})(\sigma)$ does not equal
$\alpha'/\alpha$ for any root $\alpha'$ (including $\alpha$) of the
characteristic polynomial $\det(X-\bar{r}(\sigma))$. (That this is
possible relies on the fact that $q>2n+1$.)  Thus
$\alpha\thbarpr(\sigma)$ is a simple root of the characteristic
polynomial of $\rbar''(\sigma)$ and, for each $\chi$,
\begin{align*}
\pi&_{\rbar''(\sigma),\alpha\thbarpr(\sigma)}\circ V_j(\chi)\circ i_{\rbar''(\sigma),\alpha\thbarpr(\sigma)} \\
&=
(\pi_{\rbar(\sigma),\alpha}\circ V_j\circ i_{\rbar(\sigma),\alpha}) 
\left(\pi_{\rbar'(\sigma),\thbarpr(\sigma)} \circ\left(\sum_{i=0}^{m^*-1} \chi^{-1}(\tau^i) e_i \otimes f_i\right)\circ i_{\rbar'(\sigma),\thbarpr(\sigma)}\right)\\
&=\pi_{\rbar(\sigma),\alpha}\circ V_j\circ i_{\rbar(\sigma),\alpha} \neq (0)
\end{align*}

Next, let us fix $j\in\{0,\dots ,s\}$ and $i\in\{1,\dots,m^*/2\}$, and let 
$\gamma:W_i\overset{\sim}{\to}W_{m^*-i}$ be the isomorphism such 
that $\gamma(e_0\otimes f_i)=e_{(m^*/2)+i}\otimes f_{(m^*/2)}$. (In the 
special case $2i=m^*$, $\gamma$ will  happen to be the identity; we will
in fact not make use of $\gamma$ in this case.)  We can write any submodule of 
$\ad\rbar''|_{G_{F(\zeta_l)}}$ which is isomorphic to $V_j\otimes W_i$ as:
$$\{\eta_1(v)\otimes w 
                      + \eta_2(v)\otimes \gamma(w):v\in V_j, w\in W_i\}$$
where $\eta_1,\eta_2$ are embeddings 
$V_j\hookrightarrow \ad\rbar$, and where we suppress 
the second term in the sum if $2i=m^*$. (This uses our analysis, 
above, of the conditions under which terms in the direct sum 
\ref{eq:decomposition of rbardoubleprime} are isomorphic.) Using the 
fact that $r|_{G_{F(\zeta_l)}}$ is $m^*$-big, we can find a 
$\sigma\in G_{F(\zeta_l)}$ and a root $\alpha$ of $\det(X-\bar{r}(\sigma))$ such that:
\begin{itemize}
\item $\pi_{\bar{r}(\sigma),\alpha} \circ V_j \circ i_{\bar{r}(\sigma),\alpha}\neq (0)$.
\item No other root of $\det(X-\bar{r}(\sigma))$ has $m^*$th power equal to $\alpha^{m^*}$.
\end{itemize}
Since $M$ is linearly disjoint from $\overline{F}^{\ker \rbar}(\zeta_l)$ over $F$, we may additionally assume:
\begin{itemize}
\item $\sigma$ maps to the generator $\tau$ of $\Gal(M(\zeta_l)/F(\zeta_l))$.
\end{itemize}
Define $\beta_0,\dots\beta_{m^*-1}$ by:
$$\rbar'(\sigma) e_i = \beta_i e_{i+1}$$
(where we take subscripts modulo $m^*$). The roots of the
characteristic polynomial of $\rbar'(\sigma)$ are exactly the $m^*$th
roots of $\beta_0\beta_1\dots\beta_{m^*-1}$. If $\beta$ is such a root,
so $\beta^{m^*}=\beta_0\beta_1\dots\beta_{m^*-1}$, then a corresponding
eigenvector is:
$$ v_{\beta}:=e_0 + \frac{\beta_0}{\beta} e_1 +  \frac{\beta_0\beta_1}{\beta^2} e_2 + \dots + 
 \frac{\beta_0\dots\beta_{m^*-2}}{\beta^{m^*-1}} e_{m^*-1}
$$
and the corresponding equivariant projection is
$$\pi_{\rbar'(\sigma),\beta} e_j = \frac{\beta^j}{m^*\beta_0\beta_1\dots\beta_{j-1}} v_\beta
$$ 
We see that $\alpha\beta$ is a simple root of the characteristic polynomial $\det(X-\bar{r}''(\sigma))$, and that for $v\in V_j$
\begin{align*}
\pi&_{\rbar''(\sigma),\alpha\beta}\circ (\eta_1(v)\otimes e_0\otimes
f_i + \eta_2(v)\otimes e_{(m^*/2)+i}\otimes f_{(m^*/2)}) \circ i_{\rbar''(\sigma),\alpha\beta} \\
&=
(\pi_{\rbar(\sigma),\alpha}\circ \eta_1(v)\circ i_{\rbar(\sigma),\alpha}) 
(\pi_{\rbar'(\sigma),\beta}\circ (e_{0}\otimes f_i)\circ i_{\rbar'(\sigma),\beta}) \\
&\hspace{4cm}+
(\pi_{\rbar(\sigma),\alpha}\circ \eta_2(v)\circ i_{\rbar(\sigma),\alpha}) 
(\pi_{\rbar'(\sigma),\beta}\circ (e_{(m^*/2)+i}\otimes f_{(m^*/2)})
\circ i_{\rbar'(\sigma),\beta}) \\
&=
(\pi_{\rbar(\sigma),\alpha}\circ \eta_1(v)\circ i_{\rbar(\sigma),\alpha})
\frac{1}{m^*}\frac{\beta_0\dots\beta_{i-1}}{\beta^i} \\
&\hspace{4cm}+
(\pi_{\rbar(\sigma),\alpha}\circ \eta_2(v)\circ i_{\rbar(\sigma),\alpha}) 
\frac{\beta^{{m^*/2}+i}}{m^* \beta_0\dots \beta_{{m^*/2}+i-1}}\frac{\beta_0\dots\beta_{{m^*/2}-1}}{\beta^{m^*/2}}\\
&=\frac{1}{m^*}\Big(
(\pi_{\rbar(\sigma),\alpha}\circ \eta_1(v)\circ i_{\rbar(\sigma),\alpha})
\frac{\beta_0\dots\beta_{i-1}}{\beta^i} +
(\pi_{\rbar(\sigma),\alpha}\circ \eta_2(v)\circ i_{\rbar(\sigma),\alpha})
\frac{\beta^{i}}{\beta_{m^*/2}\dots \beta_{{m^*/2}+i-1}}\Big).
\end{align*}
This will be nonzero for some choice of $\beta$ and $v$.

Since all terms in the sum (\ref{eq:decomposition of rbardoubleprime})
are isomorphic either to a $V_j(\chi)$ or a $W_i\otimes V_j$, the only
remaining point is to check that $\rbar''(G_{F(\zeta_l)}) $ has no
quotients of $l$-power order. It suffices to prove that
$H''=(\ad\rbar'')(G_{F(\zeta_l)})$ has no quotients of $l$-power order
(because the group of scalar matrices in $\GL_{nm^*}(\Flbar)$ has no
elements of order divisible by $l$). Since $m^*$ is not divisible by
$l$, $H'$ has order prime to $l$, and we see that any quotient of
$H''$ of $l$-power order would also be a quotient of $H$. Since
$r|_{G_{F(\zeta_l)}}$ has $m^*$-big image, $H$ has no quotients of
$l$-power order, and we are done.
\end{proof}

\subsection{} We now return to the main business of section: constructing characters.
For the entire remainder of this section, we will be working with the following
combinatorial data in the background:

\begin{situation} \label{character-lemma-sitn} Suppose that $F$ is a
  totally real field, $l$ is a rational prime which splits completely
  in $F$, and that we are given the following data:
  \begin{itemize}
  \item A partition of the set of primes above $l$ into two subsets
    $S_{\ord}$ and $S_{\ssg}$,
  \item For each prime $v$ above $l$, integers $a_v$ and $b_v$, and
  \item A set $T$ of places of $F$, not containing places above $l$.
  \end{itemize}
  such that the sum $-2a_v+b_v$ takes some fixed value, $w$ say,
  independent of $v$.
\end{situation}

(It may be helpful to the reader if we remark that this combinatorial
data is intended to be related to the automorphic representation $\pi$
of $\GL_2(\A_F)$ with which we will eventually be working in the
following manner: the sets $S_{\ord}$, $S_{\ssg}$ reflect the places
above $l$ where $\pi$ is ordinary and where it is supersingular; $\pi$
is thought of as being associated to a Galois representation having
Hodge-Tate numbers $\{-a_{v},b_{v}-a_{v}\}$ at the place $v$; and the
set $T$ contains the places away from $l$ where $\pi$ is ramified.)

We define a certain integer $m^*$, dependent on the set of $b_v$'s of
Situation \ref{character-lemma-sitn}, but not on the prime $l$ itself.
\begin{defn}
Let $B=\{b_v|\text{$v$ a prime of $F$ above $l$}\}$, considered as a
set \emph{without} multiplicity. We define the integer $m^*$ to be the
least common multiple of the integers in the set $\{2\}\cup B$.
\end{defn}

We have seen in the previous section that our lifting
theorems require us to maintain careful control of the lattices with which
we work. We therefore single out certain lattices which will be important
in the sequel.
\begin{defn} \label{standard bases}
Suppose we are in the situation of Situation \ref{character-lemma-sitn}. 
We make the following definitions:
\begin{enumerate}
\item
Suppose that $v\in S_{\ssg}$ is a place of $F$ above $l$, and
    let $L$ be the quadratic unramified extension of $F_v$ in $\overline{F}_v$ (so that $L$
    is isomorphic to $\Q_{l^2}$). 
     Let $K$ be a finite extension of $\Q_l$ with ring of integers $\bigO$,
    and suppose that
    $\chi:G_{L}\to\bigO^\times$ is a de Rham character.
    Finally suppose we have chosen $\sigma\in G_{F_v}$ 
    mapping to a generator of $\Gal(L/F_v)$. Then we can consider the ordered 
    $\bigO$-basis $\{f_0,f_1\}$ of $\Ind_{G_L}^{G_{F_v}}\chi$ where
    $f_i:G_{F_v}\to\bigO$ is the function supported on $\sigma^iG_L$ and
    taking the value $1$ on $\sigma^i$. We call this the \emph{$\sigma$-standard}
    basis for $\Ind_{G_L}^{G_{F_v}}\chi$.
\item Continue the assumptions of the previous point. We get an ordered basis 
$\{g_0,\dots,g_{n-1}\}$ of $\Sym^{n-1}\Ind_{G_L}^{G_{F_v}}\chi$
inherited (in the sense of 
  Definition \ref{defn:inherited basis}) from the $\sigma$-standard
  basis on $\Ind_{G_L}^{G_{F_v}}\chi$ defined in the previous part. We
  call this the \emph{$\sigma$-standard} basis of 
  $\Sym^{n-1}\Ind_{G_L}^{G_{F_v}}\chi$. (Concretely, considering
$\Sym^{n-1}\Ind_{G_L}^{G_{F_v}}\chi$ as a quotient of
$(\Ind_{G_L}^{G_{F_v}}\chi)^{\otimes(n-1)}$, 
$g_i$ is the image of $f_0^{\otimes(n-1-i)}\otimes f_1^{\otimes
  i}$.)
\item Suppose $M/F$ is a cyclic degree $m^*$  extension with $F$ totally real and $M$ CM,
$K$ is a finite extension of $\Q_l$ with ring of integers $\bigO$,
$\wt{\tau} \in G_{F}$ an element mapping to a generator $\tau \in \Gal(M/F)$, and
$\theta:G_M\to \bigO^\times$ is a character. We consider the ordered
 $\mc{O}$-basis $\{ e_0,\ldots,e_{m^* -1}\}$ 
 of $\Ind_{G_{M}}^{G_{F}}\theta$, where $e_{i} : G_{F} \rightarrow \mc{O}$
  is the function supported on $\wt\tau^iG_{M}$ with value 1 on
  $\wt\tau^i$, and call this the \emph{$\wt{\tau}$-standard basis} for 
  $\Ind_{G_{M}}^{G_{F}}\theta$.

\item If $v,L,K,M,\chi,\theta,\sigma,\wt{\tau}$ are as above, then we consider 
$\Sym^{n-1}\Ind_{G_L}^{G_{F_v}}\chi\otimes\Ind_{G_{M}}^{G_{F}}\theta$,
which is a representation of $G_{F_v}$, and has an ordered basis inherited (in the sense
of Definition \ref{defn:inherited basis for otimes}) from the $\sigma$-standard and 
$\wt{\tau}$-standard bases on $\Sym^{n-1}\Ind_{G_L}^{G_{F_v}}\chi$ and 
$\Ind_{G_{M}}^{G_{F}}\theta$ already defined.
We call this the \emph{$(\sigma,\wt{\tau})$-standard basis}
  of  $\Sym^{n-1}\Ind_{G_L}^{G_{F_v}}\chi\otimes\Ind_{G_{M}}^{G_{F}}\theta$.
\end{enumerate}
\end{defn}

We are now in a position to construct the characters we will need.
\begin{lem}\label{lem: existence of chars with long list of properties} 
Suppose we are in the situation described in Situation
\ref{character-lemma-sitn}, and we have fixed an integer $n$ and an
extension $F^{(\mathrm{bad})}$ of $F$. Assume that $l\nmid m^*$ and
$l>2n-2$. Then we can find a degree $m^*$ cyclic CM extension $M$ of $F$,
linearly disjoint from $F^{(\mathrm{bad})}$ over $F$, and continuous
characters $$\theta,\theta':G_M \to \overline{\Z}_{l}^\times,$$ which
are de Rham at all primes above $l$, and
which enjoy the following further properties:
  \begin{enumerate}
  \item $\theta, \theta'$ are congruent (mod $l$).
\item Suppose that $v\in S_{\ssg}$ is a place of $F$ above $l$, and
    let $L$ be the quadratic unramified extension of $F_v$ in $\overline{F}_v$ (so that $L$
    is isomorphic to $\Q_{l^2}$). Suppose that
    $\chi$, $\chi':G_{L}\to\Qlbar^\times$ are de Rham characters with
    $\overline{\chi}=\overline{\chi}'$. Suppose furthermore that the
    Hodge-Tate weights of $\chi$ are $-a_v$ and $b_v-a_v$, while those
    of $\chi'$ are $0$ and $1$. 

Let $K\subset \Qlbar$ be a finite extension of $\Q_l$ with ring of integers $\bigO$,
and suppose that $K$ is large enough that $\theta$, $\theta'$, $\chi$
and $\chi'$ are all valued in $\bigO$. Let $\sigma\in
G_{F_v}$ be an element mapping to a generator of $\Gal(L/F_v)$, and 
$\wt{\tau} \in G_{F}$ an element mapping to a generator $\tau \in
\Gal(M/F)$.

  Let $L'$ be a finite extension of $L$ in $\overline{F}_v$ such that
  $\chi|_{G_{L'}}$, $\chi'_{G_{L'}}$, $\theta|_{G_{L'}}$, and
  $\theta'|_{G_{L'}}$ are all
  crystalline. Let 
  \begin{align*}
  \rho_\chi&=(\Sym^{n-1}\Ind_{G_L}^{G_{F_v}}\chi)|_{G_{L'}}\otimes
  (\Ind_{G_{M}}^{G_{F}}\theta)|_{G_{L'}},\\ \text{and }
  \rho_{\chi'}&=(\Sym^{n-1}\Ind_{G_L}^{G_{F_v}}\chi')|_{G_{L'}}\otimes
  (\Ind_{G_{M}}^{G_{F}}\theta')|_{G_{L'}},
  \end{align*} regarded as
  representations $G_{L'}\to\GL_{nm^*}(\bigO)$ with respect to their
  $(\sigma,\wt{\tau})$-standard bases. Note $\rho_\chi$ and
  $\rho'_{\chi'}$ become equal after composition with the homomorphism
  $\GL_{nm^*}(\bigO)\to\GL_{nm^*}(k)$. Assume in fact that $L'$ has
  been chosen so that this common composite is the trivial
  representation.

Then $\rho_\chi\sim\rho_{\chi'}$ (in the sense of Definition \ref{defn:tilde}).

    \item For any $r:\Gal(\overline{F}/F)\to \GL_n(\overline{\Z}_l)$, a
    continuous Galois representation ramified only at primes in
    $T$ and above $l$, which satisfies $\overline{F}^{\ker \bar{r}}(\zeta_l)\subset
    F^{(\mathrm{bad})}$:
    \begin{itemize}
    \item  If $r|_{G_{F(\zeta_l)}}$ has $m^*$-big image, then
    $(r\tensor\Ind_{G_M}^{G_F} \theta)|_{G_{F(\zeta_l)}}$ and $(r\tensor\Ind_{G_M}^{G_F}
    \theta')|_{G_{F(\zeta_l)}}$ have big image.
    \item If $[\overline{F}^{\ker\ad \bar{r}}(\zeta_l):\overline{F}^{\ker\ad \bar{r}}]>m^*$ then neither the 
    fixed field of the kernel of  $\ad(\bar{r}\tensor\Ind_{G_M}^{G_F} \bar{\theta})$ 
    nor that of 
    $\ad(\bar{r}\tensor\Ind_{G_M}^{G_F}\bar{\theta}')$ will contain
    $\zeta_l$.
    \end{itemize}
  \item 
   We can put a perfect pairing on $\Ind_{G_M}^{G_F} \theta$ satisfying
     \begin{enumerate}
     \item $\langle v_1,v_2\rangle=(-1)^n\langle v_2,v_1\rangle$.
     \item For $\sigma \in G_F$, we have 
        $$\langle \sigma v_1, \sigma v_2\rangle 
             = \eps_l(\sigma)^{-m^*n+1-(1-n)w}
             \wtilde(\sigma)^{-(w-1)(n-1)} \langle  v_1, v_2\rangle$$
             where $\wtilde$ is the Teichm\"{u}ller lift of the mod
             $l$ cyclotomic character.
     \end{enumerate}
     Thus, in particular, 
     $$(\Ind_{G_M}^{G_F} \theta)^\vee\cong (\Ind_{G_M}^{G_F} \theta)
     \tensor \eps_l^{m^*n-1+(1-n)w}\wtilde^{(w-1)(n-1)}.$$ (Note that
     the character on the right hand side takes the value $(-1)^n$ on
     complex conjugations.)  \item Similarly, we can put a perfect
     pairing on $\Ind_{G_M}^{G_F} \theta'$ satisfying
     \begin{enumerate}
     \item $\langle v_1,v_2\rangle=(-1)^n\langle v_2,v_1\rangle$.
     \item For $\sigma \in G_F$, we have 
        $\langle \sigma v_1, \sigma v_2\rangle 
           = \eps_l(\sigma)^{-(m^*-1)n} \langle  v_1, v_2\rangle.$
     \end{enumerate}
     Thus, in particular, 
     $(\Ind_{G_M}^{G_F} \theta')^\vee\cong (\Ind_{G_M}^{G_F} \theta') 
                                                                               \tensor \eps_l^{(m^*-1)n}$.

     \noindent(Again the character on the right hand side is $(-1)^n$ on complex conjugations.)
  \item Suppose $r:\Gal(\overline{F}/F)\to \GL_2(\overline{\Z}_l)$ is
    a continuous representation with Hodge-Tate weights
    $\{-a_v,b_v-a_v\}$ at $v$ for each place $v$ of $F$ above $l$;
    then $\Sym^{n-1} r \tensor \Ind_{G_M}^{G_F} \theta$ has the
    following Hodge-Tate weights at $v$:
    \begin{align*}
     \{0,1,2,\dots,m^*n-2,m^*n-1\}
    \end{align*}
    (for each $v$). In particular, $\Sym^n r \tensor \Ind_{G_M}^{G_F}
    \theta$ is regular.
  \item Suppose $r':\Gal(\overline{F}/F)\to \GL_2(\overline{\Z}_l)$ is
    a continuous representation with Hodge-Tate weights $\{0,1\}$ at
    $v$ for each place $v$ of $F$ above $l$; then $\Sym^{n-1} r'
    \tensor \Ind_{G_M}^{G_F} \theta'$ also has the following
    Hodge-Tate weights at $v$:
    \begin{align*}
     \{0,1,2,\dots,m^*n-2,m^*n-1\}
    \end{align*}
    (for each $v$). In particular, $\Sym^n r' \tensor \Ind_{G_M}^{G_F}
    \theta'$ is regular.
  
  \end{enumerate}
\end{lem}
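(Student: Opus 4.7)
The plan is to construct $M$ first, then $\theta,\theta'$, and verify the seven properties in turn; the delicate step is (2). I take $M$ to be a cyclic CM extension of $F$ of degree $m^*$, obtained by compositing $F$ with a suitable cyclic CM extension of $\Q$ of degree $m^*$ (which exists since $2 \mid m^*$), and arrange via a standard Chebotarev argument that: $M$ is linearly disjoint from $F^{(\mathrm{bad})}(\zeta_l)$ over $F$; every prime of $F$ above $l$ splits completely in $M$; $M/F$ is unramified at $T$; and there exists an auxiliary rational prime $q \ne l$ with $q-1>2n$, unramified in $F^{(\mathrm{bad})}(\zeta_l)$, admitting a prime $\mathfrak{q}$ of $F$ above $q$ that splits completely in $M$. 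The prime $\mathfrak{q}$ will supply the data required by Lemma \ref{lem:big image etc}.

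I then construct $\theta,\theta'$ via Lemmas 4.1.5 and 4.1.6 of \cite{cht}. These allow me to prescribe: the algebraic type of $\theta$ (controlling its Hodge--Tate weights at primes above $l$); the product $\theta\theta^c$ on $G_F$ (controlling the pairing multiplier); the local behaviour at the primes above $\mathfrak{q}$ (choosing $\theta$ ramified at $\mathfrak{Q},\mathfrak{Q}^c$ with $q \mid \#\theta(I_{\mathfrak{Q}})$); and the residual character $\overline{\theta}$ (to match $\overline{\theta}'$); analogously for $\theta'$. The algebraic types are chosen so that at each $v \mid l$, the multiset of Hodge--Tate weights of $\theta$ at the places of $M$ above $v$, added to those of $\Sym^{n-1} r$ at $v$, yields precisely $\{0,1,\dots,nm^*-1\}$; the divisibility $b_v \mid m^*$ makes this possible, e.g.\ by taking weights of the form $(n-1)a_v + jnb_v + s$ with $0\leq j<m^*/b_v$ and $0\leq s<b_v$. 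The analogous choice is made for $\theta'$ using $1 \mid m^*$. The products $\theta\theta^c$ and $\theta'(\theta')^c$ are prescribed as demanded by (4), (5); the parity condition of Lemma 4.1.6 of \cite{cht} is met using the evenness of $m^*$.

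The verifications of (1) and (3)--(7) are then essentially formal. Property (1) is built in. Property (3) follows from Lemma \ref{lem:big image etc}, using $\mathfrak{q}$ and the linear-disjointness of $M$. Properties (4) and (5) come from the standard construction of a perfect $G_F$-equivariant pairing on $\Ind_{G_M}^{G_F}\theta$ when $\theta\theta^c$ is prescribed: the symmetry sign is $\pm 1$ according to the action of a chosen lift of the nontrivial element of $\Gal(M/F)$ on a basis, and can be pinned down to $(-1)^n$ by an appropriate rescaling of the basis, with the multiplier character read off from $\theta\theta^c$ and the fact that $[M:F]=m^*$ is even. Properties (6) and (7) follow from the additivity of Hodge--Tate weights under tensor products together with the tiling arranged in the previous step.

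The main obstacle is property (2). After restricting to $G_{L'}$, where all characters become crystalline and, by the choice of $L'$, residually trivial, Mackey gives
\[
\rho_\chi\big|_{G_{L'}} \;=\; \bigoplus_{i=0}^{n-1}\bigoplus_{w\mid v} \bigl(\chi^{n-1-i}(\chi^\sigma)^i\, \theta_w\bigr)\big|_{G_{L'}},
\]
and analogously for $\rho_{\chi'}$, using $L\subset L'$ together with the complete splitting of $v$ in $M$. By Corollary \ref{cor: sums of unramified twists of chars tilde}, it suffices to exhibit a common set of crystalline characters of $G_{L'}$ of which both $\rho_\chi$ and $\rho_{\chi'}$ are $\GL_{nm^*}(\mathcal{O})$-conjugate to direct sums of unramified twists; equivalently, since residually trivial crystalline characters with matching Hodge--Tate weight vectors differ only by unramified twists, it suffices to make the multisets of HT-weight vectors of the summands agree. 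This is achieved by coordinating the algebraic types of $\theta$ and $\theta'$ across the places $w \mid v$: under a prescribed bijection $w \leftrightarrow w'$, the Hodge--Tate weights of $\theta_w$ and $\theta'_{w'}$ are shifted by exactly the amount needed to compensate for the difference between the HT weights of $\chi^{n-1-i}(\chi^\sigma)^i$ and $(\chi')^{n-1-i}((\chi')^\sigma)^i$ at each embedding of $L'$. The main technical obstacle is that this coordinated choice must be simultaneously compatible with the independent tiling constraints imposed by (6) and (7); the combinatorial slack is furnished precisely by the divisibility $b_v\mid m^*$.
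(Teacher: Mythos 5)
You have a genuine gap in the construction of $M$, and it unravels the argument for part (2). You specify that \emph{every} prime of $F$ above $l$ should split completely in $M$. But the paper's construction (Step~1 of its proof) deliberately arranges the opposite for supersingular places: it builds $M$ from a character $\chi$ with $\chi(\Frob_v)=-1$ for $v\in S_{\ssg}$, so that such $v$ have residue degree $2$ in $M/F$ and $M_w$ is isomorphic to the quadratic unramified extension $L$ of $F_v$. This is not incidental. With split-completely, $\theta_w$ is a character of $G_{F_v}$, hence after restriction to $G_{L'}$ it contributes the \emph{same} Hodge--Tate weight at both embeddings extending a given embedding of $F_v$. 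With the inert choice, $\theta$ is a character of $G_L$ with two \emph{different} Hodge--Tate weights $h_{v,j}$ and $h_{v,m^*-1-j}$ at the two embeddings, satisfying $h_{v,j}+h_{v,m^*-1-j}=m^*n-1+(1-n)w$.

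This pairing identity is exactly what makes (2) work: it forces every constituent of $\rho_\chi|_{G_{L'}}$ to have Hodge--Tate weight pair summing to $m^*n-1$, and likewise for $\rho_{\chi'}$, so both multisets of pairs equal $\{(k,\,m^*n-1-k)\}_k$ and Corollary~\ref{cor: sums of unramified twists of chars tilde} applies. With your split-completely choice, a direct computation gives the constituent of $\rho_\chi$ indexed by $(i,j)$ the Hodge--Tate weight pair $\bigl(-(n-1)a_v+ib_v+h_{v,j},\;-(n-1)a_v+(n-1-i)b_v+h_{v,j}\bigr)$, with sum $(n-1)w+2h_{v,j}$; the corresponding constituent of $\rho_{\chi'}$ has sum $n-1+2h'_{v,j}$. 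These sums vary with $j$, and the two families cannot be put in bijection while simultaneously satisfying the tiling constraints that (6) and (7) impose on $\{h_{v,j}\}$ and $\{h'_{v,j}\}$ (e.g.\ matching sums would force $h_{v,j}$ to be an arithmetic progression of step $n$, which conflicts with the $b_v$-block tiling needed for (6) once $b_v>1$). The proposed ``coordination by shifts'' cannot work, because the required shift depends on $i$ while the bijection you invoke is only on places $w\leftrightarrow w'$; in effect the multisets of Hodge--Tate weight pairs genuinely differ. You must instead take $v\in S_{\ssg}$ to be inert of degree $2$ in $M/F$ (and $v\in S_{\ord}$ split) as in the paper, at which point the conjugate-pair structure of $\theta$ at those places produces the uniform-sum property automatically.
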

\begin{proof}
 {\sl Step 1: Finding a suitable field $M$.} We claim that there
 exists a surjective character $\chi: \Gal(\overline{F}/F)\to
 \mu_{m^*}$ (where $\mu_{m^*}$ is the group of $m^*$-th
 roots of unity in $\Qbar^{\times}$) such that
 \begin{itemize}
 \item $\chi$ is unramified at all places of $F$ above $l$.
 \item $\chi(\Frob_v)=1$ for all $v\in S_{\ord}$.
 \item $\chi(\Frob_v)=-1$ for all $v\in S_{\ssg}$.
 \item $\chi(c_v)=-1$ for each infinite place $v$ (where $c_v$ denotes
   a complex conjugation at $v$).
 \item $\overline{F}^{\ker \chi}$ is linearly disjoint from $F^{(\mathrm{bad})}$ over $F$.
 \end{itemize}
 We construct the character $\chi$ as follows. First, we find using
 weak approximation a totally negative element $\alpha\in F^\times$
 which is a $v$-adic unit for each $v|l$, and which is congruent to a
 quadratic residue mod each $v\in S_{\ord}$ and a quadratic
 non-residue mod each $v\in S_{\ssg}$. Let $\chi_0$ be the quadratic 
 character associated to the extension we get by adjoining the square
 root of this element. Then:
\begin{itemize}
 \item $\chi_0$ is unramified at all places of $F$ above $l$.
 \item $\chi_0(\Frob_v)=1$ for all $v\in S_{\ord}$.
 \item $\chi_0(\Frob_v)=-1$ for all $v\in S_{\ssg}$.
 \item $\chi_0(c_v)=-1$ for each infinite place $v$ (where $c_v$ denotes
   a complex conjugation at $v$).
\end{itemize}
Now choose (for example by Theorem 6 of chapter 10 of \cite{MR2467155}) a cyclic totally real extension $M_1/\Q$ of degree $m^*$
such that:
\begin{itemize}
\item $M_1/\Q$ is unramified at all the rational primes where
  $\overline{F}^{\ker \chi_0}F^{(\mathrm{bad})}/\Q$ is ramified.
\item $l$ splits completely in $M_1$.
\end{itemize}
Since $\overline{F}^{\ker \chi_0}F^{(\mathrm{bad})}/\Q$ and $M_1/\Q$ ramify at disjoint sets of
primes, they are linearly disjoint, and we can find a rational prime
$p$ which splits completely in $F^{(\mathrm{bad})}\overline{F}^{\ker
  \chi_0}$ but such that $\Frob_p$ generates $\Gal(M_1/\Q)$. Since
$M_1/\Q$ is cyclic, we may pick an isomorphism between $\Gal(M_1/\Q)$
and $\mu_{m^*}$, and we can think of $M_1$ as determining a character
$\chi_1:G_\Q \to \mu_{m^*}$ such that:
\begin{itemize}
\item $\chi_1$ is trivial on $G_{\Ql}$.
\item $\chi_1$ is trivial on complex conjugation.
\item $\chi_1(\Frob_p) = \zeta_{m^*}$, a primitive $m^*$th root of
  unity.
\end{itemize}
Then, set $\chi=(\chi_1|_{G_F})\chi_0$. Note that this maps onto
$\mu_{m^*}$, even when we restrict to $G_{F^{(\mathrm{bad})}}$ (since
$p$ splits completely in $F^{(\mathrm{bad})}$ and if $\wp$ is a place
of $F^{(\mathrm{bad})}$ over $p$, we have $\chi_0(\Frob_\wp)=1$ while
$\chi_1(\Frob_\wp)=\zeta_{m^*}$).
The remaining properties
are clear.

 Having shown $\chi$ exists, we set $M=\overline{F}^{\ker \chi}$; note
 that this is a CM field, and a cyclic extension of $F$ of degree $m^*$. Write $\tau$
 for a generator of $\Gal(M/F)$. Write $M^+$ for the maximal totally
 real subfield of $M$.

 \medskip{\sl Step 2: Defining certain sequences of numbers}. We now will now define
certain sequences of numbers. The reason for introducing them is that the
characters $\theta$ and $\theta'$ will be engineered such that their Hodge-Tate
numbers at the primes above $l$ in $M$ will be given according to the sequences
we are about to construct, and many of the formulae we use in defining 
$\theta$ and $\theta'$ will require explicit mention of these numbers. Thus it will
be helpful to have introduced notation for them.

In particular, for each prime $v$ of $F$ above $l$, we
define $m^*$-tuples of integers $(h_{v,0},\dots,h_{v,m^*-1})$ and
$(h'_{v,0},\dots,h'_{v,m^*-1})$ by putting:
 \begin{align*}
    (h_{v,0},\dots,h_{v,m^*-1})
        =(&a_{v}(n-1),1+a_{v}(n-1),2+a_{v}(n-1),\dots,b_{v}-1+a_{v}(n-1), \\&\quad
             b_{v}n+a_{v}(n-1),b_{v}n+1+a_{v}(n-1),\dots,b_{v}n+(b_v-1)+a_{v}(n-1), \\&\quad\quad
              \dots, \\&\quad\quad\quad
                (m^*-b_{v})n+a_{v}(n-1),(m^*-b_{v})n+1+a_{v}(n-1),\\
                &\hspace{4cm}\dots,(m^*-b_{v})n+b_{v}-1+a_{v}(n-1))
 \\
    (h'_{v,0},\dots,h'_{v,m^*-1})=(&0,n,
                                        2n,\dots
                                             ,(m^*-1)n).
 \end{align*}
 We note that, so defined, $h$ and $h'$ satisfy, for each $i$:
 \begin{align}
   h_{v,i}+h_{v,m^*-i-1} &= (m^*-b_{v})n+b_{v}-1+2a_{v}(n-1)=m^*n-1+(1-n)w    \label{eq:hs-work-out}\\
   h'_{v,i}+h'_{v,m^*-i-1} &= (m^*-1)n    \label{eq:hprimess-work-out}
 \end{align}
 (The characters $\theta$ and $\theta'$ will be engineered to have these
 Hodge-Tate numbers at the primes above $l$ in $M$.)

 \medskip{\sl Step 3: An auxiliary prime $q$}. Choose a rational prime $q$ such that
\begin{itemize}
\item no prime of $T$ lies above $q$,
\item $q \ne l$,\item $q$ splits completely in $M$, 
\item $q$ is unramified in $F^{(\mathrm{bad})}$, and
\item $q-1>2n$.
\end{itemize}
Also choose a prime $\fq$ of $F$ above $q$, and a prime $\fQ$ of M
above $\fq$. 

\medskip{\sl Step 4: Defining certain algebraic characters
  $\phi,\phi': \A_{M}^\times\to (M')^\times$}.  For each prime $v$ of $F$
   above $l$, let us choose a
prime $w_v$ of $M$ above $v$. We now have a convenient notation for
all the primes above $v$; if $v\in S_{\ord}$ there are $m^*$ of them,
$\tau^j w_v$ for $j=0,\dots, m^*-1$; and if $v\in S_{\ssg}$ there are
$m^*/2$ of them, $\tau^j w_v$ for $j=0,\dots, (m^*/2)-1$.  Also,
choose $\iota_v$ to be an embedding $M\to \Qbar_l$ attached to the
prime $w_v$ (in case $v\in S_{\ord}$ there is only one choice; in case
$v\in S_{\ssg}$ there are two). 

We are now forced into a slight notational ugliness. Write $\tilde{M}$
for the Galois closure of $M$ over $\Q$.  (Thus $\Gal(\tilde{M}/\Q)$
is in bijection with embeddings $\tilde{M}\to \Qbar$.) Let us fix
$\iota^*$, an embedding of $\tilde{M}$ into $\Qbar_l$, and write $v^*$ for the prime of $M$ below this.\footnote{The choice
  of this $\iota^*$ will affect the choice of the algebraic characters
  $\phi,\phi'$ below, but will be cancelled out---at least concerning
  the properties we care about---when we pass to the $l$-adic
  characters $\theta,\theta'$ below.}  Given any embedding $\iota'$ of $M$
into $\Qbar_l$, we can choose an element $\sigma_{\iota'}$ in
$\Gal(\tilde{M}/\Q)$ such that $\iota'= \iota^*\circ\sigma_{\iota'}$.

 We claim that there exists an extension $M'$ of $M$, and a character
 $\phi:\A_{M}^\times\to (M')^\times$ with open kernel such that:
 \begin{itemize}
 \item For $\alpha\in M^\times$,
   \[
    \phi(\alpha) = \prod_{v\in S_{\ord}\cup S_{\ssg}} 
    \prod^{(m^*/2)-1}_{j=0} (\sigma_{\iota_v\circ\tau^{-j}}(\alpha))^{h_{v,j}}
         (\sigma_{\iota_v\circ\tau^{-j-(m^*/2)}}(\alpha))^{h_{v,m^*-1-j}}
   \]
\item For $\alpha\in(\A_{M^+})^\times$, we have 
  $$\phi(\alpha)=(\prod_{v\!\not\,|\,\infty}
  |\alpha_v|\prod_{v|\infty}\sgn_v(\alpha_v)\delta_{M/M^+}(
  \Art_{M^+}(\alpha)))^{-m^*n+1-(1-n)w},$$
  where $\delta_{M/M^+}$ is the quadratic character of $G_{M^+}$
  associated to $M$.
 (Note that, in the right hand side, we
  really think of $\alpha$ as an element of $\A_{M^+}$, not
  just as an element of $\A_{M}$ which happens to lie in
  $\A_{M^+}$; so for instance $v$ runs over places of $M^+$,
  and the local norms are appropriately normalized to reflect us
  thinking of them as places of $M^+$.)
\item $\phi$ is unramified at $l$. \end{itemize}
This is an immediate consequence of Lemma 2.2 of \cite{hsbt}; we must
simply verify that the conditions in the bullet points are compatible;
the only difficult part is comparing the first and second, where
equation \ref{eq:hs-work-out} gives us what we need\footnote{We also
  use the fact that, if we fix a complex embedding $\iota_\C$ of $M$,
  $\iota_\C\circ\sigma_{\iota_v\circ\tau^j}$ will run through all
  other complex embeddings as $v$ runs through primes above $l$ and
  $j$ runs from $0$ to $m^*-1$, as may be seen by taking a field
  isomorphism $\C\cong \Qbar_l$}.

Similarly, we construct a character $\phi':(\A_{M})^\times\to
(M')^\times$ (enlarging $M'$ if necessary) with open kernel such that:
 \begin{itemize}
 \item For $\alpha\in M^\times$,
   \[
    \phi'(\alpha) = \prod_{v\in S_{\ord}\cup S_{\ssg}} 
    \prod^{(m^*/2)-1}_{j=0} (\sigma_{\iota_v\circ\tau^{-j}}(\alpha))^{h'_{v,j}}
         (\sigma_{\iota_v\circ\tau^{-j-(m^*/2)}}(\alpha))^{h'_{v,m^*-1-j}}
   \]
\item For $\alpha\in(\A_{M^+})^\times$, we have 
  $$\phi'(\alpha)=(\prod_{v\!\not\,|\,\infty}
  |\alpha_v| \prod_{v | \infty} \sgn_v(\alpha_v) \delta_{M/M^+}(\Art_{M^+}(\alpha)))
^{-(m^*-1)n}.$$ (Again, we think of $\alpha$ in the right
  hand side as a bona fide member of $\A_{M^+}$.)
\item $\phi'$ is unramified at $l$.                                     \item $q|\#\phi'(\mathcal{O}^\times_{M,\fQ})$, but $\phi'$ is
  unramified at primes above $\fq$ other than $\fQ$ and $\fQ^c$
\end{itemize}
Again, this follows from Lemma 2.2 of \cite{hsbt}, now using equation
\ref{eq:hprimess-work-out}.

 \medskip{\sl Step 5: Defining the characters
   $\theta,\theta':\Gal(\overline{M}/M)\to
 \overline{\Z}_l^\times$}. Write $\tilde{M}'$ for the Galois
 closure of $M'$ over $\Q$, and extend $\iota^*:\tilde{M}\to\Qbar_l$
 to an embedding $\iota^{**}:\tilde{M}'\to\Qbar_l$.  Define $l$-adic
 characters $\theta_0,\theta':\Gal(\overline{M}/M)\to
 \overline{\Z}_l^\times$ by:
 \begin{align*}
   \theta_0(\Art \alpha) &= \iota^{**}(\phi(\alpha)) \prod_{v|l}
   \prod^{(m^*/2)-1}_{j=0} (\iota_v\circ \tau^{-j})(\alpha_{\tau^j
     w_v})^{-h_{v,j}} (\iota_v\circ
   \tau^{-j-(m^*/2)})(\alpha_{\tau^{(m^*/2)+j} w_v})^{-h_{v,m^*-1-j}}
   \\ \theta'(\Art \alpha) &= \iota^{**}(\phi'(\alpha)) \prod_{v|l}
   \prod^{(m^*/2)-1}_{j=0} (\iota_v\circ \tau^{-j})(\alpha_{\tau^j w_v})^{-h'_{v,j}}
   (\iota_v\circ \tau^{-j-(m^*/2)})(\alpha_{\tau^{(m^*/2)+j} w_v})^{-h'_{v,m^*-1-j}}
 \end{align*}
where $v$ runs over places of $F$ dividing $l$. (It is easy to check that the expressions on the right hand sides are
 unaffected when $\alpha$ is multiplied by an element of $M^\times$.)
 Observe then that they enjoy the following properties:
 \begin{itemize}
 \item $\theta' \circ V_{M/M^+} = (\eps_l\delta_{M/M^+})^{-(m^*-1)n}$ where $V_{M/M^+}$
     is the transfer map $G_{M^+}^{\ab} \rightarrow G_{M}^{\ab}$. In
     particular, $\theta'\theta'^c=\eps_l^{-(m^*-1)n}$.
 \item $\theta_0 \circ V_{M/M^+} = (\eps_l\delta_{M/M^+})^{-m^*n+1-(1-n)w}$
   and hence , $\theta_0\theta_0^c=\eps_l^{-m^*n+1-(1-n)w}$.
                    \item For $v\in S_{\ord}$ and $0\le j\le (m^*/2)-1$, the Hodge-Tate weight of
  $\theta_0|_{G_{M_{\tau^jw_v}}}$ is $h_{v,j}$, and the  Hodge-Tate weight of
  $\theta_0|_{G_{M_{\tau^{j+m^*/2}w_v}}}$ is $h_{v,m^*-1-j}$.
\item For $v\in S_{\ssg}$ and $0\le j\le (m^*/2)-1$, the Hodge-Tate
  weights of   $\theta_0|_{G_{M_{\tau^jw_v}}}$ 
    are $h_{v,j}$ and  $h_{v,m^*-1-j}$.
\item For $v\in S_{\ord}$ and $0\le j\le (m^*/2)-1$, the Hodge-Tate weight of
  $\theta'|_{G_{M_{\tau^jw_v}}}$ is $h'_{v,j}$, and the  Hodge-Tate weight of
  $\theta'|_{G_{M_{\tau^{j+m^*/2}w_v}}}$ is $h'_{v,m^*-1-j}$.
\item For $v\in S_{\ssg}$ and $0\le j\le (m^*/2)-1$, the Hodge-Tate
  weights of   $\theta'|_{G_{M_{\tau^jw_v}}}$
    are $h'_{v,j}$ and $h'_{v,m^*-1-j}$.
 \item $q|\#\theta'(I_\fQ)$, but $\theta'$ is unramified at all primes
   above $\fq$ except $\fQ,\fQ^c$.
 \end{itemize}
We now define $\theta=\theta_0(\tilde{\theta'}/
\tilde{\theta_0})$---where $\tilde{\theta_0}$ (resp $\tilde{\theta'}$)
denotes the Teichmuller lift of the reduction mod $l$ of $\theta_0$
(resp $\theta'$)---and observe that:
 \begin{itemize}
 \item $\theta$ (mod $l$) = $\theta'$ (mod $l$).
 \item $\theta\theta^c=\eps_l^{-m^*n+1-(1-n)w}\wtilde^{-(w-1)(n-1)}$.
\end{itemize}

\medskip{\sl Step 6: Properties of $\Ind_{G_M}^{G_F}\theta$ and $\Ind_{G_M}^{G_F}\theta'$.} 
We begin by addressing point 4. We define a pairing on $\Ind_{G_M}^{G_F}\theta$ 
by the formula
$$\langle \lambda,\lambda'\rangle = \sum_{\sigma\in\Gal(\overline{M}/M)\backslash 
                      \Gal(\overline{M}/F)} 
               \eps_l(\sigma)^{m^*n-1+(1-n)w} \wtilde(\sigma)^{(w-1)(n-1)}
                \lambda(\sigma)\lambda'(c\sigma)$$
where $c$ is any complex conjugation. One easily checks that this is
well defined and perfect, and that the properties (a) and (b) hold.

We can address point 5 in a similar manner, defining:
$$\langle \lambda,\lambda'\rangle = \sum_{\sigma\in\Gal(\overline{M}/M)\backslash 
                      \Gal(\overline{M}/F)} 
               \eps_l(\sigma)^{(m^*-1)n}
                \lambda(\sigma)\lambda'(c\sigma)$$
and checking the required properties.

Next, we address point 6. We will use the following notation: if $S,T$
are multisets of integers, we will write $S\oplus T$ for the `union
with multiplicity' of $S$ and $T$ (so that
$\{1\}\oplus\{1\}=\{1,1\}$), and $S\otimes T$ for the convolution of
$S$ and $T$ (i.e. $\{s+t|s\in S, t\in T\}$ with appropriate
multiplicities). Finally, for $r:G_F\to \GL_k(\overline{\Z}_l)$ a de
Rham Galois representation and $v$ a prime above $l$, we will write
$\HT_v(r)$ for the multiset of Hodge-Tate numbers of $r$ at the place
$v$. (Note that $l$ splits completely, so this is well defined.)

Now, supposing $r:\Gal(\overline{F}/F)\to \GL_2(\overline{\Z}_l)$ to
be a continuous representation with $\HT_v(r)=\{-a_v,b_v-a_v\}$ for
each place $v$ of $F$ above $l$, we can calculate $\HT_v(\Sym^{n-1} r
\tensor \Ind_{G_M}^{G_F} \theta)$ and show it has the required value;
see Figure \ref{fig:crazy-ht-no-calc}.
\begin{figure}
\begin{align*}
\HT_v(&\Sym^{n-1} r \tensor \Ind_{G_M}^{G_F} \theta) =\\
=&\HT_v(\Sym^{n-1} r) \otimes \HT_v(\Ind_{G_M}^{G_F} \theta)\\
 =&\{-(n-1)a_v,-(n-2)a_v+(b_v-a_v),\dots,(n-1)(b_v-a_v)\} \\
 &\quad\quad\otimes \{h_{v,0},\dots,h_{v,m^*-1}\}\\
 =&\{0,b_v,\dots,(n-1)b_v\} \otimes (\{-(n-1)a_v\}\otimes \{h_{v,0},\dots,h_{v,m^*-1}\})\\
 =&\{0,b_v,\dots,(n-1)b_v\} \\
 &\quad\quad\otimes \big(\{0,1,2,\dots,b_{v}-1\} 
                \oplus \{nb_{v},nb_{v}+1,\dots,nb_{v}+(b_v-1)\} \oplus\\
                &\quad\quad\quad\quad\dots\oplus
                \{(m^*-b_{v})n,(m^*-b_{v})n+1,\dots,(m^*-b_{v})n+b_{v}-1\}\big)\\
 =&\{0,b_v,\dots,(n-1)b_v\} \otimes \{0,1,2,\dots,b_{v}-1\}\\
  &\quad\oplus \{0,b_v,\dots,(n-1)b_v\}\otimes \{nb_{v},nb_{v}+1,\dots,nb_{v}+(b_v-1)\}\\
  &\quad\oplus \dots\\
  &\quad\oplus \{0,b_v,\dots,(n-1)b_v\}\\
   &\hspace{2cm}\otimes \{(m^*-b_{v})n,(m^*-b_{v})n+1,\dots,(m^*-b_{v})n+b_{v}-1\}\\
 =&\{0,1,\dots,nb_v-1\} \oplus\{nb_v,nb_v+1,\dots,2nb_v-1\} \\
 &\quad\oplus \dots \oplus \{n(m^*-b_v),n(m^*-b_v)+1,\dots,nm^*-1\}\\
 =&\{0,1,\dots,m^*n-1\}
\end{align*}
\caption{Computation of $\HT_v(\Sym^{n-1} r \tensor \Ind_{G_M}^{G_F} \theta)$\label{fig:crazy-ht-no-calc}}
\end{figure}

Next we address point 7, in a similar manner. Supposing
$r:\Gal(\overline{F}/F)\to \GL_2(\overline{\Z}_l)$ to be a continuous
representation with $\HT_v(r)=\{0,1\}$ for each place $v$ of $F$ above
$l$, we can calculate:
\begin{align*}
\HT_v(\Sym^{n-1} r \tensor \Ind_{G_M}^{G_F} \theta) &=\HT_v(\Sym^{n-1} r) \otimes HT_v(\Ind_{G_M}^{G_F} \theta) \\
  &=\{0,1,\dots, n-1\} \otimes \{h'_{v,0},\dots,h'_{v,m^*-1}\}\\
  &=\{0,1,\dots, n-1\} \otimes \{0,n,2n,\dots,(m^*-1)n\}\\
  &=\{0,1,\dots,m^*n-1\}
\end{align*}

Next, we address point 2. Let $v\in S_{\ssg}$. The assumption that
$L'$ contains $L$ means that the representations $\rho_{\chi}$ and
$\rho_{\chi'}$ are both $\GL_{nm^*}(\mc{O})$-conjugate to direct sums of characters, and the other
assumptions on $L'$ ensure that these characters are all
crystalline. The Hodge-Tate weights of these characters with respect
to any embedding $i:L'\into\Qlbar$ are determined by the restriction
of $i$ to $L$, so we may think of each character as having two
Hodge-Tate weights in the obvious way. For both $\rho_\chi$ and
$\rho_{\chi'}$, the set of ordered pairs of Hodge-Tate weights,
running over all the characters, is exactly the set of ordered pairs
of non-negative integers with sum $nm^*-1$ (this follows from the
calculations establishing points 6 and 7). Since two crystalline
characters with the same Hodge-Tate weights must differ by an
unramified twist, the result follows from Corollary \ref{cor: sums of
  unramified twists of chars tilde}.

 \medskip{\sl Step 7: Establishing the big image/avoid $\zeta_l$ properties}. All that
 remains is to prove the big image and avoiding $\zeta_l$ properties; that is, point (3). 
 We will just show the stated properties concerning
$\Ind_{G_M}^{G_F}\theta'$; the statement for $\Ind_{G_M}^{G_F}\theta$
then follow since $\theta$ and $\theta'$ are congruent.

Let $r$ be a
continuous $l$-adic Galois representation with $m^*$-big image, such that
the following properties hold:
\begin{itemize}
\item $r$ is ramified only at primes of $T$ and above $l$, and 
\item we have 
$\bar{F}^{\ker \bar{r}}(\zeta_l)\subset F^{\mathrm(bad)}$.
\end{itemize} We 
may now apply Lemma \ref{lem:big image etc}.
Applying part 2 of that Lemma will give that 
$(r\otimes\Ind_{G_M}^{G_F}\theta')|_{G_{F(\zeta_l)}}$ has big image,
(the first part of point (3) to be proved) and applying part 1 will give the fact
that we avoid $\zeta_l$ (the second part of point (3)).
 All that remains is to check the hypotheses
of Lemma \ref{lem:big image etc}.

The fact that $M$ is linearly disjoint from $\overline{F}^{\ker \rbar}(\zeta_l)$
(common to both parts) comes from the fact that 
$\overline{F}^{\ker \rbar}(\zeta_l)\subset F^{(\mathrm{bad})}$ and $M$ was
chosen to be linearly disjoint from $F^{(\mathrm{bad})}$. 

We turn now to the particular hypotheses of the second part. That $r|_{G_{F(\zeta_l)}}$ has $m^*$-big
image is by assumption.
The properties we require of $\fq$ follow directly from the bullet
points established in Step 3, the properties of $r$ just above,
and the first and last bullet points 
(concerning $\theta'\theta'{}^c$ and $\#\theta'(I_\fQ)$ respectively) 
in the list of properties of $\theta'$
given immediately after $\theta'$ is introduced in step 5.
The fact that $(\thbarpr)(\thbarpr)^c$ can be extended to $G_F$ comes
from the fact that it is a power of the cyclotomic character. 
 \end{proof}

Finally, we will prove that, when we have applied this lemma, it is in fact possible
to strengthen point 2 a little.
\begin{lem}\label{lem: choose a different basis of ind chi} Suppose that we are in the situation of Lemma \ref{lem:
    existence of chars with long list of properties}, and suppose that
$v,L,L',\chi,\chi', \sigma,K,\bigO$ are as in point (2) of that Lemma. Using $\sigma$-standard
bases, we can consider $\Ind_{G_L}^{G_{F_v}}\chi$ as a representation
$r_\chi:G_{F_v}\to \GL_2(\bigO)$, and do the same for $r_{\chi'}$.

Suppose further that
$r,r':G_{F_v}\to\GL_2(\bigO)$ are Galois representations, and that
there is a matrix
$A\in\GL_2(\bigO)$ such that $r=A r_\chi A^{-1}$, $r'=A r_{\chi'} A^{-1}$.
Let 
  \begin{align*}
  \rho_r&=(\Sym^{n-1}r)|_{G_{L'}}\otimes
  (\Ind_{G_{M}}^{G_{F}}\theta)|_{G_{L'}},\\ \text{and }
  \rho_{r'}&=(\Sym^{n-1}r')|_{G_{L'}}\otimes
  (\Ind_{G_{M}}^{G_{F}}\theta')|_{G_{L'}}.
  \end{align*}
We have a given basis of $r$, from which we inherit a basis on $\Sym^{n-1}r$
using Definition \ref{defn:inherited basis}; we have the $\tau-$standard
basis on $\Ind_{G_{M}}^{G_{F}}\theta$; and thus we inherit a natural basis on 
$\rho_r$ and can consider it as a representation
into $\GL_{nm^*}(\bigO)$. The same is true of $\rho_{r'}$.

Then $\rho_r$ and $\rho_{r'}$ are congruent, and moreover $\rho_r\sim\rho_{r'}$ in the
sense of Definition \ref{defn:tilde}.
\end{lem}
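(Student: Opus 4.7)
The plan is to deduce this from point (2) of Lemma \ref{lem: existence of chars with long list of properties} combined with Lemma \ref{lem: residually trivial crystalline, all you need is GLn-conjugacy}, by showing that $\rho_r$ and $\rho_{r'}$ are $\GL_{nm^*}(\mc{O})$-conjugate to $\rho_\chi$ and $\rho_{\chi'}$ respectively by one and the same matrix.

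First I would unwind how the conjugation $r = A r_\chi A^{-1}$ propagates. By functoriality of $\Sym^{n-1}$, read off in the inherited basis of Definition \ref{defn:inherited basis}, there is a matrix $\widetilde{A} := \Sym^{n-1}(A) \in \GL_n(\mc{O})$ with $\Sym^{n-1}r = \widetilde{A}\cdot\Sym^{n-1}r_\chi\cdot\widetilde{A}^{-1}$, and similarly $\Sym^{n-1}r' = \widetilde{A}\cdot\Sym^{n-1}r_{\chi'}\cdot\widetilde{A}^{-1}$. Setting $B := \widetilde{A}\otimes I_{m^*}\in\GL_{nm^*}(\mc{O})$ and using the inherited basis of Definition \ref{defn:inherited basis for otimes}, one obtains
\[ \rho_r = B\rho_\chi B^{-1} \qquad\text{and}\qquad \rho_{r'} = B\rho_{\chi'}B^{-1},\]
where the same $B$ works in both because the factor $\Ind_{G_M}^{G_F}\theta$ (resp.\ $\Ind_{G_M}^{G_F}\theta'$) is left unconjugated. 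The congruence $\rho_r \equiv \rho_{r'} \pmod{\mf{m}_\mc{O}}$ is then immediate: $\overline{\chi}=\overline{\chi}'$ gives $\overline{r_\chi} = \overline{r_{\chi'}}$, hence $\overline{r} = \overline{r'}$ and therefore $\overline{\rho_r} = \overline{\rho_{r'}}$.

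Next I would restrict to $G_{L'}$. By hypothesis, $\rho_\chi|_{G_{L'}}$ and $\rho_{\chi'}|_{G_{L'}}$ are crystalline lifts of the \emph{trivial} representation. Since $\overline{B}\cdot I\cdot \overline{B}^{-1}=I$, the conjugates $\rho_r|_{G_{L'}}$ and $\rho_{r'}|_{G_{L'}}$ are also residually trivial; they remain crystalline of the same $l$-adic Hodge type (both are preserved by $\GL_{nm^*}(\mc{O})$-conjugation). Applying Lemma \ref{lem: residually trivial crystalline, all you need is GLn-conjugacy} twice therefore yields
\[ \rho_r|_{G_{L'}} \sim \rho_\chi|_{G_{L'}} \qquad\text{and}\qquad \rho_{r'}|_{G_{L'}} \sim \rho_{\chi'}|_{G_{L'}}.\]
Combining this with $\rho_\chi \sim \rho_{\chi'}$ from point (2) of Lemma \ref{lem: existence of chars with long list of properties} and the transitivity of $\sim$ (the first lemma of Section 3.4) gives the conclusion $\rho_r \sim \rho_{r'}$.

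The only point requiring care is the bookkeeping in the first step: one must check that the basis conventions of Definitions \ref{defn:inherited basis} and \ref{defn:inherited basis for otimes} really are functorial, so that the conjugation by $A$ on $r_\chi$ propagates to conjugation by the specific matrix $B = \Sym^{n-1}(A)\otimes I_{m^*}$ on $\rho_\chi$ (with the same $B$ on both the $\chi$ and $\chi'$ sides, since $A$ is the same). Once this is granted, the rest of the argument is a formal consequence of results already established.
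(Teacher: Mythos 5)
Your proposal is correct, and the key computation — building $B = \Sym^{n-1}(A) \otimes I_{m^*}$ and verifying that $\rho_r = B\rho_\chi B^{-1}$, $\rho_{r'} = B\rho_{\chi'}B^{-1}$ with the \emph{same} $B$ on both sides, hence $\bar\rho_r = \bar\rho_{r'}$ — is exactly what the paper does. The two arguments diverge only in the final step of transporting $\sim$. The paper argues directly that conjugation by $B$ is a natural automorphism of the lifting problem for the trivial residual representation, hence induces an automorphism of the lifting ring that carries $(\rho_\chi, \rho_{\chi'})$ to $(\rho_r, \rho_{r'})$ and therefore maps a common component to a common component. You instead apply Lemma \ref{lem: residually trivial crystalline, all you need is GLn-conjugacy} twice to get $\rho_r|_{G_{L'}} \sim \rho_\chi|_{G_{L'}}$ and $\rho_{r'}|_{G_{L'}} \sim \rho_{\chi'}|_{G_{L'}}$, then chain via transitivity through $\rho_\chi \sim \rho_{\chi'}$. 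Both are valid; your route uses the slightly stronger input that $\GL_{nm^*}(\mc{O})$-conjugation does not move a point \emph{off} its component (which is what Lemma \ref{lem: residually trivial crystalline, all you need is GLn-conjugacy} buys when the residual is trivial), whereas the paper's only needs the weaker functorial fact that conjugation permutes components, and would therefore also go through without the hypothesis that the residual representation is trivial. Both are sound here, and your version has the advantage of citing a ready-made lemma rather than redoing the deformation-theoretic bookkeeping.
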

\begin{proof} The matrix $A$ gives rise to a matrix $A_n:=\Sym^{n-1} A$,
and (abbreviating $\Sym^{n-1}r$ as $r_n$, $\Sym^{n-1}r'$ as $r'_n$, 
$\Sym^{n-1}r_\chi$ as $r_{\chi,n}$ and $\Sym^{n-1}r_{\chi'}$ as $r_{\chi',n}$), 
$r_n=A_n r_{\chi,n} A_{n}^{-1}$, $r'_n=A_n r_{\chi',n} A_{n}^{-1}$.

Then we can define an element $B$ of $\GL_{nm^*}(\bigO)$ by putting 
$B:=A_n\otimes\mathrm{id}$, and see that
$\rho_r = B \rho_\chi B^{-1}$ (where $\rho_\chi$ is as defined in point (2)
of lemma \ref{lem: existence of chars with long list of properties}); similarly
$\rho_{r'} = B \rho_{\chi'} B^{-1}$.

Since point (2) of Lemma \ref{lem:
    existence of chars with long list of properties} tells us $\rhobar_\chi= \rhobar_{\chi'}$,
we have: 
$$\rhobar_{r}= \overline{B \rho_\chi B^{-1}}=\overline{ B \rho_{\chi'} B^{-1}}=\rhobar_{r'}.$$
Moreover, since $\rho_\chi\sim \rho_{\chi'}$ (again from point (2)), 
we can deduce that$$\rho_{r}=B \rho_\chi B^{-1} \sim B \rho_{\chi'} B^{-1}=\rho_{r'}$$ (since conjugation
by $B$ defines a natural isomorphism  between the lifting problems for
$\bar{\rho}_\chi$ and for $\overline{B\rho_\chi B^{-1}}$, and hence a natural isomorphism between
all the relevant universal lifting rings), as required.
\end{proof}

\section{Twisting and untwisting}\label{sec: twisting and untwisting}\subsection{Twisting}In this section we establish some basic
results about automorphic induction and Galois representations, which
are presumably well-known but for which we lack a reference. If $K$ is
a number field, we say that an automorphic representation $\pi$ of
$\GL_n(\A_K)$ is regular if $\pi_v$ is regular for all $v|\infty$, in
the sense of section 7 of \cite{BLGHT}. We caution the reader that while
``regular algebraic'' (which is also defined in section 7 of \cite{BLGHT}) implies ``regular'', the two notions are not equivalent.

\begin{lem}\label{lem: base change}
  Suppose that $L/K$ is a cyclic extension of number fields. Let
  $\kappa$ be a generator of $\Gal(L/K)^\vee$. Let $\pi$
  be a cuspidal automorphic representation of $\GL_n(\A_K)$, and
  suppose that $\pi\not\cong\pi\otimes(\kappa^i\circ\Art_K\circ\det)$
  for any $1\le i\le [L:K]-1$. Then
  there is a cuspidal automorphic representation $\Pi$ of $\GL_n(\A_L)$ such
  that for all places $w$ of $L$ lying over a place $v$ of $K$ we have \[\rec(\Pi_w)=\rec(\pi_v)|_{W_{L_w}}.\]
\end{lem}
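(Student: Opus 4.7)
The plan is to argue by induction on $[L:K]$, reducing to the case of prime cyclic extensions where the result is the content of Arthur--Clozel cyclic base change.

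For the base case $[L:K] = p$ prime, I would set $\Pi := \BC_{L/K}(\pi)$; existence and the local identity $\rec(\Pi_w) = \rec(\pi_v)|_{W_{L_w}}$ follow from Arthur--Clozel base change, while the non-twist hypothesis ensures that $\Pi$ is cuspidal, the fibres of base change on cuspidal representations being exactly the orbits under twisting by characters of $\Gal(L/K)^\vee$. That the local identity at all non-archimedean places is compatible with the local Langlands correspondence is part of the characterisation of the latter (by Harris--Taylor and Henniart).

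For the inductive step, suppose $[L:K] = pm$ with $m > 1$, and let $L_1$ denote the unique intermediate field with $[L_1 : K] = m$. The characters of $\Gal(L_1/K)$, inflated to $\Gal(L/K)^\vee$, are precisely the powers $\kappa^{pj}$, so the non-twist hypothesis on $\pi$ with respect to $\kappa$ implies the corresponding hypothesis with respect to a generator of $\Gal(L_1/K)^\vee$, and by induction applied to $L_1/K$ we obtain a cuspidal $\Pi_1$ on $\GL_n(\A_{L_1})$ with $\rec((\Pi_1)_u) = \rec(\pi_v)|_{W_{(L_1)_u}}$ for all $u|v$.

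The essential additional step, and the main obstacle, is to verify that $\Pi_1$ satisfies the non-twist condition relative to $\Gal(L/L_1)$; one then applies the prime case to $L/L_1$ with input $\Pi_1$ and composes the local identities. If $\Pi_1 \otimes (\eta \circ \Art_{L_1} \circ \det) \cong \Pi_1$ for some nontrivial character $\eta$ of $\Gal(L/L_1)$, I would extend $\eta$ to a character $\kappa^i$ of $\Gal(L/K)$, noting that $p \nmid i$ since $\eta$ is nontrivial. Because base change commutes with twisting by characters of the base, this yields $\BC_{L_1/K}(\pi \otimes (\kappa^i \circ \Art_K \circ \det)) \cong \BC_{L_1/K}(\pi)$, both cuspidal; invoking the description of the fibres of $\BC_{L_1/K}$ as twist orbits under $\Gal(L_1/K)^\vee$, one concludes $\pi \cong \pi \otimes (\kappa^{pj - i} \circ \Art_K \circ \det)$ for some $j$. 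Since $p \nmid i$, $pj - i \not\equiv 0 \pmod{[L:K]}$, contradicting the non-twist hypothesis on $\pi$.
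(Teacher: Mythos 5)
Your proposal follows the same route the paper indicates (induction on $[L:K]$ reducing to the prime-degree case and then citing Arthur--Clozel, Harris--Taylor and Henniart), and it usefully supplies exactly the detail the paper elides: why the non-twist hypothesis propagates up the tower. That argument is correct in spirit. There is, however, one step that isn't quite self-contained as written. You take $L_1$ with $[L_1:K]=m$, so that the \emph{prime} extension $L/L_1$ sits at the top of the tower, and in the final paragraph you invoke ``the description of the fibres of $\BC_{L_1/K}$ as twist orbits under $\Gal(L_1/K)^\vee$.'' Since $m$ need not be prime, this fibre description is neither the prime-degree theorem of Arthur--Clozel nor part of the inductive hypothesis as you have stated it (the lemma being proved only asserts existence of the base change, not the structure of its fibres).

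This is easy to repair, in one of two ways. Either (a) turn the tower the other way: take $L_1$ with $[L_1:K]=p$ prime, set $\Pi_1:=\BC_{L_1/K}(\pi)$ directly from the prime case, verify by the same twist-extension argument (now only invoking the \emph{prime-degree} fibre description) that $\Pi_1$ satisfies the non-twist hypothesis for $\Gal(L/L_1)$, and then apply the inductive hypothesis to $L/L_1$; or (b) strengthen the statement you prove by induction so that it also records the fibre description for $\BC_{L_1/K}$ on cuspidals satisfying the non-twist hypothesis (this goes through with essentially the same extension-of-characters bookkeeping you already carry out). Either fix makes the argument complete, and your identification of the real content --- extending $\eta$ to $\kappa^i$ with the right divisibility and deriving a contradiction with the hypothesis on $\pi$ --- is exactly right.
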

\begin{proof}
  By induction on $[L:K]$ we may reduce to the case that $L/K$ is cyclic of
  prime degree. The result then follows from Theorems 3.4.2 and 3.5.1
  of \cite{MR1007299}, together with Lemma VII.2.6 of \cite{MR1876802}
  and the main result of \cite{MR672475}.
\end{proof}

We will write $BC_{L/K}(\pi)$ for $\Pi$.

\begin{lem}\label{lem: automorphic induction} Suppose that $L/K$ is a cyclic extension of number fields
  of degree $m$. Let $\pi$
  be a regular cuspidal automorphic representation of
  $\GL_n(\A_L)$. Let $\sigma$ be a generator of $\Gal(L/K)$. Assume that $\pi\not\cong\pi^{\sigma^i}$ for any $1\le i\le
  m-1$. Suppose further that $\Ind_{L_\infty}^{K_\infty}\pi_\infty$
  (the local automorphic induction) is
  regular. Then
  there is a regular cuspidal automorphic representation $\Pi$ of $\GL_{mn}(\A_K)$ such
  that for all places $v$ of $K$ we
  have
  \begin{equation}
    \label{eq:autoinductionlocalfactor}\rec(\Pi_v)= \oplus_{w|v}\Ind_{W_{L_w}}^{W_{K_v}}\rec(\pi_w)    
  \end{equation}(the
  sum being over places $w$ of $L$ dividing $v$).
  \end{lem}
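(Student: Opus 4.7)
The plan is to proceed by induction on $m=[L:K]$, paralleling the proof of Lemma \ref{lem: base change}. In the base case, where $m$ is prime, cyclic automorphic induction (\cite{MR1007299}, Chapter 3) produces an automorphic representation $\Pi$ of $\GL_{mn}(\A_K)$; the non-invariance hypothesis $\pi\not\cong\pi^{\sigma^i}$ for $1\le i\le m-1$ forces $\Pi$ to be cuspidal. The formula \eqref{eq:autoinductionlocalfactor} at finite places holds by the compatibility of automorphic induction with the local Langlands correspondence (as in Lemma VII.2.6 of \cite{MR1876802}); at archimedean places, the corresponding matching of $L$-packets is built into the Arthur--Clozel construction (together with the main result of \cite{MR672475}). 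The hypothesis that $\Ind_{L_\infty}^{K_\infty}\pi_\infty$ is regular then translates, via \eqref{eq:autoinductionlocalfactor} and local Langlands, into regularity of $\Pi$ at infinity.

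For the inductive step with $m$ composite, choose an intermediate cyclic extension $K\subsetneq K'\subsetneq L$ with $K'/K$ of prime degree $p$. Since $\Gal(L/K)$ is cyclic, $\sigma^p$ generates $\Gal(L/K')$, so the non-invariance hypothesis for $\pi$ under $\Gal(L/K')$ is automatic. Transitivity of induction of Weil-group representations, $\Ind_{L_\infty}^{K_\infty}\pi_\infty = \Ind_{K'_\infty}^{K_\infty}(\Ind_{L_\infty}^{K'_\infty}\pi_\infty)$, shows that $\Ind_{L_\infty}^{K'_\infty}\pi_\infty$ is regular (otherwise the outer induction would have repeated parameters). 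Applying the inductive hypothesis to $L/K'$ yields a regular cuspidal $\Pi'$ on $\GL_{n(m/p)}(\A_{K'})$ satisfying \eqref{eq:autoinductionlocalfactor}. We then apply the base case to $K'/K$ and $\Pi'$, producing $\Pi$ on $\GL_{mn}(\A_K)$; transitivity of local induction at each place of $K$ supplies the formula \eqref{eq:autoinductionlocalfactor} for $\Pi$ relative to $L/K$.

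The main obstacle is verifying the non-invariance hypothesis $\Pi'\not\cong(\Pi')^{\tau^i}$ for $\tau$ a generator of $\Gal(K'/K)$ and $1\le i\le p-1$, which is needed to apply the base case to $K'/K$. Choose a lift $\widetilde\tau\in\Gal(L/K)$ of $\tau$; since $\Gal(L/K)$ is abelian, $\widetilde\tau$ normalizes $\Gal(L/K')$, and conjugation then satisfies $(\Pi')^{\tau^i}\cong\Pi''$, where $\Pi''$ is the automorphic induction of $\pi^{\widetilde\tau^i}$ from $L$ to $K'$. If $\Pi'\cong(\Pi')^{\tau^i}$, then by the injectivity of cyclic automorphic induction modulo the $\Gal(L/K')$-action (a consequence of strong multiplicity one and the unramified compatibility at almost all places), $\pi^{\widetilde\tau^i}\cong\pi^{\sigma^{pj}}$ for some $j$, whence $\pi\cong\pi^{\widetilde\tau^{-i}\sigma^{pj}}$. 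Since $\widetilde\tau^i\notin\Gal(L/K')$ (as it maps to $\tau^i\neq 1$ in $\Gal(K'/K)$), the element $\widetilde\tau^{-i}\sigma^{pj}\in\Gal(L/K)$ is nontrivial, contradicting the original hypothesis on $\pi$.
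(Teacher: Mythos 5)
Your proof is correct and the overall inductive structure (peel off a prime at the bottom of the tower, use the prime-degree Arthur--Clozel case as the base, and propagate regularity at infinity via transitivity of induction) matches the paper's. The one place where you diverge is the crucial verification that the intermediate representation $\Pi'$ on $\GL_{n(m/p)}(\A_{K'})$ satisfies the non-invariance hypothesis for $K'/K$, which is needed to apply the prime-degree base case.

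The paper handles this step with a purely archimedean argument: if $\Pi'$ were isomorphic to a nontrivial $\Gal(K'/K)$-twist, then $\Ind_{K'_\infty}^{K_\infty}\Pi'_\infty$ would contain isomorphic constituents, so $\Ind_{L_\infty}^{K_\infty}\pi_\infty$ would fail to be regular, directly contradicting the hypothesis. You instead argue globally: identify $(\Pi')^{\tau^i}$ with the automorphic induction from $L$ to $K'$ of $\pi^{\widetilde\tau^i}$, and then invoke injectivity of automorphic induction modulo the $\Gal(L/K')$-action (via strong multiplicity one and the unramified local data) to conclude that $\pi$ would be fixed by a nontrivial element of $\Gal(L/K)$, contradicting the original non-invariance hypothesis. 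Both arguments are valid. The paper's is shorter and self-contained at infinity; yours has the minor conceptual advantage of not using the regularity-at-infinity hypothesis to establish cuspidality of $\Pi$ (only to guarantee that the archimedean parameter is regular and to propagate the hypothesis to intermediate steps), but it does rely on the compatibility $(\Pi')^{\tau^i}\cong\Ind_{L/K'}(\pi^{\widetilde\tau^i})$ and on fiber-of-induction facts that you would need to state precisely (e.g.\ by comparing base changes to $L$ and applying strong multiplicity one for isobaric sums), whereas the paper's version avoids these ingredients entirely.
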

  \begin{proof}
   The case that $m$ is prime follows from Theorem 3.5.1 and Lemma
    3.6.4 of \cite{MR1007299}, together with Lemma VII.2.6 of \cite{MR1876802}
  and the main result of \cite{MR672475} (the assumption that
  $\Ind_{L_\infty}^{K_\infty}\pi_\infty$ is regular is of course
  equivalent to the statement that $\Pi$ is regular). For the general case we use
  induction. Suppose that $L\supset L_2\supset L_1\supset K$ with
  $L_2/L_1$ cyclic of prime degree, and suppose that we have found a regular
  cuspidal automorphic representation $\Pi_{L_2}$ of
  $\GL_{[L:L_2]n}(\A_{L_2})$ satisfying the analogue of ~(\ref{eq:autoinductionlocalfactor}). The result will follow for $L_1$ provided we know that
  $\Pi_{L_2}\not\cong\Pi_{L_2}^{\sigma^{[L_1:K]i}}$ for any $1\le i\le
  [L_2:L_1]-1$; but if this fails to hold then it is easy to see
  that  \[\Ind_{L_\infty}^{K_\infty}\pi_\infty=\Ind_{(L_2)_\infty}^{K_\infty}(\Ind_{L_\infty}^{(L_2)_\infty}\pi_\infty)=\Ind_{(L_2)_\infty}^{K_\infty}(\Pi_{L_2})_\infty\]cannot
  be regular, a contradiction.
  \end{proof}

We will write $\Ind_{L}^{K}\pi$ for $\Pi$.

Let $F$ be a totally real field and let $M$ be an imaginary CM field which is a
cyclic Galois extension of $F$ of degree $m$. Fix
$\iota:\Qlbar\isoto\C$. Let $\pi$ be an RAESDC automorphic
representation of $\GL_n(\A_F)$, and let $\chi$ be an algebraic
character of $M^\times\backslash \A_M^\times$, chosen so that the Galois representation
$\Ind_{G_M}^{G_F}r_{l,\iota}(\chi)$ is essentially self-dual. Then the Galois
representation \[r_{l,\iota}(\pi)\otimes
\Ind_{G_M}^{G_F}r_{l,\iota}(\chi):G_F\to\GL_{nm}(\Qlbar)\] is also essentially-self dual. We have the following result.
\begin{prop}\label{prop: tensor product with induction is still automorphic}
  Assume
  that \[\pi_\infty\boxtimes\Ind_{M_\infty}^{F_\infty}\chi_\infty\] is
  regular; equivalently,  $r_{l,\iota}(\pi)\otimes
\Ind_{G_M}^{G_F}r_{l,\iota}(\chi)$ is
  regular. Assume also that if  $\kappa$ is a generator of
  $\Gal(M/F)^\vee$, then $\pi\not\cong\pi\otimes(\kappa^i\circ\Art_K\circ\det)$
  for any $1\le i\le [M:F]-1$. Then the representation $r_{l,\iota}(\pi)\otimes
\Ind_{G_M}^{G_F}r_{l,\iota}(\chi)$ is
  automorphic. More precisely, there is an RAESDC automorphic
  representation $\Pi$ of $\GL_{nm}(\A_F)$ with $r_{l,\iota}(\Pi)\cong
 r_{l,\iota}(\pi)\otimes
\Ind_{G_M}^{G_F}r_{l,\iota}(\chi)$. In fact, for every place $v$
  of $F$, we
  have \[\rec(\Pi_v|\cdot|^{(1-mn)/2})=\rec(\pi_v|\cdot|^{(1-n)/2})\otimes(\oplus_{w|v}\Ind_{W_{M_w}}^{W_{F_v}}\rec(\chi_w))\](the
  sum being over places $w$ of $M$ dividing $v$).
\end{prop}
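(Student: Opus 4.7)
The plan is to prove the proposition by a three-step procedure: base change $\pi$ to $M$, twist by $\chi$, then automorphically induce back down to $F$. Begin by applying Lemma \ref{lem: base change} to $\pi$ and the cyclic extension $M/F$. The hypothesis that $\pi \not\cong \pi \otimes (\kappa^i \circ \Art_F \circ \det)$ for $1 \le i \le m-1$ is exactly what this lemma requires, so it produces a cuspidal automorphic representation $\pi_M := BC_{M/F}(\pi)$ of $\GL_n(\A_M)$ satisfying $\rec(\pi_{M,w}) = \rec(\pi_v)|_{W_{M_w}}$ for all places $w|v$. On the Galois side this gives $r_{l,\iota}(\pi_M) \cong r_{l,\iota}(\pi)|_{G_M}$. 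Form the twist $\pi_M \otimes (\chi \circ \det)$, which is still cuspidal, and whose associated Galois representation is $r_{l,\iota}(\pi)|_{G_M} \otimes r_{l,\iota}(\chi)$.

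Next, apply Lemma \ref{lem: automorphic induction} to $\pi_M \otimes \chi$ with respect to $M/F$ (with generator $\sigma \in \Gal(M/F)$). Its hypotheses are that $(\pi_M \otimes \chi)^{\sigma^i} \not\cong \pi_M \otimes \chi$ for $1 \le i \le m-1$, and that $\Ind_{M_\infty}^{F_\infty}(\pi_M \otimes \chi)_\infty$ is regular. For the second, note that the local Langlands parameter of $\Ind_{M_\infty}^{F_\infty}(\pi_M \otimes \chi)_\infty$ agrees under $\iota$ with the parameter at infinity of
\[
\Ind_{G_M}^{G_F}\!\bigl(r_{l,\iota}(\pi)|_{G_M} \otimes r_{l,\iota}(\chi)\bigr) \;\cong\; r_{l,\iota}(\pi)\otimes \Ind_{G_M}^{G_F} r_{l,\iota}(\chi),
\]
which is regular by assumption. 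For the first hypothesis, use that $\pi_M^{\sigma^i}\cong \pi_M$ (since $\pi_M$ is a base change), so the condition becomes $\pi_M\otimes\chi^{\sigma^i}\not\cong\pi_M\otimes\chi$. If this failed, strong multiplicity one would force $r_{l,\iota}(\pi)|_{G_M}\otimes r_{l,\iota}(\chi) \cong r_{l,\iota}(\pi)|_{G_M}\otimes r_{l,\iota}(\chi^{\sigma^i})$ as $G_M$-representations, producing two isomorphic summands in the decomposition of $r_{l,\iota}(\pi) \otimes \Ind_{G_M}^{G_F} r_{l,\iota}(\chi)|_{G_M}$ and hence a repeated Hodge–Tate weight, contradicting the regularity hypothesis.

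Lemma \ref{lem: automorphic induction} then supplies a regular cuspidal automorphic representation $\Pi := \Ind_M^F(\pi_M \otimes \chi)$ of $\GL_{nm}(\A_F)$ with
\[
\rec(\Pi_v) \;=\; \bigoplus_{w|v} \Ind_{W_{M_w}}^{W_{F_v}}\!\bigl(\rec(\pi_v)|_{W_{M_w}} \otimes \rec(\chi_w)\bigr)
\]
for every place $v$ of $F$. The projection formula $\Ind(A|_{W_{M_w}}\otimes B) \cong A\otimes \Ind B$ for Weil-group representations then identifies this with $\rec(\pi_v)\otimes \bigoplus_{w|v}\Ind_{W_{M_w}}^{W_{F_v}}\rec(\chi_w)$; tracking the shift between the $|\cdot|^{(1-n)/2}$ and $|\cdot|^{(1-mn)/2}$ normalizations yields the explicit formula in the statement. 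The comparison of Langlands parameters at each (finite and infinite) place, combined with the local-global compatibility of $r_{l,\iota}(\Pi)$, gives the desired isomorphism $r_{l,\iota}(\Pi) \cong r_{l,\iota}(\pi)\otimes \Ind_{G_M}^{G_F} r_{l,\iota}(\chi)$.

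It remains to check that $\Pi$ is RAESDC. Regularity was established above; algebraicity follows from that of $\pi$ together with the algebraicity of $\chi$; and essential self-duality is read off from the essential self-duality of both $\pi$ and $\Ind_{G_M}^{G_F} r_{l,\iota}(\chi)$ (the latter built into the hypothesis that $\chi$ was chosen so that $\Ind_{G_M}^{G_F} r_{l,\iota}(\chi)$ is essentially self-dual) combined with the local formula for $\rec(\Pi_v)$. The main obstacle is the verification of the non-self-twist condition $(\pi_M\otimes\chi)^{\sigma^i}\not\cong\pi_M\otimes\chi$ needed to invoke the automorphic induction lemma; the resolution, as indicated, is to transfer the question to the Galois side via $r_{l,\iota}$ and exploit the regularity hypothesis to rule out the self-twist by a Hodge–Tate weight comparison.
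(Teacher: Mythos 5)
Your proof follows the same three-step route as the paper—base change to $M$, twist by $\chi$, automorphically induce back to $F$—invoking Lemma \ref{lem: base change} and Lemma \ref{lem: automorphic induction} exactly as in the paper's argument, and your verification of the non-self-twist hypothesis via Hodge--Tate weights is just a translated form of the paper's appeal to regularity of $\pi_\infty\boxtimes\Ind_{M_\infty}^{F_\infty}\chi_\infty$ (since the two are equivalent by assumption). One small caution: when you write ``$r_{l,\iota}(\pi_M)\cong r_{l,\iota}(\pi)|_{G_M}$'', the existence of a Galois representation attached to $\pi_M$ over the CM field $M$ deserves justification, but this is immaterial to the argument since you can work directly with the archimedean Langlands parameter as the paper does.
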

\begin{proof}
  It is enough to prove that there is a regular cuspidal automorphic
  representation $\Pi$ of $\GL_{mn}(\A_F)$ satisfying the final
  assertion ($\Pi$ is then algebraic by the conditions at the infinite
  places, and is automatically essentially self-dual by the
  strong multiplicity one theorem and the conditions at the finite places, and thus has a Galois
  representation $r_{l,\iota}(\Pi)$ associated to it, which satisfies
  the required condition by the Tchebotarev density theorem).

  By Lemma \ref{lem: base change} and Lemma \ref{lem: automorphic
    induction} we have a regular cuspidal automorphic
  representation \[\Pi:=(\Ind_{M}^{F}(BC_{M/F}(\pi|\cdot|^{(1-n)/2})\otimes\chi))|\cdot|^{(mn-1)/2}\]
  (note that $BC_{M/F}(\pi|\cdot|^{(1-n)/2})\otimes\chi$ satisfies the
  hypotheses of Lemma \ref{lem: automorphic induction} by the
  assumption that
  $\pi_\infty\boxtimes\Ind_{M_\infty}^{F_\infty}\chi_\infty$ is
  regular).  By definition this choice of $\Pi$
  satisfies \[\rec(\Pi_v|\cdot|^{(1-mn)/2})=\oplus_{w|v}\Ind_{W_{M_w}}^{W_{F_v}}(\rec(\pi_v|\cdot|^{(1-n)/2})|_{W_{M_w}}\otimes\rec(\chi_w))\]
  for each place $v$ of $F$, and the result follows.
\end{proof}

\subsection{Untwisting}In this
section we explain a kind of converse to Proposition \ref{prop: tensor
  product with induction is still automorphic}, following an idea of
Harris (\cite{harristrick}, although our exposition is extremely
similar to that found in the proof of Theorems 7.5 and 7.6 of \cite{BLGHT}). 

Suppose that $F$ is a totally real field and that $M$ is an imaginary
CM field which is a cyclic extension of $F$ of degree $m$. Suppose
that $\theta$ is an algebraic character of
$M^\times\backslash\A_M^\times$ and that $\Pi$ is a RAESDC
representation of $\GL_{mn}(\A_F)$ for some $n$. Let
$\iota:\Qlbar\isoto\C$.

\begin{prop}\label{prop: untwisting}Assume that there is a continuous irreducible
  representation $r:G_F\to\GL_n(\Qlbar)$ such that  $r|_{G_M}$ is irreducible and \[r_{l,\iota}(\Pi)\cong
  r\otimes\Ind_{G_M}^{G_F}r_{l,\iota}(\theta).\] Then $r$ is automorphic.
  
\end{prop}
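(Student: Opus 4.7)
The plan is to apply Harris's untwisting trick: base change $\Pi$ to $M$, decompose the resulting isobaric representation using the Mackey decomposition of the induced character, isolate a cuspidal constituent and twist it to obtain automorphy of $r|_{G_M}$, and finally descend to $F$ via the soluble descent lemma.

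First, since $\Pi$ is RAESDC, cyclic base change (Arthur--Clozel) yields an isobaric automorphic representation $\Pi_M := BC_{M/F}(\Pi)$ of $\GL_{mn}(\A_M)$ whose local factors at almost all places are determined by those of $\Pi$ via restriction of the corresponding Weil--Deligne representations. In particular, $r_{l,\iota}(\Pi_M) = r_{l,\iota}(\Pi)|_{G_M}$. By Mackey's formula applied to the Galois extension $M/F$, we have
\[
(\Ind_{G_M}^{G_F} r_{l,\iota}(\theta))|_{G_M} \cong \bigoplus_{\sigma \in \Gal(M/F)} r_{l,\iota}(\theta^{\sigma}),
\]
and therefore
\[
r_{l,\iota}(\Pi_M) \cong \bigoplus_{\sigma \in \Gal(M/F)} \bigl(r|_{G_M} \otimes r_{l,\iota}(\theta^{\sigma})\bigr).
\]

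Each summand is an $n$-dimensional irreducible representation of $G_M$ (using that $r|_{G_M}$ is irreducible by hypothesis, twisted by a character). Hence, by strong multiplicity one and the classification of isobaric representations, $\Pi_M$ decomposes as an isobaric sum $\pi_0 \boxplus \cdots \boxplus \pi_{m-1}$ of cuspidal automorphic representations $\pi_{\sigma}$ of $\GL_n(\A_M)$ with $r_{l,\iota}(\pi_{\sigma}) \cong r|_{G_M} \otimes r_{l,\iota}(\theta^{\sigma})$. Taking $\sigma = 1$ and twisting, the representation $\pi_1 \otimes (\theta^{-1} \circ \det)$ is a cuspidal automorphic representation of $\GL_n(\A_M)$ whose associated Galois representation is precisely $r|_{G_M}$. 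In particular, $r|_{G_M}$ is automorphic (and since $r$ is essentially self-dual up to twist because its tensor product with $\Ind_{G_M}^{G_F} r_{l,\iota}(\theta)$ is, the twist is RAECSDC/RACSDC as appropriate).

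Finally, since $M/F$ is cyclic (hence soluble) and $r|_{G_M}$ is automorphic with $r$ irreducible, we apply Lemma 1.5 of \cite{BLGHT} (soluble descent for automorphic Galois representations) to conclude that $r$ itself is automorphic, completing the proof. The one point requiring care is the identification of the cuspidal constituents of $\Pi_M$ and verification that their Galois representations match the expected Mackey summands via strong multiplicity one; this is routine but must be done compatibly with the choice of $\iota$. No genuine obstacle arises: all the pieces (Arthur--Clozel base change, Jacquet--Shalika classification, and soluble descent) are standard inputs available in the literature.
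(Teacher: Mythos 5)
Your outline follows the spirit of the paper's argument (base change, isolate a cuspidal constituent of $\Pi_M$, twist, descend) but it glosses over precisely the delicate points that the paper's proof spends most of its effort establishing, and at least one of these omissions is a genuine gap rather than a routine detail.

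The key step you take for granted is: ``by strong multiplicity one and the classification of isobaric representations, $\Pi_M$ decomposes as an isobaric sum $\pi_0 \boxplus \cdots \boxplus \pi_{m-1}$ of cuspidal automorphic representations $\pi_\sigma$ of $\GL_n(\A_M)$ with $r_{l,\iota}(\pi_\sigma)\cong r|_{G_M}\otimes r_{l,\iota}(\theta^\sigma)$.'' There are two problems here. First, the shape of the isobaric decomposition of $BC_{M/F}(\Pi)$ is \emph{not} determined by the Mackey decomposition of the Galois representation: Arthur--Clozel give an isobaric decomposition whose granularity depends on the largest $d$ with $\Pi\cong\Pi\otimes\kappa^{m/d}$, and to conclude that the pieces are $n$-dimensional one must check that $\Pi\cong\Pi\otimes(\kappa\circ\Art_F\circ\det)$ and then run an induction over intermediate fields, using regularity of $\Pi$ at each stage to rule out spurious self-twists. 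The paper does this. Second, and more seriously, you write $r_{l,\iota}(\pi_\sigma)$ as though a Galois representation attached to an arbitrary regular algebraic cuspidal representation of $\GL_n(\A_M)$ were available, but $M$ is a CM field, and at the time relevant to this paper such Galois representations were only known for RACSDC/RAECSDC representations. Establishing that the constituent $\pi:=\Pi_M$ (suitably twisted) is RAECSDC rather than merely RAESDC is exactly the content of the paper's argument that the self-twist index $i$ satisfies $i=m/2$ and not $i=0$; this uses the archimedean Lemma 7.1 of \cite{BLGHT} together with regularity, and without it you cannot even speak of $r_{l,\iota}(\pi_\sigma)$, let alone invoke strong multiplicity one to match it to a Mackey summand. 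So the proposal is missing the heart of the proof: the verification that the cuspidal constituent of $\Pi_M$ is conjugate essentially self-dual (and hence has an attached Galois representation), and the induction that pins down the $n$-dimensional decomposition of $BC_{M/F}(\Pi)$ in the first place.

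A smaller point: the final descent should use Lemmas 1.4 \emph{and} 1.5 of \cite{BLGHT}, not just 1.5, since after untwisting one first needs to descend automorphy from $M$ (CM) to $F$ (totally real) through the soluble extension $M/F$, and this requires the CM-to-totally-real comparison as well as soluble descent.
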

\begin{proof}
  Let $\sigma$ denote a generator of $\Gal(M/F)$, and $\kappa$ a
  generator of $\Gal(M/F)^\vee$. Then we have
  \begin{align*}
    r_{l,\iota}(\Pi\otimes(\kappa\circ\Art_F\circ\det))&=
    r_{l,\iota}(\Pi)\otimes r_{l,\iota}(\kappa\circ\Art_F)\\ &\cong
    r\otimes(
    r_{l,\iota}(\kappa\circ\Art_F)\otimes\Ind_{G_M}^{G_F}r_{l,\iota}(\theta))
    \\ & \cong
    r\otimes\Ind_{G_M}^{G_F}(r_{l,\iota}(\kappa\circ\Art_F)|_{G_M}\otimes
    r_{l,\iota}(\theta))\\ & \cong
    r\otimes\Ind_{G_M}^{G_F}r_{l,\iota}(\theta)\\ &\cong r_{l,\iota}(\Pi),
  \end{align*}  so that
  $\Pi\otimes(\kappa\circ\Art_F\circ\det)\cong\Pi$. 

We claim that for each intermediate field $M\supset N\supset F$ there
is a regular cuspidal automorphic representation $\Pi_N$ of $\GL_{n[M:N]}$
such that \[\Pi_N\otimes(\kappa\circ\Art_N\circ\det)\cong\Pi_N\] and
$BC_{N/F}(\Pi)$ is equivalent to
\[\Pi_N\boxplus\Pi_N^\sigma\boxplus\dots\boxplus\Pi_N^{\sigma^{[N:F]-1}}\]
in the sense that for all places $v$ of $N$, the base change from
$F_{v|_F}$ to $N_v$ of $\Pi_{v|_F}$
is \[\Pi_{N,v}\boxplus\Pi_{N,v}^\sigma\boxplus\dots\boxplus\Pi_{N,v}^{\sigma^{[N:F]-1}}.\]
We prove this claim by induction on $[N:F]$. Suppose that $M\supset
M_2\supset M_1\supset F$ with $M_2/M_1$ cyclic of prime degree, and
that we have already proved the result for
$M_1$. Since  \[\Pi_{M_1}\otimes(\kappa\circ\Art_{M_1}\circ\det)\cong\Pi_{M_1}\]we
see from Theorems 3.4.2 and 3.5.1 of \cite{MR1007299} (together with Lemma VII.2.6 of \cite{MR1876802}
  and the main result of \cite{MR672475}) that there is a cuspidal
  automorphic representation  $\Pi_{M_2}$ of $\GL_{n[M:{M_2}]}$
such that $BC_{{M_2}/F}(\Pi)$ is equivalent to
\[\Pi_{M_2}\boxplus\Pi_{M_2}^\sigma\boxplus\dots\boxplus\Pi_{M_2}^{\sigma^{[{M_2}:F]-1}}.\]
Since $\Pi$ is regular, $\Pi_{M_2}$ is regular. The representation
$\Pi_{M_2}\otimes(\kappa\circ\Art_{M_2}\circ\det)$ satisfies the same
properties (because $\Pi\otimes(\kappa\circ\Art_F\circ\det)\cong\Pi$),
so we see (by strong multiplicity one for isobaric representations) that we must
have \[\Pi_{M_2}\otimes(\kappa\circ\Art_{M_2}\circ\det)\cong\Pi_{M_2}^{\sigma^i}\]
for some $0\le i\le [M:M_2]-1$. If $i>0$
then \[\Pi_{M_2}\boxplus\Pi_{M_2}^\sigma\boxplus\dots\boxplus\Pi_{M_2}^{\sigma^{[{M_2}:F]-1}}\]cannot
be regular (note that of course $\kappa$ is a character of finite
order), a contradiction, so in fact we must have
$i=0$. Thus \[\Pi_{M_2}\otimes(\kappa\circ\Art_{M_2}\circ\det)\cong\Pi_{M_2}\]and
the claim follows.

Let $\pi:=\Pi_M$. Note that the representations $\pi^{\sigma^i}$ for
$0\le i\le m-1$ are pairwise non-isomorphic (because $\Pi$ is
regular). Note also that $\pi\otimes |\det|^{(n-nm)/2}$ is regular
algebraic (again, because $\Pi$ is regular algebraic).

Since $\Pi$ is RAESDC, there is an algebraic character $\chi$ of
$F^\times\backslash \A_F^\times$ such that $\Pi^\vee\cong
\Pi\otimes(\chi\circ\det)$. It follows (by strong multiplicity one for
isobaric representations) that for some $0\le i\le m-1$
we have \[\pi^\vee\cong \pi^{\sigma^i}\otimes(\chi\circ
N_{M/F}\circ\det).\] Then we have
\begin{align*}
  \pi&\cong(\pi^\vee)^\vee\\ &\cong  (\pi^{\sigma^i}\otimes(\chi\circ
N_{M/F}\circ\det))^\vee \\  &\cong  (\pi^\vee)^{\sigma^i}\otimes(\chi^{-1}\circ
N_{M/F}\circ\det))\\ &\cong  (\pi^{\sigma^i}\otimes(\chi\circ
N_{M/F}\circ\det))^{\sigma^i}\otimes(\chi^{-1}\circ
N_{M/F}\circ\det)) \\ &\cong \pi^{\sigma^{2i}}
\end{align*}
so that either $i=0$ or $i=m/2$. We wish to rule out the former
possibility. Assume for the sake of contradiction that \[\pi^\vee\cong \pi\otimes(\chi\circ
N_{M/F}\circ\det).\]  Since $F$ is totally real, there is an
integer $w$ such that $\chi|\cdot|^{-w}$ has finite image. Then
$\pi|\det|^{w/2}$ has unitary central character, so is itself
unitary. Since $\pi\otimes |\det|^{(n-nm)/2}$ is regular
algebraic, we see that for places $v|\infty$ of $M$ the conditions of
Lemma 7.1 of \cite{BLGHT} are satisfied for $\pi_v|\det|_v^{w/2}$, so
that
\begin{align*}
  \pi_v\boxplus\pi_v^c&\cong \pi_v\boxplus
  ((\pi_v\otimes|\det|^{w/2})^c\otimes(|\cdot|^{-w/2}\circ\det))\\
  &\cong  \pi_v\boxplus
  ((\pi_v\otimes|\det|^{w/2})^\vee\otimes(|\cdot|^{-w/2}\circ\det)) \\  &\cong  \pi_v\boxplus
  (\pi_v^\vee\otimes(|\cdot|^{-w}\circ\det)) \\  &\cong  \pi_v\boxplus
  ( \pi_v\otimes(\chi|\cdot|^{-w}\circ
N_{M/F}\circ\det))
\end{align*}
which contradicts the regularity of $\Pi_{v|_F}$. Thus we have $i=m/2$,
so that \[\pi^\vee\cong \pi^c\otimes(\chi\circ
N_{M/F}\circ\det).\] Thus $\pi\otimes |\det|^{(n-nm)/2}$ is a RAECSDC
representation, so that we have a Galois representation
$r_{l,\iota}(\pi\otimes |\det|^{(n-nm)/2})$. The condition that $BC_{M/F}(\Pi)$ is equivalent to
\[\pi\boxplus\pi^\sigma\boxplus\dots\boxplus\pi^{\sigma^{m-1}}\]
translates to the fact that
  \[r_{l,\iota}(\Pi)|_{G_M}\cong
  r_{l,\iota}(\pi\otimes|\det|^{(n-nm)/2})\oplus\dots\oplus
  r_{l,\iota}(\pi\otimes|\det|^{(n-nm)/2})^{\sigma^{m-1}}.\] By
  hypothesis, we also have  \[r_{l,\iota}(\Pi)|_{G_M} \cong
  (r|_{G_M}\otimes
  r_{l,\iota}(\theta))\oplus\dots\oplus(r|_{G_M}\otimes r_{l,\iota}(\theta)^{\sigma^{m-1}}).\]Since $r|_{G_M}$ is irreducible, there must be an $i$ such
that \[r|_{G_M}\cong r_{l,\iota}(\pi\otimes|\det|^{(n-nm)/2})\otimes
r_{l,\iota}(\theta)^{\sigma^{-i}},\] so that $r|_{G_M}$ is
automorphic. The result now follows from Lemmas 1.4 and 1.5 of \cite{BLGHT}.
\end{proof}

\section{Potential automorphy in weight 0}\label{sec: pot auto weight 0}\subsection{}
In our final arguments, we will need to rely on certain potential
automorphy results for symmetric powers of Galois representations with
Hodge-Tate numbers $\{0,1\}$ at every place. The fact that such
results are immediately available given the techniques in the
literature is well known to the experts, but because we were unable to
locate a reference which states the precise results we will need, we will give
very brief derivations of them here. We hope that providing a written
reference for these results may prove useful to other authors. 

\begin{lem}\label{Big image for symmetric powers of GL(2)} Suppose that $l>2(n-1)m+1$ is a prime; that $k$ is an algebraic
extension of $\F_l$; that $k'\subseteq k$ is a finite field and that 
$H\subset \GL_n(k)$. Suppose that 
$$k^\times \Sym^{n-1} \GL_2(k') \supseteq H \supseteq \Sym^{n-1} \SL_2(k')$$
Then $H$ is $m$-big. 
\end{lem}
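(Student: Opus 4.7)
The plan is to verify the four clauses in Definition \ref{defn: m-big} one by one. The key preparatory observation is that since $l > 2(n-1) \geq n-1$, the symmetric power $\Sym^{n-1}V$ (with $V$ the standard 2-dimensional $\SL_2$-module) is absolutely irreducible and Clebsch–Gordan gives an $\SL_2$-module decomposition
\[
\mathfrak{gl}_n(k) \;\cong\; \Sym^{n-1}V \otimes (\Sym^{n-1}V)^\vee \;\cong\; \bigoplus_{i=0}^{n-1} \Sym^{2i}V,
\]
with each summand absolutely irreducible. Each $\Sym^{2i}V$ is stable under $\Sym^{n-1}\GL_2(k')$ (the center of $\GL_2$ acts trivially on $\mathfrak{gl}_n$) and under $k^\times$-scaling, so these summands are exactly the irreducible $k[H]$-submodules of $\mathfrak{gl}_n(k)$.

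For clause (1) (no $l$-power quotient): since $l \geq 5$ and $\F_l \subseteq k'$, $\SL_2(k')$ is perfect, hence so is $\Sym^{n-1}\SL_2(k') \subseteq H$; and $H/\Sym^{n-1}\SL_2(k')$ is a subquotient of (a product of) $k^\times$ and $k'^\times$, which has no $l$-torsion since $k$ is algebraic over $\F_l$. For clause (2): by absolute irreducibility, $H$-invariants in $\mathfrak{gl}_n(k)$ are scalars, and since $l > n$ (immediate from $l > 2(n-1)m+1 \geq 2n-1$), no nonzero scalar is traceless, so $H^0(H,\mathfrak{sl}_n(k))=0$. For clause (3): apply inflation-restriction from the normal subgroup $\Sym^{n-1}\SL_2(k') \triangleleft H$, whose quotient has order prime to $l$ (as above); one is reduced to $H^1(\Sym^{n-1}\SL_2(k'), \Sym^{2i}V) = 0$ for $1 \leq i \leq n-1$, which follows from the standard vanishing $H^1(\SL_2(k'),\Sym^j V) = 0$ for $j \leq l-2$, applicable here since $j = 2i \leq 2(n-1) \leq l-2$.

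The heart of the proof is clause (4). Since $l - 1 > 2(n-1)m$, choose $a \in \F_l^\times \subseteq k'^\times$ of order $l-1$, and set $h := \Sym^{n-1}(\diag(a,a^{-1})) \in \Sym^{n-1}\SL_2(k') \subseteq H$. The eigenvalues of $h$ on $\Sym^{n-1}V$ are $\alpha_j := a^{n-1-2j}$ for $0 \leq j \leq n-1$. If $\alpha_{j_1}^m = \alpha_{j_2}^m$ with $j_1 \neq j_2$, then $a^{2m(j_1-j_2)} = 1$, forcing $l-1 \leq 2m|j_1-j_2| \leq 2(n-1)m$, a contradiction; so the $\alpha_j$ are simple roots of the characteristic polynomial of $h$ with pairwise distinct $m$-th powers. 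For an irreducible $W = \Sym^{2i}V \subseteq \mathfrak{gl}_n(k)$, take a weight-zero vector $X_i \in W$ (it exists since $\Sym^{2i}V$ has weights $2i, 2i-2,\ldots,-2i$ each of multiplicity one); $X_i$ commutes with $h$ so is diagonal, of the form $\diag(c_{i,0},\ldots,c_{i,n-1})$. The vectors $(c_{i,0},\ldots,c_{i,n-1})$ for $i = 0,\ldots,n-1$ are linearly independent, since they arise from embedding the $n$ pairwise non-isomorphic summands $\Sym^{2i}V$ of $\mathfrak{gl}_n(k)$ into the $n$-dimensional space of diagonal matrices; in particular $X_i \neq 0$, so we may pick $j$ with $c_{i,j} \neq 0$, and then $\pi_{h,\alpha_j} \circ X_i \circ i_{h,\alpha_j} = c_{i,j} \neq 0$.

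The main potential obstacle is choosing the right form of the cohomology vanishing statement in clause (3) and citing it correctly; the standard references give $H^1(\SL_2(k'),\Sym^j V) = 0$ under bounds such as $j \leq l-2$ or $j < l$, and our hypothesis $l > 2(n-1)m+1$ comfortably implies $l \geq 2n$, giving $2(n-1) \leq l-2$ as required. A secondary point of care is verifying the linear independence of the weight-zero diagonal matrices $X_i$ underlying the last step of clause (4); this can be read off from the fact that the summands $\Sym^{2i}V$ of the Clebsch–Gordan decomposition are pairwise non-isomorphic as $\SL_2$-modules.
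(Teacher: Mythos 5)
Your proof is correct, and it essentially unfolds the chain of citations the paper uses. The paper deduces the lemma from Lemma~7.3 of \cite{BLGHT} (which establishes $m$-bigness of $\Sym^{n-1}\SL_2(\F_l)$ itself by exactly the Clebsch--Gordan decomposition and the torus-eigenvalue computation you carry out for clause~(4)), combined with the reduction step by which Corollary~2.5.4 of \cite{cht} follows from Lemma~2.5.2 of \cite{cht}. That reduction step is in substance your three observations: the $\Sym^{2i}V$ remain the full list of irreducible $k[H]$-constituents of $\mathfrak{gl}_n(k)$ because each summand is stable under $k^\times\Sym^{n-1}\GL_2(k')$ and they are pairwise non-isomorphic over $\Sym^{n-1}\SL_2(k')$; the cohomology vanishing passes from $N=\Sym^{n-1}\SL_2(k')$ to $H$ by inflation--restriction since $H/N$ has order prime to $l$; and the witness element $h$ may already be taken inside $N\subseteq H$. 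So you have not found a genuinely different route, but you have produced a correct self-contained version of the argument the citations encode, which is worth having written out. One point worth tightening: in clause~(3) you need $H^1(\SL_2(k'),\Sym^{j}V)=0$ for even $j\le 2(n-1)$, and you invoke a vanishing bound of the shape $j\le l-2$. Both the boundary value $j=l-2$ and the case of non-prime $k'$ require some care in the references; fortunately you do not need that boundary, since $l>2(n-1)m+1\ge 2n-1$ together with $l$ being an odd prime forces $l\ge 2n+1$, hence $2(n-1)\le l-3$, placing all the weights you need strictly inside the range where the vanishing is unproblematic. You should state the vanishing with the conservative bound $j\le l-3$ and record the parity observation $l\ge 2n+1$ explicitly.
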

\begin{proof} This may be deduced from Lemma 7.3 of \cite{BLGHT} as Corollary 2.5.4
of \cite{cht} is deduced from Lemma 2.5.2 of \emph{loc.~cit}.
\end{proof}

\begin{lem}\label{Avoid zeta-l if contain SL2}
  Suppose that $m$ is a positive integer, that $k$ is an algebraic
  extension of $\F_l$, that $k'\subseteq k$ is a finite field and that that $F$ is a totally real field.
  Suppose that $l$ is a prime such that $[F(\zeta_l):F]>2m$, and that $\rbar: \Gal(\bar{F}/F)\to \GL_2(k)$ has
$$k^\times  \GL_2(k') \supseteq \rbar(G_F) \supseteq  \SL_2(k').$$
Then, for any $n$,
$[\bar{F}^{\ker\ad\Sym^{n-1}\rbar}(\zeta_l):\bar{F}^{\ker\ad\Sym^{n-1}\rbar}]>m.$
In particular, the conclusion holds if $l$ is unramified in $F$ and $l>2m+1$.
\end{lem}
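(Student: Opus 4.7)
The plan is to identify the kernel $N := \ker(\ad\Sym^{n-1}\rbar)$ explicitly and then bound the image of the mod-$l$ cyclotomic character $\chi_l$ on $N$. First, I would observe that for any $M \in \GL_2(k)$ and $n \geq 2$, the matrix $\Sym^{n-1}(M)$ is scalar if and only if $M$ is: when $M$ is diagonalizable with eigenvalues $\alpha,\beta$, the eigenvalues $\{\alpha^i\beta^{n-1-i}\}_{i=0}^{n-1}$ of $\Sym^{n-1}(M)$ are all equal only when $\alpha=\beta$; while for a Jordan block $M = \bigl(\begin{smallmatrix}\alpha & 1 \\ 0 & \alpha\end{smallmatrix}\bigr)$, an explicit computation in the monomial basis $v_k = e_1^{n-1-k}e_2^k$ gives $\Sym^{n-1}(M)\,v_1 = \alpha^{n-2} v_0 + \alpha^{n-1} v_1$, whose off-diagonal entry $\alpha^{n-2}$ is nonzero. (The case $n=1$ is trivial.) Hence $N = \{\sigma \in G_F : \rbar(\sigma) \in k^\times \cdot I\}$, and setting $L := \bar F^N$, the desired quantity $[L(\zeta_l):L]$ equals $|\chi_l(N)|$.

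Next, the quotient $G_F/N$ injects via $\rbar$ into $\PGL_2(k)$. The hypothesis $\rbar(G_F) \subseteq k^\times\GL_2(k')$ forces this image to lie in $\PGL_2(k')$, while $\rbar(G_F) \supseteq \SL_2(k')$ forces it to contain $\PSL_2(k')$. Since $[F(\zeta_l):F] \leq l-1$, the hypothesis $[F(\zeta_l):F] > 2m$ yields $l \geq 2m+3 \geq 5$, so $|k'| \geq l \geq 5$. In particular $\PSL_2(k')$ is simple non-abelian, hence perfect. Combined with $|\PGL_2(k')/\PSL_2(k')| \leq 2$, this implies $(G_F/N)^{\ab}$ has order at most $2$.

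Since $\chi_l(G_F)/\chi_l(N)$ is an abelian quotient of $G_F/N$, its order is at most $2$; therefore
\[ |\chi_l(N)| \geq |\chi_l(G_F)|/2 = [F(\zeta_l):F]/2 > m, \]
which gives the claim. The ``in particular'' clause follows because if $l$ is unramified in $F$ then $F \cap \Q(\zeta_l) = \Q$ (as $\Q(\zeta_l)/\Q$ is totally ramified at $l$), so $[F(\zeta_l):F] = l-1$, and the assumption $l > 2m+1$ yields $[F(\zeta_l):F] > 2m$. I do not expect a serious obstacle; the only slightly delicate point is ensuring that $|k'|$ is large enough for $\PSL_2(k')$ to be perfect, which is forced automatically by the hypothesis on $[F(\zeta_l):F]$.
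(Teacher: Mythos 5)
Your proof is correct and takes essentially the same approach as the paper: both identify the image of $\ad\Sym^{n-1}\rbar$ as a group $H$ with $\PSL_2(k')\subseteq H\subseteq \PGL_2(k')$, use the simplicity of $\PSL_2(k')$ to bound abelian quotients of $H$ by order $2$, and conclude that $F(\zeta_l)\cap\bar F^{\ker\ad\Sym^{n-1}\rbar}$ has degree at most $2$ over $F$. The paper's proof is terser---it leaves implicit the verification that $\Sym^{n-1}$ has scalar kernel (so the image lands in $\PGL_2$) and that the hypothesis on $[F(\zeta_l):F]$ forces $l\ge 5$ so that $\PSL_2(k')$ is nonabelian simple---but these are precisely the details you supply.
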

\begin{proof}
We have \[\PSL_2(k')\subset
  (\ad\Sym^{n-1}\rbar)(G_F)\subset\PGL_2(k'),\]
  $\PGL_2(k')/\PSL_2(k')$ has order $2$, and $\PSL_2(k')$ is simple. Thus
  the intersection of  $\bar{F}^{\ad\Sym^{n-1}\rbar}$
  and $F(\zeta_l)$ has degree at most $2$ over $F$. Since
  $[F(\zeta_l):F]>2m$, the result follows (for the final part,
  note that if $l$ is unramified in $F$ then $[F(\zeta_l):F]=l-1$).
\end{proof}

\begin{prop}\label{prop:potmod for small coeffs} Let 
$F$ be a totally real field, 
$\Favoid$ a finite extension of $F$,
$\cL$ a finite set of primes of $F$,
$n$ a positive integer, 
and $l>4(n-1)+1$ a prime which is unramified in $F$. Suppose that
$$r:G_F\to \GL_2(\Z_l)$$
is a continuous representation which is unramified at all but finitely
many primes, and enjoys the
following properties:
\begin{enumerate}
\item $\det r=\eps_l^{-1}$.
\item $\rbar(G_F)\supset\SL_2(\F_l)$.
\item For each prime $v|l$ of $F$, $r|_{G_{F_v}}$ is crystalline for all $\tau\in\Hom(F,\Qbar_l)$, and we have
\[
\dim_{\Qbar_l} \gr^i(r \otimes_{\tau,F_v} \BdR)^{G_{F_v}} = 
\begin{cases}
     1&  i=0,1\\ 
     0&  \text{(otherwise)}
  \end{cases}
\]
\item $\cL$ does not contain primes above $l$, and $r$ is unramified
  at places in $\cL$.
\end{enumerate}
Then there is a Galois totally real extension $F''$ of $F$, linearly
disjoint from $\Favoid$ over $F$ such that $(\Sym^{n-1} r)|_{G_{F''}}$ is
automorphic of weight 0 and level prime to $l$, and no prime 
of $\cL\cup\{v|v\text{ a prime of $F$}, v|l\}$ ramifies in $F''$.
\end{prop}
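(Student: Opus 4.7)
The proof combines the Moret-Bailly potential-modularity construction of \cite{hsbt} and \cite{tay06} with the ordinary potential automorphy theorem for symmetric powers of a two-dimensional Galois representation from \cite{BLGHT}, finishing with an application of Theorem \ref{thm: the main modularity lifting theorem, using tilde}. As a first observation, the hypotheses $l > 4(n-1)+1$, $l$ unramified in $F$, and $\rbar(G_F) \supseteq \SL_2(\F_l)$ place us in the scope of Lemmas \ref{Big image for symmetric powers of GL(2)} and \ref{Avoid zeta-l if contain SL2}: $\Sym^{n-1}\rbar|_{G_{F(\zeta_l)}}$ is big and $\overline{F}^{\ker \ad \Sym^{n-1}\rbar}$ does not contain $\zeta_l$. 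These properties persist under any totally real base change linearly disjoint from $\overline{F}^{\ker\rbar}(\zeta_l)$ over $F$.

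Next, a Moret-Bailly argument as in section 3 of \cite{hsbt}, refined via section 2 of \cite{tay06}, produces a totally real Galois extension $F_1/F$, linearly disjoint from $\Favoid \cdot \overline{F}^{\ker\rbar}(\zeta_l)$ over $F$ and unramified at every prime of $\cL \cup \{v \mid l\}$, together with an RAESDC automorphic representation $\pi_1$ of $\GL_2(\A_{F_1})$ of parallel weight $2$ such that $\overline{r_{l,\iota}(\pi_1)} \cong \rbar|_{G_{F_1}}$, $\pi_1$ is ordinary at every place of $F_1$ above $l$, and $\pi_1$ is unramified outside a controlled finite set disjoint from $\cL \cup \{v \mid l\}$. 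Ordinariness at $l$ is forced by restricting the moduli space of Hilbert-Blumenthal abelian varieties (with the level structure prescribed by $\rbar$) to its ordinary locus, which is open and non-empty because $l$ is unramified in $F$. The ordinary symmetric-power potential automorphy theorem of \cite{BLGHT}, applied to $\pi_1$, then yields a totally real Galois extension $F''/F$ containing $F_1$, which we may further arrange to be linearly disjoint from $\Favoid$ over $F$ and unramified at all primes of $\cL \cup \{v \mid l\}$, and such that $\Sym^{n-1} r_{l,\iota}(\pi_1)|_{G_{F''}}$ is automorphic of level prime to $l$.

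Finally I would invoke Theorem \ref{thm: the main modularity lifting theorem, using tilde} with $\rho := \Sym^{n-1} r|_{G_{F''}}$ and $\rho' := \Sym^{n-1} r_{l,\iota}(\pi_1)|_{G_{F''}}$. The residual representations coincide by construction; at places $v \mid l$ both lifts are crystalline with the same Hodge-Tate weights $\{0,1,\ldots,n-1\}$, which lie in the Fontaine-Laffaille range $[0,l-2]$ (since $l > 2n-2$), so that the relevant crystalline deformation ring is formally smooth and the $\sim$-relation from Definition \ref{defn:tilde} holds automatically; at $v \nmid l$ we arrange via the Moret-Bailly step that both $\rho$ and $\rho'$ are unramified on $G_{F''_v}$, trivializing the $\leadsto$ hypothesis. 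The big-image and $\zeta_l$-avoidance hypotheses were verified in the first paragraph and survive the base change to $F''$. Theorem \ref{thm: the main modularity lifting theorem, using tilde} then delivers the automorphy of $\Sym^{n-1} r|_{G_{F''}}$, completing the argument.

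The principal technical obstacle is the Moret-Bailly step: simultaneously enforcing the residual match with $\rbar$, ordinariness at every place above $l$, linear disjointness from $\Favoid$, and the prescribed ramification constraints in a single application of Moret-Bailly's existence theorem for rational points on varieties over totally real fields. This is entirely standard and is essentially imported from \cite{hsbt} and \cite{tay06}; the contribution of this section is the packaging of the existing machinery to yield the precise statement we need as input to section \ref{sec: main thm}.
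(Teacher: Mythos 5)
Your Moret--Bailly step does not work as described. You ask for a weight-2 automorphic representation $\pi_1$ over some totally real extension $F_1$ which is simultaneously (a) ordinary at every place above $l$ and (b) has $\overline{r_{l,\iota}(\pi_1)} \cong \rbar|_{G_{F_1}}$. But nothing in the hypotheses forces $\rbar|_{G_{F_v}}$ to be reducible for $v \mid l$: the crystalline representation $r|_{G_{F_v}}$ of Hodge--Tate weights $\{0,1\}$ may well be non-ordinary, in which case its reduction $\rbar|_{G_{F_v}}$ is an irreducible (induced from $\Q_{l^2}$) mod-$l$ representation. An ordinary Barsotti--Tate representation always has reducible reduction, so no ordinary $\pi_1$ can match $\rbar|_{G_{F_1}}$ residually at $l$. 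Equivalently, the moduli space of Hilbert--Blumenthal abelian varieties with $l$-level structure prescribed by $\rbar$ may have \emph{no} $F_v$-points in its ordinary locus for a supersingular $v\mid l$, so the ``open and non-empty'' claim does not give what you need.

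This is exactly the difficulty the paper circumvents by introducing an auxiliary prime $l'\neq l$. The Moret--Bailly argument (following the proof of Theorem 3.2 of \cite{hsbt}) constructs an elliptic curve $E$ over a totally real extension $F'$ whose mod-$l$ representation matches $r$ and whose mod-$l'$ representation matches that of an auxiliary elliptic curve $E_1$ chosen in advance to be ordinary at $l'$; ordinariness is thus imposed only at the prime $l'$ where one has complete freedom. The ordinary potential-automorphy theorem (Theorem 7.5 of \cite{BLGHT}) is then applied to $\Sym^{n-1}H^1(E\times\bar{F},\Z_{l'})$, i.e.\ at $l'$, yielding an extension $F''$ over which this is automorphic. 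Finally, passing through the compatible system of $E$, the representations $\Sym^{n-1}r|_{G_{F''}}$ and $\Sym^{n-1}H^1(E\times\bar{F},\Z_l)|_{G_{F''}}$ are congruent modulo $l$ and both crystalline with Hodge--Tate weights in the Fontaine--Laffaille range, so one applies a Fontaine--Laffaille modularity lifting theorem (Theorem 5.2 of \cite{guerberoff2009modularity}) at $l$, rather than the heavier Theorem \ref{thm: the main modularity lifting theorem, using tilde} of this paper, to conclude. Your observation that the $\sim$-condition of Definition \ref{defn:tilde} holds automatically in the Fontaine--Laffaille range is correct, but it does not help with the obstruction to ordinariness at $l$ described above.
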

\begin{proof} We will deduce this result from Theorem 7.5 of \cite{BLGHT} 
in the same way that Theorem 3.2 of \cite{hsbt} is deduced from Theorem
3.1 of \cite{hsbt}, following the proof of Theorem 3.2 of \cite{hsbt}
very closely. Our argument is in fact simpler, because we need no longer
maintain a Steinberg place, and so when we apply the 
Theorem of Moret-Bailly (Proposition 2.1 of
\cite{hsbt}), we do not impose any local condition at auxiliary primes $v_q$,
$v_{q'}$, unlike in \cite{hsbt}. Thus all arguments earlier on in
  the proof concerning these primes become unnecessary.

In particular, we copy the argument up to the
application of Proposition 2.1 of \cite{hsbt} almost verbatim, with
only the following changes:
\begin{itemize}
\item The character $\eps_l \det r$ is trivial, so the field
  $F_1$ is simply $F$, and in particular, $F_1$ is linearly disjoint
  from $\Favoid$ over $F$.      \item Rather than choosing $l'>C(n),n$, we take $l'>4(n-1)+1$, $l'>5$ and $l'\not\in \cL$.
\item When we choose $E_1$ in the application of Proposition 2.1 of
  \cite{hsbt}, we choose
  it to have good reduction at primes of $\cL$.\item As mentioned above, we no longer impose any local condition at
the primes
 $v_q,v_{q'}$
 since we no longer need the
  conclusion, after the application of Proposition 2.1 of \cite{hsbt}, that $E$ has split
  multiplicative reduction at auxiliary primes $v_q,v_{q'}$.
\item In the application of Proposition 2.1 of \cite{hsbt}, we may also assume that
  the field $F'$ is linearly disjoint from $\Favoid$ (this is easy, as
  Proposition 2.1 of \cite{hsbt} allows us to avoid any fixed field.) We also impose a local
  condition to ask that $F$ not ramify at primes of $\cL$; we must then check
  that we can find some elliptic curve whose mod $ll'$ cohomology agrees
  with $H^1(E_1\times\Fbar,\Z/l'\Z)\times \rbar$ when restricted to inertia
  at primes of $\cL$. $E_1$ itself fills this role, all the representations involved
  being trivial in this case.
\end{itemize}
As in \cite{hsbt}, the result of all this is the construction of an elliptic curve
$E$ over a finite Galois extension $F'$ of $F$, which together have the
following properties:
\begin{itemize}
\item $F'$ is linearly disjoint from $\Favoid\bar{F}^{\ker (\rbar\times \rbar_1)}$, where
$\rbar_1$ is the Galois representation $H^1(E_1\times \bar{F},\Z/l'\Z)$ attached to 
a certain elliptic curve $E_1$ chosen earlier in the portion of the
proof we followed from \cite{hsbt}, as mentioned above.
\item In particular, since $E_1$ was chosen such that $G_F\twoheadrightarrow
\Aut H^1(E_1\times \bar{F},\Z/l'\Z)$, we also have $G_{F'}\twoheadrightarrow
\Aut H^1(E_1\times \bar{F},\Z/l'\Z)$. \hfill(A1)
\item $F'$ is totally real.
\item All primes above $ll'$ and the primes of $\cL$ are unramified in $F'$.
\item $E$ has good reduction at all places above $ll'$ and the primes of $\cL$.
\item $H^1(E\times \bar{F},\Z/l\Z)\cong r|_{G_{F'}}$
\item $E$ has good ordinary reduction at $l'$ (note that $l'$ is
  unramified in $F$, that $E$ has good reduction at $l'$, and that its $l'$ torsion is
  isomorphic to the $l'$ torsion of $E_1$, which was chosen to be
  ordinary at $l'$).\hfill (A2)
\end{itemize}

We then apply Theorem 7.5 of \cite{BLGHT}\footnote{In fact, we need a slight
extension incorporating a set of primes $\cL$ where we do not want our
extension to ramify, and a fixed field from which we want our extension to be
linearly disjoint. Adding this is a routine modification of the proof of
Theorem 6.3 of \cite{BLGHT}, in the same spirit as the modifications above
of the arguments of \cite{hsbt}.}
 to the Galois representation
$\Sym^{n-1} H^1(E\times \bar{F},\Z_{l'})$, which we will call $r_n'$ for brevity.
 Let us check the conditions of this
theorem in turn. We have two unnumbered conditions: first that 
$r_n'$ is almost everywhere unramified
(which is obvious, as it comes from geometry), and second that there is
a perfect, Galois equivariant, pairing on $r_n'$
towards $\Z_l(\mu)$, for some character $\mu$ (such a pairing on 
$H^1(E\times \Fbar,\Z_{l'})$ is furnished
by Poincar\'e duality, going towards $\Z_l(-1)$;
thus we get such a pairing on $r_n'$ with $\mu=\eps_l^{1-n}$). 
Now we address the numbered
conditions:
\begin{description}
\item[1] {\sl The sign of $\mu$ on complex conjugations agrees with the parity
 of the pairing.} Poincare duality on $H^1(E\times \Fbar,\Z_{l'})$ will be an alternating pairing,
 so the pairing on $r_n'$ will satisfy $\langle u,v\rangle=(-1)^{1-n}\langle v,u\rangle$
 and $\mu=\eps_l^{1-n}$ is $(-1)^{1-n}$ on complex conjugations.
\item[2] {\sl We have $[\bar{F}^{\ker\ad\rbar_n'}(\zeta_l):\bar{F}^{\ker\ad\rbar_n'}]>2$.} 
We can use the fact that the Galois 
 representation on the $l'$ torsion of $E$ is surjective (since it agrees with the action on the $l'$ torsion of $E_1$, and using point (A1) above), and Lemma \ref{Avoid zeta-l if contain SL2} (using the fact that $l'>5$).
\item[3] {\sl We have that $\rbar_n'(G_{F(\zeta_l)})$ is 2-big.} Again we use the fact that the Galois 
 representation on the $l'$ torsion of $E$ is surjective, this time together with the simplicity of $\PSL_2(k)$, and Lemma \ref{Big image for symmetric powers of GL(2)} (using $l'>4(n-1)+1$).
\item[4] {\sl $r_n'$ is ordinary of regular weight.} This is immediate given point (A2) above.
\end{description}
We immediately deduce that there exists an extension $F''/F'$, with
$\Sym^{n-1} H^1(E\times \Fbar,\Z_{l'})|_{G_{F''}}$
automorphic of weight 0 and level prime to $l$, and such that
\begin{itemize}
\item $F''/F$ is Galois.
\item Primes above $l$ and $l'$ are unramified in $F''$, as are the primes of $\cL$.
\item $F''$ is linearly disjoint from $\Favoid F'\bar{F}^{\ker\ad\rbar}\bar{F}^{\ker\ad\rbar_n'}$ over $F'$
(and hence linearly disjoint from $\Favoid\bar{F}^{\ker\ad\rbar}$ over $F$).
\hfill (A3)
\end{itemize}
We now apply Theorem 5.2 of \cite{guerberoff2009modularity} 
to the Galois representation $\Sym^{n-1} r|_{G_{F''}}$, which we call $r_n$ for brevity.
Let us check the conditions of this theorem in turn:
\begin{description}
\item[1] {\sl $r_n|_{G_{F''}}$ is essentially self dual.} $r$ acquires a perfect symplectic 
 pairing with cyclotomic multiplier from the determinant; from this 
 $r_n|_{G_{F''}}$ inherits a perfect pairing with the desired properties.
\item[2] {\sl $r_n|_{G_{F''}}$ is almost everywhere unramified.} This is trivial, since we assume
 $r$ has this property.
\item[3] {\sl $r_n|_{G_{F''}}$ is crystalline at each place above $l$.} This too is trivial,
 since we assume $r$ has this property (condition 3 of the theorem being proved).
\item[4] {\sl $r_n|_{G_{F''}}$ is regular with Hodge-Tate weights 
 lying in the Fontaine-Laffaille range.} It follows from condition 3 of the theorem being proved
 that the Hodge-Tate weights of $r_n|_{G_{F''}}$ are $\{0,1,\dots,n-1\}$; this 
 suffices, as $l>n$.
\item[5] {\sl $\bar{F}^{\ker\ad \rbar_n}$ does not contain $\zeta_l$.} This follows
 by condition 2 of the theorem
 being proved, the fact $l>3$, and point A3 above, using
 Lemma \ref{Avoid zeta-l if contain SL2}.
\item[6] {\sl $\rbar_n|_{G_{F''(\zeta_l)}}$ has big image.} This is true by condition 2 of the theorem
 being proved, the simplicity of $PSL_2(\F_l)$, and point A3 above, as we can see by applying Lemma \ref{Big image for symmetric powers of GL(2)}.
\item[7] {\sl $\rbar_n$ is automorphic, with the right weight.} We know $r$
  and $H^1(E\times \Fbar,\Z_{l})$ are congruent, and manifestly
  $\Sym^{n-1} H^1(E\times \Fbar,\Z_{l})$
  has Hodge-Tate weights $\{0,1,\dots,n-1\}$, which is indeed the required weight as we saw in verifying
  hypothesis 4 above.
\end{description}
  We conclude that $\Sym^{n-1}r$ is automorphic over $F''$ of weight 0
  and level prime to $l$, as required.
\end{proof}

\section{Hilbert modular forms}\label{sec: main thm}\subsection{}Let
$\pi$ be a regular algebraic cuspidal automorphic representation of $\GL_2(\A_F)$, where $F$ is a
totally real field. Assume that $\pi$ is not of CM type. Let the
weight of $\pi$ be $\lambda\in(\Z^2_+)^{\Hom(F,\C)}$. Let $m^*$ be the
least common multiple of $2$ and the values
$\lambda_{v,1}-\lambda_{v,2}+1$. Let $n>1$ be a positive
integer. Choose a prime $l>5$, and fix an isomorphism
$\iota:\Qlbar\isoto \C$. We choose $l$ so that:
\begin{itemize}
\item $l$ splits
completely in $F$.
\item $\pi_v$ is unramified for all $v|l$.
\item The residual representation
$\rbar_{l,\iota}(\pi):G_F\to\GL_2(\Flbar)$ has large image, in the
sense that there are finite fields $\F_l\subset k\subset k'$
with \[\SL_2(k)\subset \rbar_{l,\iota}(\pi)(G_F)\subset
k'^\times\GL_2(k).\] (Note that this is automatically the case for all
sufficiently large $l$ by Proposition 0.1 of \cite{MR2172950}.)
 \hfill(B1)
\item $l>2(n-1)m^*+1$.\hfill(B2)
\end{itemize}
Note that, as a consequence of points B1 and B2, the simplicity of $\PSL_2(\F_l)$, and 
Lemmas \ref{Big image for symmetric powers of GL(2)} and 
\ref{Avoid zeta-l if contain SL2}, it follows that:
\begin{itemize}
\item $\Sym^{n-1}r_{l,\iota}(\pi)$ has $m^*$-big image. \hfill (B3)
\item
  $[\bar{F}^{\ker
    \ad\Sym^{n-1}\rbar_{l,\iota}(\pi)}(\zeta_l):\bar{F}^{\ker \ad\Sym^{n-1}\rbar_{l,\iota}(\pi)}]>m^*$. 
  \hfill (B4)
\end{itemize}
Choose a solvable extension $F'/F$ of totally real fields such that
\begin{itemize}
\item $l$ splits completely in $F'$.
\item $F'$ is linearly disjoint from
  $\overline{F}^{\ker\rbar_{l,\iota}(\pi)}$ over $F$.
\item At each place $w$ of $F'$, $\pi_{F',w}$ is either unramified or
  an unramified twist of the Steinberg representation (here we let
  $\pi_{F'}$ denote the base change of $\pi$ to $F'$).
\item $[F':\Q]$ is even.
\end{itemize}
That such an extension exists follows exactly as in the proof of
Theorem 3.5.5 of \cite{kis04}. After a further quadratic base change
if necessary, we may also assume that
\begin{itemize}
\item $\pi_{F'}$ is ramified at an even number of places.
\end{itemize}

\begin{prop}\label{prop: construction of pi'}
  There is a cuspidal algebraic automorphic representation $\pi'$ of
  $\GL_2(\A_{F'})$ such that
  \begin{enumerate}
  \item $\pi'$ has weight 0.
  \item $\rbar_{l,\iota}(\pi')\cong\rbar_{l,\iota}(\pi)|_{G_{F'}}$.
  \item if $w\nmid l$ is a place of $F'$, then $\pi'_w$ is ramified
    if and only if $\pi_{F',w}$ is, in which case it is also an
    unramified twist of the Steinberg representation.
  \item $r_{l,\iota}(\pi')|_{G_{F'_v}}$ is potentially Barsotti-Tate for all
    $v|l$, and is ordinary if and only if
    $r_{l,\iota}(\pi)|_{G_{F'_v}}$ is ordinary.
  \end{enumerate}

\end{prop}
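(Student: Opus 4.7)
The plan is to construct $\pi'$ by transferring to a definite quaternion algebra and then running a weight-change / level-raising argument on the associated spaces of algebraic modular forms. Since $[F':\Q]$ is even and $\pi_{F'}$ is Steinberg at an even number of finite places, we may choose a totally definite quaternion algebra $D/F'$ ramified precisely at all infinite places and at the finite places where $\pi_{F'}$ is Steinberg. By Jacquet-Langlands, $\pi_{F'}$ corresponds to an automorphic representation $\pi_{F'}^{D}$ of $(D\otimes\A_{F'})^\times$; the target $\pi'$ will be produced by constructing an algebraic modular form on $D^\times$ of parallel weight $2$ (i.e.\ weight $0$ in the normalisation of the paper) congruent to $\pi_{F'}^D$ mod $l$, with prescribed local types at $l$, and then transferring back.

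Next I would specify the desired local components at each $v\mid l$. Since $l$ splits completely in $F'$ we have $F'_v=\Q_l$, and $\rbar:=\rbar_{l,\iota}(\pi)|_{G_{F'}}$ is absolutely irreducible with large image by the hypotheses (B1)–(B4). At each $v\mid l$ where $\pi_v$ is ordinary, $\rbar|_{G_{F'_v}}$ is reducible, and I would pick $\pi'_v$ to be a principal series representation (possibly tamely ramified, to accommodate Serre weight considerations) whose local Langlands parameter lifts $\rbar|_{G_{F'_v}}$ and gives a potentially Barsotti-Tate ordinary Galois representation. At each $v\mid l$ where $\pi_v$ is non-ordinary, $\rbar|_{G_{F'_v}}$ is irreducible, and I would pick $\pi'_v$ to be a supercuspidal representation induced from a character of a (ramified or unramified) quadratic extension of $\Q_l$, chosen so that the attached Weil-Deligne representation, twisted to be of Hodge-Tate weights $\{0,1\}$, is potentially Barsotti-Tate, non-ordinary, and reduces mod $l$ to $\rbar|_{G_{F'_v}}$. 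The existence of such local $\pi'_v$ with the right residual reduction is classical: at the ordinary places it follows from choosing suitable characters mod $l$, and at the non-ordinary places it follows from the explicit description of irreducible mod $l$ representations of $G_{\Q_l}$ as induced from characters of unramified quadratic extensions, together with the theory of tame types.

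The existence of a weight $0$ form on $D^\times$ realising these local types and congruent to $\pi_{F'}^D$ mod $l$ is then a weight-change (a.k.a.\ companion forms / Serre weight) statement for Hilbert modular forms. Away from $l$ we keep the level structure of $\pi_{F'}^D$, so the only change is at $v\mid l$, where we replace the hyperspecial component of $U_v$ by an open compact subgroup adapted to the chosen local type and twist by the appropriate weight. Using that $l$ splits completely and $l$ is large compared to $\lambda$, Fontaine–Laffaille theory identifies the mod $l$ reductions of the two weight spaces; combined with standard level-raising (cf.\ the arguments used for \cite{kis04} and related Hilbert modular Serre-type results), this produces the required Hecke eigensystem mod $l$ and hence, by lifting the system of Hecke eigenvalues, an automorphic representation $\pi'$ on $D^\times$ (equivalently on $\GL_2(\A_{F'})$ after JL) with the stated properties. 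The main obstacle is the simultaneous matching of local types at every $v\mid l$: one must produce one global form whose local component at each $v\mid l$ has the specified ordinary/non-ordinary behaviour and the correct residual reduction, which forces us to use a multi-local weight-change argument rather than a place-by-place modification, and to verify that the corresponding local deformation rings at $l$ are all nonzero so that the global patching produces an automorphic point.
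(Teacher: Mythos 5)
Your overall strategy matches the paper's: after the preliminary base change to $F'$, pass to a totally definite quaternion algebra ramified at the finite places where $\pi_{F'}$ is Steinberg, use Jacquet--Langlands, and produce $\pi'$ as a mod-$l$ congruence on the space of algebraic automorphic forms on $B^\times$ by changing the weight/type at $v|l$. That much is right, and so is the broad shape of your local choices at $v|l$ (principal series in the ordinary case, cuspidal type coming from a quadratic extension in the non-ordinary case).

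However, two steps are not actually justified as you have written them. First, the existence of the congruent eigenform in weight $0$ is \emph{not} a "companion forms / Serre weight" statement and is \emph{not} proved by "verifying that local deformation rings at $l$ are nonzero and running global patching." The paper instead invokes the much more elementary Lemma 3.1.4 of \cite{kis04}: the spaces $S_{\lambda,1,\psi}$ and $S_{0,\tau,\psi'}$ have the same Hecke action, and a maximal ideal $\mathfrak m$ in the support of the first automatically lies in the support of the second as soon as the mod-$l$ reduction $W_\lambda\otimes k$ appears as a Jordan--H\"older constituent of $W_\tau\otimes k$. That representation-theoretic fact is exactly Lemma 3.1.1 of \cite{MR1639612} for the specific principal series and cuspidal types $\tau_v$ chosen; "Fontaine--Laffaille identifies the mod $l$ reductions of the two weight spaces" is not the mechanism and would not produce what you need without this JH-constituent input. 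No patching, no deformation rings, no level raising in the Diamond--Taylor sense is used or needed.

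Second, you assert but do not verify property (4), and this is where the real content lies. Having chosen the types $\tau_v$, one still has to show that the resulting $r_{l,\iota}(\pi')|_{G_{F'_v}}$ is potentially Barsotti--Tate and matches the ordinariness of $r_{l,\iota}(\pi)|_{G_{F'_v}}$. The paper does this by computing $\WD(r_{l,\iota}(\pi')|_{G_{F'_v}})|_{I_{F'_v}}$ explicitly via Lemma 4.2.4 of \cite{MR1639612} (local-global compatibility for the chosen inertial type), and then, in the non-ordinary case, observing the inertial action is via niveau-$2$ fundamental characters so that the representation cannot become crystalline over an abelian extension (hence is non-ordinary); in the ordinary case it invokes Savitt's classification, Theorem 6.11(3) of \cite{MR2137952}, to rule out the non-ordinary possibilities given that $\rbar|_{G_{F'_v}}$ is reducible. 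Your proposal says the types are "chosen so that" the resulting representation has these properties, but the whole point is that this is something one must prove about the specific $\tau_v$, using nontrivial $p$-adic Hodge-theoretic input.
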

\begin{proof}
  Choose a quaternion algebra $B$ with centre $F'$ which is ramified
  at precisely the infinite places and the set $\Sigma$ of finite places at which
  $\pi_{F'}$ is ramified. 
 We will use Lemma 3.1.4 of \cite{kis04} and
  the Jacquet-Langlands correspondence to find $\pi'$. Let $v \in
  \Sigma$ and let $\rho$ be an irreducible
  representation of $B_{v}^{\times}$ corresponding to the irreducible
  admissible representation $\JL(\rho)$ of $\GL_2(F'_v)$ under the
  Jacquet-Langlands correspondence. We recall that
  $\rho^{\mc{O}_{B,v}^{\times}}$ is non-zero if and only if
  $\JL(\rho)$ is an unramified twist of the Steinberg
  representation. 
  We now introduce $l$-adic automorphic forms on $B^{\times}$. Let $K$ be a
  finite extension of $\Ql$ inside $\Qlbar$ with ring of integers
  $\mc{O}$ and residue field $k$, and assume that $K$ contains the
  images of all embeddings $F'\into\Qlbar$.
  Fix a maximal order $\mc{O}_{B}$ in $B$ and for each finite place $v \not
  \in \Sigma$ of
  $F'$ fix an isomorphism $i_v : \mc{O}_{B,v} \isoto
  M_2(\mc{O}_{F'_v})$.  Since $l$ splits completely in $F'$, we can
  and do identify embeddings $F'\into \Qlbar$ with primes of $F'$
  dividing $l$. For each place $v|l$ we let $\iota v$ denote the real place of $F'$
  corresponding to the embedding $\iota \circ v$. Similarly, if
  $\sigma : F'\into \bb{R}$ is an embedding we let $\iota^{-1} \sigma$
  denote the corresponding place of $F'$ dividing $l$.

  For each $\lambda' \in (\bb{Z}^{2}_+)^{\Hom(F',\Qlbar)}$ and $v|l$
  consider the algebraic representation
  \[ W_{\lambda'_v}:= \Sym^{\lambda'_{v,1}-\lambda'_{v,2}}\mc{O}^2
  \otimes_{\mc{O}} (\det)^{\lambda'_{v,2}} \] of
  $\GL_2(\mc{O})$. We consider it as a representation of
  $\mc{O}_{B,v}^{\times}$ via $\mc{O}_{B,v}^{\times} \stackrel{i_v}{\To}
  \GL_2(\mc{O}_{F'_v}) \stackrel{v}{\into} \GL_2(\mc{O})$.
 Let $W_{\lambda'}=\otimes_{v|l}
  W_{\lambda'_v}$ considered as a representation of
  $\GL_2(\mc{O}_{F',l})$.
  For each $v|l$ let $\tau_v$ denote a smooth
  representation of $\GL_2(\mc{O}_{F'_v})$ on a finite free
  $\mc{O}$-module $W_{\tau_v}$. Let $\tau$ denote the representation $
  \otimes_{v|l}\tau_v$ of
  $\GL_2(\mc{O}_{F',l})$ on $ W_{\tau} := \otimes_{v|l}W_{\tau_v}$. We
  let $W_{\lambda',\tau}=W_{\lambda'}\otimes_{\mc{O}}W_{\tau}$.
  Suppose that $\psi': (F')^{\times}\backslash
  (\bb{A}_{F'}^{\infty})^{\times}\rightarrow \mc{O}^{\times}$ is a
  continuous character so that for each prime $v|l$, the action of
  the centre $\mc{O}_{F'_v}^{\times}$ of $\mc{O}_{B,v}^{\times}$ on
  $W_{\lambda'_v}\otimes_{\mc{O}}W_{\tau_v}$ is given by
  $(\psi')^{-1}|_{\mc{O}_{F'_v}^{\times}}$. The existence of such a
  character implies that there exists an integer $w'$ such that $w' =
  \lambda'_{v,1}+\lambda'_{v,2} +1$ for each $v|l$.

  Let $U=\prod_v U_v \subset (B \otimes_{\bb{Q}} \bb{A}^{\infty})^{\times}$ be a
  compact open subgroup with $U_v \subset \mc{O}_{B,v}^{\times}$ for
  all $v$ and $U_v = \mc{O}_{B,v}^{\times}$ for $v |l$. We let
  $S_{\lambda',\tau,\psi'}(U,\mc{O})$ denote the space of functions
  \[ f : B^{\times}\backslash
  (B\otimes_{\bb{Q}}\bb{A}^{\infty})^{\times} \rightarrow
  W_{\lambda',\tau} \] with $f(gu)=(\lambda'\otimes\tau)(u_l)^{-1}f(g)$
  and $f(gz)=\psi'(z)f(g)$ for all $u \in U$, $z \in
  (\bb{A}_{F'}^{\infty})^{\times}$ and $g \in
  (B\otimes_{\bb{Q}}\bb{A}^{\infty})^{\times}$.
  Writing $U=U^l\times U_l$, we let
  $$ S_{\lambda',\tau,\psi'}(U_l,\mc{O})=\varinjlim_{U^l}S_{\lambda',\tau,\psi'}(U^l
  \times U_l,\mc{O})$$
  and we let
  $(B\otimes_{\bb{Q}}\bb{A}^{l,\infty})^{\times}$ act on this space by
  right translation.

  Let $\psi'_{\bb{C}}: (F')^{\times}\backslash \bb{A}_{F'}^{\times}
  \rightarrow \bb{C}^{\times}$ be the algebraic Hecke character
  defined by $$z \mapsto \mathbf{N}_{F'/\bb{Q}}(z_{\infty})^{1-w'}
  \cdot \iota\left( \mathbf{N}_{F'/\bb{Q}}(z_l)^{w'-1}
    \psi'(z^{\infty})\right).$$ 
Let $W_{\tau,\bb{C}}=W_{\tau}\otimes_{\mc{O},\iota}\bb{C}$.
 We have an isomorphism of $(B\otimes_{\bb{Q}}\bb{A}^{l,\infty})^{\times}$-modules
\begin{equation}
\label{eq:iso}
 S_{\lambda',\tau,\psi'}(U_l,\mc{O})\otimes_{\mc{O},\iota}\bb{C}
\isoto
\bigoplus_{\Pi}
\Hom_{\mc{O}_{B,l}^{\times}}(W_{\tau,\bb{C}}^{\vee},\Pi_l)\otimes
\Pi^{\infty,l} 
\end{equation} 
where the sum is over all automorphic
representations $\Pi$ of $(B\otimes_{\bb{Q}}\bb{A})^{\times}$ of
weight $\iota_* \lambda' := (\lambda'_{\iota^{-1}\sigma})_{\sigma} \in
(\bb{Z}^2_+)^{\Hom(F',\bb{C})}$ and central character $\psi'_{\bb{C}}$
(see for instance the proof of Lemma 1.3 of \cite{tay-fm2}).

  Let $U$ be as above and let $R$ denote a finite set of places of
  $F'$ containing all those places $v$ where $U_v \neq
  \mc{O}_{B,v}^{\times}$. Let  $\bb{T}^{\Sigma\cup R}$ denote the
  polynomial algebra $\mc{O}[T_{v},S_{v}]$ where $v$ runs over all
  places of $F'$ away from $l$, $R$ and $\Sigma$. For such $v$ we let
  $T_v$ and $S_v$ act on $S_{\lambda',\tau,\psi'}(U,\mc{O})$ via the
  double coset operators
\[ \left[ U i_v^{-1}\left(\begin{matrix} \varpi_v & 0 \cr 0 &
      1 \end{matrix} \right) U \right]
\mathrm{\ \ and \ \ }  \left[ U i_v^{-1} \left(\begin{matrix} \varpi_v
      & 0 \cr 0 & \varpi_v \end{matrix} \right) U \right]
\]
respectively, where $\varpi_v$ is a uniformizer in $\mc{O}_{F'_v}$.

Let $\wt{\pi}$ denote the automorphic representation of
$(B\otimes_{\bb{Q}}\bb{A})^{\times}$ of weight
$\lambda_{F'}:=(\lambda_{\sigma|_{F}})_{\sigma}\in (\bb{Z}^2_+)^{\Hom(F',\bb{C})}$ corresponding
to $\pi_{F'}$ under the Jacquet-Langlands correspondence. Let $\iota^*
\lambda = ((\lambda_{F'})_{\iota v})_v \in
(\bb{Z}^2_+)^{\Hom(F',\Qlbar)}$. Let $U= \prod_v U_v \subset
(B\otimes_{\bb{Q}}\bb{A}^{\infty})^{\times}$ be the compact open
subgroup with $U_v = \mc{O}_{B,v}^{\times}$ for all $v$. Then the
space $\wt{\pi}^U$ is non-zero. Let $\chi: G_{F'}^{\ab} \rightarrow
\Qlbar^{\times}$ denote the character $\epsilon \det
r_{l,\iota}(\pi)|_{G_{F'}}$ and let $\psi = \chi \circ \Art_{F'} :
\bb{A}_{F'}^{\times}/\overline{
  (F'_{\infty})^{\times}_{>0}(F')^{\times}} \rightarrow \Qlbar$. Note
that $\chi$ is totally even and hence we may regard $\psi$ as a
character of $(\bb{A}_{F'}^{\infty})^{\times}/(F')^{\times} \isoto \bb{A}_{F'}^{\times}/\overline{
  (F'_{\infty})^{\times}(F')^{\times}}$. Extending $K$ if
necessary, we can and do assume that $\psi$ is valued in $\mc{O}^{\times}$.
Further extending $K$ if necessary, choose a
$\bb{T}^{\Sigma\cup R}$-eigenform
$f$ in $S_{\lambda,1,\psi}(U,\mc{O})$ corresponding to an element of $(\wt{\pi}^{\infty})^{U}$
under the isomorphism \eqref{eq:iso}. The $\bb{T}^{\Sigma\cup R}$-eigenvalues on $f$ give rise to an $\mc{O}$-algebra homomorphism
$\bb{T}^{\Sigma\cup R}\rightarrow \mc{O}$ and reducing this modulo
$\mf{m}_{\mc{O}}$ gives rise to a maximal ideal $\mf{m}$ of $\bb{T}^{\Sigma\cup R}$.

Let
$\wt{\chi}:G_{F'}^{\ab}\rightarrow \mc{O}^{\times}$ denote the
Teichm\"uller lift of the reduction of $\chi$. Let $\psi' = \wt{\chi}
\circ \Art_{F'}$, which we can regard as a character
$(\bb{A}_{F'}^{\infty})^{\times}/(F')^{\times} \rightarrow
\mc{O}^{\times}$.
Let $v$ be a place of $F'$ dividing $l$.
\begin{itemize} 
\item If
$r_{l,\iota}(\pi)|_{G_{F'_v}}$ is ordinary and
$\iota^*\lambda_{v,1}\neq \iota^*\lambda_{v,2}$, let $\chi_1,\chi_2 :
\bb{F}_l^{\times} \rightarrow \Ql^{\times}$ be the characters given by $\chi_1(x)
= \wt{x}^{\iota^*\lambda_{v,1}}$ and $\chi_2(x)=\wt{x}^{\iota^*\lambda_{v,2}}$ where $\wt{x}$ denotes the Teichm\"uller lift of
$x$. Then let $\tau_v$ denote the representation
\[
I(\chi_1,\chi_2):=\Ind_{B(\bb{F}_l)}^{\GL_2(\bb{F}_l)}(\chi_1\otimes
\chi_2) \] of $\GL_2(\bb{F}_l)$ where $B$ is the Borel subgroup of
upper triangular matrices in $\GL_2$. 
\item If
$r_{l,\iota}(\pi)|_{G_{F'_v}}$ is ordinary and
$\iota^*\lambda_{v,1} = \iota^*\lambda_{v,2}$, let $\chi :
\bb{F}_l^{\times} \rightarrow \Ql^{\times}$ be the character
$\chi(x)=\wt{x}^{\iota^*\lambda_{v,2}}$. Let $\tau_v$ denote the
  representation $\chi \circ \det$ of $\GL_2(\bb{F}_l)$.
\item If
$r_{l,\iota}(\pi)|_{G_{F'_v}}$ is not ordinary, let $\chi :
\bb{F}_{l^2} \rightarrow \bb{Q}_{l^2}^{\times}$ be the character given
by $\chi(x)= \wt{x}^{\iota^*\lambda_{v,1}-\iota^*\lambda_{v,2}+2+(l+1)(\iota^*\lambda_{v,2}-1)}$. Let $\tau_v$ be the
$\Qlbar$-representation of $\GL_2(\bb{F}_l)$ denoted $\Theta(\chi)$ in
section 3 of \cite{MR1639612} (note that $\chi^l \neq \chi$ since
$0 < \iota^*\lambda_{v,1}-\iota^*\lambda_{v,2}+2 < l+1$  ). 
\end{itemize}
Extending $K$ if necessary, we can and
do fix a model for $\tau_v$ on a finite free $\mc{O}$-module
$W_{\tau_v}$. We also view $W_{\tau_v}$ as a smooth
$U_v=\mc{O}_{B,v}^{\times}$-module via $U_v \stackrel{i_v}{\To}
\GL_2(\mc{O}_{F'_v})\stackrel{v}{\To}\GL_2(\bb{Z}_l)\onto \GL_2(\bb{F}_l)$. 
By Lemma 3.1.1 of \cite{MR1639612}, $W_{\lambda_v}\otimes_{\mc{O}}k$ is
a Jordan-H\"older constituent of $W_{\tau_v}\otimes_{\mc{O}}k$.

It follows that $W_{\lambda}\otimes_{\mc{O}}k$ is a
Jordan-H\"older constituent of the $U_l$-module
$W_{\tau}\otimes_{\mc{O}}k$. Also, since $l>3$ and $l$ is unramified
in $F'$, $B^{\times}$ contains no elements of exact order $l$ and
hence the group $U$ satisfies hypothesis 3.1.2 of \cite{kis04} (with
$l$ replacing $p$).
We can therefore apply Lemma 3.1.4 of \cite{kis04} to deduce that $\mf{m}$ is
in the support of $S_{0,\tau,\psi'}(U,\mc{O})$.
Let $\Pi'$ denote the automorphic
representation of $(B\otimes_{\bb{Q}}\bb{A})^{\times}$ corresponding
to any non-zero $\bb{T}^{\Sigma\cup R}$-eigenform in
$S_{0,\tau,\psi'}(U,\mc{O})_{\mf{m}}\otimes_{\mc{O},\iota}\bb{C}$ under the isomorphism \eqref{eq:iso}.
Let $\pi'$ be the
automorphic representation of $\GL_2(\bb{A}_{F'})$ corresponding to
$\Pi'$ under the Jacquet-Langlands correspondence. Then $\pi'$ is
regular algebraic and of weight 0 by construction. The choice of
$\mf{m}$ ensures that property (2) of the theorem holds and hence that
$\pi'$ is cuspidal. Property (3) holds by the choice of $U$.

It remains to show that $\pi'$ satisfies property (4). Let $v$ be a
place of $F'$ dividing $l$ and suppose firstly that
$r_{l,\iota}(\pi)|_{G_{F'_v}}$ is non-ordinary.  Then by the choice of
$\tau_v$ and part (3) of Lemma 4.2.4 of \cite{MR1639612} we see that
$r_{l,\iota}(\pi')|_{G_{F'_v}}$ is potentially Barsotti-Tate and moreover 
\[ \WD(r_{l,\iota}(\pi')|_{G_{F'_v}})|_{I_{F'_v}} \cong
\wt{\omega}_2^{-\iota^*\lambda_{v,1}-l\iota^*\lambda_{v,2}+l-1} \oplus
\wt{\omega}_2^{-l\iota^* \lambda_{v,1}-\iota^*\lambda_{v,2}+1-l} \]
where $\omega_2$ is a fundamental character of niveau 2.
We deduce that $r_{l,\iota}(\pi')|_{G_{F'_v}}$ only becomes
Barsotti-Tate over a non-abelian extension of $G_{F'_v}$ and hence is non-ordinary.
Now suppose that $r_{l,\iota}(\pi)|_{G_{F'_v}}$ is ordinary. Then by the choice of
$\tau_v$ and parts (1) and (2) of Lemma 4.2.4 of \cite{MR1639612} we see that
$r_{l,\iota}(\pi')|_{G_{F'_v}}$ is potentially Barsotti-Tate and moreover 
\[ \WD(r_{l,\iota}(\pi')|_{G_{F'_v}})|_{I_{F'_v}} \cong
\wt{\omega}^{-\iota^*\lambda_{v,1}} \oplus \wt{\omega}^{-\iota^*
  \lambda_{v,2}} \] where $\omega$ is the mod $l$ cyclotomic
character. Since $\rbar_{l,\iota}(\pi)|_{G_{F'_v}}$ is reducible,
it follows from Theorem 6.11(3) of \cite{MR2137952} that
$r_{l,\iota}(\pi')|_{G_{F'_v}}$ is either decomposable (in which case
it is easy to see that it must be ordinary), or it corresponds to a
potentially crystalline representation as in Proposition 2.17 of
\cite{MR2137952}, with $v_l(x_1)=1$ or $v_l(x_2)=1$ (because if
neither of these hold, then by Theorem 6.11(3) of \cite{MR2137952} the
representation
$\rbar_{l,\iota}(\pi')|_{G_{F'_v}}=\rbar_{l,\iota}(\pi)|_{G_{F'_v}}$
would be irreducible, a contradiction). In either case the
representation is ordinary (for example by Lemma 2.2 of
\cite{BLGHT}).\end{proof} We would like to thank Richard Taylor for pointing out the following
lemma to us. 
\begin{lem}\label{lem: purity implies local-global compatibility}Let $F$ be a totally real field, and let $\pi$ be a RAESDC
  representation of $\GL_n(\A_F)$. Let $l$ be a prime number, and fix
  an isomorphism $\iota:\Qlbar\to\C$. Suppose that for some place
  $v\nmid l$ of $F$, the Galois
  representation $r_{l,\iota}(\pi)|_{G_{F_v}}$ is pure. Then we
  have \[\WD(r_{l,\iota}(\pi)|_{G_{F_v}})^{F-ss}=\iota^{-1}(\rec(\pi_v)\otimes|\Art_{F_v}^{-1}|_{F_v}^{(1-n)/2}),\]where
    $\WD(r_{l,\iota}(\pi)|_{G_{F_v}})$ denotes the Weil-Deligne
    representation associated to $r_{l,\iota}(\pi)|_{G_{F_v}}$.

\end{lem}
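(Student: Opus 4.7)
The plan is to combine weak local-global compatibility with a rigidity property of pure Weil--Deligne representations. By Theorems 1.1 and 1.2 of \cite{BLGHT} (which consolidate results of Shin, Chenevier--Harris, Caraiani, Taylor--Yoshida, and others), we have in hand a \emph{weak} form of local-global compatibility: the Frobenius-semisimplified Weil--Deligne representations $\WD(r_{l,\iota}(\pi)|_{G_{F_v}})^{F-ss}$ and $\iota^{-1}(\rec(\pi_v)\otimes|\Art_{F_v}^{-1}|_{F_v}^{(1-n)/2})$ have isomorphic semisimplifications, meaning once one additionally forgets the monodromy operator $N$ (or equivalently, semisimplifies $N$ to zero). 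The goal is then to upgrade agreement of semisimplifications to agreement of the full Frobenius-semisimplified Weil--Deligne representations, which is exactly a question about the nilpotent operator $N$.

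Next I would establish purity on both sides. Purity of the Galois side is the hypothesis. For the automorphic side, one needs that $\pi_v$ is tempered: this is known for RAESDC automorphic representations of $\GL_n(\A_F)$ over totally real fields $F$, via the results of Shin and Caraiani on RACSDC representations combined with solvable base change. Temperedness of $\pi_v$ translates into purity of $\rec(\pi_v)\otimes|\cdot|^{(1-n)/2}$ as a Weil--Deligne representation.

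The main step is then to invoke the following rigidity fact: two Frobenius-semisimple, pure Weil--Deligne representations of the same weight whose semisimplifications (killing $N$ and semisimplifying the Weil action) agree must themselves be isomorphic. The reason is that purity forces the monodromy filtration to coincide with the weight filtration, so the Jordan block structure of $N$ is entirely determined by how the eigenvalues of Frobenius on $r^{ss}$ distribute across the weights. Applying this to our two Weil--Deligne representations yields the desired identity.

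I do not expect any serious obstacle: the two ingredients (weak local-global compatibility and temperedness of RAESDC components) are by now available in the literature in the required generality, and the rigidity of pure Weil--Deligne representations is formal. The only real care required is bookkeeping with the twist $|\cdot|^{(1-n)/2}$ and the $F$-semisimplification conventions so that the comparison of $N$'s on the two sides is unambiguous.
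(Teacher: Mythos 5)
Your argument and the paper's both start from the weak compatibility of Theorem 1.1 of \cite{BLGHT}, so the only issue is pinning down the monodromy operator $N$; but they diverge from there. You work entirely on the Weil--Deligne side: you claim both WD representations are pure, and then invoke the (correct) rigidity fact that a Frobenius-semisimple pure WD representation is determined by its underlying Weil representation, since purity forces the monodromy filtration to coincide with the weight filtration and hence fixes the Jordan type of $N$. The paper instead translates the question back to smooth representations of $\GL_n(F_v)$: it observes that $\pi_v$ is generic (a purely formal consequence of cuspidality of $\pi$, no temperedness needed), shows that $\rec^{-1}(\WD(r_{l,\iota}(\pi)|_{G_{F_v}})^{F\text{-}ss})$ is tempered (by the purity hypothesis, via Lemma 1.4(3) of Taylor--Yoshida) and hence generic (irreducibility of unitary inductions), and concludes using Zelevinsky's theorem that a generic representation is determined by its supercuspidal support.

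The significant difference is what input is required to make each argument run. You need temperedness of $\pi_v$ to get purity of $\rec(\pi_v)\otimes|\cdot|^{(1-n)/2}$, and you assert this as a known consequence of work of Shin, Caraiani, et al.\ via base change. But the paper's proof is designed precisely so as \emph{not} to presuppose the Ramanujan conjecture at $v$: it needs only genericity of $\pi_v$, plus the purity hypothesis on the Galois side, and temperedness of $\pi_v$ then falls out as a \emph{consequence} of the lemma rather than going in as an input. Note also that knowing the Frobenius eigenvalues of $\rec(\pi_v)$ agree with those of a pure WD representation does not by itself imply $\pi_v$ is tempered (one needs to know $N$ is maximal), so you cannot simply derive temperedness of $\pi_v$ from the lemma's hypotheses and close the circle that way; you genuinely need the Ramanujan-type input. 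If one is willing to grant full temperedness of local components of RAESDC representations over totally real fields, your rigidity argument is correct and arrives at the same conclusion, but the paper's route is both more economical in its hypotheses and better suited to the state of the literature at the time.
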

\begin{proof}
  By Theorem 1.1 of \cite{BLGHT}, the claimed equality holds on the Weil
  group (but we do not necessarily know that the monodromy is the same
  on each side). However, $\pi_{v}$ is generic (because $\pi$ is
  cuspidal) and $\rec^{-1}(\WD(r_{l,\iota}(\pi)|_{G_{F_v}})^{F-ss})$ is
  also generic (because it is tempered, by Lemma 1.4(3) of
  \cite{MR2276777}, and thus a subquotient of a unitary induction of a
  square-integrable representation of a Levi subgroup, by Theorem 2.3 of
  \cite{MR1721403}. Any such induction is irreducible (cf. page 72 of
  \cite{MR771672}), and the
  result follows from Theorem 9.7 of \cite{MR584084}.) The claimed equality
  follows (because a generic representation is determined by its
  supercuspidal support - this follows easily from the results of
  Zelevinsky recalled on page 36 of \cite{MR1876802}).
\end{proof}

\begin{thm}\label{potential automorphy of sym n for weight 0}
Continue using the setup at the beginning of this section, and let
$\pi'$ be as in Proposition \ref{prop: construction of pi'}. 
  Let $N/F'$ be a finite extension of number fields. There is a finite
  Galois extension of totally real fields $F''/F'$ such
  that
  \begin{enumerate}
\item $F''$ is linearly disjoint from
  $\overline{F}^{\ker\rbar_{l,\iota}(\pi')}(\zeta_l)N$ over
  $F'$.
\item There is a RAESDC automorphic representation $\pi'_n$ of $\GL_n(\A_{F''})$ of
  weight $0$ and level prime to $l$ such that $r_{l,\iota}(\pi'_n)\cong
  \Sym^{n-1}r_{l,\iota}(\pi')|_{G_{F''}}$.
  \end{enumerate}

\end{thm}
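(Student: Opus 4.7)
The plan is to apply Proposition \ref{prop:potmod for small coeffs} to $r := r_{l,\iota}(\pi')$ after a preliminary solvable totally real base change to normalize some technical conditions. By Proposition \ref{prop: construction of pi'}(1), $\pi'$ has weight $0$, so $r$ has Hodge-Tate weights $\{0,1\}$ at each place $v|l$; moreover, combining (B1) with the linear disjointness of $F'/F$ from $\overline{F}^{\ker\bar r_{l,\iota}(\pi)}$, we get $\bar r(G_{F'}) \supset \SL_2(\F_l)$. The prime $l$ splits completely in $F'$, and $l > 4(n-1)+1$ by (B2).

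Two mild technical issues prevent an immediate application. First, by construction $\det r = \eps_l^{-1}\wt\chi$ for a finite-order totally even character $\wt\chi$ (the Teichm\"uller lift of $\eps_l \det r_{l,\iota}(\pi)|_{G_{F'}}$) rather than exactly $\eps_l^{-1}$. Second, by Proposition \ref{prop: construction of pi'}(4), $r$ is only potentially Barsotti-Tate above $l$, not necessarily crystalline. Both are handled by a preliminary finite solvable totally real Galois base change $F_1/F'$, linearly disjoint from $\overline{F}^{\ker\bar r_{l,\iota}(\pi')}(\zeta_l)\cdot N$ over $F'$, chosen so that $\wt\chi|_{G_{F_1}} = 1$ and $r|_{G_{F_1}}$ is crystalline with Hodge-Tate weights $\{0,1\}$ at each place above $l$. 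Such an $F_1$ is constructed by composing the fixed field of $\ker\wt\chi$ with the local inertial extensions trivializing the ramification of $r$ at the $l$-adic places, plus an auxiliary totally real extension to secure the linear disjointness. We then apply Proposition \ref{prop:potmod for small coeffs} to $r|_{G_{F_1}}$ with $\Favoid := F_1 \cdot \overline{F}^{\ker\bar r_{l,\iota}(\pi')}(\zeta_l) \cdot N$ and $\cL := \emptyset$; all hypotheses hold (in particular the residual image condition is preserved by $F_1/F'$ because $F_1$ is linearly disjoint from $\overline{F}^{\ker\bar r}$ over $F'$). This yields a finite Galois totally real extension $F_2/F_1$, linearly disjoint from $\Favoid$ over $F_1$, such that $(\Sym^{n-1}r)|_{G_{F_2}}$ is automorphic of weight $0$ and level prime to $l$.

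The main obstacle is promoting $F_2$ to a Galois extension of $F'$ itself, rather than merely of $F_1$: the naive Galois closure $\widetilde F_2/F'$ contains $F_2$, but $\widetilde F_2/F_2$ need not be solvable, so automorphy of $\Sym^{n-1}r$ does not automatically transfer from $F_2$ to $\widetilde F_2$ via solvable base change. The cleanest remedy is to build $\Gal(F_1/F')$-equivariance into the Moret-Bailly construction underpinning the proof of Proposition \ref{prop:potmod for small coeffs} (which follows Theorem 3.2 of \cite{hsbt})---applying the Moret-Bailly theorem to a $\Gal(F_1/F')$-invariant moduli problem of elliptic curves with the prescribed mod $l$ and mod $l'$ torsion---so that the output field $F_2$ is already Galois over $F'$. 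Setting $F'' := F_2$ then yields the conclusion: $F''/F'$ is Galois and totally real, linearly disjoint from $\overline{F}^{\ker\bar r_{l,\iota}(\pi')}(\zeta_l)\cdot N$ over $F'$ (since $\Favoid$ contains this composite), and $(\Sym^{n-1}r)|_{G_{F''}} \cong r_{l,\iota}(\pi'_n)$ for an RAESDC representation $\pi'_n$ of $\GL_n(\A_{F''})$ of weight $0$ and level prime to $l$.
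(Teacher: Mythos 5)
Your plan to apply Proposition~\ref{prop:potmod for small coeffs} to $r=r_{l,\iota}(\pi')$ at the \emph{same} prime $l$, after a preliminary base change $F_1/F'$ that makes $r|_{G_{F_1}}$ crystalline and $\wt\chi|_{G_{F_1}}$ trivial, runs into two fatal obstructions, and the paper's proof is built precisely to avoid both of them.

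First, the crystallinity normalization is incompatible with the hypothesis that $l$ be unramified in the base field. By construction (Proposition~\ref{prop: construction of pi'}), $r_{l,\iota}(\pi')|_{G_{F'_v}}$ for $v|l$ is only \emph{potentially} Barsotti-Tate, with nontrivial inertial type: in the non-ordinary case the proof exhibits $\WD(r_{l,\iota}(\pi')|_{G_{F'_v}})|_{I_{F'_v}}$ as a sum of nontrivial powers of a fundamental character of niveau $2$, and in the ordinary case as a sum of nontrivial powers of the Teichm\"uller lift of the mod-$l$ cyclotomic character (nontrivial whenever $\pi$ has non-zero weight, which is the interesting case). To trivialize this inertial action, $F_1$ would have to contain a ramified extension of $F'_v$, so $l$ would ramify in $F_1$, contradicting the hypothesis of Proposition~\ref{prop:potmod for small coeffs}. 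Second, the determinant normalization conflicts with the linear-disjointness you need: $\wt\chi$ is the Teichm\"uller lift of $\overline{\epsilon}\det\rbar_{l,\iota}(\pi)$, so $\overline{F'}^{\ker\wt\chi}\subset\overline{F'}^{\ker\rbar_{l,\iota}(\pi')}(\zeta_l)$. Thus any $F_1$ with $\wt\chi|_{G_{F_1}}=1$ contains a subfield of $\overline{F}^{\ker\rbar_{l,\iota}(\pi')}(\zeta_l)N$ and cannot be linearly disjoint from it over $F'$ unless $\wt\chi$ is already trivial; but then the large-image hypothesis used to invoke the proposition (and claimed in the theorem's conclusion (1)) is no longer available.

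The paper escapes both problems by \emph{changing the prime}. After a quadratic totally real base change $F_1/F'$, it twists $\pi'_{F_1}$ by a square root $\chi$ of its central character, producing $\pi''$ with trivial central character, and then works with $r_{l',\iota'}(\pi'')$ for a fresh prime $l'\neq l$ chosen so that $\pi''$ is unramified above $l'$, $l'$ is unramified in $F_1$, $l'>4(n-1)+1$, and the residual image is large. Now $\det r_{l',\iota'}(\pi'')=\epsilon_{l'}^{-1}$ on the nose, the representation is crystalline of the right shape above $l'$, and Proposition~\ref{prop:potmod for small coeffs} applies; level prime to $l$ for the output follows from purity (Lemma~\ref{lem: purity implies local-global compatibility}), and one untwists by $\chi$ at the end. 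Your final paragraph also proposes a $\Gal(F_1/F')$-equivariant reworking of the Moret-Bailly step; while the Galois-over-$F'$ bookkeeping is indeed a point that needs care, it is orthogonal to the gap above, and in any case cannot rescue the argument once the application of Proposition~\ref{prop:potmod for small coeffs} at the prime $l$ is blocked.
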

\begin{proof} 
  The central character $\omega_{\pi'}$ of $\pi'$ has finite order and
  is trivial at the infinite places, so
  we can choose a quadratic totally real extension $F_1$ of $F'$
  linearly disjoint from
  $N\overline{F}^{\ker\rbar_{l,\iota}(\pi')}(\zeta_l)$ (which we will
  henceforth call $\Favoid$) over $F'$, such that if
  $\pi'_{F_1}=BC_{F_1/F}(\pi')$, then $\omega_{\pi'_{F_1}}$ has a
  square root (note that the obstruction to taking a square root is in
  the 2-torsion of the Brauer group of $F'$). Say
  $\chi^2=\omega_{\pi'_{F_1}}$, and write
  $\pi''=\pi'_{F_1}\otimes(\chi^{-1}\circ\det)$. Making a further
  solvable base change (and keeping $F_1$ linearly disjoint from
  $\Favoid$ over $F'$), we may assume in addition that $\pi''$ has
  level prime to $l$ (that this is possible follows from local-global
  compatibility and Proposition \ref{prop: construction of
  pi'}(4)).

Then choose a rational prime $l'\neq l$ and an isomorphism $\iota':
\overline{\bb{Q}}_{l'}\isoto \bb{C}$ such that:
\begin{itemize}
\item $\pi''_v$ is unramified for all $v|l'$.
\item $l'$ is unramified in $F_1$.
\item $l'>4(n-1)+1$.
\item $l'$ splits completely in the field of coefficients of $\pi''$.
\item The residual representation
$\rbar_{l',\iota'}(\pi''):G_{F_1}\to\GL_2(\overline{\bb{F}}_{l'})$  
has large image, in the
sense that there are finite fields $\F_{l'}\subset k\subset k'$
with \[\SL_2(k)\subset \rbar_{l',\iota'}(\pi'')(G_{F_1})\subset
k'^\times\GL_2(k).\] (Note that this is automatically the case for all
sufficiently large $l'$ by Proposition 0.1 of \cite{MR2172950}.)
Coupled with the previous
point, this in fact means that:
\[\SL_2(\F_{l'})\subset \rbar_{l',\iota'}(\pi'')(G_{F_1})\subset \GL_2(\F_{l'}).\]
\end{itemize}

Since $\pi''$ has trivial central character, $\det
r_{l',\iota'}(\pi'')=\epsilon_l^{-1}$, and we can apply Proposition
\ref{prop:potmod for small coeffs} to find a Galois extension
$F_2/F_1$, linearly disjoint from $\Favoid$ over $F'$, such that $
\Sym^{n-1} r_{l',\iota'}(\pi'')$ is automorphic over $F_2$ of weight
0. That it is in fact automorphic of level prime to $l$ follows from
Lemma \ref{lem: purity implies local-global compatibility} (note that
$ \Sym^{n-1} r_{l',\iota'}(\pi'')|_{G_{F_2}}$ is pure, because
$r_{l',\iota'}(\pi'')$ is pure, for example by the main result of
\cite{MR2327298}, and $ \Sym^{n-1} r_{l',\iota'}(\pi'')|_{G_{F_2}}$ is
unramified at all places dividing $l$ by the choice of
$F_2$). Replacing $F_2$ by a further solvable base change, also
disjoint from $\Favoid$ over $F'$, if necessary, we may assume that
$\chi$ is unramified at all places of $F_2$ lying over $l$. We are
then done, taking $F''=F_2$ (because $ \Sym^{n-1}
r_{l',\iota'}(\pi')|_{G_{F_2}}=
r_{l',\iota'}(\chi)^{n-1}|_{G_{F_2}}\otimes \Sym^{n-1}
r_{l',\iota'}(\pi'')|_{G_{F_2}}$).
\end{proof}

\begin{thm}\label{thm: nth symmetric power of pi is potentially
    automorphic}Let $F$ be a totally real field, and let $\pi$ be a
  non-CM regular algebraic cuspidal automorphic
  representation of $\GL_2(\A_F)$. Then there is a prime $l$, an
  isomorphism $\Qlbar\isoto\C$, a finite Galois extension of
  totally real fields $F''/F$ and an RAESDC automorphic representation $\pi_n$ of
  $\GL_n(\A_{F''})$ such that $r_{l,\iota}(\pi_n)\cong
  \Sym^{n-1}r_{l,\iota}(\pi)|_{G_{F''}}$.
\end{thm}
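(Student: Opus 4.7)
The plan is to carry out the strategy outlined in the introduction, combining the weight~$0$ potential automorphy Theorem~\ref{potential automorphy of sym n for weight 0} with the automorphy lifting Theorem~\ref{thm: CM modularity lifting theorem, using tilde} via a Harris-style tensor-and-untwist argument using the characters built in Lemma~\ref{lem: existence of chars with long list of properties}. First, I would fix a prime $l$ and an isomorphism $\iota:\Qlbar\isoto\C$ satisfying the conditions (B1)--(B4) at the start of this section, and make the preliminary solvable totally real base change $F'/F$ so that $\pi_{F'}$ is unramified or an unramified twist of Steinberg at each finite place and $[F':\Q]$ is even. I would then apply Proposition~\ref{prop: construction of pi'} to produce the weight~$0$ companion $\pi'$ over $F'$, congruent to $\pi_{F'}$ mod $l$, with matching ramification away from $l$ and matching ordinary/non-ordinary behaviour at $l$, and potentially Barsotti-Tate above $l$.

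Next I would invoke Lemma~\ref{lem: existence of chars with long list of properties} (with $F$ there taken to be $F'$, with $S_{\ord}$ and $S_{\ssg}$ dictated by the ordinary/supersingular locus of $\pi$ above $l$, with $a_v$, $b_v$ extracted from the Hodge-Tate weights of $\pi$ at $v$, and with $T$ the ramified set away from $l$) to produce a cyclic degree-$m^{\ast}$ CM extension $M/F'$ and crystalline characters $\theta,\theta':G_M\to\overline{\Z}_l^{\times}$ with $\overline{\theta}=\overline{\theta}'$. Then apply Theorem~\ref{potential automorphy of sym n for weight 0} to $\pi'$ with $N$ chosen large enough that, over the resulting totally real Galois extension $F''/F'$, all the fields appearing in the hypotheses of the lifting theorem are disjoint from the obstructive bits. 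Set
\[
r := \Sym^{n-1}r_{l,\iota}(\pi)|_{G_{F''}},\qquad r' := \Sym^{n-1}r_{l,\iota}(\pi')|_{G_{F''}},
\]
and form
\[
r'' := r\otimes \bigl(\Ind_{G_M}^{G_{F'}}\theta\bigr)|_{G_{F''}},\qquad r''' := r'\otimes \bigl(\Ind_{G_M}^{G_{F'}}\theta'\bigr)|_{G_{F''}},
\]
using the inherited lattices of Definitions~\ref{defn:inherited basis} and~\ref{defn:inherited basis for otimes}. By parts (6) and (7) of the character lemma, $r''$ and $r'''$ have the same Hodge-Tate weights at every place above $l$; by part (1) their residual reductions coincide, and by parts (4) and (5) both are essentially self-dual with matching similitude characters (up to a residually trivial unramified crystalline twist). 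The representation $r'''$ is automorphic over $F''$ by Theorem~\ref{potential automorphy of sym n for weight 0} combined with Proposition~\ref{prop: tensor product with induction is still automorphic}, after possibly enlarging $F''$ to ensure the non-self-twist hypothesis of that proposition.

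To apply Theorem~\ref{thm: the main modularity lifting theorem, using tilde} to deduce that $r''$ is automorphic, I would verify its four hypotheses. Big residual image and the ``avoid $\zeta_l$'' condition come from part (3) of Lemma~\ref{lem: existence of chars with long list of properties}, using (B3)--(B4). The essential self-duality with matching $\chi$, $\chi'$ on inertia follows from parts (4) and (5) of the character lemma (the similitude characters are the same up to a finite-order twist that is arranged to match on inertia). At places $v\nmid l$ where $\pi$ is unramified, both representations are unramified; at places of unipotent ramification, a standard path-connectedness and smoothness argument (using the Ramanujan conjecture for $\pi$, which gives local-global compatibility at these places via Lemma~\ref{lem: purity implies local-global compatibility}) combined with Lemma~\ref{lem: unramified twists lie on same components} gives the required $\leadsto_{\mathcal{O}}$ relation. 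The hard step, and the one where the machinery of the character lemma is essential, is the verification at places $v\mid l$ that $r''|_{G_{F''_v}}\sim r'''|_{G_{F''_v}}$. Here I would proceed indirectly: at an ordinary $v$, reduce via Lemma~\ref{lem: crystalline ordinary lifting ring is irreducible when residually trivial} after a base change to a residually trivial situation; at a non-ordinary $v$, use the knowledge of Barsotti-Tate components to replace $r_{l,\iota}(\pi)|_{G_{F_v}}$ and $r_{l,\iota}(\pi')|_{G_{F_v}}$ by induced representations $\rho_\chi$, $\rho_{\chi'}$ from the quadratic unramified extension, then apply part (2) of the character lemma together with Lemma~\ref{lem: choose a different basis of ind chi} to conclude. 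This is the main obstacle and is precisely the reason the character lemma is formulated as it is.

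Having deduced the automorphy of $r''$, Proposition~\ref{prop: untwisting} applied to the cyclic CM extension $M F''/F''$ and the character $\theta|_{G_{M F''}}$ (noting that $r$ is irreducible, since $\pi$ is not of CM type and $l$ was chosen with $\rbar_{l,\iota}(\pi)$ having large image, so the restriction to $G_{MF''}$ of $\Sym^{n-1} r_{l,\iota}(\pi)$ remains irreducible) yields the automorphy of $r$ over $F''$, giving the RAESDC representation $\pi_n$ of $\GL_n(\mathbb{A}_{F''})$ required.
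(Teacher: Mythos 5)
Your proposal captures the overall strategy of the paper's proof — construct $\pi'$, build $M,\theta,\theta'$ via Lemma \ref{lem: existence of chars with long list of properties}, get potential automorphy of $r'$ over $F''$ via Theorem \ref{potential automorphy of sym n for weight 0}, tensor with the automorphic induction of $\theta'$, run the automorphy lifting theorem, and untwist via Proposition \ref{prop: untwisting}. You have also correctly identified the key local lemmas (Lemma \ref{lem: crystalline ordinary lifting ring is irreducible when residually trivial}, Corollary \ref{cor: sums of unramified twists of chars tilde}, Lemma \ref{lem: choose a different basis of ind chi}, the purity/local-global input from Lemma \ref{lem: purity implies local-global compatibility}).

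However, there is a genuine gap in where the lifting theorem is applied. You define $r''$ and $r'''$ as representations of $G_{F''}$ and then try to verify the hypotheses of Theorem \ref{thm: the main modularity lifting theorem, using tilde} directly over $F''$. This fails: the characters $\theta,\theta'$ produced by Lemma \ref{lem: existence of chars with long list of properties} are only de Rham (not crystalline) above $l$, so $r''|_{G_{F''_v}}$ is in general not crystalline and the condition ``$\rho|_{G_{F_v}}\sim\rho'|_{G_{F_v}}$'' (which is a statement about points of a crystalline lifting ring) is not even well-posed there; moreover the residual representation need not be trivial at $v\mid l$, which is needed for the arguments via Corollary \ref{cor: sums of unramified twists of chars tilde} and Lemma \ref{lem: crystalline ordinary lifting ring is irreducible when residually trivial}. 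The paper addresses this by introducing a further solvable totally real extension $F^+/F''$, chosen so that (a) $r''|_{G_{F^+}}$ and $(r'\otimes\Ind\theta')|_{G_{F^+}}$ are crystalline at all $v\mid l$, (b) $\rbar''|_{G_{F^+_v}}$ is trivial and $F^+_v$ contains the quadratic unramified extension of $F'_v$ for $v\mid l$, (c) $F^+M/F^+$ is unramified everywhere and split completely over $T$ with $\theta,\theta'$ unramified away from $l$, and (d) $F^+$ is linearly disjoint from $\bar{F}^{\ker\ad\rbar''}(\zeta_l)M$ to preserve big image. The lifting theorem is then applied over $F^+$, after which automorphy over $F''$ follows by solvable descent (Lemma 1.3 of \cite{BLGHT}) before Proposition \ref{prop: untwisting} is invoked. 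Your parenthetical ``after a base change to a residually trivial situation'' gestures at this, but it is placed inside the sublemma verification as a local trick rather than as a global solvable extension over which the whole lifting theorem is run; without that explicit intermediate field and the final descent step, the argument as written does not close.
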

\begin{proof}
  We continue to use the notation established above, and in particular
  we will fix $l$ and $\iota$ as above, and make use of $F'$ and
  $\pi'$. $F''$ will be as in the conclusion of Theorem \ref{potential
    automorphy of sym n for weight 0}, which we will apply with a
  particular choice of field $N$, to be determined below. We can and
  do assume that $r_{l,\iota}(\pi)$ and $r_{l,\iota}(\pi')$ both take
  values in $\GL_n(\mc{O})$ where $\mc{O}$ is the ring of integers in
  a finite extension $K$ of $\bb{Q}_l$ inside $\Qlbar$. Let $k$ denote
  the residue field of $K$. We can and do further assume that
  $r_{l,\iota}(\pi)|_{G_{F'}}$ and $r_{l,\iota}(\pi')$ are equal (as
  homomorphisms) when composed with the natural map
  $\GL_n(\mc{O})\rightarrow \GL_n(k)$. Write
  $\rbar_{l,\iota}(\pi)|_{G_{F'}}=\rbar_{l,\iota}(\pi')$ for this
  composition.

We write $r=\Sym^{n-1}r_{l,\iota}(\pi)|_{G_{F'}}$ and
  $r'=\Sym^{n-1}r_{l,\iota}(\pi')$, thought of as representations
  valued in $\GL_n(\bigO)$ via the bases specified in Definition \ref{defn:inherited basis}. We begin by 
  applying Lemma \ref{lem:
    existence of chars with long list of properties} in the following
  situation (where the field $F$ of Lemma \ref{lem: existence of chars
    with long list of properties} is $F'$):
  \begin{itemize}
  \item $S_{\ord}$ is the set of places of $F'$ dividing $l$ lying over a place for which
    $\pi$ is ordinary with respect to $\iota$.
  \item For each $v|l$, thought of as an embedding $F' \into \Qlbar$,
    $a_v=-\lambda_{\iota\circ v|_{F},2}$ and 
    $b_v=\lambda_{\iota\circ v|_{F},1}-\lambda_{\iota\circ v|_{F},2}+1$.
  \item $T$ is the set of places away from $l$ at which $\pi'$ is
    ramified.
      \item
    $(F')^{(\mathrm{bad})}=\overline{F}^{\ker \rbar}(\zeta_l)$.
  \end{itemize}
  We deduce that (after possibly extending $K$) there is a CM
  extension $M$ of $F'$ of degree $m^*$, and de Rham
  characters \[\theta,\theta':G_M\to \bigO^\times,\]satisfying various
  properties that we will now describe.  We can fix an element
  $\tilde{\tau}\in G_{F'}$ mapping to a generator $\tau\in\Gal(M/F')$,
  and we regard $\Ind_{G_M}^{G_{F'}}\theta$ and
  $\Ind_{G_M}^{G_{F'}}\theta'$ as representations valued in
  $\GL_{m^*}(\bigO)$ via their $\tilde{\tau}$-standard bases $\beta=\{
  e_0,\ldots,e_{m^* -1}\}$ and $\beta'=\{e'_0,\ldots,e'_{m^* -1}\}$
  respectively, in the sense of Definition \ref{standard bases}. Note
  that these two representations become equal when composed with the
  homomorphism $\GL_{m^*}(\mc{O}) \rightarrow \GL_{m^*}(k)$.  Then, by
  the conclusions of Lemma \ref{lem: existence of chars with long list
    of properties}, the following hold:
\begin{itemize}
\item $\bar{\theta}=\bar{\theta}'$.
\item $(r\otimes\Ind_{G_M}^{G_{F'}}\theta)|_{G_{F'(\zeta_l)}}$ has big
  image.
\item $\bar{F}^{(\ker \ad(\rbar\otimes\Ind_{G_M}^{G_{F'}}\overline{\theta}))}$ does not contain
  $\zeta_l$.
  \item $r\otimes\Ind_{G_M}^{G_{F'}}\theta$ and
    $r'\otimes\Ind_{G_M}^{G_{F'}}\theta'$ are both de Rham, and have
    the same Hodge-Tate weights at each place of $F'$ dividing $l$.
  \item $r'':=r\otimes\Ind_{G_M}^{G_{F'}}\theta$ is essentially self-dual;
    that is, there is a character $\chi:G_{F'}\to\Qlbar^\times$ with
    $\chi(c_v)$ independent of $v|\infty$ (where $c_v$ denotes a
    complex conjugation at $v$) such that $(r'')^\vee\cong
    r''\chi$.
  \item $\Ind_{G_M}^{G_{F'}}\theta'$ is essentially self-dual.
\end{itemize}
Applying Theorem \ref{potential automorphy of sym n for weight 0} with
$N=\bar{M}^{\ker\bar{\theta}}$, we find a totally real field $F''/F'$ with $r'|_{G_{F''}}$
automorphic of level prime to $l$. By Proposition \ref{prop: tensor product with induction
  is still automorphic}, the representation
$(r'\otimes\Ind_{G_M}^{G_{F'}}\theta')|_{G_{F''}}$ is automorphic. We now choose a
solvable extension $F^+/F''$ of totally real fields such that
\begin{itemize}
\item $F^+$ is linearly disjoint from $\bar{F}^{\ker\ad \rbar''}(\zeta_l)M$
  over $F'$.
\item $r''|_{G_{F^+}}=(r\otimes\Ind_{G_M}^{G_{F'}}\theta)|_{G_{F^+}}$ and
  $(r'\otimes\Ind_{G_M}^{G_{F'}}\theta')|_{G_{F^+}}$ are both
  crystalline at all places dividing $l$.
\item  $(r'\otimes\Ind_{G_M}^{G_{F'}}\theta')|_{G_{F^+}}$ is
  automorphic of level prime to $l$ (note that $r'|_{G_{F''}}$ is
  automorphic of level prime to $l$, so this is easily achieved by
  Lemma \ref{lem: base change} and Proposition \ref{prop: tensor product with induction is still automorphic}).
\item The extension $F^+M/F^+$ is unramified at all finite places, and
  is split completely at all places of $F^+$ lying over places in $T$.
\item $\theta|_{G_{F^+M}}$ and $\theta'|_{G_{F^+M}}$ are both
  unramified at all places not dividing $l$.
\item If $v|l$ is a place of $F^+$, then $F^+_v$ contains the
  unramified quadratic extension of $F'_v$, and $\rbar''|_{G_{F^+_v}}$ is trivial.
\end{itemize}

Write $\rho:=r''|_{G_{F^+}}=(r\otimes\Ind_{G_M}^{G_{F'}}\theta)|_{G_{F^+}}$,
 $\rho':=(r'\otimes\Ind_{G_M}^{G_{F'}}\theta')|_{G_{F^+}} : G_{F^+}
\rightarrow
\GL_{nm^*}(\mc{O})$, so that $\rho'$ is automorphic of level prime to $l$.
\begin{sublem}
  For each place $w|l$ of $F^+$, $\rho|_{G_{F^+_w}}\sim\rho'|_{G_{F^+_w}}$.
\end{sublem}
\begin{proof}If $w$ lies over a place of $S_{\ord}$, this follows from
  Lemma \ref{lem: crystalline ordinary lifting ring is irreducible
    when residually trivial}. Otherwise, $w$ lies over a place $v$ in
  $S_{\ssg}$. Let $L$ be the unramified quadratic extension of $F'_v$
  in $\overline{F}'_v$. Then
  $\rbar_{l,\iota}(\pi)|_{G_{F'_v}}\cong\Ind_{G_{L}}^{G_{F'_v}}\overline{\chi}$
  for some character $\overline{\chi}:G_L\to k^\times$ (see for
  example Th\'{e}or\`{e}me 3.2.1(1) of \cite{ber05}). We can and do
  (after possibly extending $K$)
  choose a crystalline lift $\chi:G_L\to\bigO^\times$ of
  $\overline{\chi}$ with Hodge-Tate weights $-a_v$ and $b_v-a_v$
  (recall that $l$ splits completely in $F'$). We can also (again,
  extending $K$ if necessary) choose a de
  Rham character $\chi': G_L\to\bigO^\times$ lifting $\overline{\chi}$
  with Hodge-Tate weights 0 and 1, which becomes crystalline over
  $F^+_w$ (we can do this by
  the assumption that $\overline{r}''|_{G_{F^+_w}}$ is trivial). 

Choose an element $\sigma\in
G_{F'_v}$ mapping to a generator of $\Gal(L/F'_v)$, and fix the $\sigma$-standard (in the sense
of Definition \ref{standard bases})
bases of 
$\Ind_{G_L}^{G_{F'_v}}\chi$ , $\Ind_{G_L}^{G_{F'_v}}\chi'$,
$\Sym^{n-1}\Ind_{G_L}^{G_{F'_v}}\chi$, 
and $\Sym^{n-1}\Ind_{G_L}^{G_{F'_v}}\chi'$.

Choose a matrix $A\in\GL_n(\bigO)$ with
$A(\Ind_{G_L}^{G_{F'_v}}\overline{\chi}) A^{-1}=
\rbar_{l,\iota}(\pi)|_{G_{F'_v}}$, and write
$r_{\chi}=A(\Ind_{G_L}^{G_{F'_v}}\chi) A^{-1}$. We have $r_\chi \sim
r_{l,\iota}(\pi)|_{G_{F'_v}}$, because both liftings are crystalline
of the same weight, $F_v$ is unramified, and the common weight is in
the Fontaine-Laffaille range (see e.g. Lemma 2.4.1 of
\cite{cht} which shows that the appropriate lifting ring is formally
smooth over $\mc{O}$). Then by the remarks following Definition
\ref{defn:tilde}, we have
$(r_\chi)|_{G_{F^+_w}}\sim
r_{l,\iota}(\pi)|_{G_{F^+_w}}$, and
$\Sym^{n-1}(r_\chi)|_{G_{F^+_w}}\sim
\Sym^{n-1}r_{l,\iota}(\pi)|_{G_{F^+_w}}$, so that (with the inherited bases)
  \[(\Sym^{n-1}(r_\chi)|_{G_{F^+_w}})\otimes (\Ind_{G_M}^{G_{F'}}\theta)|_{G_{F^+_w}}\sim
  (\Sym^{n-1}r_{l,\iota}(\pi)|_{G_{F^+_w}})\otimes
  (\Ind_{G_M}^{G_{F'}}\theta)|_{G_{F^+_w}}.\]

Similarly, write $r_{\chi'}=A(\Ind_{G_L}^{G_{F'_v}}\chi') A^{-1}$. Then $r_{\chi'}|_{G_{F^+_w}}\sim
  r_{l,\iota}(\pi')|_{G_{F^+_w}}$, because both representations are
  Barsotti-Tate and non-ordinary (see Proposition 2.3 of
  \cite{MR2280776} which shows that all non-ordinary points lie on the
  same component of the appropriate lifting ring). Then $\Sym^{n-1}(r_{\chi'})|_{G_{F^+_w}}\sim
  \Sym^{n-1}r_{l,\iota}(\pi')|_{G_{F^+_w}}$, and 
  \[(\Sym^{n-1}(r_{\chi'})|_{G_{F^+_w}})\otimes (\Ind_{G_M}^{G_{F'}}\theta')|_{G_{F^+_w}}\sim
  (\Sym^{n-1}r_{l,\iota}(\pi')|_{G_{F^+_w}})\otimes
  (\Ind_{G_M}^{G_{F'}}\theta')|_{G_{F^+_w}}.\]

  By Lemma \ref{lem: choose a different basis of ind chi}, we
  have \[(\Sym^{n-1}(r_{\chi'})|_{G_{F^+_w}})\otimes
  (\Ind_{G_M}^{G_{F'}}\theta')|_{G_{F^+_w}}\sim
  (\Sym^{n-1}(r_\chi)|_{G_{F^+_w}})\otimes
  (\Ind_{G_M}^{G_{F'}}\theta)|_{G_{F^+_w}}.\]
  Since \[\rho|_{G_{F^+_w}}=(\Sym^{n-1}r_{l,\iota}(\pi)|_{G_{F^+_w}})\otimes
  (\Ind_{G_M}^{G_{F'}}\theta)|_{G_{F^+_w}},\] \[\rho'|_{G_{F^+_w}}=(\Sym^{n-1}r_{l,\iota}(\pi')|_{G_{F^+_w}})\otimes
  (\Ind_{G_M}^{G_{F'}}\theta')|_{G_{F^+_w}},\] the result follows from
  the transitivity of $\sim$.\end{proof}

For $v$ a place of $F^+$, let $R^{\square}_{\rhobar|_{G_{F^+_v}}}$
denote the universal $\mc{O}$-lifting ring of
$\rhobar|_{G_{F^+_v}}$. Extending $\mc{O}$ if necessary, we can and do
assume that if $v$ is such that at least one $\rho$ or $\rho'$ is
ramified at $v$, then for each minimal prime ideal $\wp$ of
$R^{\square}_{\rhobar|_{G_{F^+_v}}}$, the quotient
$R^{\square}_{\rhobar|_{G_{F^+_v}}}/\wp$ is geometrically integral.

\begin{sublem}
  For all places $v\nmid l$ of $F^+$, either
  \begin{itemize}
 \item $\rho|_{G_{F^+_v}}$ and $\rho'|_{G_{F^+_v}}$ are both
   unramified, or
 \item each of the following conditions hold:
 \begin{itemize}
  \item  $\rho'|_{G_{F^+_v}} \leadsto_{\bigO} \rho|_{G_{F^+_v}}$ (see
   Definition \ref{defn: leadsto}),  
 \item  $\rho|_{G_{F^+_v}} \leadsto_{\bigO} \rho'|_{G_{F^+_v}}$, and
 \item the similitude characters of $\rho$ and $\rho'$ agree on inertia
  at $v$.
 \end{itemize}
 \end{itemize}
\end{sublem}

\begin{proof}
  If $v$ does not lie over a place in $T$ then there is nothing to
  prove, so we may suppose that $v$ lies over a place of $T$. 
  The condition on similitude characters is immediate since $\rho$ and $\rho'$ are both unipotent on inertia at $v$.
  Let us then turn to checking the condition that 
  $\rho'|_{G_{F^+_v}} \leadsto_{\bigO} \rho|_{G_{F^+_v}}$. By 
  assumption, condition (1) in Definition \ref{defn: leadsto} is
  satisfied so we just need to check condition (2).  Let $\rho_v =
  r_{l,\iota}(\pi)|_{G_{F^+_v}}$ and $\rho'_v =
  r_{l,\iota}(\pi')|_{G_{F^+_v}}$. Then $\rho_v$ and $\rho'_v$ are
  both lifts of the same reduction $\rhobar_{v}: G_{F^+_v} \rightarrow
  \GL_2(k)$. It follows easily from Corollary 2.6.7 of \cite{kis04}
  that there is a quotient $R^{\St}_{\rhobar_v}$ of
  $R^{\square}_{\rhobar_v}$ corresponding to lifts which are
  extensions of an unramified character $\gamma$ by $\gamma \epsilon$,
  and furthermore that the ring $R^{\St}_{\rhobar_v}$ is an integral
  domain of dimension 5. Let $\rho_v^{\square}$ denote the universal
  lift to $R^{\St}_{\rhobar_v}$.  Then $\rho_v$ and $\rho'_v$ arise as
  specialisations of this lift at closed points of
  $R^{\St}_{\rhobar_v}[1/l]$; let us call these points $x$ and $x'$.

  Note that $(\Ind_{G_M}^{G_{F'}}\theta)|_{G_{F^+_v}} =
  \oplus_{i=0}^{m^*-1}(\theta^{\tau^i}|_{G_{F^+_v}}) e_i$ and
  similarly $(\Ind_{G_M}^{G_{F'}}\theta')|_{G_{F^+_v}} =
  \oplus_{i=0}^{m^*-1}(\theta'^{\tau^i}|_{G_{F^+_v}}) e'_i$. For
  $i=0,\ldots,m^*-1$, let $\wt\theta_i : G_{F^+_v} \rightarrow
  \mc{O}^{\times}$ denote the Teichm\"uller lift of
  $\overline{\theta}^{\tau^i}|_{G_{F^+_v}}$.  If $R$ is an object of
  $\mc{C}_{\mc{O}}$ and $r \in R^{\times}$, we let
  $\lambda(r):G_{F^+_v} \rightarrow R^{\times}$ denote the unramified
  character sending $\Frob_{F^+_v}$ to $r$.

  Let $R=R^{\square}_{\rhobar|_{G_{F^+_v}}}$ and let
  $S=R^{\St}_{\rhobar_v}[[X_0,\ldots,X_{m^*-1}]]$. The lift \[
  \Sym^{n-1}\rho^{\square}_{v} \otimes \left( \oplus_{i=0}^{m^*-1}
    \wt\theta_i \lambda(1+X_i) \right)e_i \] of $\rhobar|_{G_{F^+_v}}$
  gives rise to a map $\Spec S \rightarrow \Spec R$. Since $S$ is a
  domain, the image of this map must be contained in an irreducible
  component of $\Spec R$. We deduce that $\rho|_{G_{F^+}}$ and
  $\rho'|_{G_{F^+}}$ are contained in a common irreducible component of
  $\Spec R[1/l]$.

 To prove that $x'$ is contained in a unique irreducible component, it
 then suffices to prove that $x'$ is a smooth point of $\Spec
 R[1/l]$. Since the completed local ring $R^{\wedge}_{x'}$ at $x'$ is
 the universal $K$-lifting ring of $(\rho'|_{G_{F_v^+}})
 \otimes_{\mc{O}} K$, a standard argument shows that $R^{\wedge}_{x'}$
 is smooth if $H^2(G_{F^+_v},\ad\rho'|_{G_{F^+_v}})=0$. By Tate local
 duality, it suffices to show that
 $H^0(G_{F^+_v},\ad\rho'|_{G_{F^+_v}}(1))=0$, i.e. that
 $\Hom_{G_{F^+_v}}(\rho'|_{G_{F^+_v}},\rho'|_{G_{F^+_v}}(1))=0$.

Let $\St_v$ denote the $n$-dimensional representation of
$G_{F^+_v}$ corresponding to the Steinberg representation. 
Then $\rho'|_{G_{F^+_v}}$ is $\GL_n(K)$-conjugate to an unramified twist of
 \[\oplus_{i=0}^{m^*-1}\St_v\otimes\theta'^{\tau^i}|_{G_{F^+_v}}.\]
 We may therefore assume that $\rho'|_{G_{F^+_v}}$ is equal to this representation.
It is easy to check (for example by considering the corresponding
Weil-Deligne representation) that the representation $\St_v$ contains
a unique $j$-dimensional subrepresentation for each $j$. 
Then a nonzero element of
$\Hom_{G_{F^+_v}}(\rho'|_{G_{F^+_v}},\rho'|_{G_{F^+_v}}(1))$ would have
to give a non-zero map from the unique $j$-dimensional quotient of
$\St_v\otimes\theta'^{\tau^i}|_{G_{F^+_v}}$ to the unique
$j$-dimensional subrepresentation of
$\St_v\otimes\theta'^{\tau^{i'}}(1)|_{G_{F^+_v}}$ for some $i$, $i'$ and
$j$. This implies that $(\theta'^{\tau^{i'}}/\theta'^{\tau^{i}})|_{G_{F^+_v}}$ is a
nonzero power of the cyclotomic character, which is impossible,
because $\theta'^{\tau^{i'}}/\theta'^{\tau^{i}}$ is a ratio of algebraic
characters of the same weight, and is thus pure of weight 0.

Finally, we can see that 
$\rho|_{G_{F^+_v}} \leadsto_{\bigO} \rho'|_{G_{F^+_v}}$
using the same argument.
\end{proof}
By Theorem \ref{thm: the main modularity lifting theorem, using
  tilde}, $\rho$ is automorphic (the conditions on the image of
$\rhobar$ follow from Lemma \ref{lem: existence of chars with long
  list of properties} and the choice of $F^+$, and the remaining
conditions follow by construction and the two sublemmas just
proved). Since $F^+/F''$ is solvable and 
$\rho=r''|_{G_{F^+}}$, $r''|_{G_{F''}}$ is automorphic by Lemma 1.3 of
\cite{BLGHT}. But
$r''|_{G_{F''}}=(\Sym^{n-1}r_{l,\iota}(\pi))|_{G_{F''}}\otimes(\Ind_{G_M}^{G_F}\theta)|_{G_{F''}}=(\Sym^{n-1}r_{l,\iota}(\pi))|_{G_{F''}}\otimes(\Ind_{G_{MF''}}^{G_{F''}}(\theta|_{G_{MF''}}))$,
so by Proposition \ref{prop:
  untwisting} $(\Sym^{n-1}r_{l,\iota}(\pi))|_{G_{F''}}$ is
automorphic, as required.
\end{proof}

\begin{cor}
  \label{cor: main L-fn results}Suppose that $F$ is a totally real
  field and that $\pi$ is a non-CM regular algebraic cuspidal automorphic
  representation of $\GL_2(\A_F)$ of weight $\lambda$, and
  let $w_\pi$ be the common value of the numbers
   $\lambda_{v,1}+\lambda_{v,2}$, $v|\infty$. 
  Suppose that $n$ is a positive
  integer and that $\psi:F^\times\backslash \A_F^\times\to\C^\times$
  is a finite order character. Then there is a meromorphic function
  $L(\Sym^{n-1}\pi\times\psi,s)$ on the whole complex plane such that:
  \begin{itemize}
  \item For any prime $l$ and any isomorphism $\iota:\Qlbar\isoto\C$
    we have   $L(\Sym^{n-1}\pi\times\psi,s)=L(\iota(\Sym^{n-1}
    r_{l,\iota}(\pi)\otimes r_{l,\iota}(\psi)),s)$.
  \item The expected functional equation holds between
    $L(\Sym^{n-1}\pi\times\psi,s)$ and   $L(\Sym^{n-1}(\pi^\vee|\det|^{-w_\pi})\times\psi,1+(n-1)w_\pi-s)$.
  \item If $n>1$ or $\psi\neq 1$ then   $L(\Sym^{n-1}\pi\times\psi,s)$ is
    holomorphic and non-zero in $\Re s\ge 1+(n-1)w_\pi/2$.
  \end{itemize}

\end{cor}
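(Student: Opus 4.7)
The plan is to combine Theorem \ref{thm: nth symmetric power of pi is potentially automorphic} with Brauer's induction theorem and solvable descent, in the by-now-standard way. First I would apply Theorem \ref{thm: nth symmetric power of pi is potentially automorphic} to produce a prime $l$, an isomorphism $\iota:\Qlbar\isoto\C$, a finite Galois extension of totally real fields $F''/F$, and a RAESDC automorphic representation $\pi_n$ of $\GL_n(\A_{F''})$ with $r_{l,\iota}(\pi_n)\cong \Sym^{n-1}r_{l,\iota}(\pi)|_{G_{F''}}$.

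Write $G:=\Gal(F''/F)$ and use Brauer to express $\mathbf{1}_G=\sum_i a_i \Ind_{H_i}^G \chi_i$ with $a_i\in\Z$, $H_i\le G$ solvable, and $\chi_i:H_i\to\C^\times$ one-dimensional. Let $K_i:=(F'')^{H_i}$, so $F''/K_i$ is solvable, and let $\psi_i$ be the finite-order Hecke character of $K_i$ corresponding to $\chi_i$ under class field theory. Since $r_{l,\iota}(\pi_n)$ is the restriction to $G_{F''}$ of a representation of $G_F$, strong multiplicity one implies that $\pi_n$ is $G$-invariant, hence $H_i$-invariant for every $i$. By iterating the solvable descent theorem of Arthur--Clozel (the converse direction of Lemma \ref{lem: base change}), for each $i$ we obtain a RAESDC automorphic representation $\pi_{n,i}$ of $\GL_n(\A_{K_i})$ whose base change to $F''$ equals $\pi_n$; in particular $r_{l,\iota}(\pi_{n,i})\cong \Sym^{n-1}r_{l,\iota}(\pi)|_{G_{K_i}}$.

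Applying the Brauer identity to $V:=\Sym^{n-1}r_{l,\iota}(\pi)\otimes r_{l,\iota}(\psi)$ yields
$$L(\Sym^{n-1}\pi\times\psi,s)=\prod_i L\!\left(\pi_{n,i}\otimes((\psi\circ N_{K_i/F})\psi_i),s\right)^{a_i},$$
and defining $L(\Sym^{n-1}\pi\times\psi,s)$ by this formula makes the first bullet tautological (independence of $(l,\iota)$ follows because the left side depends only on $V$ and the right side is an alternating product of standard $\GL_n$ $L$-functions, which by Godement--Jacquet and local-global compatibility are intrinsic to the $\pi_{n,i}$ and hence to $V$). Meromorphic continuation and the functional equation for each factor are classical; combined they give the functional equation relating $L(\Sym^{n-1}\pi\times\psi,s)$ to $L(\Sym^{n-1}(\pi^{\vee}|\det|^{-w_{\pi}})\times\psi,1+(n-1)w_{\pi}-s)$, after checking that each $\pi_{n,i}$ is essentially self-dual with similitude character of the predicted infinity type (which is immediate from the fact that $\Sym^{n-1}r_{l,\iota}(\pi)$ is pure of weight $(n-1)w_\pi$). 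The non-vanishing for $\RE s\ge 1+(n-1)w_\pi/2$ follows from Jacquet--Shalika on the edge of the critical strip applied to each factor, provided the twists $\pi_{n,i}\otimes((\psi\circ N_{K_i/F})\psi_i)$ are non-trivial; this is automatic when $n>1$ (because they have dimension $n\ge 2$) and holds when $n=1$, $\psi\ne 1$ because then no factor can be the trivial character.

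The main obstacle is no longer automorphic or Galois-theoretic but bookkeeping: one must track the shifts between the Galois-theoretic normalization (in which $V$ is pure of weight $(n-1)w_\pi$) and the unitary automorphic normalization used in Godement--Jacquet, and verify that the essentially-self-duality of each $\pi_{n,i}$ assembles to give the stated symmetry $s\leftrightarrow 1+(n-1)w_\pi-s$. A secondary subtlety is the solvable descent step: strictly speaking one needs to know that the descended representations $\pi_{n,i}$ are cuspidal (which follows from the irreducibility of $r_{l,\iota}(\pi_n)$ and the cuspidality criterion for base change), and to descend the essential self-duality along with the representation, which is standard given that the similitude character is itself Galois-invariant.
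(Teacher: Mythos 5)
Your proposal follows essentially the same route as the paper: invoke Theorem \ref{thm: nth symmetric power of pi is potentially automorphic}, apply Brauer's theorem to $\Gal(F''/F)$, descend $\pi_n$ along the resulting solvable extensions (the paper does this via Lemma~1.3 of \cite{BLGHT}), and take the corresponding alternating product of standard $\GL_n$ $L$-functions. The one point the paper addresses carefully that you dispatch with the phrase ``local-global compatibility'' is the matching of local factors of $\pi_n$ with $(\Sym^{n-1}\rec(\pi_v))|_{W_{F''_v}}$ at \emph{ramified} places and at places dividing $l$: the isomorphism $r_{l,\iota}(\pi_n)\cong\Sym^{n-1}r_{l,\iota}(\pi)|_{G_{F''}}$ by itself only controls unramified Frobenii, and the paper must invoke Lemma~\ref{lem: purity implies local-global compatibility} (purity, via \cite{MR2327298}, upgrades compatibility to the full Weil--Deligne level away from $l$) and, for places above $l$, pass to an auxiliary prime $p\ne l$ plus Tchebotarev to run the same argument with $r_{p,\iota'}$ in place of $r_{l,\iota}$. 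You would need to make these two points explicit, but the strategy is the same.
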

\begin{proof}This follows from Theorem \ref{thm: nth symmetric power of pi is potentially
    automorphic}, as in the proof of Theorem 4.2 of \cite{hsbt}. We
  give the details. The $L$-function $L(\iota(\Sym^{n-1}
    r_{l,\iota}(\pi)\otimes r_{l,\iota}(\psi)),s)$ is independent of
    the choice of $l$, $\iota$ by definition, so it is enough to prove
    the result for the $l$, $\iota$ fixed throughout this section. Let
    $\pi_n$, $F''$ be as in the conclusion of Theorem \ref{thm: nth symmetric power of pi is potentially
    automorphic}.

  We claim that $\rec(\pi_{n,v})=(\Sym^{n-1}\rec(\pi_v))|_{W_{F''_v}}$
  for all places $v$ of $F''$. If $v\nmid l$, this follows from Lemma
  \ref{lem: purity implies local-global compatibility} and the purity
  of $r_{l,\iota}(\pi)$ (which follows from the main result of
  \cite{MR2327298}). If $v|l$, then we choose a prime $p\ne l$ and an
  isomorphism $\iota':\Qpbar\isoto\C$. By the Tchebotarev density
  theorem we have $r_{p,\iota'}(\pi_n)=(\Sym^{n-1}
  r_{p,\iota'}(\pi))|_{G_{F''}}$, and we may argue as before.

 By Lemma 1.3 of \cite{BLGHT}, for any intermediate
  field $F\subset F_j\subset F''$ with $F''/F_j$ soluble, there is an
  automorphic representation $\pi^j$ of $\GL_n(\A_{F_j})$ with $r_{l,\iota}(\pi^j)=(\Sym^{n-1}
    r_{l,\iota}(\pi))|_{G_{F_j}}$. By Brauer's theorem, we can
    write \[1=\sum_j a_j\Ind_{G_{F_j}}^{G_F}\chi_j\]with $a_j\in\Z$
    and $\chi_j:G_{F_j}\to\C^\times$ a
    homomorphism. Then by the above discussion (applied to the
    representations $\pi^j$), we have \[L(\iota(\Sym^{n-1}
    r_{l,\iota}(\pi)\otimes r_{l,\iota}(\psi)),s)=\prod_j
    L(\pi^j\otimes(\chi_j\circ\Art_{F_j})\otimes\psi,s)^{a_j}.\] The
    result follows.
\end{proof}

We now deduce the Sato-Tate conjecture for $\pi$, following the
formulation of \cite{GeeSTwt3} (see also section 8 of
\cite{BLGHT}). Recall $w_\pi$ is the common value of the
$\lambda_{v,1}+\lambda_{v,2}$, $v|\infty$. Let $\psi$ be the product
of the central character of $\pi$ with $|\cdot|^{w_\pi}$, so that
$\psi$ has finite order. Let $a$ denote the order of $\psi$, and let
$U(2)_a$ denote the subgroup of $U(2)$ consisting of those matrices
$g\in U(2)$ with $\det(g)^a=1$. Let $U(2)_a/\sim$ denote the set of
conjugacy classes of $U(2)_a$. By ``the Haar measure on
$U(2)_a/\sim$'' we mean the push forward of the Haar measure on
$U(2)_a$ with total measure 1.

The Ramanujan conjecture is known to hold at all finite places of
$\pi$ (see \cite{MR2327298}), so for all $v$ for which $\pi_v$ is
unramified, the matrix $(\mathbf{N} v)^{-w_\pi/2}\rec(\pi_v)(\Frob_v)$ lies in
 $U(2)_a$. Let $[\pi_v]$ denote its conjugacy class 
in $U(2)_a/\sim$.

\begin{thm}
  Suppose that $F$ is a totally real field, and that $\pi$ is a non-CM
  regular algebraic cuspidal automorphic representation of $\GL_2(\A_F)$. Then the
  classes $[\pi_v]$ are equidistributed with respect to the Haar
  measure on $U(2)_a/\sim$.
\end{thm}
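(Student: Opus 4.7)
The plan is to deduce equidistribution from Corollary \ref{cor: main L-fn results} via a standard application of Weyl's equidistribution criterion, following the analytic argument used (for example) in Section 8 of \cite{BLGHT} and in \cite{GeeSTwt3}.

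First, I would identify a convenient family of test functions whose integrals characterize the Haar measure on $U(2)_a/\sim$. Since $U(2)_a$ is compact, the Peter--Weyl theorem tells us that characters of irreducible representations of $U(2)_a$ span a dense subspace of the continuous class functions on $U(2)_a$. The irreducible representations of $U(2)_a$ can be described explicitly: they are restrictions of irreducible representations of $U(2)$, and up to tensoring by powers of $\det$, they are the symmetric power representations $\Sym^{n-1}(\std)\otimes(\det)^j$ for $n\ge 1$ and $0\le j<a$. By the Weyl equidistribution criterion, to prove the theorem it suffices to show that for each nontrivial such representation $\rho$ of $U(2)_a$, the Dirichlet series
\[
\sum_v \frac{\tr \rho([\pi_v]) \log \mathbf{N} v}{\mathbf{N} v^s}
\]
(sum over the finite places $v$ where $\pi_v$ is unramified) extends to a holomorphic function without zeros in a neighbourhood of $\Re s\ge 1$.

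Next, I would translate this into a statement about $L$-functions. Writing $\chi$ for the central character of $\pi$ and setting $\psi = \chi|\cdot|^{w_\pi}$ so that $\psi^a$ is trivial, the trace $\tr\rho([\pi_v])$ for $\rho = \Sym^{n-1}(\std)\otimes(\det)^j$ is (up to normalisation by powers of $\mathbf{N} v^{w_\pi/2}$) the local Euler factor at $v$ of $L(\Sym^{n-1}\pi\times\psi^j,s)$, appropriately shifted. By Corollary \ref{cor: main L-fn results}, provided either $n>1$ or $\psi^j$ is nontrivial, the function $L(\Sym^{n-1}\pi\times\psi^j,s)$ is meromorphic on $\mathbb{C}$, holomorphic and nonzero in $\Re s\ge 1+(n-1)w_\pi/2$, and satisfies the expected functional equation. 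Taking its logarithmic derivative and shifting variables gives precisely the nonvanishing/holomorphy needed for the Dirichlet series above at $\Re s = 1$.

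Finally, a standard Tauberian argument (in the form used in the proofs of Sato--Tate in \cite{hsbt}, \cite{BLGHT}, and already applied for Hilbert modular forms in \cite{GeeSTwt3}) converts this analytic input into the Weyl equidistribution statement. The case $\rho$ trivial contributes the main term with density one; for all other $\rho$ one gets asymptotic cancellation, proving that $[\pi_v]$ are equidistributed with respect to the Haar measure on $U(2)_a/\sim$. The only nonroutine ingredient is Corollary \ref{cor: main L-fn results}, which is already in hand; the remaining analytic manipulations are entirely standard, so I would expect no serious obstacle in this section, the real substance of the paper having been carried out in Theorem \ref{thm: nth symmetric power of pi is potentially automorphic}.
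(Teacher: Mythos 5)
Your proposal is correct and takes essentially the same approach as the paper: the paper simply cites the corollary to Theorem I.A.2 of Serre \cite{MR0263823} (which packages exactly the Weyl-criterion/Tauberian argument you spell out), together with the description of the irreducible representations of $U(2)_a$ as $\det^c\otimes\Sym^d\C^2$, and then applies Corollary \ref{cor: main L-fn results}.
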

\begin{proof}This follows from Corollary \ref{cor: main L-fn results}, together
  with the corollary to Theorem I.A.2 of \cite{MR0263823} (note that
  the irreducible representations of $U(2)_a$ are the representations
  $\det^c\otimes\Sym^d\C^2$ for $0\le c<a$, $d\ge 0$).\end{proof}

This may be reformulated in a somewhat more explicit fashion as
follows. Note that the space $U(2)_a/\sim$ is disconnected if $a\neq 1$, so that to make
an explicit statement we choose a connected component, which amounts
to choosing $\zeta$ as in the statement below (of course, one may replace
$\zeta$ by $-\zeta$, and it is only the choice of $\zeta^2$ which
determines a component).
\begin{cor}Suppose that $F$ is a totally real field, and that $\pi$ is
  a non-CM regular algebraic cuspidal automorphic representation of
  $\GL_2(\A_F)$. Let $\psi$ be the product of the central character of
  $\pi$ with $|\cdot|^{w_\pi}$, so that $\psi$ is a finite order character. Let $\zeta$
  be a root of unity with $\zeta^2$ in the image of $\psi$. For any
  place $v$ of $F$ such that $\pi_v$ is unramified, let $t_v$ denote
  the eigenvalue of the Hecke
  operator \[\left[\GL_2(\bigO_{F_v})\begin{pmatrix}\varpi_v & 0\\ 0&
    1\end{pmatrix}\GL_2(\bigO_{F_v})\right]\](where $\varpi_v$ is a
  uniformiser of $\bigO_{F_v}$) on $\pi_v^{\GL_2(\bigO_{F_v})}$. Note
  that if $\psi_v(\varpi_v)=\zeta^2$ then
  $t_v/(2(\mathbf{N}v)^{(1+w_\pi)/2}\zeta)\in[-1,1]\subset\R$.

Then as $v$ ranges over the places of $F$ with $\pi_v$ unramified and
$\psi_v(\varpi_v)=\zeta^2$, the values $t_v/(2(\mathbf{N}v)^{(1+w_\pi)/2}\zeta)$ are equidistributed
in $[-1,1]$ with respect to the measure $(2/\pi)\sqrt{1-t^2}dt$.
  
\end{cor}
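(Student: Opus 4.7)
The plan is to deduce the corollary from the preceding theorem (equidistribution of the classes $[\pi_v]$ in $U(2)_a/\sim$ with respect to the pushforward Haar measure) by an explicit computation of the pushforward measure on a single component of $U(2)_a$, together with a careful unpacking of the normalizations relating $t_v$ to $\rec(\pi_v)(\Frob_v)$.

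First I would describe the structure of $U(2)_a$. By definition $U(2)_a = \{g\in U(2) : \det(g)^a=1\}$, so the determinant map exhibits $U(2)_a$ as the disjoint union of $a$ components, one for each $a$-th root of unity $\eta$, each of which is a coset of $SU(2)$ inside $U(2)$. The conjugacy classes on $U(2)_a/\sim$ therefore decompose correspondingly, and the condition $\psi_v(\varpi_v)=\zeta^2$ singles out exactly the component with $\det g = \zeta^2$; indeed, the determinant of $(\mathbf{N}v)^{-w_\pi/2}\rec(\pi_v)(\Frob_v)$ is $\psi_v(\varpi_v)$ by the definition of $\psi$ as the central character of $\pi$ twisted by $|\cdot|^{w_\pi}$.

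Next I would parametrize the conjugacy classes on the fixed component. Any element of $U(2)$ with determinant $\zeta^2$ has eigenvalues of the form $\zeta e^{\pm i\theta}$ for some $\theta\in[0,\pi]$, and the map $g\mapsto \tfrac{1}{2\zeta}\operatorname{tr}(g)=\cos\theta$ gives a homeomorphism of the conjugacy classes in this component onto $[-1,1]$. The Haar measure on the component is the translate of the Haar measure of $SU(2)$ by any chosen element of determinant $\zeta^2$; by the Weyl integration formula on $SU(2)$, its pushforward under $g\mapsto \tfrac{1}{2\zeta}\operatorname{tr}(g)$ is precisely $(2/\pi)\sqrt{1-t^2}\,dt$ on $[-1,1]$ (coming from the measure $(2/\pi)\sin^2\theta\,d\theta$ on $[0,\pi]$).

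To relate this to $t_v$, I would observe that for $\pi_v$ unramified, the trace of $\rec(\pi_v)(\Frob_v)$ equals $t_v/(\mathbf{N}v)^{(w_\pi-1)/2}$ up to the standard Tate normalization built into $\rec_K$. Unwinding the shift by $|\cdot|^{(1-n)/2}=|\cdot|^{-1/2}$ (for $n=2$) between $\rec_K$ and the Satake parameters, one finds that the eigenvalues of $\rec(\pi_v)(\Frob_v)$ are exactly those $2$-dimensional Satake parameters whose product is $\psi_v(\varpi_v)\mathbf{N}v^{w_\pi}$ and whose sum is $t_v$, so $\operatorname{tr}((\mathbf{N}v)^{-w_\pi/2}\rec(\pi_v)(\Frob_v)) = t_v/\mathbf{N}v^{(w_\pi+1)/2}$. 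Therefore, if we set $[\pi_v]$ to have eigenvalues $\zeta e^{\pm i\theta_v}$, then $\cos\theta_v = t_v/(2(\mathbf{N}v)^{(1+w_\pi)/2}\zeta)$.

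Finally, applying the preceding theorem to the continuous characteristic function of any interval $[a,b]\subset[-1,1]$ times the characteristic function of the fixed component, and dividing by the limiting density of places with $\psi_v(\varpi_v)=\zeta^2$ (which is $1/a$, so that both numerator and denominator track the same component), yields the claimed equidistribution of $t_v/(2(\mathbf{N}v)^{(1+w_\pi)/2}\zeta)$ with respect to $(2/\pi)\sqrt{1-t^2}dt$. The main obstacle here is none of substance; the work is entirely in matching the normalization of $\rec_K$ to the classical Hecke eigenvalue and in verifying that a single component of $U(2)_a$ carries the measure $(1/a)$ times the Sato-Tate measure under trace, so that the conditional distribution is exactly Sato-Tate.
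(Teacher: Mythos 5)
Your argument is correct, and it is essentially the only reasonable way to deduce the corollary from the preceding theorem. The paper itself gives no proof (it introduces the corollary with ``This may be reformulated in a somewhat more explicit fashion''), treating the deduction as an immediate unwinding of the equidistribution statement on $U(2)_a/\!\sim$; your write-up simply spells out that unwinding, correctly identifying the component of $U(2)_a$ cut out by $\psi_v(\varpi_v)=\zeta^2$, computing the pushforward of Haar measure on that component via Weyl integration, and matching the normalization of $\rec$ to the classical Hecke eigenvalue $t_v$.
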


\bibliographystyle{amsalpha}
\bibliography{barnetlambgeegeraghty}

\end{document}